\documentclass[11pt, a4paper]{article}

\usepackage[english]{babel}
\usepackage{dutchcal}
\usepackage[T1]{fontenc}

\usepackage{amsmath}
\usepackage{amssymb}
\usepackage{amsthm}

\usepackage{tikz}
\usepackage{tikz-cd}

\usepackage{hyperref}
\usepackage[capitalize]{cleveref}
\usepackage{footmisc}
\usepackage{bbm}
\usepackage{mathrsfs}
\usepackage{extarrows}
\usepackage{enumitem}
\usepackage{stmaryrd}
\usepackage{microtype}

\newcommand{\llpar}{\llparenthesis}
\newcommand{\rrpar}{\rrparenthesis}

\newcommand{\ab}{\mathrm{ab}}

\newcommand{\aff}{\mathrm{aff}}

\newcommand{\KM}{\mathrm{KM}}
\newcommand{\Ell}{\mathrm{Ell}}

\newcommand{\Tori}{\mathrm{Tori}}

\newcommand{\et}{\mathrm{\acute{e}t}}
\newcommand{\fin}{\mathrm{fin}}

\newcommand{\gl}{\mathrm{gl}}

\DeclareMathOperator{\res}{res}
\newcommand{\ori}{\mathrm{or}}

\newcommand{\pre}{\mathrm{pre}}
\newcommand{\st}{\textup{st}}

\newcommand{\Tate}{\mathrm{Tate}}

\DeclareMathOperator{\tr}{tr}

\DeclareMathOperator{\KU}{KU}

\newcommand{\gKU}{\mathbf{KU}}
\DeclareMathOperator{\KO}{KO}
\newcommand{\gKO}{\mathbf{KO}}

\newcommand{\Sph}{\mathbf{S}}
\newcommand{\gTMF}{\mathbf{TMF}}

\DeclareMathOperator{\TMF}{TMF}

\DeclareMathOperator{\Ab}{Ab}

\DeclareMathOperator{\Aff}{Aff}

\DeclareMathOperator{\CAlg}{CAlg}

\DeclareMathOperator{\Cat}{Cat}

\DeclareMathOperator{\Fun}{Fun}

\DeclareMathOperator{\Glo}{Glo}

\DeclareMathOperator{\Mod}{Mod}

\newcommand{\PrLst}{\mathrm{Pr}^L_{\mathrm{st}}}

\DeclareMathOperator{\QCoh}{QCoh}

\DeclareMathOperator{\Ind}{Ind}

\DeclareMathOperator{\Span}{Span}
\DeclareMathOperator{\Spec}{Spec}

\DeclareMathOperator{\Shv}{Shv}
\newcommand{\Stk}{{\mathrm{Stk}}}

\DeclareMathOperator{\Sp}{Sp}

\DeclareMathOperator{\SpDMnc}{SpDM^{nc}}

\DeclareMathOperator{\SpSch}{SpSch}
\newcommand{\Spc}{\mathcal{S}}
\newcommand{\spaces}{\mathcal{S}}

\DeclareMathOperator{\Tot}{Tot}

\newcommand{\Fin}{\mathcal{F}\mathrm{in}}

\DeclareMathOperator{\Aut}{Aut}
\DeclareMathOperator{\colim}{colim}

\newcommand{\id}{\mathrm{id}}
\DeclareMathOperator{\Map}{Map}
\DeclareMathOperator{\map}{map}

\DeclareMathOperator{\Hom}{Hom}

\newcommand{\op}{\mathrm{op}}

\newcommand{\A}{\mathsf{A}}

\newcommand{\B}{\mathsf{B}}

\newcommand{\BT}{\mathbf{BT}}

\newcommand{\cC}{\mathcal{C}}

\newcommand{\D}{\mathsf{D}}
\newcommand{\calD}{\mathcal{D}}
\newcommand{\E}{\mathbf{E}}
\newcommand{\sfE}{\mathsf{E}}

\newcommand{\calF}{\mathcal{F}}

\newcommand{\G}{\mathbf{G}}
\newcommand{\calG}{\mathcal{G}}

\newcommand{\h}{\mathrm{h}}

\newcommand{\M}{\mathsf{M}}
\newcommand{\sfM}{\mathsf{M}}

\renewcommand{\O}{\mathcal{O}}

\renewcommand{\P}{\mathbf{P}}

\newcommand{\calP}{\mathcal{P}}
\newcommand{\bfP}{\mathbf{P}}
\newcommand{\Q}{\mathbf{Q}}

\newcommand{\R}{\mathsf{R}}

\newcommand{\sfS}{\mathsf{S}}
\newcommand{\sfR}{\mathsf{R}}

\newcommand{\T}{\mathrm{T}}

\newcommand{\Z}{\mathbf{Z}}

\newcommand{\calQ}{\mathcal{Q}}

\newcommand{\calC}{\mathcal{C}}
\newcommand{\calK}{\mathcal{K}}
\newcommand{\Cc}{\mathcal{C}}
\newcommand{\calE}{\mathcal{E}}
\newcommand{\bfG}{\mathbf{G}}
\newcommand{\bfE}{\mathbf{E}}

\newcommand{\bbZ}{\mathbb{Z}}
\newcommand{\bfZ}{\mathbf{Z}}
\newcommand{\calL}{\mathcal{L}}
\newcommand{\bfQ}{\mathbf{Q}}

\newcommand{\bbC}{\mathbb{C}}

\newcommand{\Th}{\operatorname{Th}}
\newcommand{\Psh}{\mathrm{Psh}}

\def\hyp{{\hbox{-}}}

\newcommand{\sfZ}{\mathsf{Z}}
\newcommand{\sfX}{\mathsf{X}}
\newcommand{\sfY}{\mathsf{Y}}
\newcommand{\calO}{\mathcal{O}}

\newcommand{\sfA}{\mathsf{A}}

\newcommand{\bfB}{\mathbf{B}}
\newcommand{\bfR}{\mathbf{R}}

\newcommand{\nc}{\mathrm{nc}}
\newcommand{\ev}{\mathrm{ev}}

\newcommand{\dcat}{\mathcal{D}}
\newcommand{\ccat}{\mathcal{C}}

\newcommand{\bbone}{\mathbbm{1}}

\newcommand{\jdiagram}{\mathsf{J}}

\newcommand{\spherespectrum}{\mathbf{S}}

\newcommand{\abglobalspaces}{\Spc^{\gl}_{\ab}}

\newcommand{\spectra}{{\mathrm{Sp}}}
\newcommand{\PshT}{{\calP}}

\newcommand{\einfty}{\mathbf{E}_\infty}
\newcommand{\dual}[1]{\widehat{#1}}

\newcommand{\pdivaff}{\mathbf{P}\mathrm{Div}_{\mathrm{aff}}}
\newcommand{\pdivpreaff}{\mathbf{P}\mathrm{Div}_{\mathrm{aff}}^{\mathrm{pre}}}
\newcommand{\pdivoraff}{\mathbf{P}\mathrm{Div}_{\mathrm{aff}}^{\mathrm{or}}}

\newcommand{\pdiv}{\mathbf{P}\mathrm{Div}}

\newcommand{\pdivorstk}{\mathbf{P}\mathrm{Div}^{\mathrm{or}}}
\newcommand{\pdivstk}{\mathbf{P}\mathrm{Div}}
\newcommand{\pdivprestk}{\mathbf{P}\mathrm{Div}^{\pre}}

\newcommand{\frakp}{\mathfrak{p}}

\newcommand{\fmodule}{\mathcal{F}}

\newcommand{\BH}{\mathbf{B}H}
\newcommand{\BK}{\mathbf{B}K}
\newcommand{\BG}{\mathbf{B}G}

\newcommand{\sfN}{\mathsf{N}}

\newcommand{\image}{\operatorname{im}}

\newcommand{\Top}{\mathrm{Top}}

\newcommand{\zSpec}{\operatorname{\mathsf{Spec}}}
\newcommand{\cart}{\mathrm{cart}}
\newcommand{\height}{\operatorname{ht}}

\newcommand{\ol}[1]{\overline{#1}}

\newcommand{\bs}{{-}}
\newcommand{\ul}[1]{\underline{#1}}

\newcommand{\al}{\alpha}

\newcommand{\Ga}{\Gamma}

\newcommand{\Qloc}{\calQ}

\DeclareMathOperator{\Ar}{Ar}

\newcommand{\all}{\mathrm{all}}

\newcommand{\End}{\operatorname{End}}

\newcommand{\CMon}{\operatorname{CMon}}
\newcommand{\Mon}{\CMon}

\newcommand{\Inj}{\mathrm{Inj}}
\newcommand{\LocSys}{\mathrm{LocSys}}
\newcommand{\PrL}{\mathrm{Pr}^{\mathrm{L}}}

\newcommand{\awayfromS}{{S{\perp}\pi}}

\usepackage{mathtools}

\newtheorem*{theorem*}{Theorem}

\theoremstyle{plain}\numberwithin{equation}{subsection}
\newtheorem{theorem}[equation]{Theorem}
\Crefname{theorem}{{Th}.\!\!}{{Ths}.\!\!}
\newtheorem{theoremalph}{Theorem}

\Crefname{theoremalph}{{Th}.\!\!}{{Ths}.\!\!}
\newtheorem{coralph}[theoremalph]{Corollary}

\Crefname{coralph}{{Cor}.\!\!}{{Cors}.\!\!}

\Crefname{defalph}{{Df}.\!\!}{{Dfs}.\!\!}

\Crefname{conjalph}{{Conj}.\!\!}{{Conjs}.\!\!}

\Crefname{problem}{{Prb}.\!\!}{{Prbs}.\!\!}
\newtheorem{prop}[equation]{Proposition}
\Crefname{prop}{{Pr}.\!\!}{{Prs}.\!\!}
\newtheorem{lemma}[equation]{Lemma}
\Crefname{lemma}{{Lm}.\!\!}{{Lms}.\!\!}
\newtheorem{cor}[equation]{Corollary}
\Crefname{cor}{{Cor}.\!\!}{{Cors}.\!\!}

\Crefname{conjecture}{{Conj}.\!\!}{{Conjs}.\!\!}

\theoremstyle{definition}\numberwithin{equation}{subsection}
\newtheorem{mydef}[equation]{Definition}
\Crefname{mydef}{{Df}.\!\!}{{Dfs}.\!\!}

\Crefname{defn}{{Df}.\!\!}{{Dfs}.\!\!}

\Crefname{recall}{{Rcl}.\!\!}{{Rcls}.\!\!}
\newtheorem{construction}[equation]{Construction}
\Crefname{construction}{{Con}.\!\!}{{Cons}.\!\!}
\newtheorem{ass}[equation]{Assumption}
\Crefname{ass}{{As}.\!\!}{{As}.\!\!}
\newtheorem{notation}[equation]{Notation}
\Crefname{notation}{{Nt}.\!\!}{{Nts}.\!\!}

\Crefname{situation}{{St}.\!\!}{{Sts}.\!\!}

\theoremstyle{remark}\numberwithin{equation}{subsection}
\newtheorem{example}[equation]{Example}
\Crefname{example}{{Ex}.\!\!}{{Exs}.\!\!}
\newtheorem{ex}[equation]{Example}
\Crefname{ex}{{Ex}.\!\!}{{Exs}.\!\!}

\Crefname{nonexample}{{NonEx}.\!\!}{{NonEx}.\!\!}

\Crefname{claim}{{Clm}.\!\!}{{Clms}.\!\!}
\newtheorem{remark}[equation]{Remark}
\Crefname{remark}{{Rmk}.\!\!}{{Rmks}.\!\!}
\newtheorem{rmk}[equation]{Remark}
\Crefname{rmk}{{Rmk}.\!\!}{{Rmks}.\!\!}

\Crefname{idea}{{Id}.\!\!}{{Ids}.\!\!}

\Crefname{warn}{{Warn}.\!\!}{{Warns}.\!\!}

\Crefname{question}{{Qn}.\!\!}{{Qns}.\!\!}

\Crefname{figure}{{Fig.}\!\!}{{Figs.}\!\!}
\Crefname{footnote}{{Fn.}\!\!}{{Fn.}\!\!}

\Crefname{part}{{\textsection}\!\!}{{\textsection}\!\!}
\Crefname{chapter}{{\textsection}\!\!}{{\textsection}\!\!}
\Crefname{section}{{\textsection}\!\!}{{\textsection}\!\!}
\Crefname{subsection}{{\textsection}\!\!}{{\textsection}\!\!}
\Crefname{appendix}{{\textsection}\!\!}{{\textsection}\!\!}

\usepackage{parskip}

\makeatletter
\def\thm@space@setup{%
  \thm@preskip=\parskip \thm@postskip=0pt
}
\makeatother

\begingroup
    \makeatletter
    \@for\theoremstyle:=mydef,remark,plain,theorem,defn\do{%
        \expandafter\g@addto@macro\csname th@\theoremstyle\endcsname{%
            \addtolength\thm@preskip\parskip
            }%
        }
\endgroup

\usetikzlibrary{spath3}
\tikzset{between/.style n args={2}{/tikz/spath/at end path construction={
    \tikzset{spath/split at keep middle={current}{#1}{#2}}
}}}

\begin{document}
\title{Ambidextrous global spectra and tempered cohomology}
\author{William Balderrama\footnote{\url{williamb@math.uni-bonn.de}}, Jack Morgan Davies\footnote{\url{davies@uni-wuppertal.de}}, and Sil Linskens\footnote{\url{sil.linskens@mathematiks.uni-regensburg.de}}}
\date{March 19, 2026}
\maketitle

\begin{abstract}
We introduce generalizations of global equivariant spectra which encode globally equivariant cohomology theories equipped with additional \emph{transfers}, such as the deflation maps present in equivariant topological $K$-theory. We call these \emph{$\mathcal{Q}$-ambidextrous global spectra}, where $\mathcal{Q}$ is a parameter encoding which additional transfers one allows.

As our main example, we prove that the tempered cohomology theory associated with an oriented $\bfP$-divisible group, constructed by Lurie, is represented by a \emph{$\pi$-ambidextrous} global $\bfE_\infty$ ring spectrum, encoding transfers along all relatively $\pi$-finite maps of global spaces. This is established by means of a general parametrized decategorification process, perhaps of independent interest, that produces $\mathcal{Q}$-ambidextrous global spectra from suitable global families of stable $\infty$-categories. By allowing $\mathcal{Q}$ to vary, we are able to coherently encode the fact that non-invertible morphisms of oriented $\bfP$-divisible groups induce maps of tempered theories that only commute with certain transfers.

With these $\pi$-ambidextrous enhancements in hand, we explore the fundamental properties of tempered theories as equivariant stable homotopy types. We construct a well-behaved \emph{$F$-global homology theory} for any $\pi$-finite space $F$, with good base change properties. Taking $F = \bfB H$ for a finite group $H$, this establishes general base change results for the geometric fixed points of tempered theories. We use this to compute the $H$-geometric fixed points of tempered theories, showing that they vanish for $H$ nonabelian and admit a simple algebro-geometric model when $H$ is abelian, with identifiable blueshift properties.
\end{abstract}

\setcounter{tocdepth}{2}
\tableofcontents

\section{Introduction}
Let $G$ be a finite group. Atiyah and Segal \cite{equivktheory,atiyahsegal} introduced the \emph{$G$-equivariant complex $K$-theory} of a compact $G$-space $X$:
\[
KU_G^0(X) = \raisebox{.6em}{$\left\{\begin{array}{cc}\text{$G$-equivariant vector} \\
\text{bundles over $X$}\end{array}\right\}$} \,\diagup\, \raisebox{-.4em}{\text{stable equivalence.}}
\]
In \cite{ec3}, Lurie generalized this drastically by introducing \emph{tempered cohomology theories}, which share the essential properties of equivariant complex $K$-theory (such as Atiyah--Segal \cite{atiyahsegal} completion theorems and a Hopkins--Kuhn--Ravenel \cite{hkr} character theory), associated with \emph{oriented $\bfP$-divisible groups} over $\bfE_\infty$ rings. Examples include equivariant complex $K$-theory itself, arising from the torsion subgroup of the multiplicative group scheme over $KU$, and higher chromatic analogues such as \emph{equivariant elliptic cohomology theories}, arising from the torsion subgroups of oriented elliptic curves.

This article continues the study of tempered cohomology theories. One of our primary goals is to capture additional structure present in tempered cohomology. To motivate this, consider the classical example of equivariant complex $K$-theory. The $G$-equivariant complex $K$-theory of a $G$-space $X$ is built out of its values over the cells $G/H_+ \wedge S^n$. The fundamental computation here identifies the cohomology of the $0$-cells
\[
KU_G^0(G/H) \cong RU(H)
\]
with the complex representation ring of $H$. As the group $G$ varies, the representation rings $RU(G)$ carry a significant amount of algebraic structure: associated with any subgroup inclusion $i\colon H \subset G$, one has \emph{restriction} $i^\ast$ and \emph{transfer maps} $i_!$
\[
i^\ast\colon RU(G) \to RU(H),\qquad i_! = \bbC[G]\otimes_{\bbC[H]}(\bs)\colon RU(H)\to RU(G),
\]
and associated with any surjective homomorphism $p\colon G \to K$ one has \emph{inflation} $p^\ast$ and \emph{deflation} $p_!$ maps
\begin{equation}\label{eq:deflations}
p^\ast\colon RU(K) \to RU(G),\qquad p_! = \bbC[G]\otimes_{\bbC[H]}(\bs) \colon RU(G) \to RU(K).
\end{equation}
This algebraic structure on representation rings is reflected in the higher structure present on equivariant complex $K$-theory. For example, the fact that $G$-equivariant complex $K$-theory is represented by a \emph{genuine $G$-spectrum}
\[
KU_G \in \spectra_G
\]
encodes the presence of restrictions and transfers between subgroups of $G$. As the group $G$ varies, these $G$-spectra assemble into a \emph{global equivariant spectrum}
\[
\gKU \in \spectra^\gl
\]
in the sense of Schwede \cite{s}, defining a cohomology theory on \emph{global spaces} (or \emph{orbispaces}). This additionally encodes the presence of inflations. These refinements are available for all tempered theories: In \cite{gepner2024global2ringsgenuinerefinements}, Gepner, Pol, and the third-named author proved that the tempered cohomology theory associated with an oriented $\bfP$-divisible group is represented by a global $\bfE_\infty$ ring, i.e.\ by an object of $\CAlg(\spectra^\gl_\ab)$. For $K$-theory, this additional multiplicative structure refines the tensor product of representations and its compatibilities with restrictions, transfers, and inflations.

Notably missing from this picture are the deflations, or transfers along surjective group homomorphisms, which are present on representation rings. In fact, Lurie's \emph{tempered ambidexterity theorem} \cite[Th.1.1.21]{ec3} shows that tempered theories admit deflations, and much more: they admit transfers along all relatively $\pi$-finite maps of global spaces. These are integral analogues of the $K(n)$-local transfers arising from Hopkins--Lurie's $K(n)$-local ambidexterity theorem \cite{ambi}.

\subsection{Tempered cohomology theories as \texorpdfstring{$\pi$}{pi}-ambidextrous global spectra}\label{ssec:intro_sheaf}
To capture these additional transfers, we introduce a category $\Sp^\gl_\pi$ of \emph{$\pi$-ambidextrous global spectra}. This precisely encodes global cohomology theories equipped with transfers for all relatively $\pi$-finite morphisms of global spaces (see \Cref{ex:variousQ_intro} below). We then show that Lurie's tempered cohomology theory $\A_\bfG^{(\bs)}$, associated with an oriented $\bfP$-divisible group $\bfG$ over an $\bfE_\infty$ ring $\A$, admits a \emph{canonical} refinement to a $\pi$-ambidextrous $\bfE_\infty$ ring $\ul{\A}_\bfG \in \CAlg(\Sp^\gl_\pi)$. Moreover, this construction is natural in $\A$, allowing us to carry it out for families of oriented $\bfP$-divisible groups over arbitrary base stacks (including nonconnective spectral Deligne--Mumford stacks and more, see \cref{ssec:stacks}). This leads to the following, which is a special case of \Cref{main:functorialityinpdivisiblegroups} below.

\begin{theorem}\label{main:pisheaf}
Let $\bfG$ be an oriented $\bfP$-divisible group over a stack $\sfM$. Then $\bfG$ endows the structure sheaf $\calO_\sfM$ of $\bfE_\infty$ rings on $\sfM$ with a lift to a sheaf $\O_\bfG$ of $\pi$-ambidextrous global $\bfE_\infty$ rings:
\begin{center}\begin{tikzcd}
&\CAlg(\spectra^\gl_\pi)\ar[d]\\
\Aff_{/\sfM}^\op\ar[r,"\calO_\sfM"]\ar[ur,dashed,"\ul{\calO}_\bfG"]&{\CAlg.}
\end{tikzcd}\end{center}
In particular, the ring of global sections $\Gamma(\calO_\sfM)$ of $\sfM$ is the underlying spectrum of a $\pi$-ambidextrous global $\bfE_\infty$ ring $\Gamma(\ul{\calO}_\bfG)\in \CAlg(\spectra^\gl_\pi)$.
\end{theorem}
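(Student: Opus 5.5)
The plan is to build $\ul{\calO}_\bfG$ by decategorifying a global family of stable $\infty$-categories, as the abstract suggests. Over an affine $\Spec \A \to \sfM$, Lurie's tempered cohomology assigns to a $\pi$-finite space (or global space) $X$ the $\infty$-category $\LocSys_{\bfG}(X)$ of tempered local systems: $\A$-modules parametrized over the ``$\bfG$-twisted'' orbits of $X$. This is a functor
\[
\calL_{\bfG}\colon (\text{global spaces})^\op \to \CAlg(\PrLst).
\]
Lurie's tempered ambidexterity theorem \cite[Th.1.1.21]{ec3} says that for every relatively $\pi$-finite map $f$ the pullback $f^\ast$ has a left adjoint $f_!$ and a right adjoint $f_\ast$ which agree via the norm map. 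Thus $\calL_\bfG$ is a $\pi$-ambidextrous (``$\calQ=\pi$'') global family of presentably symmetric monoidal stable $\infty$-categories. I would first package this as a functor out of the span category of global spaces with backwards maps arbitrary and forwards maps relatively $\pi$-finite. This uses the Beck--Chevalley conditions, and the projection formula for the symmetric monoidal structure.

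Second, I would apply the general parametrized decategorification: send such a family to the $\pi$-ambidextrous global spectrum $X \mapsto \map(\bbone, \bbone_X)$, the mapping spectrum out of the unit in $\calL_\bfG(X)$. Restrictions come from $f^\ast$ and transfers from the counit and unit of $f_\ast \simeq f_!$. Lax monoidality of taking endomorphisms of the unit upgrades the output to $\CAlg(\spectra^\gl_\pi)$. On the terminal global space the output recovers $\A$, and on $\bfB G$ it recovers $\A_\bfG^{\bfB G}$, which checks that the construction refines Lurie's tempered theory.

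Third, for naturality in $\A$, note that $\Spec\A \mapsto \calL_\bfG$ is functorial along $\Aff_{/\sfM}^\op$ via base change $\A\to\A'$. Base change of $\bfP$-divisible groups along such a map is an isomorphism of pulled-back groups, so it preserves the ambidexterity adjunctions; base change $\A'\otimes_\A(\bs)$ then commutes with $f_!$ and $f^\ast$. The functor $\ul{\calO}_\bfG$ is the composite of this family with decategorification. The sheaf property then follows from two facts: limits in $\CAlg(\spectra^\gl_\pi)$ are computed levelwise, and tempered cohomology of a fixed $\pi$-finite orbit, $\A\mapsto \A_\bfG^{\bfB G}$, is a finite flat, hence quasicoherent, construction in $\A$. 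Such constructions satisfy descent. The global sections statement is the limit over $\Aff_{/\sfM}$, and its underlying spectrum is $\Gamma(\calO_\sfM)$ because the forgetful functor preserves limits.

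I expect the main obstacle to be the first step. One must promote the pointwise ambidexterity of \cite{ec3} to a fully coherent functor out of spans, compatible with the symmetric monoidal structure and functorial in $\A$, rather than just a collection of adjunctions. My first attempt would use a universal property of span categories, as in Barwick or Gaitsgory--Rozenblyum: a right-adjointable functor satisfying base change extends uniquely. Ambidexterity is a property, namely that the norm is invertible, so it can be checked objectwise, which keeps the needed input down to Lurie's theorem plus Beck--Chevalley for pullback squares of global spaces.
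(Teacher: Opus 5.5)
Your architecture matches the paper's: Lurie's tempered ambidexterity makes $\LocSys_\bfG$ a symmetric monoidal $\pi$-stable $\Glo_\ab$-category, one decategorifies via endomorphisms of the unit, base change along Cartesian morphisms is $\pi$-exact, and descent is checked at the end. Your descent argument is a fine substitute for the paper's use of flat descent for $\LocSys$ plus limit preservation of decategorification. (Limits in $\CAlg(\Sp^\gl_\pi)$ are pointwise, values on a global space are limits of values on orbits $T\in\Glo_\ab$, and $\A\mapsto\A^T_\bfG$ is the coordinate ring of the affine $\bfG(T)$.)

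The genuine gap is your second step. It carries essentially all of the content, while the step you flag as the obstacle is the easy one. Extending $\sfX\mapsto\LocSys_\bfG(\sfX)$ to a functor $\Span_\pi(\abglobalspaces)\to\Cat$ with forward maps $f_!$ is standard. But it gives no functor $\Span_\pi(\abglobalspaces)\to\Sp$, $\sfX\mapsto\End(\bbone_\sfX)$: forward maps do not preserve units ($f_!\bbone\not\simeq\bbone$), so there is nothing to postcompose with. The transfer you describe, $\phi\mapsto\epsilon\circ f_!\phi\circ\mathrm{Nm}_f^{-1}\circ\eta$, mixes the unit of $f^\ast\dashv f_\ast$, the counit of $f_!\dashv f^\ast$ and the norm. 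Its coherence under composition and base change of spans is $(\infty,2)$-categorical information that an $(\infty,1)$-functor out of spans does not contain. Likewise, fiberwise lax monoidality of $\End(\bbone)$ only yields $E(\sfX)\otimes E(\sfX)\to E(\sfX)$. An object of $\CAlg(\Sp^\gl_\pi)$ is a lax symmetric monoidal functor for the \emph{Cartesian} product on $\Span_\pi(\abglobalspaces)$. So you need coherent external products $E(\sfX)\otimes E(\sfY)\to E(\sfX\times\sfY)$ compatible with transfers.

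The idea you are missing is to avoid extending the categories to spans at all. Since $\ul{\Span}(\pi)$ is the free $\pi$-semiadditive $\Glo_\ab$-category on a point (indeed an idempotent algebra), there is a unique symmetric monoidal $\pi$-colimit preserving functor $F\colon\ul{\Span}(\pi)^\otimes\to\LocSys_\bfG^\otimes$ with $\ast\mapsto\bbone$. The unstraightening of $\ul{\Span}(\pi)^\otimes$, with its external tensor product, is computed to be $\Span_{\mathrm{fw}}(\Ar_\pi(\abglobalspaces))^\otimes$. Compose $\smallint F$ with a lax symmetric monoidal parametrized global sections functor $\Gamma\colon\smallint\LocSys_\bfG^\otimes\to\Sp^\otimes$, which only uses restrictions. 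The result is a lax symmetric monoidal functor that inverts Cartesian edges. It therefore descends along the symmetric monoidal Dwyer--Kan localization $\Span_{\mathrm{fw}}(\Ar_\pi(\abglobalspaces))\to\Span_\pi(\abglobalspaces)$. Ambidexterity enters exactly here: $F$ preserves $\pi$-colimits, hence $\pi$-limits by semiadditivity, so $\smallint F$ preserves Cartesian edges. Then $\Gamma$ sends these to equivalences because $\hom(\bbone,f_\ast A)\simeq\hom(f^\ast\bbone,A)$. This construction is moreover (laxly) natural in $\pi$-exact symmetric monoidal functors. Your third step tacitly needs that naturality to form the composite $\Aff_{/\sfM}^\op\to\CAlg(\Sp^\gl_\pi)$ at all.
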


If $\sfM =\Spec \A$ is affine, then $\ul{\A}_\bfG = \Ga(\ul{\O}_{\bfG})$ is a refinement of Lurie's tempered cohomology theory $\A_\bfG^{(\bs)}$ to a $\pi$-ambidextrous global $\E_\infty$ ring. We note that this special case of the theorem above is essentially contained in the work of Ben-Moshe \cite{benmoshe_tempered}.

\begin{ex}\label{ex:KU_intro}
The torsion subgroup $\mu_{\bfP^\infty}\subset\bfG_m$ of the multiplicative group scheme canonically admits the structure of an oriented $\bfP$-divisible group over $\KU$ \cite[Th.6.5.1]{ec2}. By \cite[Th.4.1.2]{ec3}, the tempered cohomology theory associated with $\mu_{\bfP^\infty}$ is equivalent to equivariant complex $K$-theory. Thus we obtain a $\pi$-ambidextrous equivariant complex $K$-theory spectrum
\[
\gKU \coloneqq \Ga(\O_{\mu_{\bfP^\infty}/\Spec \KU}) \in \CAlg(\Sp^\gl_\pi)
\]
encoding all the structure discussed above, including deflations.

To illustrate this extra deflationary structure, let $G$ be a finite group and $X$ be a $G$-space. Given a normal subgroup $N\triangleleft G$ which acts properly discontinuously on $X$, the covering map $X \to X/N$ extends to a relatively $1$-finite map
\[
p\colon X//G \to (X/N)//(G/N)
\]
of global spaces. One thus obtains a transfer map
\[
p_!\colon KU_G^0(X) \to KU_{G/N}^0(X/N)
\]
in equivariant $K$-theory, sending the class of a $G$-equivariant vector bundle $\xi \to X$ to the class of the $G/N$-equivariant vector bundle $p_!(\xi)$ for which the fiber $(p_!\xi)_{Nx}$ is naturally equivalent to the space of coinvariants $\left(\bigoplus_{n\in N}\xi_{nx}\right)_{N}$ for $x\in X$.
\end{ex}

The extra flexibility of \cref{main:pisheaf} being applicable to oriented $\bfP$-divisible groups over arbitrary base stacks allows us to extend this example to equivariant \emph{real} $K$-theory.

\begin{ex}\label{ex:KO_intro}
As will be explained in greater detail in \cite[\textsection A]{geometricnorms}, the quotient stack $\Spec\KU / C_2$ for the action of complex conjugation on $\KU$ is equivalent to a \emph{moduli stack of oriented tori} $\M_\Tori^\ori$. The oriented $\bfP$-divisible group $\mu_{\bfP^\infty}$ over $\KU$ descends to $\M_\Tori^\ori$, and applying \cref{main:pisheaf} we thus obtain a $\pi$-ambidextrous equivariant real $K$-theory spectrum
\[\gKO \coloneqq \Ga(\O_{\mu_{\bfP^\infty}/\M_\Tori^\ori}) \in \CAlg(\spectra_\pi^\gl),\]
satisfying $\gKO\simeq\gKU^{\h C_2}$.
\end{ex}

We emphasize that, even for these classical examples, the structure we produce is new: to our knowledge, coherent deflations have not been previously constructed in equivariant $K$-theory, despite being a well-understood construction at the level of representation rings.

We obtain even more examples by considering $\bfP$-divisible groups associated with elliptic cohomology theories.

\begin{ex}\label{ex:TMF_intro}
The \emph{moduli stack of oriented elliptic curves} $\M_\Ell^\ori$ of Goerss--Hopkins--Miller, although our usage here crucially uses Lurie's interpretation in \cite[\textsection7]{ec2}, carries the universal oriented elliptic curve $\calE$. The torsion subgroup $\calE[\bfP^\infty]\subset\calE$ is an oriented $\bfP$-divisible group over $\M_\Ell^\ori$, and applying \cref{main:pisheaf} yields a $\pi$-ambidextrous global $\bfE_\infty$ ring of topological modular forms
\[\gTMF \coloneqq \Ga(\O_{\calE[\P^\infty]}) \in \CAlg(\spectra_\pi^\gl).\]
This is a $\pi$-ambidextrous global refinement of Hopkins' $\E_\infty$ ring $\TMF$ of topological modular forms, which  by \cite{elltempcomp} represents the same global cohomology theory (for finite abelian groups) as the equivariant elliptic cohomology theories constructed by Gepner--Meier \cite{davidandlennart} and  Gepner--Linskens--Pol \cite{gepner2024global2ringsgenuinerefinements}.
\end{ex}

These examples, along with others and their relationships, are further discussed in \Cref{ssec:piambi_tempered_cohomology}.

Although the $\pi$-ambidextrous structure of $\ul{\O}_\bfG$ given in \Cref{main:pisheaf} is new, one of the main focuses of this article is on the subtle \emph{functoriality} of $\ul{\O}_\bfG$ in general \emph{non-invertible morphisms} of oriented $\bfP$-divisible groups. To state this compatibility requires variants of the notion of a $\pi$-ambidextrous global spectrum. We will use this opportunity to detail some of our key definitions.

\subsection{Functoriality as \texorpdfstring{$\calQ$}{Q}-ambidextrous global spectra}\label{ssec:mainresults}

Write $\spaces$ for the category of spaces. 

\begin{mydef}
The \emph{global orbit category} for finite abelian groups is the full subcategory
\[
\Glo_\ab\subset\spaces
\]
of spaces spanned by the classifying spaces of finite abelian groups. Write
\[
\Spc^\gl_\ab \coloneqq \Psh(\Glo_\ab)
\]
for the category of \emph{$\ab$-global spaces}. Given a finite abelian group $H$, we write $\bfB H \in \Glo_\ab \subset \spaces^\gl_\ab$ for the corresponding representable $\ab$-global space.
\end{mydef}

Global spaces, also called \emph{orbispaces}, are a model for globally equivariant unstable homotopy theory; see the works of  Gepner--Henriques \cite{gepnerhenriques}, Lurie \cite[\textsection 3]{ec3}, and Schwede \cite{schwedeorbispaces}. A \emph{global cohomology theory} is a limit preserving functor
\[
(\abglobalspaces)^\op \to \spectra.
\]

Informally, one can think of a ``genuine global cohomology theory''  as a global cohomology theory equipped with \emph{transfers}. Classically, one considers transfers along faithful maps of groupoids, leading to the category $\Sp^{\gl}$ of global spectra as defined by Schwede \cite{s}. However, as we have seen, there are many interesting examples which admit substantially more transfers. To capture these examples, we will parametrize different varieties of genuine global cohomology theories by certain subcategories $\calQ\subset\abglobalspaces$ which we call \emph{inductible}; see \Cref{def:qambiglobalspectrum} for the precise definition. Given an inductible subcategory $\calQ\subset\abglobalspaces$, there is a span category $\Span_\calQ(\abglobalspaces)$ of global spaces, whose backwards maps are arbitrary but whose forward maps are required to be locally in $\calQ$.  

\begin{mydef}\label{maindef:ambidextrousglobalspectra}
A \emph{$\calQ$-ambidextrous spectrum} $E$ is a functor
\[
E\colon \Span_\calQ(\abglobalspaces)\to\spectra
\]
whose restriction to $(\abglobalspaces)^\op \subset \Span_\calQ(\abglobalspaces)$ is a global cohomology theory. Write
\[
\Sp^\gl_\calQ\subset\Fun(\Span_\calQ(\abglobalspaces)^\op,\spectra)
\]
for the category of $\calQ$-ambidextrous global spectra. This category is stable (\Cref{lm:qglobalspectra_stablepresentable}) and presentable (\cref{lem:Qcom_localization}), and supports a symmetric monoidal structure whose commutative monoids are exactly those $\calQ$-ambidextrous spectra $E\colon \Span_\calQ(\abglobalspaces)\to\spectra$ equipped with a lax symmetric monoidal structure (\cref{prop:qcommutativeglobalringsaslaxfunctors}).
\end{mydef}

\begin{ex}\label{ex:variousQ_intro}
As $\calQ$ varies, the category of $\calQ$-ambidextrous spectra captures many interesting variants of global spectra:
\begin{enumerate}
\item Suppose that $\calQ\subset\abglobalspaces$ consists of the isomorphisms. Then $\Sp^\gl_{\calQ}$ is simply the $\infty$-category of global cohomology theories.
\item Suppose that $\calQ = \Glo_\ab^f\subset\Glo_\ab$ is the subcategory spanned by the faithful maps. Then, as mentioned, $\Sp^\gl_{\calQ}$ is equivalent to Schwede's $\infty$-category of global spectra $\Sp^\gl$ with respect to the family of finite abelian groups by the main result of \cite{lenzmackey}.
\item Suppose that $\calQ = \Glo_\ab$. Then $\Sp^\gl_\calQ$ is a variant of $\Sp^\gl_\ab$ whose objects have the additional structure of deflations, transfers along arbitrary group homomorphisms. This $\infty$-category has not appeared in the literature, but has nonetheless been hinted at; see for example \cite[p.1331]{schwedeglobalalgebraicktheory}.
\item Suppose that $\calQ = \spaces_\pi\subset\spaces$ is the category of $\pi$-finite spaces viewed as a full subcategory of $\abglobalspaces$ by the cofree functor $\spaces\to\abglobalspaces$. Then $\Sp^\gl_\pi$ is the category of \emph{$\pi$-ambidextrous global spectra} which \cref{main:pisheaf} shows is the natural home for tempered cohomology theories. By definition, the objects of $\Sp^\gl_\pi$ come equipped with coherently compatible restrictions and transfer maps along all relatively $\pi$-finite maps of orbispaces $\sfX \to \sfY$.
\end{enumerate}
\end{ex}

The subtlety in encoding the functoriality of tempered cohomology theories in morphisms of $\bfP$-divisible groups arises from the fact that not all natural transformations between equivariant cohomology theories commute with the additional transfers that are present. Of particular note for us, the natural transformation $\A_{\bfG'}^{(\bs)} \to \A_{\bfG}^{(\bs)}$ of tempered cohomology theories associated with a homomorphism $\bfG \to \bfG'$ of oriented $\bfP$-divisible groups over $\A$ does \emph{not} generally refine to a map $\ul{\A}_{\bfG'} \to \ul{\A}_\bfG$ of $\pi$-ambidextrous spectra. However, in examples, it does commute with some large class of transfers. By allowing $\calQ$ to vary, we can coherently encode the sense in which these natural transformations generally only commute with a subset of all possible transfers.

To make this precise, for each set of prime numbers $S$, we define in \cref{ssec:pcovings} a wide subcategory $\awayfromS\subset\pi$ of $\pi$-finite spaces whose morphisms, in a strong sense, do not interact with $p$-primary information for $p\in S$. This leads to the category $\Sp^\gl_{\awayfromS}$ of $\pi$-ambidextrous global spectra away from $S$. Let $\pdivorstk_{\nmid S}\subset\pdivorstk$ be the wide subcategory of oriented $\bfP$-divisible groups whose morphisms are required to induce fiberwise isomorphisms on $p$-local components for all primes $p\notin S$; see \Cref{df:Cartesian_away_from_S}. Our first main result is the following refinement of \Cref{main:pisheaf}.

\begin{theoremalph}[\Cref{thm:temperedglobalspectrum}]\label{main:functorialityinpdivisiblegroups}
    For any set of prime numbers $S$, there is a functor
    \[(\pdivorstk_{S\nmid})^\op \to \CAlg(\Sp^\gl_{\awayfromS}), \qquad \bfG \mapsto \Ga(\ul{\O}_\bfG)\]
    refining the assignment of \Cref{main:pisheaf}.
\end{theoremalph}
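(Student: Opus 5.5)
The construction goes through the parametrized decategorification process mentioned in the abstract. We produce $\Ga(\ul{\O}_\bfG)$ by decategorifying a suitable global family of stable $\infty$-categories attached to $\bfG$. Concretely, for an oriented $\bfP$-divisible group $\bfG$ over an affine $\Spec \A\to\sfM$, Lurie's tempered local systems give a functor
\[
(\abglobalspaces)^\op\to\Cat, \qquad \sfX\mapsto \LocSys_\bfG(\sfX).
\]
This is $\A$-modules tempered with respect to $\bfG$; on $\bfB H$ it is modules over $\A^{\bfB H}_\bfG$. Restriction $f^*$ has left and right adjoints $f_!$ and $f_*$. The plan has three steps.

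First, \emph{ambidexterity on categories.} Lurie's tempered ambidexterity theorem \cite[Th.1.1.21]{ec3} gives a canonical norm equivalence $f_!\simeq f_*$ for every relatively $\pi$-finite $f$. We organize this, via the universal property of span categories in the style of Hopkins--Lurie/Harpaz, into a functor
\[
\Span_\pi(\abglobalspaces)\to\PrLst
\]
satisfying Beck--Chevalley. Second, \emph{decategorification.} We apply $\Map(\bbone,\bs)$, i.e.\ global sections of the unit, pointwise. The resulting $\Span_\pi(\abglobalspaces)\to\Sp$ sends $\sfX$ to $\A^\sfX_\bfG$. Limit preservation on $(\abglobalspaces)^\op$ follows from the sheaf property of tempered local systems. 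Laxness of the unit and lax symmetric monoidality of the categorical functor then give, by \cref{prop:qcommutativeglobalringsaslaxfunctors}, an object of $\CAlg(\Sp^\gl_\pi)$. Naturality in $\A$, together with descent, extends this from affines to the stack $\sfM$ by taking the limit in $\CAlg(\Sp^\gl_\pi)$ over $\Aff_{/\sfM}$.

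Third, \emph{functoriality in $\bfG$.} A morphism $\phi\colon\bfG\to\bfG'$ induces a map of ring-valued theories $\A_{\bfG'}^{(\bs)}\to\A_\bfG^{(\bs)}$, coming from a natural transformation of categorical functors on $(\abglobalspaces)^\op$. To decategorify it into a map in $\CAlg(\Sp^\gl_{\awayfromS})$, we need this transformation to commute with the norm equivalences, i.e.\ with $f_!$, for $f$ in $\awayfromS$. After restricting the categorical functors to $\Span_{\awayfromS}(\abglobalspaces)$, we must check that each square is adjointable, i.e.\ that the Beck--Chevalley map is an equivalence. We reduce this to $f$ of the form $\bfB H\to *$, or more generally to $\pi$-finite fibers, using the locality of the condition defining $\awayfromS$ from \cref{ssec:pcovings}. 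We then use that $\A^{\bfB H}_\bfG$ depends only on the $p$-primary parts of $\bfG$ for $p$ dividing $|H|$ (via the character theory and $\Hom(\hat H,\bfG)$ descriptions). The condition that $\phi$ is an isomorphism on $p$-local components for $p\notin S$, together with $H$ having no $p$-torsion interaction for $p\in S$ in the relevant sense, makes $\phi$ induce an equivalence of the relevant categories. In that case the Beck--Chevalley condition holds automatically.

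The main obstacle is coherence. Building the functor on the whole category $(\pdivorstk_{S\nmid})^\op$, rather than morphism by morphism, requires the adjointability results to assemble into a functor to a category of ``$\awayfromS$-ambidextrous global categories'' with adjointable transformations. It must also be checked that decategorification is functorial there. The approach I would try first is to define everything as a functor $\pdivorstk\to\Fun(\Span(\ldots),\Cat)$ via a parametrized adjointability theorem (an $(\infty,2)$-categorical span universal property), with each morphism's adjointability verified as above. Functoriality then comes from the universal property rather than from hand-built homotopies.
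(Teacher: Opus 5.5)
Your architecture matches the paper's: decategorify $\LocSys_\bfG$, verify compatibility with $q_\ast$ for $q\in\awayfromS$ one morphism at a time, and extend to stacks by descent. However, the two load-bearing steps are, respectively, missing and incorrect as stated.

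\textbf{Decategorification.} Applying $\Map(\bbone,\bs)$ objectwise to a functor $\Span_\pi(\abglobalspaces)\to\PrLst$ whose forward maps are $f_!$ does not produce a functor to $\Sp$, because $f_!$ does not preserve the unit. The transfer $\End(\bbone_\sfY)\to\End(\bbone_\sfX)$ is the composite of three maps: the unit of $f^\ast\dashv f_\ast$, the inverse of the norm $f_!\simeq f_\ast$, and the counit of $f_!\dashv f^\ast$. These must be assembled coherently, lax symmetric monoidally, and naturally in the parametrized category. That is \Cref{thm:generaldecategorification}, the main technical input of the paper, and it proceeds as follows.
\begin{enumerate}
\item Use the universal property of the free $\calQ$-semiadditive $T$-category $\ul{\Span}(\calQ)^\otimes$ to get a symmetric monoidal $\calQ$-exact functor $\ul{\Span}(\calQ)^\otimes\to\calC^\otimes$ sending $\ast$ to $\bbone$.
\item Unstraighten with the external tensor product, identifying $\smallint\ul{\Span}(\calQ)^\otimes\simeq\Span_{\mathrm{fw}}(\Ar_\calQ(\calP))^\otimes$ (\Cref{thm:unstr_of_span}).
\item Compose with the laxly natural parametrized global sections $\Gamma\colon\smallint\calC^\otimes\to\Sp^\otimes$ (\Cref{prop:global_sections}).
\item Show the composite inverts Cartesian edges (via \Cref{lem:QexactFunctor}), so it descends along the Dwyer--Kan localization of \Cref{prop:arrow_localization}.
\end{enumerate}
Shortcuts through the underlying category $\calC(\ast)$ do not work for $\awayfromS$, which need not contain $\bfB H\to\ast$. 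On the other hand, the first half of the coherence issue you raise is formal. $\awayfromS$-exactness is a property of a morphism, so once decategorification is a functor out of $\Cat(\Glo_\ab)^\otimes_{\awayfromS\textup{-st}}$, functoriality in $\bfG$ follows from \Cref{thm:basicfunctoriality} by checking each morphism. No $(\infty,2)$-categorical span universal property is needed.

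\textbf{Adjointability.} Locality only reduces you to maps $F\to T$ with $T\in\Glo_\ab$. By \Cref{cor:pcovcharacterize} these are, up to finite coproducts, of the form $q'\times\id_T\colon F\times T\to T'\times T$, where $F,T'$ are prime to $S$ but $\pi_1T$ has $S$-primary order; locality does not remove this factor $T$. On it $\phi^\ast$ is base change along $\A_{\bfG'}^T\to\A_{\bfG}^T$, which is typically not an equivalence. For instance, for the fixed-base part $[\ell]$ of the Adams operation in \Cref{intro_ex:adams_on_KU} and $T=\bfB C_\ell$, this map is induced by the zero map $\mu_\ell\to\mu_\ell$. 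So the argument ``$\phi$ induces an equivalence, hence Beck--Chevalley is automatic'' fails.

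The splitting $\A_{\bfG}^{T''}\simeq\A_{\bfG}^{T''[1/S]}\otimes_\A\A_{\bfG}^{T''_{(S)}}$ you allude to is half of the paper's argument (\Cref{prop:commutesqlims}). The other half is that $\A_{\bfG'}^{T}\to\A_{\bfG}^{T}$ is finite \'etale, hence dualizable, so base change commutes with the infinite limit computing $q_\ast$. This finite \'etaleness (\Cref{lem:orientedseparable}) is the missing idea. It uses the orientation essentially, via the connected--\'etale sequence with the Quillen $\bfP$-divisible group and the chromatic reduction \Cref{lm:reduction_to_monochromatic}. It is also what makes $\phi^\ast$ preserve tempered local systems for non-Cartesian $\phi$ (\Cref{prop:preservetemper}), which you take for granted.

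Finally, your argument does not address the base-changing part of a morphism, where $\phi^\ast$ is never an equivalence. The paper factors $\phi$ as a Cartesian morphism followed by a fixed-base one. It handles the Cartesian part by Ben-Moshe's result that base change is a parametrized left adjoint, together with \Cref{lem:QexactFunctor} (\Cref{prop:Cartesianpilimits}). This is also needed before your limit over $\Aff_{/\sfM}$ can be taken in $\CAlg(\Sp^\gl_\pi)$.
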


\begin{ex}\label{intro_ex:adams_on_KU}
The torsion subgroup $\mu_{\bfP^\infty}\subset\bfG_m$ of the multiplicative group scheme satisfies
\[
\mu_{\bfP^\infty}(KU[1/S])_{\geq 1} \simeq \Sigma \bfQ/\bfZ[1/S]
\]
for any set $S$ of nonzero integers, and for an integer $\ell$ the Adams operation $\psi^{\ell}\colon \KU \to \KU[1/\ell]$ induces multiplication by $\ell$ on this; when $S = \emptyset$ this follows from \cite[Ex.6.5.7]{balderrama2023algebraic} after passing to torsion, and the same method of calculation applies in general.
In particular, $\psi^{\ell}$ extends to a morphism
\[
(\psi^{\ell},[\ell])\colon (\KU,\mu_{\bfP^\infty}) \to (\KU[1/\ell],\mu_{\bfP^\infty})
\]
of oriented $\bfP$-divisible groups, i.e.\ a morphism in $\pdivorstk$, where $[\ell]$ is the $\ell$-fold multiplication map of $\mu_{\bfP^\infty}$. Alternately, one may construct $\psi^\ell$ as a morphism in $\pdivorstk$ by viewing $\KU$ as an orientation classifier for the formal multiplicative group \cite[\textsection 6.5]{ec2}, and twisting the tautological orientation of $\KU[1/\ell]$ by $[\ell]$; see the discussion in \Cref{ex:AO_on_KO,ex:AO_on_TMF} and \cite[\textsection 6]{globaltate}. This morphism lies in $\pdivorstk_{S\nmid}$ for any $S$ containing all prime divisors of $\ell$, and therefore induces a map
\[
\psi^{\ell}\colon \gKU_{\awayfromS} \to \gKU[1/\ell]_{\awayfromS}
\]
of $\pi$-ambidextrous $\bfE_\infty$ rings away from $S$. This is \emph{not} a map of $\pi$-ambidextrous spectra unless $\ell = \pm 1$ (see \cref{prop:pconverse}). As a $\pi$-ambidextrous spectrum away from $S$ admits an underlying $G$-spectrum provided the primes in $S$ do not divide the order of $G$ (see \cref{restricted_G-spectrum}), this recovers a classical theorem of Hirata--Kono \cite[Th.3.1]{hiratakono} that the Adams operation $\psi^\ell\colon \KU \to \KU[1/\ell]$ lifts to a morphism $\KU_G\to\KU_G[1/\ell]$ of $G$-spectra provided $\ell$ is coprime to the order of $G$.
\end{ex}

Similar examples involving Adams operations on $\KO$ and $\TMF$ are discussed in \Cref{ex:AO_on_KO,ex:AO_on_TMF}, respectively.

\subsection{Parametrized category theory and tempered local systems}

The proof of \Cref{main:functorialityinpdivisiblegroups} takes up the bulk of the article. We proceed by establishing a general decategorification construction in the setting of \emph{parametrized category theory}, which we then apply to the theory of \emph{tempered local systems} over oriented $\bfP$-divisible groups introduced by Lurie. Both of these steps are nontrivial, and constitute much of the core technical work of this paper, taking place in \cref{sec:parametrized} and \cref{sec:temperedlocalsystems} respectively. This work is largely carried out for general small categories $T$ in place of $\Glo_\ab$, but we restrict ourselves to this special situation for the introduction; see \textsection\ref{subsec:globalsemi}-\ref{ssec:monoidalparametrized} for the general and more precise definitions.

\begin{mydef}[{Informal}]
An \emph{$\ab$-global category} is a limit preserving functor
\[
\ccat\colon (\abglobalspaces)^\op\to\Cat.
\]
Given an inductible subcategory $\calQ\subset\abglobalspaces$, an $\ab$-global category $\ccat$ is said to be \emph{$\calQ$-stable} if the following conditions are satisfied:
\begin{enumerate}
\item $\ccat(\sfX)$ is stable for all $\sfX\in\abglobalspaces$.
\item For every map $f\colon \sfX\to\sfY$ which is locally in $\calQ$, the restriction $f^\ast\colon \ccat(\sfY)\to\ccat(\sfX)$ sits in a string of adjunctions $f_! \dashv f^\ast \dashv f_\ast$ satisfying a Beck--Chevalley condition.
\item For every truncated map $f\colon \sfX\to\sfY$ which is locally in $\calQ$, a certain norm map
\[
\mathrm{Nm}_f\colon f_! \to f^\ast
\]
is an equivalence.
\end{enumerate}
These assemble into a category $\Cat(\Glo_\ab)_{\calQ\textup{-st}}$ of $\calQ$-stable categories and \emph{$\calQ$-exact functors}: natural transformations $\ccat\to\dcat$ whose components are exact functors of stable categories and which satisfy a base change condition with respect to the adjunctions of (2). Moreover, $\Cat(\Glo_\ab)_{\calQ\textup{-st}}$ admits a natural symmetric monoidal structure leading to a category $\Cat(\Glo_\ab)_{\calQ\textup{-st}}^\otimes$ of \emph{symmetric monoidal $\calQ$-stable categories}.
\end{mydef}

Given an ordinary symmetric monoidal stable category $\ccat$, the endomorphism spectrum $\End_\ccat(\bbone)$ of its monoidal unit carries a canonical $\bfE_\infty$ structure by \cite[\textsection4.8]{ha}. Our work in \cref{sec:parametrized} culminates in the following parametrized refinement of this construction.

\begin{theoremalph}[{\cref{thm:generaldecategorification}}]\label{introthm:globalsections}
The assignment sending a symmetric monoidal stable category $\ccat$ the $\bfE_\infty$ ring $\End_{\ccat}(\bbone)$ of endomorphisms of its unit refines to a limit preserving functor
\[
\End_{(\bs)}(\bbone)\colon \Cat(\Glo_\ab)^\otimes_{\calQ\textup{-st}} \to \CAlg(\Sp^\gl_\calQ)
\]
satisfying $\End_\ccat(\bbone)^\sfX = \End_{\ccat(\sfX)}(\bbone)$ for $\sfX\in\abglobalspaces$.
\end{theoremalph}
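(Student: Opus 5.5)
The plan is to build the functor in three stages: pass from a $\calQ$-stable category to a functor out of a span category of categories, decategorify it along the unit, and then handle the multiplicative structure and limits.

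\textbf{Stage 1: spans of categories.} Let $\ccat$ be a $\calQ$-stable $\ab$-global category. Conditions (2) and (3) (adjoints, Beck--Chevalley, and the norm equivalences $f_!\simeq f_\ast$ for truncated $f$ locally in $\calQ$) are what is needed to extend the contravariant functor $\ccat\colon(\abglobalspaces)^\op\to\Cat$ to a functor
\[
\ccat\colon \Span_\calQ(\abglobalspaces)\to\Cat_{\mathrm{st}}.
\]
This sends a span $\sfX\leftarrow\sfZ\xrightarrow{f}\sfY$ to $f_!g^\ast$, and I would build it with the universal property of span categories (Barwick / Haugseng-style). The $\calQ$-exact functors are exactly the natural transformations that commute with the $f_!$, so this extension is functorial in $\Cat(\Glo_\ab)_{\calQ\textup{-st}}$. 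Concretely, one first gets an ambidextrous extension on the subcategory of truncated maps, then extends to all maps locally in $\calQ$ by the limit-preservation of $\ccat$, since every such map is a colimit of truncated ones.

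\textbf{Stage 2: decategorification.} I would compose this with a lax "evaluation at the unit" functor. In the symmetric monoidal setting, $\ccat$ lifts to a functor into $\CAlg(\Cat_{\mathrm{st}})$ on the backwards maps. The units $\bbone_\sfX$ form a section compatible with the $g^\ast$, because each $g^\ast$ is symmetric monoidal. Forward maps $f_!$ only receive a lax structure through the projection formula: $f_!$ is $\ccat(\sfY)$-linear, and the counit gives $f_!\bbone_\sfZ\to\bbone_\sfY$ after using the norm to identify $f_!$ with $f_\ast$. I would encode this as a lax symmetric monoidal functor
\[
\Span_\calQ(\abglobalspaces)\to\Cat_{\mathrm{st}}^{\mathrm{lax}},
\]
take the associated Grothendieck construction, and set $\sfX\mapsto\End_{\ccat(\sfX)}(\bbone)=\map(\bbone_\sfX,\bbone_\sfX)$. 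The cleanest route is to regard $\map(\bbone,-)$ as a lax natural transformation from $\ccat$ to the constant functor at $\Sp$, and then apply the general fact that a lax transformation out of the unit yields a lax symmetric monoidal functor $\Span_\calQ(\abglobalspaces)\to\Sp$. By \cref{prop:qcommutativeglobalringsaslaxfunctors}, such a lax symmetric monoidal functor is exactly an object of $\CAlg(\Sp^\gl_\calQ)$, provided its restriction to $(\abglobalspaces)^\op$ preserves limits. That last condition holds because $\ccat$ preserves limits and $\map(\bbone,-)$ commutes with limits of categories along the unit-preserving restrictions.

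\textbf{Stage 3: limits.} The functor is limit preserving because limits in $\Cat(\Glo_\ab)^\otimes_{\calQ\textup{-st}}$ are computed pointwise, and so are limits in $\CAlg(\Sp^\gl_\calQ)$; the latter are computed in the functor category, as $\Sp^\gl_\calQ$ is closed under limits. The formula $\End_\ccat(\bbone)^\sfX=\End_{\ccat(\sfX)}(\bbone)$ holds by construction.

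\textbf{Main obstacle.} I expect Stage 1 together with the lax monoidal coherence in Stage 2 to be the hard part: producing, fully coherently and naturally in $\ccat$, the span-functoriality and its compatibility with the symmetric monoidal structure. My first attempt would be to use the universal property of $\Span_\calQ$ as the free category with Beck--Chevalley adjoints (Macpherson / Cnossen–Haugseng–Lenz–Linskens). That would give an equivalence between $\calQ$-stable categories and product-preserving functors out of spans, compatible with $\otimes$, after which Stage 2 reduces to the unstraightening argument above.
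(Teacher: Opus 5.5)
Your overall shape is reasonable: extend along spans, extract endomorphisms of the unit through a lax symmetric monoidal construction, invoke \cref{prop:qcommutativeglobalringsaslaxfunctors}, and check limits pointwise as in your Stage 3. The gap is that the step which actually produces the transfers is asserted rather than constructed, and the tools you name cannot supply it.

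A lax natural transformation $\map(\bbone,-)\colon\ccat\Rightarrow\mathrm{const}(\Sp)$ of functors on $\Span_\calQ(\PshT)$ requires, for each forward map $f\colon Z\to Y$, a natural map $\map(\bbone_Z,-)\to\map(\bbone_Y,f_!(-))$. By Yoneda this is a map $\bbone_Y\to f_!\bbone_Z$, namely the unit $\bbone_Y\to f_*f^*\bbone_Y$ followed by the inverse of the norm $f_!\xrightarrow{\sim}f_*$. It is not the counit $f_!\bbone_Z\to\bbone_Y$, which needs no norm; your Stage 2 conflates the two.

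Neither of your stages contains this map:
\begin{itemize}
\item Your Stage 1 functor $\Span_\calQ(\PshT)\to\Cat_{\mathrm{st}}$ records only $g^*$, $f_!$ and base change, and it exists for any $\calQ$-cocomplete $T$-category. So it contains none of these wrong-way units.
\item The unit section $X\mapsto\bbone_X$ carries only the counit along forward maps. That points the wrong way to induce $\map(\bbone_Z,-)\to\map(\bbone_Y,f_!(-))$ by passing to mates.
\end{itemize}
Making the maps $\bbone_Y\to f_!\bbone_Z$ coherent under composition and base change, and natural in $\calQ$-exact functors, is the heart of the theorem. The naturality is needed for functoriality in $\ccat$ and hence for limit preservation. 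The norms are defined inductively on truncation level, and this coherence is not formal. The universal property of $\Span_\calQ(\PshT)$ as the recipient of Beck--Chevalley left adjoints, which your last paragraph proposes, only reproduces Stage 1.

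The missing idea is to use instead the universal property of the \emph{parametrized} span category. $\ul{\Span}(\calQ)$ is the free $\calQ$-semiadditive $T$-category on a point (\cref{ex:SpanQ}). It is also an idempotent algebra in $\Cat(T)_{\calQ\textup{-co}}$ whose modules are exactly the $\calQ$-semiadditive categories. Hence there is a unique symmetric monoidal $\calQ$-colimit preserving functor $F\colon\ul{\Span}(\calQ)^\otimes\to\ccat^\otimes$ picking out $\bbone$, and $F$ carries all the coherent ambidextrous data.

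The paper then proceeds as follows:
\begin{itemize}
\item It unstraightens $F$, using $\smallint\ul{\Span}(\calQ)^\otimes\simeq\Span_{\mathrm{fw}}(\Ar_\calQ(\PshT))^\otimes$ (\cref{thm:unstr_of_span}).
\item It composes with a parametrized global sections functor $\Gamma\colon\smallint\ccat^\otimes\to\Sp^\otimes$ (\cref{prop:global_sections}). This is built from the Yoneda embedding and only sees restrictions, so no wrong-way maps are needed there.
\item Beyond the existence of $F$, ambidexterity enters only through \cref{lem:QexactFunctor}: $F$ also preserves $\calQ$-limits, so $\smallint F$ preserves Cartesian edges. $\Gamma$ inverts these, because $\map(\bbone_Y,q_*A)\simeq\map(\bbone_X,A)$ for $A\in\ccat(X)$.
\item The composite therefore descends along the symmetric monoidal Dwyer--Kan localization $\Span_{\mathrm{fw}}(\Ar_\calQ(\PshT))\to\Span_\calQ(\PshT)$ of \cref{prop:arrow_localization}.
\end{itemize}
The transfer along $q\colon X\to Y$ then comes out as $\End(\bbone_X)\simeq\map(\bbone_Y,q_*\bbone_X)\simeq\map(\bbone_Y,q_!\bbone_X)\to\End(\bbone_Y)$. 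All coherences are supplied by $F$ and the localization rather than built by hand.
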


This theorem extends the parametrized decategorification process from \cite[\textsection3]{benmoshe_tempered}. Its proof is significantly more involved than the non-parametrized counterpart, and goes through several intermediate constructions that may be of more general interest in parametrized category theory. As these constructions are nevertheless technical, we refer the reader to \Cref{sec:parametrized} for more information.

To apply \cref{introthm:globalsections}, we need examples of $\calQ$-stable $\ab$-global categories. Such examples arise from the theory of \emph{tempered local systems} introduced and studied by Lurie in \cite{ec3}. Given an oriented $\bfP$-divisible group $\bfG$ and an $\ab$-global space $\sfX$, write $\LocSys_\bfG(\sfX)$ for the category of $\bfG$-tempered local systems on $\sfX$; we review the relevant definitions in \cref{ssec:recollectionofpdivs}. As we prove in \cref{ssec:functorialityofTLS}, this construction is natural in both $\bfG$ and $\sfX$, and extends to a functor
\[
\LocSys_{(\bs)}\colon (\pdivorstk)^\op \to \Cat(\Glo_\ab).
\]
When restricted to the subcategory of $\pdivorstk$ spanned by oriented $\bfP$-divisible groups over an affine base and fiberwise isomorphisms, this was established by Ben-Moshe in \cite{benmoshe_tempered}; we extend his construction to allow arbitrary morphisms of oriented $\bfP$-divisible groups. The main theorem of \cref{sec:temperedlocalsystems} is the further refinement to symmetric monoidal $\awayfromS$-stable categories.

\begin{theoremalph}\label{introthm:functoriality}
For any set $S$ of prime numbers, the assignment to an oriented $\bfP$-divisible group $\bfG$ its $\ab$-global category of $\bfG$-tempered local systems lifts to a functor
\[
\LocSys_{(\bs)}\colon (\pdivorstk_{S\nmid})^\op \to \Cat(\Glo_\ab)^\otimes_{\awayfromS\textup{-st}}.
\]
\end{theoremalph}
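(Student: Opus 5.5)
We first construct the underlying functor $\LocSys_{(\bs)}\colon (\pdivorstk)^\op \to \Cat(\Glo_\ab)$, extending Ben-Moshe's construction from fiberwise isomorphisms over affine bases to arbitrary morphisms. The construction proceeds in two stages. First we work over affine bases: a morphism $(\A,\bfG)\to(\A',\bfG')$ consists of a ring map $\A\to\A'$ and a homomorphism $\bfG_{\A'}\to\bfG'$. We factor it as a base change, which is a fiberwise isomorphism, followed by a homomorphism of oriented $\bfP$-divisible groups over the fixed ring $\A'$. For the base change we use extension of scalars. For a homomorphism $\phi\colon\bfG\to\bfG'$ over $\A$ we use restriction along the induced map of Lurie's $\bfG$-tempered orbit categories. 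Tempered local systems are functors out of $\mathrm{T}_\bfG$, built from $\A$-cochains on the mapping stacks $\Map(\widehat{H},\bfG)$, so $\phi$ induces $\Map(\widehat H,\bfG)\to\Map(\widehat H,\bfG')$ naturally in $H$. To make this coherent we straighten the cartesian fibration over pairs $(\A,\bfG)$. We then right Kan extend along $\Aff\to\Stk$ to pass to stacks, which is legitimate because $\LocSys$ satisfies descent in the base.

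Next we check symmetric monoidality. Pointwise, $\LocSys_\bfG(\sfX)$ is symmetric monoidal via the pointwise tensor product. The restriction functors above are symmetric monoidal, since they are given by precomposition and base change. Hence the lift to symmetric monoidal $\ab$-global categories is formal once the first step is made coherent.

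The main content is showing that for $\bfG$ fixed, $\LocSys_\bfG$ is $\awayfromS$-stable, and that morphisms in $\pdivorstk_{S\nmid}$ are $\awayfromS$-exact. We proceed as follows.

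\begin{enumerate}
\item \textbf{Stability.} Each $\LocSys_\bfG(\sfX)$ is stable; this is immediate.
\item \textbf{Adjoints and base change.} For $f$ locally in $\awayfromS\subset\pi$, the restriction $f^\ast$ has both adjoints satisfying Beck--Chevalley. Since $\LocSys_\bfG$ is limit preserving, this reduces to $\pi$-finite fibers, where it follows from Lurie's presentability results.
\item \textbf{Norm equivalence.} For truncated $f$, the norm $f_!\to f_\ast$ is an equivalence. This is exactly Lurie's tempered ambidexterity theorem \cite[Th.1.1.21]{ec3}, which already holds for all relatively $\pi$-finite maps. In particular the fibers need not be $S$-prime.
\end{enumerate}

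The new and hardest step is exactness for a morphism $\phi\colon\bfG\to\bfG'$ that is a fiberwise isomorphism on $p$-local parts for $p\notin S$. We must show that the Beck--Chevalley transformation $f_!\phi^\ast\to\phi^\ast f_!$ is an equivalence for $f$ in $\awayfromS$. Since the norms are equivalences, it suffices to treat $f_\ast$, or equivalently $f_!$. We then argue in three reductions.

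\begin{enumerate}
\item \textbf{Reduce to a basic case.} Using descent in $\sfX$ and stability of the condition under composition and pullback, we reduce to $f\colon\bfB H\to \bfB K$ (or $\sfX\to\ast$) with $\pi$-finite fibers whose homotopy has no $S$-torsion. This is our reading of the definition of $\awayfromS$ in \cref{ssec:pcovings}.
\item \textbf{Reduce to the $S$-prime part.} Lurie's description of $f_!$ on tempered local systems only involves $\Map(\widehat{A},\bfG)$ for abelian groups $A$ built from the homotopy of the fibers. These groups then have order prime to $S$.
\item \textbf{Conclude.} The maps $\Map(\widehat A,\bfG)\to\Map(\widehat A,\bfG')$ only see the $p$-local components of $\bfG$ for $p\mid |A|$, hence $p\notin S$. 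By hypothesis these maps are isomorphisms, so the exchange map is an equivalence.
\end{enumerate}

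We expect the main obstacle to be making this last reduction precise: identifying the exchange map with something computed on mapping stacks, in a form functorial enough to be checked objectwise. Our first attempt will be to check it on the generators $\bfB H\to\LocSys$ given by representables, where $f_!$ is computed by an explicit left Kan extension along the tempered orbit categories.
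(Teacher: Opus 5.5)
There is a genuine gap in the exactness step, and it starts with a misreading of $\awayfromS$. Being away from $S$ is not the condition that the fibers have homotopy without $S$-torsion. For example, $B\Sigma_3\to BC_2$ has fiber $BC_3$ but is not away from $2$ (\cref{lem:pcoverabeliangroup}, since $\Sigma_3\not\cong C_3\times C_2$). On such maps your claim that $q_\ast$ ``only involves $\Map(\dual{A},\bfG)$ for $A$ built from the homotopy of the fibers'' is false. The value $(q_\ast\calF)(T'')$ is a limit over $(\Glo_\ab)_{/T''\times_T F}$, which sees the abelian $2$-subgroups of $\Sigma_3$ and their centralizers. Indeed, take $\ul{\bfQ_2/\bfZ_2}\to 0$ over $\bfQ$, which is an isomorphism away from $2$. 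Via character theory, exactness along $q$ for this morphism amounts exactly to Cartesianness of the $\calL_2$-square of \cref{def:pcover}, and that fails here. The input you are missing is the structure theorem \cref{cor:pcovcharacterize} (via \cref{prop:pifinawayfromp}). Over $\bfB H\simeq\bfB H[1/S]\times\bfB H_{(S)}$, a map away from $S$ has the form $\coprod_i F_i\times\bfB H_{(S)}\to\bfB H[1/S]\times\bfB H_{(S)}$ with each $F_i$ prime to $S$. So the $S$-primary part of the base is an inert product factor, and together with $\Glo_\ab\simeq\Glo_\ab^{S\nmid}\times\Glo_\ab^{S}$ this is what splits the limit computing $q_\ast$.

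Even with the correct class, your last step (``these maps are isomorphisms, so the exchange map is an equivalence'') is not enough. The exchange map must be checked at every $T''\to T$, including $T''$ with $S$-torsion, where $\alpha$ is not an isomorphism; for instance $F\times BC_p\to BC_p$ with $p\in S$. After splitting off the $S$-prime part, you still have to commute $\A_{\bfG'}^{T''_{(S)}}\otimes_{\A_\bfG^{T''_{(S)}}}(\bs)$ past an infinite limit. The paper does this because $\A_\bfG^T\to\A_{\bfG'}^T$ is finite \'etale, hence dualizable (\cref{lem:orientedseparable}). That proof goes through the chromatic reduction \cref{lm:reduction_to_monochromatic} and the connected--\'etale sequence of the Quillen $\bfP$-divisible group, and orientedness is essential there.

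The same fact is silently needed in your first step. $\bfG$ enters $\LocSys_\bfG(\sfX)$ only through the coefficient algebra $\A_{\bfG,\sfX}$ on the fixed category $(\Glo_\ab)_{/\sfX}$. So a non-Cartesian $\alpha$ acts by extension of scalars, not by restriction along a map of orbit categories. That this preserves the completeness condition (2) of \cref{def:temperedlocalsystems} is \cref{prop:preservetemper}, which again uses flatness and dualizability. Finally, you also omit that Cartesian morphisms preserve all $\pi$-limits (\cref{prop:Cartesianpilimits}), which you need before factoring a general morphism of $\pdivorstk_{S\nmid}$.
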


\cref{main:functorialityinpdivisiblegroups} is then simply the composite of \cref{introthm:globalsections} and \cref{introthm:functoriality}.

Taking $S = \emptyset$, \cref{introthm:functoriality} asserts that for a fixed $\bfP$-divisible group $\bfG$, the $\ab$-global category $\LocSys_\bfG$ admits in a canonical way the structure of a symmetric monoidal $\pi$-stable category, i.e.\ one has
\[
\LocSys_\bfG \in \Cat(\Glo_\ab)^\otimes_{\pi-\st}.
\]
This is a reformulation of Lurie's \emph{tempered ambidexterity theorem} \cite[Th.7.2.10]{ec3} (see \cref{thm:temperedambidexterity}). The extra work needed to prove \cref{introthm:functoriality} stems from the observation that this lift is \emph{not} natural in the $\bfP$-divisible group $\bfG$: in general, a morphism $\alpha\colon \bfG'\to\bfG$ of $\bfP$-divisible groups will induce a functor $\LocSys_{\bfG}\to\LocSys_{\bfG'}$ which is \emph{not} $\pi$-exact, but, as we prove, is $\awayfromS$-exact for $S$ the set of primes at which $\alpha$ is not a fiberwise isomorphism.

\subsection{Tempered homology and geometric fixed points}

Having constructed $\pi$-ambidextrous spectra from oriented $\bfP$-divisible groups, we then turn our attention to studying the equivariant stable homotopy type of their associated tempered theories. Our first contribution is to construct a sensible and general form of tempered \emph{homology} associated with a $\bfP$-divisible group.

Let $\bfG$ be an oriented $\bfP$-divisible group over a stack $\sfM$. If $F$ is a $\pi$-finite space, then the sheaf in \cref{main:pisheaf}, after evaluation on $F$, defines a \emph{quasi-coherent} sheaf of $\bfE_\infty$ rings
\[
\calO_\bfG^F \in \CAlg(\QCoh(\sfM))
\]
on $\sfM$ (\Cref{pr:quasicoherent_on_picompactspaces}). Write
\[
\bfG(F) \in \Stk_{/\sfM}^\aff
\]
for its relative spectrum, a stack which is affine over $\sfM$. 

\begin{theoremalph}[{\Cref{constr:temperedhomology}}]\label{thm:introtemperedhomology}
There is a lax symmetric monoidal and colimit preserving \emph{$F$-global tempered homology functor}
\[
\O_\bfG^F[\bs]\colon \abglobalspaces{}_{/F}\to\QCoh(\bfG(F)),
\]
sending a map of $\pi$-finite spaces $p\colon K \to K'$ over $F$ to the $\calO_\bfG^F$-linear transfer $p_!\colon \calO_\bfG^K \to \calO_\bfG^{K'}$. 
\end{theoremalph}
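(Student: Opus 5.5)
The plan is to build $\O_\bfG^F[\bs]$ as a left Kan extension from a small generating subcategory. Every object of $\abglobalspaces{}_{/F}$ is a colimit of representables $\bfB H\to F$ with $H$ finite abelian. Such an object is a relatively $\pi$-finite map of global spaces, because $F$ is $\pi$-finite and $\bfB H$ is $\pi$-finite as a global space.

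Let $\calK_F\subset\abglobalspaces{}_{/F}$ be the full subcategory spanned by global spaces $K\to F$ that are relatively $\pi$-finite over $F$. It contains the representables and is closed under finite products over $F$. The argument has three steps.

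First, I define $\O_\bfG^F[\bs]$ on $\calK_F$, using the $\pi$-ambidextrous sheaf $\ul\O_\bfG$ of \cref{main:pisheaf}.
\begin{itemize}
\item Restricting $\ul\O_\bfG$ along the covariant inclusion $\calK_F\to\Span_\pi(\abglobalspaces)$, which sends $p\colon K\to K'$ to the forward span, gives a functor $K\mapsto\calO_\bfG^K$ with transfers $p_!$.
\item For $q\colon K\to F$, the pullback $q^\ast\colon\calO_\bfG^F\to\calO_\bfG^K$ makes $\calO_\bfG^K$ an $\calO_\bfG^F$-module.
\item The transfers $p_!$ are $\calO_\bfG^F$-linear by the projection formula. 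This is encoded by the lax monoidality of $\ul\O_\bfG$ from \cref{prop:qcommutativeglobalringsaslaxfunctors}, together with compatibility of spans with products over $F$.
\end{itemize}
This gives a functor $\calK_F\to\Mod_{\calO_\bfG^F}$, and it should be refined sheaf-wise over $\Aff_{/\sfM}$. Each $\calO_\bfG^K$ is quasi-coherent by the argument of \Cref{pr:quasicoherent_on_picompactspaces}. Quasi-coherent $\calO_\bfG^F$-algebras over $\sfM$ are the same as $\QCoh(\bfG(F))$ (affineness of $\bfG(F)\to\sfM$), so we land in $\QCoh(\bfG(F))$.

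Second, for lax symmetric monoidality on $\calK_F$, I use the fibre product over $F$. The maps $\calO_\bfG^{K}\otimes_{\calO_\bfG^F}\calO_\bfG^{L}\to\calO_\bfG^{K\times_F L}$ come from the lax structure of $\ul\O_\bfG$. They are natural for transfers because the exterior product of spans is functorial for forward maps. One cleaner route: observe that $\ul\O_\bfG|_{\Span_\pi}$ is lax monoidal for the cartesian product, and compose with the lax monoidal functor $\calK_F\to\Span_\pi(\abglobalspaces)_{/F}$. The unit is $F\mapsto\calO_\bfG^F$.

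Third, I extend to $\abglobalspaces{}_{/F}$ by left Kan extension along $\calK_F\subset\abglobalspaces{}_{/F}$. The extension is colimit preserving provided $\abglobalspaces{}_{/F}\simeq\Psh(\calK_F^{\mathrm{rep}})$, where $\calK_F^{\mathrm{rep}}$ is the full subcategory of representables $\bfB H\to F$. So I Kan extend from that subcategory instead, and check that the extension agrees on all of $\calK_F$. Lax monoidality passes to the extension by Day convolution, since the tensor product on $\QCoh(\bfG(F))$ preserves colimits in each variable.

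The main obstacle is this agreement on $\calK_F$. It requires that $K\mapsto\calO_\bfG^K$ with its covariant transfers preserve the colimits expressing a relatively $\pi$-finite $K\to F$ in terms of representables. This is a homology-type statement, dual to the sheaf (limit) property. I would deduce it from ambidexterity: the norm equivalences $p_!\simeq p_\ast$ for $\LocSys_\bfG$ (\cref{thm:temperedambidexterity}) identify the transfer-colimit with the restriction-limit. The latter holds because global sections of $\LocSys_\bfG$ satisfy descent. For the base change properties, I would check that the construction commutes with pullback along $\Aff_{/\sfM}$, using the affine-locality in \cref{main:pisheaf}.
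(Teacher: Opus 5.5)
Your proposal is correct and follows the paper's own route. The paper likewise left Kan extends the lax symmetric monoidal functor $\pi_{/F}\to\Mod_{\calO_\bfG^F}$ obtained from $\ul{\calO}_\bfG$ along $\pi_{/F}\subset\abglobalspaces{}_{/F}$; your $\calK_F$ coincides with $\pi_{/F}$ by inductibility. It gets colimit preservation from \cref{prop:homology_underlying} and \cref{prop:homology_colim}, whose proof is exactly your ambidexterity-plus-descent argument $\colim_{A\in(\Glo_\ab)_{/K}}(\pi_A)_!\bbone\simeq\pi_!\bbone\simeq\pi_\ast\bbone$.

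Two small corrections. First, the lax monoidal structure has to come from the extension along $\calK_F$, since the representables over $F$ are not closed under fibre products; the representables serve only to check colimit preservation. Second, it is quasi-coherent $\calO_\bfG^F$-modules, not algebras, that make up $\QCoh(\bfG(F))$.
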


Taking $F = \bfB H$ for a finite group $H$, the category $\spaces_H$ of $H$-spaces sits as a subcategory of the category $\abglobalspaces{}_{/\bfB H}$ of $H$-global spaces which is closed under colimits and products. Thus \cref{thm:introtemperedhomology} specialises to provide a $\bfG$-tempered $H$-equivariant homology functor
\[
\calO_\bfG^{\bfB H}[\bs//H]\colon \spaces_H \to \QCoh(\bfG(\bfB H)).
\]
As suggested by Lurie in the context of elliptic cohomology \cite[Rmk.3.14]{lurieecsurvey}, tempered homology is in many ways better behaved than tempered cohomology, especially when applied to non-compact $H$-spaces. In particular, it satisfies the following general base change property.

\begin{theoremalph}[{\Cref{thm:homologybasechange}}]\label{intro:basechangeforhomology}
Let $\bfG$ be an oriented $\bfP$-divisible group over a stack $\sfM$. Then, for any map $f\colon \sfN\to\sfM$ of stacks, the induced map $\tilde{f}\colon (f^\ast\bfG)(F) \to \bfG(F)$ satisfies
\[
\tilde{f}^\ast \left(\calO_\bfG^F[\sfX]\right)\simeq \calO_{f^\ast\bfG}^F[\sfX]
\]
for any $\pi$-finite space $F$ and global space $\sfX \in \abglobalspaces{}_{/F}$.
\end{theoremalph}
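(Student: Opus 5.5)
Both sides of the claimed equivalence are colimit preserving in $\sfX \in \abglobalspaces{}_{/F}$, so the plan is to reduce to generators and then use base change for the underlying quasi-coherent algebras. On the left, $\calO_\bfG^F[\bs]$ is colimit preserving by \Cref{thm:introtemperedhomology}, and $\tilde{f}^\ast$ is a left adjoint on quasi-coherent sheaves. Since $\abglobalspaces{}_{/F}$ is generated under colimits by objects $\bfB H \to F$ with $H$ finite abelian, it suffices to do three things: construct a natural comparison map $\tilde f^\ast \calO_\bfG^F[\sfX] \to \calO_{f^\ast\bfG}^F[\sfX]$, check it is compatible with colimits, and check it is an equivalence on these generators.

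To build the comparison, first identify $(f^\ast\bfG)(F)$ with $\sfN\times_\sfM \bfG(F)$. Both are relatively affine, given by the relative spectra of $\calO^F_{f^\ast\bfG}$ and $f^\ast\calO^F_\bfG$, so this amounts to the equivalence $f^\ast \calO_\bfG^F \simeq \calO_{f^\ast\bfG}^F$ of quasi-coherent algebras. I would prove that equivalence first, as follows.
\begin{enumerate}
\item By \Cref{pr:quasicoherent_on_picompactspaces} the sheaf $\calO_\bfG^F$ is quasi-coherent, so it suffices to check the equivalence locally on affines $\Spec \A \to \sfM$.
\item Over an affine, the sheaf $\ul{\calO}_\bfG$ of \Cref{main:pisheaf} restricted along $\Aff_{/\sfN}\to\Aff_{/\sfM}$ is, by its construction as a sheaf, exactly $\ul{\calO}_{f^\ast\bfG}$.
\item Evaluating at $F$ then gives the desired identification.
\end{enumerate}
Concretely, the statement that $F \mapsto \A_\bfG^F$ commutes with base change in $\A$ for $\pi$-finite $F$ is Lurie's result that tempered cohomology of $\pi$-finite spaces is a finite flat type construction. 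It is presumably what underlies the quasi-coherence statement, and I would take it from there. The same reasoning identifies $\calO^K$ for every $\pi$-finite $K$ over $F$ compatibly.

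Next, the comparison map for general $\sfX$. Consider the functor
\[
\sfX \mapsto \tilde f^\ast \calO_\bfG^F[\sfX],
\]
which is lax symmetric monoidal and colimit preserving. Both it and $\calO^F_{f^\ast\bfG}[\bs]$ are left Kan extended from the generators $\bfB H\to F$, or from $\pi$-finite $K \to F$ (presumably how \Cref{constr:temperedhomology} proceeds). So it suffices to give a natural equivalence on $\pi$-finite $K\to F$. There the values are $\calO^K_\bfG$, viewed as an $\calO^F_\bfG$-module, and $\calO^K_{f^\ast\bfG}$. Functoriality in $K$ uses the transfers $p_!$, so the identification must be compatible with transfers, not just restrictions.

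This compatibility with transfers is the main obstacle. It should follow from the fact that the $\pi$-ambidextrous structure in \Cref{main:pisheaf} is a sheaf in $\CAlg(\Sp^\gl_\pi)$ over $\Aff_{/\sfM}$. The pullback morphism $f^\ast\bfG \to \bfG$ is a fiberwise isomorphism, so by \Cref{main:functorialityinpdivisiblegroups} with $S=\emptyset$ the induced map of global rings respects all $\pi$-finite transfers. Combining this with flat/finite base change for the modules $\calO^K_\bfG$ over $\calO^F_\bfG$ yields the natural equivalence of functors on $\pi$-finite objects over $F$, and Kan extension concludes.
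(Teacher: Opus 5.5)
Your proposal is correct and follows the paper's proof. Both sides preserve colimits in $\sfX$, so it suffices to give a natural equivalence on generators $T\to F$ with $T\in\Glo_\ab$, where it reduces to $f^\ast\calO_\bfG^T\simeq\calO_{f^\ast\bfG}^T$. The paper simply asserts this identification holds ``by construction,'' whereas you explain why it is natural in the transfers that define the functoriality of $\calO^F_\bfG[\bs]$: the sheaf $\ul{\calO}_\bfG$ restricts along $\Aff_{/\sfN}\to\Aff_{/\sfM}$ to $\ul{\calO}_{f^\ast\bfG}$ as a sheaf of $\pi$-ambidextrous rings. This is a useful clarification of the same argument, not a different route.
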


After passing to reduced homology, our construction of $\bfB H$-global homology allows us to define the \emph{$H$-geometric fixed points} for a finite group $H$
\[
\calO_\bfG^{\Phi H} = \widetilde{\calO}_\bfG^{\bfB H}[\widetilde{E}\calP_H//H] \in \CAlg(\QCoh(\bfG(\bfB H)))
\]
of $\bfG$, with associated stack
\[
\Phi^H \bfG \in \Stk^\aff_{/\bfG(\bfB H)}.
\]
The base change theorem (\Cref{intro:basechangeforhomology}) for $F$-global tempered homology then specializes to base change for geometric fixed points with immediate consequences. For example, we show that for any $\bfP$-divisible group $\bfG$ and any finite \emph{nonabelian} group $H$, the $H$-geometric fixed points stack
\begin{equation}\label{eq:intro_vanishing}\Phi^H\bfG = \emptyset\end{equation}
vanishes whenever $H$ is nonabelian; see \Cref{pr:nonabelian_vanishing}, and also \Cref{cor:tomdieckesque} for an application.

We round out our discussion of geometric fixed points by giving a modular interpretation of $\Phi^H\bfG$ when $H$ is abelian. By construction, if $H$ is an abelian group, then we may regard
\[
\bfG(\bfB H) \simeq \bfG[\dual{H}] \simeq \Hom(\dual{H},\bfG)
\]
as a stack of homomorphisms $\dual{H} \to \bfG$. For $H = \bfZ/(n)$, this is equivalent to the $n$-torsion subgroup of $\bfG$. Each inclusion of subgroups $L \to H$ induces a closed immersion
\[\Hom(\dual{L},\bfG) \to \Hom(\dual{H}, \bfG),\]
and we define the \emph{stack of injections}
\[\Inj(\dual{H}, \bfG) \coloneq \bfG[\dual{H}]\setminus\bigcup_{L\subsetneq H}\bfG[\dual{L}] \]
as the open substack of $\Hom(\dual{H}, \bfG)$ away from these closed immersions for all proper subgroups $L \subsetneq H$; see \Cref{def:injstack}. Our final result is the following.

\begin{theoremalph}\label{main:geometricmodel}
Let $H$ be a finite abelian group and $\bfG$ be an oriented $\bfP$-divisible group. Then there is a (necessarily unique) equivalence
\[
\Phi^H\bfG\simeq \Inj(\dual{H},\bfG)
\]
of open substacks of $\bfG(\bfB H)\simeq\bfG[\dual{H}]$.
\end{theoremalph}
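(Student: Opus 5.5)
The plan is to reduce, via the base change theorem (\Cref{intro:basechangeforhomology}), to the case where $\sfM = \Spec \A$ is affine. Both sides are open substacks of $\bfG[\dual{H}]$, and both are compatible with pullback along maps $\Spec \A \to \sfM$. For the left side this holds because $\Phi^H\bfG$ is the relative spectrum of $\calO_\bfG^{\Phi H}$, which is reduced $\bfB H$-global tempered homology of $\widetilde{E}\calP_H//H$. For $\Inj(\dual{H},\bfG)$ it holds because it is defined by removing the images of the closed immersions $\bfG[\dual{L}]\to\bfG[\dual{H}]$, and these images are stable under base change. Open substacks are determined by their underlying points, so in the affine case it suffices to check two things: that $\Phi^H\bfG\to\bfG[\dual H]$ is an open immersion (or at least that $\calO_\bfG^{\Phi H}$ is a localization of $\calO_\bfG^{\bfB H}$), and that it has the same support as $\Inj$.

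\textbf{Step 1: $\Phi^H\bfG$ is an open substack.} Use the cofiber sequence $E\calP_{H+}\to S^0\to \widetilde{E}\calP_H$. The space $E\calP_H$ is a colimit of orbits $H/L$ with $L\subsetneq H$. Colimit preservation of $\calO^{\bfB H}_\bfG[\bs//H]$ identifies $\calO^{\bfB H}_\bfG[E\calP_H//H]$ with a colimit of the modules $\calO_\bfG^{\bfB L}$, which are pushed forward along the closed immersions $\bfG[\dual L]\to\bfG[\dual H]$. The unit map to $\calO^{\bfB H}_\bfG$ is given by the transfers $p_!$. It follows that the reduced homology of $\widetilde{E}\calP_H$ is an idempotent $\calO^{\bfB H}_\bfG$-algebra, since $\widetilde{E}\calP_H\wedge\widetilde{E}\calP_H\simeq\widetilde{E}\calP_H$ and the functor is lax symmetric monoidal. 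It is moreover the localization away from the ideal sheaf generated by the images of the transfers from $\calO^{\bfB L}$. Using the standard presentation $\widetilde{E}\calP_H\simeq\colim$ of smash products of $S^{\infty V}$-type complexes, I expect this to be the localization inverting the Euler classes of the characters $H\to \bfQ/\bfZ$ that are nontrivial on each maximal subgroup. That is an open complement of the union of the closed subschemes $\bfG[\dual L]$.

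\textbf{Step 2: identify the closed complement.} Show that the vanishing locus of the relevant Euler classes or transfer images is exactly $\bigcup_{L\subsetneq H}\bfG[\dual L]$. Via the orientation, the Euler class of a character $\chi\colon H\to S^1$ cuts out $\bfG[\dual{\ker\chi}]$-type loci. Concretely, it cuts out the locus of homomorphisms $\dual H\to\bfG$ killing the corresponding element of $\dual H$, since the orientation identifies the zero locus of the Euler class with the kernel of evaluation. A homomorphism $\dual H\to\bfG$ fails to be injective precisely when it kills some nonzero element, and hence some cyclic subgroup of order $p$. So the union over characters of order $p$ suffices, and matches the union over maximal $L$, because each proper $L$ lies in a maximal one and $\dual{H/L}$ contains an order-$p$ element.

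\textbf{Main obstacle.} The hardest step is Step 1: making precise that reduced tempered homology of $\widetilde E\calP_H$ is exactly the localization at these Euler classes, rather than merely something supported there. I would first try to handle cyclic $H=\bfZ/p^k$ directly, where $\widetilde E\calP_H=S^{\infty\lambda}$ for a faithful character $\lambda$. There the claim reduces to the Thom isomorphism for $\lambda$, which follows from the orientation, giving $\calO^{\bfB H}_\bfG[1/e_\lambda]$. I would then pass to general abelian $H$ by writing $\widetilde E\calP_H$ as a smash product over characters whose kernels run through the maximal subgroups, which yields the intersection of the corresponding opens. Uniqueness of the equivalence is automatic, since both sides are open substacks of $\bfG[\dual H]$.
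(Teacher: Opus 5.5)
Your architecture matches the paper's proof of \Cref{thm:geofixed}:
\begin{itemize}
\item write $\widetilde{E}\calP_H$ as $S^{\infty W}$ for a sum $W$ of nontrivial characters;
\item use an invertible Thom class $\beta_V\in\pi^H_V\ul{\A}_{\bfG,H}$ to trade the representation-graded Euler class $a_V$ for $e_V=a_V\beta_V\in\pi_0\ul{\A}^{\bfB H}_\bfG$;
\item identify the vanishing locus of $e_V$ with $\bfG[\dual{K}]$ for $K=\ker V$;
\item intersect the complements.
\end{itemize}
There is a genuine gap, however: the two steps carrying all the content are only asserted, and the reason you give for the decisive one is not the actual mechanism.

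The slogan ``the orientation identifies the zero locus of the Euler class with the kernel of evaluation'' is at best a statement about the formal part $\bfG_p^\circ$, where Euler classes are pulled back from a coordinate. It says nothing about the \'etale part of $\bfG$. Over a rational base the orientation condition is vacuous (all $\A_p^\wedge=0$), yet the claim still has to be proved. What is needed is \Cref{lem:cpgeofixed}. Set $I_K=\ker(\res^H_K)$, which cuts out $\bfG[\dual K]$, and $I_V=\image(a_V)$; then $I_V\subset I_K\subset\sqrt{I_V}$. This is proved from the cofiber sequence $S(V)_+\to S^0\to S^V$ with no orientation at all. For $x\in I_K$, the class $i(x)\in\A^\ast_H(S(V))$ vanishes on the bottom cell $H/K_+$. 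It therefore lies in the image of the map induced by the collapse $S(V)_+\to\Sigma H/K_+$, and such a class is nilpotent. Hence some $x^n$ lies in $\ker(i)=I_V$.

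The orientation enters only through $\beta_V$, and that is not formal either. The paper constructs it only after several reductions:
\begin{itemize}
\item reduce to $\pi_0\A$ local and $\A$ $p$-complete or rational;
\item pass to a faithfully flat extension on which $\bfG\simeq\bfG_p^\circ\oplus\ul{\Lambda}$;
\item assemble Bott classes of $V^{\image(\alpha)}$ through Lurie's character isomorphism;
\item check that multiplication by $\beta_V$ is an isomorphism on every $\pi^K_\ast$.
\end{itemize}
After that reduction you could instead compute $e_V$ componentwise through the character isomorphism, where it is either a unit or a Borel Euler class. Some such argument must be supplied.

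There are also concrete errors.
\begin{itemize}
\item For $H=\bfZ/p^k$ with $k\geq 2$ and $\lambda$ faithful, $S^{\infty\lambda}$ is $\widetilde{EH}$, not $\widetilde{E}\calP_H$: its $L$-fixed points are $S^0$ for all $L\neq e$. Inverting $e_\lambda$ removes only the zero section $\bfG[0]$, rather than $\bfG[p^{k-1}]$. You need the character with kernel $\bfZ/p^{k-1}$, i.e.\ the element of order $p$ in $\dual H$. Your phrase ``characters nontrivial on each maximal subgroup'' has the same confusion. Your later description via characters whose kernels are the maximal subgroups is right, as is the paper's use of all nontrivial characters via $\ol{\rho}_H$.
\item Geometric fixed points kill transfers rather than localizing away from them. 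For $\gKU$ and $H=C_p$, the transfer ideal in $RU(C_p)=\bfZ[x]/(x^p-1)$ is $(1+x+\cdots+x^{p-1})$. Inverting it gives $\bfZ[1/p]$, whereas $\pi_0\gKU^{\Phi C_p}=\bfZ[1/p,\zeta_p]$. The closed set to remove is cut out by the restriction kernels $\ker(\res^H_L)$, which is why \Cref{lem:cpgeofixed} is phrased via $I_K$.
\item Idempotence of $\calO^{\Phi H}_\bfG$ over $\calO^{\bfB H}_\bfG$ does not follow from lax monoidality, since the lax structure maps of $H$-fixed points are not equivalences in general. It is a consequence of the Euler-class description, not an input to it.
\end{itemize}
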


This can be regarded as an integral and geometric refinement of the computation of the geometric fixed points of the Borel theories associated with Morava $E$-theory \cite[\textsection 3]{bhnnns}. It implies a corresponding \emph{blueshift} phenomenon for the geometric fixed points of tempered theories.

\begin{coralph}[{\Cref{cor:blueshift}}]\label{maintheorem:blueshift}
    Let $\bfG$ be an oriented $\bfP$-divisible group. Fix a prime $p$, and suppose that the $p$-divisible group $\bfG_{(p)}$ is of height $\leq h$. Let $H$ be a finite abelian group, and set $n = \log_p |H/pH|$. Then the stack $\Phi^H\bfG$ has chromatic height at most $h-n$ at the prime $p$. In particular, $\Phi^H\bfG = \emptyset$ if $n > h$.
\end{coralph}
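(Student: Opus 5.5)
The plan is to deduce the blueshift statement from \Cref{main:geometricmodel}, replacing $\Phi^H\bfG$ by the open substack $\Inj(\dual{H},\bfG)\subset\bfG[\dual{H}]$, and then to argue fiberwise on the base. Chromatic height at $p$ of a stack affine over $\sfM$ can be tested on geometric points. So I would use the base change result \Cref{intro:basechangeforhomology}, which gives base change for $\Phi^H$ and hence for $\Inj$, to reduce to a base $\Spec \kappa$ with $\kappa$ a field. There $\bfG_{(p)}$ has height $\leq h$, and we must show that any point of $\Inj(\dual{H},\bfG)$ over a $K(m)$-local field with $m>h-n$ cannot exist. Concretely, after working $p$-locally we must show that $\Inj(\dual{H},\bfG)\otimes K(m)$ vanishes for $m>h-n$, or dually that its support lies in heights $\leq h-n$.

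First I would split $H=H_{(p)}\times H'$ into its $p$-part and prime-to-$p$ part. Correspondingly
\[
\bfG[\dual{H}]\simeq\bfG_{(p)}[\dual{H_{(p)}}]\times\bfG[\dual{H'}],
\]
and $\Inj$ splits as a product. The prime-to-$p$ factor is étale after $p$-localization and does not change chromatic height, so we may assume $H$ is a $p$-group with $p$-rank $n$.

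Next, over a $K(m)$-local base, the $p$-divisible group $\bfG_{(p)}$ has a connected–étale decomposition. Its identity component is the formal group of the orientation, of height $m$ at such points, and its étale part has height $h-m$. (This uses the orientation: $\bfG^\circ$ is the Quillen formal group, and height is additive in short exact sequences.) A homomorphism $\dual{H}\to\bfG$ is injective exactly when it is injective on the socle $\dual{H}[p]\cong(\bfZ/p)^n$. The key claim is that over a field of characteristic $p$-height $m$, the injective homomorphisms from a finite group into a connected formal group are empty, since the subscheme of $p$-torsion points of $\bfG^\circ$ is infinitesimal. Injectivity onto the stack then means that $\dual{H}[p]$ must embed into the étale quotient $\bfG^{\text{ét}}[p]\cong(\bfZ/p)^{h-m}$. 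This forces $n\leq h-m$, that is, $m\leq h-n$. This is exactly the argument of \cite[\textsection 3]{bhnnns} for $E$-theory, and in the case $n>h$ no $m\geq0$ works, so the stack is empty.

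The main obstacle is making ``injective'' rigorous in the derived setting. $\Inj$ is defined as the complement of the closed substacks $\bfG[\dual{L}]$, and one must check that its geometric points over a field are genuinely injective homomorphisms on underlying classical groups. One must also check that the connected part has no nontrivial points there, which requires care because, over a non-perfect or derived base, the connected component has nonreduced points. I would handle this by reducing to the case $H=\bfZ/p$ socle-wise. The complement of $\bfG[\dual{H}/\dual{H}[p]\text{-kernels}]$ is the locus where the restriction to each order-$p$ subgroup avoids the zero section. For $\bfZ/p$, the stack $\Inj(\bfZ/p,\bfG^\circ)$ is $\bfG^\circ[p]\setminus 0$, which is empty in height exactly $m$ after inverting the generator (this is Hopkins–Kuhn–Ravenel: $E^0(B\bfZ/p)[1/x]$ is $K(m)$-acyclic information lowering height by one). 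Iterating over a flag of $\dual{H}[p]$ then gives the drop by $n$.
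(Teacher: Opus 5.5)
Your argument is correct and follows the paper's proof: work locally on an affine base, replace $\Phi^H\bfG$ by $\Inj(\dual{H},\bfG)$, and note that over a residue field an injection of the constant group $\dual{H}$ misses the infinitesimal connected part. Hence $\dual{H}[p]$ embeds into the \'etale quotient, whose height is at most $h$ minus the formal height. The derived-setting obstacle you flag needs neither base change to an $\E_\infty$ field (which is not available) nor your HKR/flag iteration. Open complements are defined through Zariski spaces, which only see $\pi_0$ (\Cref{rmk:interpretation_classicalstacks}), so the paper simply uses $\Spec\pi_0\A_\bfG^{\Phi H}\simeq\Inj(\dual{H},\bfG^\heartsuit)$ and argues with the classical $\bfP$-divisible group $\bfG^\heartsuit$ over residue fields of $\pi_0\A$.
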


Most of the results above are stated and proved in the geometric setting of stacks and quasi-coherent sheaves, as this is the setting where base change \cref{intro:basechangeforhomology} holds in full generality. One may apply the lax symmetric monoidal global sections functor $\Ga\colon \QCoh(\sfM) \to \Mod_{\Ga(\calO_\sfM)}$ to obtain statements about $\E_\infty$ rings and spectra, only some care is needed as in general $\Ga$ need not preserve colimits; for example, $\Gamma(\Phi^H\bfG)\not\simeq \Gamma(\ul{\calO}_\bfG)^{\Phi H}$ in general.

In practice, the work of Mathew--Meier \cite{akhilandlennart} and our previous work \cite[Ths.A \& B]{reconstruction} shows that the stacks which are of interest in chromatic homotopy theory are frequently \emph{$0$-semiaffine}, meaning that $\Ga\colon \QCoh(\sfM) \to \Mod_{\Ga(\calO_\sfM)}$ does in fact preserve colimits. This includes all of the examples given above in \Cref{ex:KU_intro,ex:KO_intro,ex:TMF_intro} and below in \Cref{ssec:piambi_tempered_cohomology}. Under this assumption, all of the preceding theorems behave well with respect to global sections. In particular, if $\bfG$ is an oriented $\bfP$-divisible group over a $0$-semiaffine stack $\sfM$, then the global sections of $\Phi^H\bfG$ recover the $H$-geometric fixed points of the $\pi$-ambidextrous global $\bfE_\infty$ ring $\Gamma(\ul{\calO}_\bfG)$:
\[
\Gamma(\calO_{\Phi^H\bfG})\simeq \Gamma(\ul{\calO}_\bfG)^{\Phi H};
\]
see \Cref{rmk:0semiaffine}.

\begin{ex}
The main theorem of \cite{akhilandlennart} states that the stack $\sfM_\Ell^\ori$ is $0$-affine. By the vanishing statements of (\ref{eq:intro_vanishing}) and \Cref{maintheorem:blueshift}, we then see that equivariant $\TMF$ satisfies
\[
\gTMF^{\Phi H}\simeq \begin{cases}\Gamma(\calO_{\Inj(\dual{H},\calE)}),&H\text{ abelian and requiring at most $2$ generators},\\0&\text{ otherwise};\end{cases}
\]
see \Cref{ex:geoFP_TMF}. After inverting the order of $H$, the stack $\Inj(\dual{H},\calE)$ becomes \'etale over $\sfM_\Ell^\ori$, and may be identified with the classical moduli stack of \emph{level $\dual{H}$-structures} on $\calE$:
\[\gTMF^{\Phi C_n\times C_n} \simeq \TMF(n), \qquad \gTMF^{\Phi C_n}[1/n] \simeq \TMF_1(n).\]
This implies that $\gTMF^{\Phi H} \neq 0$ if and only if $H$ is a finite abelian group requiring at most $2$ generators, thus computing the \emph{derived defect base} of $\gTMF$ à la \cite{derivedindandres}. Considering the  stack of injections more closely, we see that $\gTMF^{\Phi C_n}$ is an integral refinement of $\TMF_1(n)$, in the sense that it has chromatic height exactly $1$ at all primes $p$ dividing $n$; compare this to $\TMF_1(n)$ which has height $0$ at such primes. This corresponds to the fact that if an elliptic curve $E$ admits a subgroup isomorphic to the constant group scheme $\ul{C}_p$, then its formal height is at most $1$, and that ordinary elliptic curves over complete local rings provide examples with formal height exactly $1$.
\end{ex}

These results provide sound foundations for the further study of tempered cohomology theories as equivariant spectra. The base change results for the geometric fixed points of tempered cohomology theories have already been used by the second- and third-named authors \cite[Th.7.14]{globaltate} to study global Tate $K$-theory. In forthcoming work \cite{geometricnorms}, we rely on the moduli interpretation of geometric fixed points \Cref{main:geometricmodel} to construct \emph{geometric norms} on $\gTMF$ and an ultra-commutative structure on $\gTMF \otimes\Q$.

\subsection*{Outline}
This article is comprised of four sections; more detailed introductions can be found at the beginning of each section.

\begin{itemize}
    \item In \Cref{sec:globalhomotopytheory}, we introduce the category of \emph{$\calQ$-ambidextrous monoids} in a general category $\calC$, denoted by $\Mon^\gl_\calQ(\calC)$, which specializes to $\Sp^\gl_\calQ$ from \Cref{maindef:ambidextrousglobalspectra} for $\calC = \Sp$ and $\calQ\subset\abglobalspaces$, and establish its basic categorical properties.
    \item In \Cref{sec:parametrized}, we recall and prove the necessary concepts and statements from parametrized category theory, including $\calQ$-stability. This culminates with our proof of the general decategorification result \Cref{introthm:globalsections}.
    \item In \Cref{sec:temperedlocalsystems}, we show that Lurie's categories of tempered local systems $\LocSys_\bfG$ define $\pi$-stable $\Glo_\ab$-categories and discuss which morphisms of oriented $\bfP$-divisible groups induce morphisms of $\calQ$-stable $\Glo_\ab$-categories for various $\calQ$. In particular, this contains a proof of \Cref{introthm:functoriality}.
    \item In \Cref{sec:thetemperedglobalspectrum}, we combine the previous two sections to show that Lurie's tempered cohomology theories are represented by $\pi$-ambidextrous global spectra and discuss the equivariant stable homotopy theory of these examples.
\end{itemize}

The reader willing to take \Cref{main:functorialityinpdivisiblegroups}, and hence also \Cref{introthm:globalsections,introthm:functoriality}, for granted, is welcome to read only \Cref{sec:globalhomotopytheory} for the definition of $\Sp_\calQ^\gl$ and \Cref{sec:thetemperedglobalspectrum} for the applications to the equivariant stable homotopy theory of tempered cohomology theories.

\subsection*{Acknowledgements}
SL would like to thank Bastiaan Cnossen for helpful discussions concerning \Cref{sec:parametrized}, as well as him and Tobias Lenz for conversations during the preparation of \cite{CLLSpans}. SL also thanks Shay Ben-Moshe for illuminating discussions regarding decategorification and transfers in tempered theories. The overlap of the work here with \cite{benmoshe_tempered} is a consequence of the generosity with which he shared his ideas.

JMD was supported by the DFG-funded research 
training group GRK 2240: Algebro-Geometric Methods in Algebra, 
Arithmetic and Topology. SL was an associate member of the SFB: Higher invariants. Both JMD and SL would like to thank the Isaac Newton Institute for Mathematical Sciences, Cambridge, for support and hospitality during the programme \emph{Equivariant homotopy theory in context} where work on this paper was undertaken. This work was supported by EPSRC grant no EP/K032208/1.

\section{Ambidextrous global spectra}\label{sec:globalhomotopytheory}
In this section, we review \Cref{maindef:ambidextrousglobalspectra} and the fundamental properties of $\Sp_\calQ^\gl$. In fact, we will proceed slightly more generally, by replacing $\Glo_{\ab}$ by an arbitrary small category $T$ in the definition. First, we introduce the notion of an \emph{inductible} class $\calQ$ of maps between presheaves on $T$, and then define \emph{$\calQ$-ambidextrous monoids} in a general category $\calC$ with limits in \Cref{ssec:qcommtuativemonoids}, denoted by $\Mon_\calQ(\calC)$. This specializes to $\Sp^\gl_\calQ$ for $T = \Glo_{\ab}$ and $\calC = \Sp$. Then we study the basic categorical properties of this construction. In \Cref{ssec:symmetricmonoidalstructures}, we prove that $\Mon_\calQ(\calC)$ inherits a canonical symmetric monoidal structure from $\calC$, and give two equivalent characterizations of this structure. Finally, in \Cref{ssec:homologytheories} we construct homology theories out of $\calQ$-ambidextrous monoids, generalizing the classical construction of equivariant homology theories.

For this section we fix a small category $T$, and denote by $\PshT$ the category $\Psh(T) = \Fun(T^{\op},\Spc)$ of presheaves on $T$. We will identify $T$ with a subcategory of $\PshT$ via the Yoneda embedding.

\subsection{\texorpdfstring{$\calQ$}{Q}-commutative monoids}\label{ssec:qcommtuativemonoids}

\begin{mydef}\label{df:preinductable}
A small subcategory $\calQ\subset\PshT$ is \emph{inductible} if:
	\begin{enumerate}
    \item $\calQ$ contains all the objects of $T$.
	\item A map $f\colon X\to Y$ with $Y\in \calQ$ is in $\calQ$ if and only if $f\times_Y A$ is for all maps $A \to Y$ for $A\in T$.
	\item $\calQ$ is closed under diagonals: if $X \to Y$ is in $\calQ$ then $X \to X\times_Y X$ is in $\calQ$.
	\item Every morphism in $\calQ$ with target in $T$ is truncated.
\end{enumerate}
\end{mydef}

\begin{mydef}\label{def:local_class_generated}
Given an inductible subcategory $\calQ\subset\PshT$, the \emph{local class}
\[
\bar{\calQ}\subset \PshT
\]
generated by $\calQ$ is the wide subcategory of $\PshT$ spanned by those morphisms $X \to Y$ which are locally in $\calQ$, in the sense that $X\times_Y A \to A$ is in $\calQ$ for all $A \to Y$ with $A\in T$.
\end{mydef}

The local class $\bar{\Qloc}$ is closed under base change by construction, allowing us to make the following definition.

\begin{mydef}
Given an inductible subcategory $\Qloc$,  we write $\Span_{{\Qloc}}(\PshT)$ for the category of spans in $\PshT$ with arbitrary backwards map and with forwards map in $\bar{\Qloc}$.
\end{mydef}

Thus the objects of $\Span_{{\Qloc}}(\PshT)$ are the objects of $\PshT$, but with mapping spaces
\[
\Map_{\Span_{{\Qloc}}(\PshT)}(X,Y) = \Biggl\{\begin{tikzcd}[cramped, sep=small]
  & \arrow{dl} W \arrow[dr, "\in {\bar{\Qloc}}"] & \\ 
X &                                      & Y
\end{tikzcd}\Biggr\};
\]
composition is given by pullback, i.e.\
\[
\left(X \leftarrow W \rightarrow Y \right)\circ\left(Y\leftarrow V \rightarrow Z\right) = \left(X \leftarrow W\times_Y V \rightarrow Z\right).
\]
See \cite{HHLNa} for a formal definition.

\begin{mydef}\label{def:qambiglobalspectrum}
	Let $\Cc$ be a category with small limits. A \emph{$\calQ$-ambidextrous monoid in $\Cc$} is a functor 
	\[
	E\colon \Span_{{\Qloc}}(\PshT)\to \Cc
	\]
	whose restriction along $\PshT^{\op}\hookrightarrow \Span_{{\Qloc}}(\PshT)$ preserves limits. Such objects assemble into a category
	\[
	\Mon_\Qloc(\Cc) \subset \Fun(\Span_{{\Qloc}}(\PshT),\Cc).
	\]
\end{mydef}

\begin{mydef}
A $\calQ$-ambidextrous monoid in $\Sp$ is called a \emph{$\calQ$-ambidextrous spectrum}, and we write
\[\Sp_\calQ \coloneqq \Mon_\calQ(\Sp) \]
for the category of $\calQ$-ambidextrous spectra. More generally, we will verify in \cref{lm:qglobalspectra_stablepresentable} that $\Mon_\calQ(\calC)$ is stable whenever $\calC$ is. Therefore, in this case, we will denote $\Mon_\calQ(\calC)$ by $\Sp_{\calQ}(\calC)$ and refer to an object therein as a $\calQ$-ambidextrous spectrum (object) in $\calC$.
\end{mydef}

\begin{remark}
Note that the category $T$ is implicit in the notations above by the choice $\calQ\subset \Psh(T)$. When $T=\Glo_{\ab}$, we will refer to objects in $\Sp^{\gl}_{\calQ} \coloneqq \Sp_{\calQ}$ as $\calQ$-\emph{ambidextrous global spectra}, as in the introduction.
\end{remark}

\begin{rmk}
We will verify in \cref{lem:Qcom_localization} below that $\Mon_\Qloc(\ccat)$ is presentable, resp., stable, whenever $\ccat$ is presentable, resp., stable. It then formally follows that
\[
\Mon_\calQ(\ccat)\simeq \Mon_\calQ(\spaces)\otimes\ccat,
\]
for any presentable category $\ccat$, and
\[
\Sp_\calQ(\ccat)\simeq {\Sp_\calQ}\otimes {\ccat}
\]
for any stable presentable category $\ccat$.
\end{rmk}

Other than the globally equivariant example that is the main focus of this article, the language of $\calQ$-ambidextrous spectra also makes sense in traditional $G$-equivariant stable homotopy theory. Denote by $\Fin_G$ the category of finite $G$-sets for a finite group $G$. 

\begin{example}\label{ex:GequivSHT}
Let $G$ be a finite group. Potentially the first example of $T$ and $\calQ$ for which the definition above was considered is $(T, \calQ) = (\mathrm{Orb}_G, \Fin_G)$, the category of transitive $G$-sets and finite $G$-sets respectively. We will follow the literature in referring to objects of $\Mon_{\Fin_G}(\calC)\simeq \Fun^\times(\Span(\Fin_G),\ccat)$ as \emph{$G$-commutative monoids in $\calC$} and denote them $\Mon_G(\calC)$.

Importantly, the Guillou--May theorem identifies $\Sp_{\Fin_G}$ with the category $\Sp_G$ of genuine $G$-spectra, see for example \cite[Cor.9.16]{CLLEquiv}. In particular, we note that when $\calC$ is stable, $\Mon_G(\calC)$ is equivalent to the tensor product $\calC \otimes {\Sp_G}$ of $\calC$ with (genuine) $G$-spectra. Therefore, in this case, we may refer to $\Mon_G(\calC)$ as the category of $G$-spectra in $\calC$. Similarly, we will refer to objects in $\CAlg(\Mon_G(\calC))$ as $G$-equivariant $\E_\infty$-rings in $\calC$.
\end{example}

\begin{rmk}\label{rmk:restrictionbetweenCMonQQprime}
If $\calQ\subset\calQ'$, then the restriction along the induced functor $\Span_\calQ(\PshT)\to\Span_{\calQ'}(\PshT)$ defines a conservative and limit preserving restriction functor
\[
\res\colon \Mon_{\calQ'}(\Cc) \to \Mon_\calQ(\Cc). 
\]
\end{rmk}

Let us unpack the structure encoded by a $\calQ$-ambidextrous monoid.

\begin{notation}
	Let $E$ be a $\calQ$-ambidextrous monoid in $\Cc$. Associated with every map $f\colon X\to Y$ in $\PshT$ is a \emph{restriction} map
	\[
	\res_f\coloneqq E(Y\leftarrow X = X)\colon E(Y) \to E(X),
	\]
	and associated with every map $q\colon Y\to Z$ in $\Qloc$ is a \emph{transfer} map
	\[
	\tr_q\coloneqq E(Y=Y\to Z) \colon E(Y)\to E(Z).
	\]
	The composition law in $\Span_{\calQ}(\PshT)$ tells us immediately that 
	\[
	\res_f \res_g \simeq \res_{gf}\quad\textup{and}\quad \tr_q \tr_p\simeq \tr_{qp}.
	\]
    Compositions of transfers and restrictions are controlled by the structure of pullbacks along maps in $\calQ$: given a map $f\colon X \to Z$ in $\PshT$ and a map $q\colon Y \to Z$ in $\calQ$, we may form the pullback 
	\[\begin{tikzcd}
		{X\times_Y Z} & X \\
		Y & {Z.}
		\arrow["{q'}", from=1-1, to=1-2]
		\arrow["{f'}"', from=1-1, to=2-1]
		\arrow["\lrcorner"{anchor=center, pos=0.125}, draw=none, from=1-1, to=2-2]
		\arrow["f", from=1-2, to=2-2]
		\arrow["q", from=2-1, to=2-2]
	\end{tikzcd}\]
    Contained in the structure of $E$ is a homotopy witnessing $\res_f \tr_q \simeq \tr_{q'} \res_{f'}$.
    \end{notation}
    
\begin{example}
    In the case that $T = \Glo_{\ab}$  and $f\colon \bfB H\to \bfB G, q\colon \bfB K\to \bfB G$ are faithful maps of orbits, we may compute that 
	\[
	\bfB H \times_{\bfB G} \bfB K \simeq \coprod_{[g] \in H\backslash G/ K} \bfB(H^g\cap K).
	\]
    So the relationship between restriction and transfer above specializes in this case to the usual double coset formula relating restriction and transfers along injective group homomorphisms familiar from equivariant homotopy theory and representation theory:
	\[
	\res_H^G \tr_K^G \simeq \sum_{[g] \in H \backslash G / K} \tr^H_{H\cap {}^{g}K} \circ c_g^* \circ \res^K_{H^{g}\cap K}
	\]
    for any two subgroups $H,K\subset G$.
\end{example}

Now we detail examples of inductible subcategories of $\abglobalspaces$ that play a role in this article.

\begin{example}
Let $\calG$ denote the category of finite groupoids, and denote by $\calO\subset\calG$ the wide subcategory spanned by the faithful morphisms of groupoids with abelian isotropy. The restricted Yoneda embedding along $\Glo_{\ab}\subset \calG$ realizes $\calG$ as a full subcategory of ab-global spaces. An easy argument shows that in this way $\calO$ and $\calG$ both define inductible subcategories of $\abglobalspaces$. By \cite[Th.A]{lenzmackey}, the category $\Sp_\calO^\gl$ of $\calO$-ambidextrous $\ab$-global spectra is equivalent to (the underlying $\infty$-category of) Schwede's category of global spectra $\Sp^\gl_\ab$ for the family of finite abelian groups. 
\end{example}

\begin{example}
From the inductible subcategory $\calG$, we obtain a category $\Sp^\gl_\calG$ of $\calG$-ambidextrous global spectra. By construction, $\calG$-ambidextrous global spectra admit \emph{deflations}, i.e.\ transfers along surjective group homomorphisms, and there is a forgetful functor $\Sp^\gl_\calG \to \Sp^\gl_\ab$ which forgets these deflations.
\end{example}

The next examples require the following definition.

\begin{mydef}\label{df:rightadjoint}
The evaluation functor 
\[
\ev_{\ast}\colon \Spc^{\gl}_{\ab} \to \Spc
\]
admits a fully faithful right adjoint denoted by $\bfR$. As $\Glo_{\ab}$ is a full subcategory of $\Spc$, $\bfR$ is computed by the restricted Yoneda embedding $\bfR(X)(\bfB A) \simeq \Map_{\Spc}(BA,X)$. 
\end{mydef}

\begin{example}
Let $\spaces_\pi\subset\spaces$ denote the full subcategory of $\pi$-finite spaces: spaces with finitely many path components, each of which has finitely many non-zero homotopy groups, all of which are finite. The essential image $\pi = \bfR(\spaces_\pi)\subset\abglobalspaces$ is inductible, and so generates a local class $\ol{\pi}\subset\abglobalspaces$. We call the maps in $\ol{\pi}$ \emph{relatively $\pi$-finite} maps of $\ab$-global spaces, following \cite[Def.7.2.4]{ec3}. As justified by \Cref{main:pisheaf}, the category $\Sp^{\gl}_{\pi}$ of $\pi$-amibdextrous ab-global spectra encodes the maximal structure on the tempered cohomology theories associated with oriented $\bfP$-divisible groups which can be obtained from Lurie's tempered ambidexterity theorem and our decategorification results.
\end{example}

\begin{example}
Let $S$ be a set of primes. In \cref{ssec:pcovings}, we will identify an inductible subcategory $\awayfromS\subset\pi\subset\abglobalspaces$ consisting of relatively $\pi$-finite maps of $\ab$-global spaces which, in a strong sense, do not interact with $p$-primary information for $p\in S$. The resulting category $\Sp^\gl_{\awayfromS}$ encodes ab-global cohomology theories with transfers only along relatively $\pi$-finite maps that do not interact with $S$. This class will play a key role in our discussion of the naturality of the $\pi$-ambidextrous $\ab$-global spectrum associated with an oriented $\bfP$-divisible group.
\end{example}

Finally, we mention the following trivial example.

\begin{example}
The collection of isomorphisms $\simeq \coloneq T^{\simeq}$ always forms an inductible subcategory of $\PshT$. The resulting category $\Sp^T_\simeq$ of $\simeq$-ambidextrous spectra is equivalent to the category of limit preserving functors $\PshT^\op\to\Sp$, i.e.\ to the category of (naïve) cohomology theories on $\PshT$. When $T= \Glo_{\ab}$, these are the (naive) ab-global cohomology theories of the introduction.
\end{example}

We now come to some basic categorical facts about $\Mon_\calQ(\cC)$.

\begin{lemma}\label{lem:Qcom_localization}
Let $\calQ$ be an inductible subcategory of $\PshT$, and denote by $\calD$ the full subcategory of $\PshT$ spanned by objects in $\calQ$. Then, for every $\Cc\in \PrL$, the restriction functor
\[
\Mon_\Qloc(\Cc) \subset \Fun(\Span_{{\calQ}}(\PshT),\Cc) \to \Fun(\Span_{\calQ}(\calD), \calC)
\]
is fully faithful and witnesses $\Mon_\Qloc(\Cc)$ as an accessible Bousfield localization of the functor category $\Fun(\Span_{\calQ}(\calD),\Cc)$. In particular, $\Mon_\Qloc(\Cc)$ is presentable.
\end{lemma}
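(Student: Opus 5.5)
The plan is to identify $\Mon_\Qloc(\Cc)$ with a category of functors on $\Span_\calQ(\calD)$ satisfying a set of local conditions. This is done by showing that right Kan extension along the inclusion $\iota\colon \Span_\calQ(\calD)\hookrightarrow \Span_\calQ(\PshT)$ inverts restriction on $\Mon_\Qloc(\Cc)$.

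First, I would show that every $\calQ$-ambidextrous monoid $E$ is right Kan extended from $\Span_\calQ(\calD)$. Since $\calD\supset T$ and every presheaf is a colimit of representables, the restriction $E|_{\PshT^\op}$ is right Kan extended from $T^\op$, and hence from $\calD^\op$. The point is to upgrade this to the span category. For $X\in\PshT$, the comma category $X\downarrow \iota$ consists of spans $X\leftarrow W\to D$ with $D\in\calD$ and $W\to D$ in $\bar\calQ$. I would show that the subcategory of those spans with $W\to D$ an identity, that is, the category $(\calD^\op)_{X/}\simeq (\calD_{/X})^\op$, is initial. For a fixed span $X\leftarrow W\to D$, the relevant comma category has an initial object given by $W$ itself. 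This uses that $W\in\calD$: $W\to D$ lies in $\bar\calQ$ with $D\in\calQ$, so by condition (2) of \Cref{df:preinductable} the map $W\to D$ is in $\calQ$, and hence $W$ is in $\calD$. The factorization $X\leftarrow W = W$ followed by the forward span $W\to D$ is then initial. Checking this by Quillen's Theorem A, and identifying mapping spaces in span categories via the description in \cite{HHLNa}, is the step I expect to be the main obstacle. Granting it, $E\simeq \iota_*\iota^*E$, and restriction is fully faithful on $\Mon_\Qloc(\Cc)$.

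Next, I would characterize the essential image. A functor $F\colon\Span_\calQ(\calD)\to\Cc$ lies in it if and only if $\iota_*F$ restricted to $\PshT^\op$ preserves limits. By the initiality above, $(\iota_*F)(X)=\lim_{(\calD_{/X})^\op}F$, so limit preservation reduces to one condition: for each $D\in\calD$, $F(D)\to\lim_{(T_{/D})^\op}F$ is an equivalence. The other $D\in\calD$ are then handled by cofinality of $T_{/X}$ in $\calD_{/X}$ for colimit-presentations. Indeed, writing $D=\colim_{T_{/D}}A$, this condition says $F|_{\calD^\op}$ is right Kan extended from $T^\op$, and then $\iota_*F$ restricted to $\PshT^\op$ is the right Kan extension of $F|_{T^\op}$, which preserves limits. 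Each such condition is the locality condition against the map $\colim_{T_{/D}} y(A)\to y(D)$ of representable presheaves on $\Span_\calQ(\calD)$, tensored with a set of generators of $\Cc$. This is a small set of maps, since $\calD$ is small because $\calQ$ is small.

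Finally, since $\Span_\calQ(\calD)$ is small, $\Fun(\Span_\calQ(\calD),\Cc)$ is presentable for $\Cc\in\PrL$. The full subcategory of objects local with respect to a small set of maps is then an accessible Bousfield localization, by the standard result \cite{ha}/HTT 5.5.4.15 adapted to $\Cc$-valued functors by writing $\Fun(\Span_\calQ(\calD),\Cc)\simeq \Psh(\Span_\calQ(\calD)^\op)\otimes\Cc$. Alternatively, one can note that the locality conditions are limit conditions preserved by $\kappa$-filtered colimits for large $\kappa$, so the subcategory is closed under limits and $\kappa$-filtered colimits, and presentability follows from the adjoint functor theorem. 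Presentability of $\Mon_\Qloc(\Cc)$ follows.
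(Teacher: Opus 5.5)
Your proposal is correct and follows the paper's proof. The paper likewise uses that right Kan extension along $\Span_\calQ(\calD)\subset\Span_\calQ(\PshT)$ restricts on backward maps to right Kan extension along $\calD^{\op}\subset\PshT^{\op}$, characterizes $\Mon_\calQ(\calC)$ as those $\Span(i)_*E'$ with $\res E'$ right Kan extended from $T$, and presents it as the local objects for a small set of maps. The differences are only in packaging: the paper cites \cite[Pr.C.18]{normsmotivic} for the initiality of $(\calD_{/X})^{\op}$ that you check by hand (using exactly the closure property of inductible subcategories you identify), and it obtains the generating set abstractly as $\iota_!(S)$ via \cite[Pr.5.5.4.2]{htt}, rather than writing down your explicit maps $\colim_{T_{/D}} h^A\otimes C\to h^D\otimes C$.
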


\begin{proof}
Write $j\colon T\subset \calD$ and $i\colon \calD\subset \PshT$ for the inclusions, and consider the following diagram
\[\begin{tikzcd}
	{\Fun(\Span_{{\Qloc}}(\PshT),\Cc)} & {\Fun(\Span_{\Qloc}(\calD),\Cc)} \\
	{\Fun((\PshT)^{\op},\Cc)} & {\Fun(\calD^{\op},\Cc)} & {\Fun(T^{\op},\Cc).}
	\arrow[""{name=0, anchor=center, inner sep=0}, "{\Span(i)^*}", shift left=2, from=1-1, to=1-2]
	\arrow["{\mathrm{res}}"', from=1-1, to=2-1]
	\arrow[""{name=1, anchor=center, inner sep=0}, "{\Span(i)_*}", shift left=2, hook', from=1-2, to=1-1]
	\arrow["{\mathrm{res}}", from=1-2, to=2-2]
	\arrow[""{name=2, anchor=center, inner sep=0}, "{i^*}", shift left=2, from=2-1, to=2-2]
	\arrow[""{name=3, anchor=center, inner sep=0}, "{i_*}", shift left=2, hook', from=2-2, to=2-1]
	\arrow["{j_*}", hook', from=2-3, to=2-2]
	\arrow["\dashv"{anchor=center, rotate=-90}, draw=none, from=0, to=1]
	\arrow["\dashv"{anchor=center, rotate=-90}, draw=none, from=2, to=3]
\end{tikzcd}\]
We note that the right Kan extension along $i$ exists by the dual of \cite[Th.5.1.5.6.]{htt}. On the other hand, the right Kan extension along $\Span(i)$ exists and the Beck--Chevalley transformation $\res \Span(i)_*\Rightarrow i_* \res$ is an equivalence by \cite[Pr.C.18]{normsmotivic}. Now we claim that a functor $E\colon \Span_{{\Qloc}}(\PshT) \to \calC$ is in $\Mon_\Qloc(\calC)$ if and only if the following condition is satisfied:
\begin{enumerate}
\item[($\ast$)] $E$ is isomorphic to $\Span(i)_\ast E'$ for a functor $E'\colon \Span_{\calQ}(\calD)\to\calC$, and $\res E'$ is right Kan extended from $T$.
\end{enumerate} 

To prove this claim, note that by definition $E$ is in $\Mon_\Qloc(\ccat)$ if and only of $\res E$ preserves limits, and by the dual of \cite[Th.5.1.5.6]{htt} this holds if and only if $\res E$ is right Kan extended along $ij$, or equivalently, if and only if the unit maps $\res E \to i_\ast i^\ast \res E \to  (ij)_\ast (ij)^\ast \res E$ are equivalences. By commutativity of the above diagram, we have $i_\ast i^\ast E\simeq \res \Span(i)_\ast \Span(i)^\ast E$, and so this holds if and only if the unit $E \to \Span(i)_\ast \Span(i)^\ast E$ is an equivalence and $E' = \Span(i)^\ast E$ has the property that $\res E' = i^\ast E$ is right Kan extended from $T$, as claimed.

To show this is an accessible Bousfield localization, we begin by observing that clearly $j_*$ is an accessible Bousfield localization, since it is a right adjoint and commutes with $\kappa$-filtered colimits provided that $\kappa$ is larger then the slice $(T^{\op})_{/X.}$ for all objects $X\in \calD$ (recall that $\calD$ is small). So by \cite[Pr.5.5.4.2]{htt}, a functor is in the essential image of $j_*$ if and only if it is $S$-local for a set $S$ of morphisms in $\Fun(\calD^{\op},\calC)$. But then by definition $E\colon \Span_\calQ(\calD)\to \calC$ is in $\Mon_\Qloc(\calC)$ if and only if it is $\iota_!(S)$-local, where $\iota_!$ is left adjoint to restriction along $\iota\colon \calD^{\op}\to \Span_{\calQ}(\calD)$. Applying the cited proposition again, we conclude that $\Mon_\Qloc\subset \Fun(\Span_{\calQ}(\calD),\calC)$ is an accessible Bousfield localization. 

For presentability, note that $\Span_{\calQ}(\calD)$ is now small and so $\Fun(\Span_{\calQ}(\calD),\calC)$ is presentable.
\end{proof}

\begin{lemma}\label{lm:qglobalspectra_stablepresentable}
    If $\calC$ is stable, then $\Mon_\Qloc(\calC)$ is stable. In particular, $\Sp_\calQ$ is stable. 
\end{lemma}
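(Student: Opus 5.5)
The plan is to realize $\Mon_\Qloc(\calC)$ as a full subcategory of the functor category $\Fun(\Span_{\Qloc}(\PshT),\calC)$ that contains the zero object and is closed under finite limits and finite colimits there. Since $\calC$ is stable, the functor category $\Fun(\Span_{\Qloc}(\PshT),\calC)$ is stable, with limits and colimits computed pointwise. A full subcategory of a stable category containing $0$ and closed under fibers and cofibers is again stable. Indeed, pullback squares and pushout squares in the subcategory are computed in the ambient category, where they coincide.

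First, closure under limits. The inclusion $\Mon_\Qloc(\calC)\subset \Fun(\Span_{\Qloc}(\PshT),\calC)$ is closed under all small limits that exist in $\calC$. If $E=\lim_k E_k$ is computed pointwise, then its restriction to $\PshT^\op$ is $\lim_k \res E_k$. A limit of limit-preserving functors is limit-preserving, since limits commute with limits. In particular $0$ (the empty limit) and fibers lie in $\Mon_\Qloc(\calC)$.

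Second, closure under cofibers, which I expect to be the only point needing care, since colimits need not commute with limits in general. Given a map $E\to E'$ in $\Mon_\Qloc(\calC)$, its pointwise cofiber $C$ satisfies $C\simeq \Sigma F$, where $F$ is the pointwise fiber of $E\to E'$ shifted appropriately. More precisely, in a stable $\calC$ we have $\operatorname{cofib}(E\to E')\simeq \Sigma\operatorname{fib}(E\to E')$ pointwise. The suspension $\Sigma\colon \calC\to\calC$ is an equivalence, so it preserves all limits. Hence $\Sigma\circ\res F$ preserves limits whenever $\res F$ does. By the first step $\res F$ preserves limits, so $C$ lies in $\Mon_\Qloc(\calC)$.

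Alternatively, one can argue directly that the limit-preservation condition amounts to requiring certain maps of the form $E(X)\to\lim E(\cdots)$ to be equivalences, and finite colimits commute with these limits in a stable category. The suspension trick avoids any such bookkeeping.

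Thus $\Mon_\Qloc(\calC)$ is a full subcategory containing $0$ and closed under fibers and cofibers, hence stable. The special case $\calC=\Sp$ gives stability of $\Sp_\calQ$.
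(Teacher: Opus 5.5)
Your proof is correct, but it is organized differently from the paper's. The paper writes $\Mon_\calQ(\calC)$ as the pullback of
\[
\Fun(\Span_\calQ(\PshT),\calC)\to\Fun(\PshT^\op,\calC)\leftarrow\Fun^{\mathrm{R}}(\PshT^\op,\calC)
\]
and observes that this is a diagram of stable categories and exact functors. Its pullback is then stable because limits in $\Cat^{\mathrm{st}}$ are computed in $\Cat$.

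You stay inside the single stable category $\Fun(\Span_\calQ(\PshT),\calC)$. There you apply the closure criterion for full subcategories (contains $0$, closed under fibers and cofibers), and you handle cofibers via $\mathrm{cofib}\simeq\Sigma\,\mathrm{fib}$ together with the fact that $\Sigma$ is an equivalence of $\calC$.

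The paper's argument is shorter, but it tacitly uses two facts: that $\Fun^{\mathrm{R}}(\PshT^\op,\calC)$ is stable, and that its inclusion into $\Fun(\PshT^\op,\calC)$ is exact. This is exactly the point your suspension trick proves; alternatively one can see it through $\Fun^{\mathrm{R}}(\PshT^\op,\calC)\simeq\Fun(T^\op,\calC)$.

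So your route is more self-contained, while the paper's pullback formulation is more formal. The paper's version also matches how it later reasons about limits of $\calQ$-stable categories.
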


\begin{proof}
	By definition, $\Mon_\Qloc(\calC)$ sits in a Cartesian diagram
	\[\begin{tikzcd}
		{\Mon_\Qloc(\calC)} & {\Fun(\Span_{\calQ}(\PshT),\calC)} \\
		{\Fun^{\mathrm{R}}(\PshT^{\op},\calC)} & {\Fun(\PshT^{\op},\calC).}
		\arrow[from=1-1, to=1-2]
		\arrow[from=1-1, to=2-1]
		\arrow[from=1-2, to=2-2]
		\arrow[""{name=0, anchor=center, inner sep=0}, from=2-1, to=2-2]
		\arrow["\lrcorner"{anchor=center, pos=0.125}, draw=none, from=1-1, to=0]
	\end{tikzcd}\]
	If $\calC$ is stable, then this is a diagram of stable categories and exact functors, and therefore the pullback is again stable.
\end{proof}

\subsection{As a symmetric monoidal category}\label{ssec:symmetricmonoidalstructures}
Let $\ccat$ be a presentably symmetric monoidal category and $\calQ\subset\PshT$ be an inductible subcategory. Our next goal is to introduce a symmetric monoidal structure on $\Mon_\calQ(\ccat)$. 
To begin, we note $\Span_\calQ(\PshT)$ is symmetric monoidal with respect to the Cartesian product in $\PshT$ (see \cite[Ex.3.2.2]{CLLR24Bispan}). We note this is not the categorical Cartesian product in $\Span_\calQ(\PshT)$. One would like to say that
\[
\Mon_\calQ(\calC)\subset\Fun(\Span_\calQ(\PshT),\ccat)
\]
inherits the structure of a symmetric monoidal category by localizing Day convolution. However, some care is needed to account for the fact that $\Span_\calQ(\PshT)$ is a large category. Our work in this section will show that this is nonetheless essentially the case. We proceed as follows.

\begin{construction}
Let $\calD\subset\PshT$ be the full subcategory spanned by the objects of $\calQ$. As $\calD$ is closed under products, there is an induced symmetric monoidal structure $\Span_\Qloc(\calD)^\otimes$ on $\Span_\Qloc(\calD)$. As $\Span_\Qloc(\calD)$ is small, we may then equip
\[
\Fun(\Span_\calQ(\calD),\calC)
\]
with the Day convolution symmetric monoidal structure, see \cite[\textsection2.2.6]{ha}.
\end{construction}

\begin{prop}
The inclusion 
\[
\Mon_\Qloc(\calC) \hookrightarrow \Fun(\Span_{\calQ}(\calD),\calC)
\]
is the right adjoint in a symmetric monoidal Bousfield localization.
\end{prop}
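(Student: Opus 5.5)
By \Cref{lem:Qcom_localization}, the inclusion $\Mon_\Qloc(\calC)\hookrightarrow\Fun(\Span_{\calQ}(\calD),\calC)$ is already an accessible Bousfield localization. Write $L$ for its left adjoint. It therefore suffices to check the standard criterion for compatibility with Day convolution \cite[Pr.2.2.1.9]{ha}: the class of $L$-equivalences must be closed under $\otimes$ with arbitrary objects. Because Day convolution preserves colimits in each variable, it is enough to test this on a set of generating local equivalences tensored with a set of generators of $\Fun(\Span_\calQ(\calD),\calC)$.

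First I make the generating local equivalences explicit, following the proof of \Cref{lem:Qcom_localization}. The condition cutting out $\Mon_\Qloc(\calC)$ is that $\res E'$ is right Kan extended from $T$ along $j$. So I take the set $S$ of maps in $\Fun(\calD^{\op},\calC)$ of the form
\[
\colim_{A\in T_{/X}} y(A)\otimes c\longrightarrow y(X)\otimes c,
\]
for $X\in\calD$ and $c$ ranging over a set of generators of $\calC$. The localizing set is then $\iota_!(S)$, where $\iota\colon\calD^{\op}\to\Span_\calQ(\calD)$. Since $\iota_!$ of a representable is again representable, $\iota_!(S)$ consists of the maps
\[
\colim_{A\to X} \Span_\calQ(\calD)(A,-)\otimes c\to \Span_\calQ(\calD)(X,-)\otimes c .
\]
Generators of the functor category are the objects $\Span_\calQ(\calD)(Z,-)\otimes d$, and Day convolution of representables is representable on the product: $y(X)\otimes y(Z)\simeq y(X\times Z)$, with the $\calC$-coefficients multiplied.

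Tensoring an element of $\iota_!(S)$ with $y(Z)\otimes d$ thus yields
\[
\colim_{A\to X} y(A\times Z)\otimes(c\otimes d)\to y(X\times Z)\otimes(c\otimes d).
\]
I have to show this is an $L$-equivalence, i.e.\ that mapping it into any $E\in\Mon_\Qloc(\calC)$ gives an equivalence. By Yoneda this amounts to showing $E(X\times Z)\to\lim_{A\to X}E(A\times Z)$ is an equivalence, after mapping in from $c\otimes d$. This is the main step. Since $E$ restricted to $\PshT^{\op}$ preserves limits and $\PshT$ is an $\infty$-topos, products commute with colimits. Hence $X\times Z\simeq\colim_{A\in T_{/X}}A\times Z$ in $\PshT$, and $E(X\times Z)\simeq\lim_{A}E(A\times Z)$. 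Here I use that $E$ is really the right Kan extension $\Span(i)_*$ of its restriction, together with the Beck--Chevalley identification $\res\Span(i)_*\simeq i_*\res$ from the proof of \Cref{lem:Qcom_localization}. The point to be careful about is that $A\times Z$ still lies in $\calD$, as $\calD$ is closed under products; if it did not, the statement would pass through the extension to $\PshT$, where limit preservation still holds.

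Finally, once the localization is known to be symmetric monoidal, \cite[Pr.2.2.1.9]{ha} gives the induced symmetric monoidal structure on $\Mon_\Qloc(\calC)$ and makes $L$ symmetric monoidal. The inclusion is then its lax symmetric monoidal right adjoint, which is what the statement claims.
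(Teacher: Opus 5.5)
Your proof is correct and has the same skeleton as the paper's. Both use the compatibility criterion for localizations (the paper cites \cite[Pr.3.3.4]{hinich}, you cite \cite[Pr.2.2.1.9]{ha}), and both reduce to tensoring the generating local equivalences $\iota_!(S)$ with generators. Where you differ is in how the key step is verified.

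The paper never evaluates anything on local objects. It uses that $\iota_!$ is strong symmetric monoidal for Day convolution \cite[Pr.3.6]{benmosheschlank} to move the question to $\Fun(\calD^{\op},\calC)$. There Day convolution is pointwise, because $\calD^{\op}$ carries the coCartesian structure, and the local equivalences are exactly the maps inverted by restriction to $T^{\op}$. Restriction is symmetric monoidal for the pointwise structure, so closure under tensoring is immediate.

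You instead compute the Day convolution of corepresentables by hand in $\Fun(\Span_\calQ(\calD),\calC)$; this is precisely what strong monoidality of $\iota_!$ gives on generators. You then test locality against $E\in\Mon_\calQ(\calC)$ using $X\times Z\simeq\colim_{A\in T_{/X}}A\times Z$ together with limit preservation of $E$ on $\PshT^{\op}$.

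Both arguments rest on the same fact, that products commute with colimits. You use it in the topos $\PshT$; the paper uses it pointwise in $\Spc$ and $\calC$. Your route is more explicit and avoids the external citation. The paper's is shorter and shows the conceptual reason: the localization is restriction along a monoidal functor.

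One minor point: an object of $\Mon_\calQ(\calC)$ is by definition already a functor on $\Span_\calQ(\PshT)$ whose restriction to $\PshT^{\op}$ preserves limits. So at that step you do not need the Beck--Chevalley identification $\res\Span(i)_*\simeq i_*\res$.
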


\begin{proof}
By \cite[Pr.3.3.4]{hinich}, it suffices to prove that the maps sent to equivalences by the localization 
\[
\Fun(\Span_{\calQ}(\calD),\calC) \to \Mon_\Qloc(\calC) 
\]
are closed under the tensor product. In fact, since the tensor product commutes with colimits in both variables separately, it suffices to prove that the tensor product of maps in the generating class $\iota_!(S)$ from the proof of \cref{lem:Qcom_localization} are again sent to equivalences. Equip $\calD^{\op}$ with the coCartesian monoidal structure, so that $\calD^{\op} \to \Span_{\calQ}(\calD)$ is a strong symmetric monoidal functor. By \cite[Pr.3.6]{benmosheschlank}, the functor 
\[\iota_!\colon \Fun(\calD^{\op},\calC)\to \Fun(\Span_{\calQ}(\calD),\calC) 
\] 
is strong symmetric monoidal if we equip the source and target with the Day convolution symmetric monoidal structure. Using this fact, a simple calculation shows that it suffices to prove that the tensor product of maps in $S$ is again sent to an equivalence by the localization 
\[
\Fun(\calD^{\op},\calC)\to \Fun(T^{\op},\calC).
\]
This is clear, as the Day convolution symmetric monoidal structure on $\Fun(\calD^{\op},\calC)$ is equivalent to the pointwise symmetric monoidal structure \cite[Example 3.14.]{denissilluca} and the localization is given by restricting along the inclusion $T^{\op} \subset \calD^{\op}$.
\end{proof}

\begin{cor}
$\Mon_\calQ(\calC)$ admits a unique symmetric monoidal structure for which the localization
\[
\Fun(\Span_{\calQ}(\calD),\calC) \to \Mon_\Qloc(\calC)
\]
is strong symmetric monoidal.
\qed
\end{cor}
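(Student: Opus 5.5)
The corollary follows formally from the preceding proposition together with the general theory of symmetric monoidal Bousfield localizations. I would argue as follows.

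First, write $L\colon \Fun(\Span_{\calQ}(\calD),\calC) \to \Mon_\Qloc(\calC)$ for the left adjoint of the fully faithful inclusion; it exists by \cref{lem:Qcom_localization}. The preceding proposition says that the class $W$ of morphisms inverted by $L$ is compatible with the Day convolution tensor product: if $f\in W$ and $X$ is any object, then $f\otimes X\in W$. By \cite[Pr.2.2.1.9]{ha}, equivalently \cite[Pr.3.3.4]{hinich} as already invoked, this compatibility implies that the local objects $\Mon_\Qloc(\calC)$ inherit a symmetric monoidal structure. Explicitly, one takes $X\otimes_L Y \coloneqq L(X\otimes Y)$ with unit $L(\bbone)$, and with this structure $L$ is strong symmetric monoidal while the inclusion is lax symmetric monoidal. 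This settles existence.

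For uniqueness, the key observation is that $L$ is a localization of $\infty$-categories at $W$. Suppose $\Mon_\Qloc(\calC)$ carries some symmetric monoidal structure making $L$ strong symmetric monoidal. Then $L^\otimes$ exhibits $\Mon_\Qloc(\calC)^\otimes$ as the localization of the operad $\Fun(\Span_\calQ(\calD),\calC)^\otimes$ at the tensor-compatible class $W$. The universal property of symmetric monoidal localization \cite[Prop.4.1.7.4]{ha}, or Hinich's version, states that strong symmetric monoidal functors out of $\Mon_\Qloc(\calC)$ correspond to strong symmetric monoidal functors out of $\Fun(\Span_{\calQ}(\calD),\calC)$ that invert $W$. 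Comparing two such structures via the identity functor, which inverts nothing extra, the universal property yields a unique symmetric monoidal equivalence between them under $L$. Concretely, both structures are forced to have tensor product $L(X\otimes Y)$, and their coherence data are determined by those of Day convolution.

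The only point needing care is that \emph{unique} is meant in the $\infty$-categorical sense: the space of symmetric monoidal structures together with a strong monoidal refinement of $L$ is contractible. This is exactly the content of the universal property, so no genuine obstacle arises; the corollary is a direct consequence of the proposition.
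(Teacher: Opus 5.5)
Your proposal is correct and matches the paper's approach. The paper records the corollary with \qed as an immediate consequence of the preceding proposition, which shows via Hinich that the maps inverted by the localization are closed under Day convolution. Your existence step (tensor product $L(X\otimes Y)$) and your uniqueness step (universal property of symmetric monoidal localization) are the standard unpacking of that fact.
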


This allows us to make the following definition.

\begin{mydef}\label{df:qambiabglobal_rings}
A \emph{$\calQ$-ambidextrous $\E_\infty$ ring} in $\calC$ is an object of $\CAlg(\Mon_\calQ(\calC))$. For $\calC = \Sp$, we refer to the objects of $\CAlg(\Sp_\calQ)$ as \emph{$\calQ$-ambidextrous $\E_\infty$ rings}.
\end{mydef}


Given two symmetric monoidal categories $\jdiagram$ and $\ccat$, commutative monoids for the Day convolution monoidal structure on $\Fun(\jdiagram,\ccat)$ (when it exists) are exactly the lax symmetric monoidal functors $\jdiagram\to\ccat$. Our next goal is to derive a similar description of $\calQ$-ambidextrous $\ab$-global $\E_\infty$ rings. This will be an essential ingredient to our decategorification theorems in the next section. We require some preparation, which we undertake now.

\begin{mydef}
Let $F\colon \Cc\to \calD$ be a lax symmetric monoidal functor and let $X,Y\in \calD$. Write
\begin{equation}\label{eq:otimes_on_slices}
\otimes\colon \calC_{X/}\times \calC_{Y/}\to \calC_{X \otimes Y/}
\end{equation}
for the functor which sends a pair $\{(A, X\to F(A)),(B,X\to F(B))\}$ to $(A\otimes B, X\otimes Y\to F(A)\otimes F(B)\to F(A\otimes B))$. We say $F\colon \Cc\to \calD$ is \emph{extendable} if the functor above is initial for all $X,Y\in \calD$.
\end{mydef}

\begin{lemma}
Let $F\colon \Cc\to \calD$ be an extendable lax symmetric monoidal functor of small symmetric monoidal categories. Then, for all $\calE\in \CAlg(\PrL)$, the restriction functor 
\[
\Fun(\calD,\calE)\to \Fun(\calC,\calE)
\]
is strong symmetric monoidal with respect to Day convolution. In particular, right Kan extension along $F$ is canonically lax symmetric monoidal.
\end{lemma}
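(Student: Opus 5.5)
The plan is to compare the two Day convolutions directly through their pointwise left Kan extension formulas, and to check that $F^\ast$ carries one onto the other. For $G,H\in\Fun(\calD,\calE)$ and $A\in\calC$, the Day convolution is
\[
(G\circledast H)(Z)\simeq \colim_{(X,Y,\,X\otimes Y\to Z)} G(X)\otimes H(Y),
\]
the colimit running over the comma category $\calD\times\calD\times_{\calD}\calD_{/Z}$. Evaluating $F^\ast(G\circledast H)$ at $A$ therefore gives a colimit over pairs $(X,Y)$ with a map $X\otimes Y\to F(A)$. On the other side, $F^\ast G\circledast F^\ast H$ at $A$ is a colimit of $G(F(B))\otimes H(F(C))$ over triples $(B,C,\,B\otimes C\to A)$.

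The first step is to build the comparison map $F^\ast G\circledast F^\ast H\to F^\ast(G\circledast H)$. This is the canonical lax structure on $F^\ast$: it is adjoint to
\[
F^\ast G\boxtimes F^\ast H\to F^\ast(G\circledast H)\circ\otimes,
\]
which is induced by the lax structure maps $F(B)\otimes F(C)\to F(B\otimes C)$. Coherently, this makes $F^\ast$ lax symmetric monoidal, for instance via \cite[\textsection2.2.6]{ha} or by viewing $F^\ast$ as right adjoint to the symmetric monoidal left Kan extension $F_!$. It then remains to show that this lax structure is strong, and since it is a natural transformation between functors into $\calE$, that can be checked pointwise and objectwise.

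The second step rewrites both colimits as colimits over a common indexing category and invokes extendability. I will reindex the source colimit as an iterated colimit over $(X,Y)$ of colimits over $\calC_{X/}\times\calC_{Y/}$. The target colimit, for fixed $(X,Y)$, is controlled by $\calC_{X\otimes Y/}$. The key observation is that $G(X)\otimes H(Y)$ is the constant value of the functor on these slice categories. The comparison map is induced by $\otimes\colon\calC_{X/}\times\calC_{Y/}\to\calC_{X\otimes Y/}$, and because $\otimes$ is initial, restriction along it preserves the relevant (co)limits, so the map is an equivalence.

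I expect the main obstacle to be this reindexing step: making the Fubini-type rearrangement precise requires identifying the indexing $\infty$-categories as total spaces of suitable fibrations, so that initiality of $\otimes$ applies fiberwise. The variance must also be handled with care, since the colimit over comma categories uses maps out of $F$ in a dual way, and that is exactly where the direction of $\otimes$ on slices enters. If the direct manipulation proves too messy, I will instead reduce to representables: both sides preserve colimits in each variable, $\calE$-tensored presheaves are generated by $\calE\otimes$ corepresentables, and on corepresentables $\Map(X,-)\otimes e$ the claim reduces to $F^\ast$ of a corepresentable being computed by the slice $\calC_{X/}$. There the initiality of $\otimes$ is precisely the required statement.

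The final step derives the last assertion. Once $F^\ast$ is strong symmetric monoidal, its right adjoint $F_\ast$ (right Kan extension) inherits a lax symmetric monoidal structure by doctrinal adjunction \cite[Cor.7.3.2.7]{ha}.
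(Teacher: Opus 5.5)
Your argument is right in substance, but the version that actually closes is your fallback, and that route differs genuinely from the paper's.

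\textbf{The paper's route.} The paper does no rearrangement of colimits. It evaluates the lax comparison at $C\in\calC$ and recognizes it as the map of colimits induced by
\[
\bar F\colon {\otimes}\downarrow C\to{\otimes}\downarrow F(C),\qquad (A,B,A\otimes B\to C)\mapsto(FA,FB,FA\otimes FB\to F(A\otimes B)\to FC).
\]
It then shows $\bar F$ is final by Quillen's Theorem A. The comma category of $\bar F$ under $(X,Y,\gamma)$ is exactly $(\calC_{X/}\times\calC_{Y/})\times_{\calC_{X\otimes Y/}}(\calC_{X\otimes Y/})_{/(C,\gamma)}$, which is contractible precisely because $\otimes$ is initial.

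\textbf{Your primary route.} It rests on the same contractible categories, but it is off as stated in three places:
\begin{enumerate}
\item For fixed $(X,Y)$ the source is indexed by $(\calC_{X/}\times\calC_{Y/})\times_{\calC}\calC_{/A}$, not by $\calC_{X/}\times\calC_{Y/}$. You dropped the map $B\otimes C\to A$, and without it nothing depends on $A$.
\item The relevant projection to $(X,Y)$ is a Cartesian fibration. So you get a coend weighted by classifying spaces of fibres, not an iterated colimit.
\item The input needed is not that initial functors preserve colimits; they preserve limits. What you need is that initial functors, and their base changes along the right fibration $\calC_{/A}\to\calC$, are weak homotopy equivalences. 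That gives $|(\calC_{X/}\times\calC_{Y/})\times_{\calC}\calC_{/A}|\simeq\Map_\calD(X\otimes Y,FA)$.
\end{enumerate}
The paper's $\bar F$ packaging is exactly what removes the variance problem you anticipated.

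\textbf{Your fallback.} This one is clean and complete. Both sides preserve colimits in each variable, and $F^\ast\Map_\calD(X,-)\simeq\colim_{(A,x)\in(\calC_{X/})^{\op}}\Map_\calC(A,-)$. The comparison on the summand indexed by $((A,x),(B,y))$ is the Yoneda element $\mu_{A,B}\circ(x\otimes y)\colon X\otimes Y\to F(A\otimes B)$. So the whole map is the map of colimits along $\otimes^{\op}$, which is final because $\otimes$ is initial. This uses extendability in a single step with no appeal to Theorem A, at the cost of a density argument. The paper's proof is instead a direct pointwise computation.

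\textbf{The unit, which both you and the paper omit.} Strong monoidality also requires the unit map $\Map_\calC(\bbone,-)\otimes\bbone_\calE\to\Map_\calD(\bbone,F(-))\otimes\bbone_\calE$ to be an equivalence. Equivalently, $(\bbone_\calC,\bbone_\calD\to F(\bbone_\calC))$ must be initial in $\calC_{\bbone/}$, and this does not follow from the binary condition.

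For a counterexample, take $\calC=[1]$ with $\otimes=\min$ and unit $1$, and $F\colon\calC\to\ast$.
\begin{itemize}
\item $\min\colon[1]\times[1]\to[1]$ is initial, so $F$ is extendable.
\item But $1$ is not initial in $\calC_{\ast/}=[1]$.
\item Accordingly, $F^\ast\bbone_\calE$ is constant, while the Day unit of $\Fun([1],\calE)$ takes the initial value $\emptyset_\calE$ at $0$.
\item For $\calE=\Spc$, the right Kan extension $\mathrm{ev}_0$ admits no unit map at all, so the ``in particular'' clause fails too.
\end{itemize}
A nullary version of extendability therefore has to be added. It does hold in the paper's application, where $F$ is a fully faithful symmetric monoidal inclusion.
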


\begin{proof}
Recall that restriction along $F$ is lax symmetric monoidal with respect to Day convolution, giving a transformation
\[
F^*(I)\otimes^{\textup{Day}} F^*(J)\to F^*(I\otimes^{\textup{Day}} J)
\]
for all $I,J\in \Fun(\calD,\calE)$. Evaluating the transformation above on $C\in \calC$ and applying the formula for the tensor product in Day convolution, we find that the map above is homotopic to the map
\[
\colim_{\otimes \downarrow
 C} IF(A)\otimes IF(B) \to \colim_{\otimes\downarrow F(C)} I(X)\otimes I(Y)
\]
induced on colimits by the functor $\bar{F}\colon {\otimes} \downarrow C \to {\otimes} \downarrow F(C)$ induced by $F$. Applying Quillen's theorem A, the functor $\bar{F}$ is final if and only if the category $( \otimes \downarrow C)_{(X,Y)/}$ is contractible for all $(X,Y)\in {\otimes} \downarrow F(C)$. However, these categories are easily seen to be equivalent to the under-slices of the functor (\ref{eq:otimes_on_slices}). Another application of Quillen's theorem A implies that these are contractible exactly when (\ref{eq:otimes_on_slices}) is initial, proving the statement.
\end{proof}

For the following statement, we equip $\Span_{{\calQ}}(\PshT)$ with the symmetric monoidal structure induced by the Cartesian monoidal structure of $\PshT$.

\begin{lemma}
The symmetric monoidal inclusion $\Span_{\calQ}(\calD)\subset \Span_{{\calQ}}(\PshT)$ is extendable.
\end{lemma}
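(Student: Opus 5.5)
The claim is that for all $X,Y\in\Span_\calQ(\PshT)$ the functor
\[
\otimes\colon \Span_\calQ(\calD)_{X/}\times\Span_\calQ(\calD)_{Y/}\to\Span_\calQ(\calD)_{X\times Y/}
\]
is initial. By Quillen's Theorem A it suffices to fix an object $c=(C,\,X\times Y\leftarrow V\to C)$ of the target, with $C\in\calD$ and $V\to C$ in $\bar\calQ$, and show that the comma category $\calK_c$ is weakly contractible. An object of $\calK_c$ consists of
\begin{itemize}
\item objects $a=(A,\,X\leftarrow W\to A)$ and $b=(B,\,Y\leftarrow W'\to B)$ of the two slices;
\item a span $A\times B\leftarrow U\to C$ in $\Span_\calQ(\calD)$;
\item an identification of the composite span $X\times Y\leftarrow (W\times W')\times_{A\times B}U\to C$ with the span defining $c$.
\end{itemize}
My approach is to show that $\calK_c$ is cofiltered. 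Cofiltered categories are weakly contractible, which gives the claim.

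First, $\calK_c$ is nonempty. The plan is to produce an object with $A=B=C$. Write $V\to X$ and $V\to Y$ for the two projections of $V\to X\times Y$. Take $a=(C,\,X\leftarrow V\to C)$ and $b=(C,\,Y\leftarrow V\to C)$. The tensor product is then $(C\times C,\,X\times Y\leftarrow V\times V\to C\times C)$. The comparison span $C\times C\leftarrow U\to C$ should be built from the diagonal $\Delta\colon C\to C\times C$, used backwards, together with a forward map. Here I will use the inductibility axioms: $\calQ$ is closed under diagonals, and $\bar\calQ$ is closed under base change. These should let me take $U$ to be $V$, sitting over $C\times C$ via $V\to C\xrightarrow{\Delta}C\times C$. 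The pullback $(V\times V)\times_{C\times C}V$ then has to be compared with $V$. I would instead replace $V\times V$ by $V\times_C V$, choosing $a$ and $b$ with apexes arranged so that the fibre product collapses to $V$. This forces the apexes of $a$ and $b$ to be chosen compatibly rather than both equal to $V$, and I expect a small adjustment there.

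Second, for cones over finite diagrams in $\calK_c$, I will use that $\calD$ is closed under finite products, and that maps in $\bar\calQ$ are stable under base change and composition. Given two objects $(a_i,b_i,U_i)$ with $i=1,2$, I form $A=A_1\times A_2$ and $B=B_1\times B_2$. The structure spans of $a$ and $b$ come from pulling back, that is, from fibre products of the apexes over $X$ and over $Y$. The comparison span to $C$ is obtained by a similar fibre product of $U_1$ and $U_2$ over $C$. The diagonal axiom ensures that the relevant maps into these fibre products lie in $\bar\calQ$. Equalizers of parallel morphisms are handled in the same way, using the diagonal of the relevant apex.

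The main obstacle is making these constructions coherent in the $\infty$-categorical span category, since morphisms there carry spaces of choices. The risk is that cofilteredness fails on the nose. If so, the fallback is to show directly that the full subcategory of $\calK_c$ spanned by the objects with $A=B=C$ is coinitial, and that within it the object built from the diagonal is terminal. Weak contractibility would then again follow.
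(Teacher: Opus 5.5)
Your central claim, that $\calK_c$ is cofiltered, is false in general. It already fails in the most degenerate case. Take $T=\Fin$ and $\calQ=\Fin^{\simeq}$. This is inductible, $\calD\simeq\Fin$ is closed under products, $\bar\calQ$ consists of the equivalences, and $\Span_\calQ(\PshT)\simeq\PshT^{\op}$. Since $\times\colon\calD_{/X}\times\calD_{/Y}\to\calD_{/X\times Y}$ is right adjoint to $C\mapsto(C,C)$, one finds
\[
\calK_c\simeq\bigl((\calD_{/X})_{C/}\times(\calD_{/Y})_{C/}\bigr)^{\op}.
\]
This category has the terminal object $(\id_C,\id_C)$, so it is weakly contractible. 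However, it is cofiltered only if $(\calD_{/X})_{C/}$ is filtered. That fails for $C=Y=\ast$ and $X=\{0,1\}\sqcup_{\ast}\{0,1\}$, the pushout in $\PshT$ of two representables glued at $0$. Any map from a representable to $X$ factors through one of the two wedge summands. Hence no object of $\calD$ under $\ast$ and over $X$ receives both inclusions $\{0,1\}\to X$.

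The concrete steps break as well:
\begin{itemize}
\item Your nonemptiness candidate with $A=B=C$ composes to the apex $(V\times V)\times_{C\times C}V$, not $V$. What works is $a=(V,\,X\leftarrow V=V)$ and $b=(V,\,Y\leftarrow V=V)$, with comparison span $V\times V\xleftarrow{\Delta}V\to C$; here $V\in\calD$ by axiom (2).
\item For two objects, the span $X\leftarrow W_1\times_X W_2\to A_1\times A_2$ requires $W_1\times_X W_2\to W_1\times W_2$ to lie in $\bar\calQ$. That map is a base change of the diagonal of the arbitrary presheaf $X$, which the diagonal axiom (it only concerns maps in $\calQ$) does not cover.
\item There are no span morphisms $A_1\times A_2\to A_i$ to serve as cone legs, since the projections are not in $\bar\calQ$.
\end{itemize}

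The missing idea is to push everything onto \emph{backward} spans, where the statement becomes one about slices of $\calD$. The paper uses a commutative square with three ingredients:
\begin{itemize}
\item The vertical functors $(\calD_{/X})^{\op}\to\Span_\calQ(\calD)_{X/}$, $(A\to X)\mapsto(A,\,X\leftarrow A=A)$, and their analogues for $Y$ and $X\times Y$, are initial, as observed in the proof of \cite[Pr.C.18]{normsmotivic}.
\item The bottom functor $\times\colon(\calD_{/X})^{\op}\times(\calD_{/Y})^{\op}\to(\calD_{/X\times Y})^{\op}$ admits a right adjoint, namely the opposite of $C\mapsto(C,C)$, and is therefore initial.
\item Cancellation for initial functors then shows that $\otimes$ is initial.
\end{itemize}
Your fallback can only be repaired in the same spirit. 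The relevant full subcategory of $\calK_c$ is not the objects with $A=B=C$, but those whose structure spans to $X$ and $Y$ are backward. That subcategory is $\bigl((\calD_{/X})_{V/}\times(\calD_{/Y})_{V/}\bigr)^{\op}$, with terminal object $(V,V,\Delta)$. Showing that it is coinitial in $\calK_c$ is precisely the Bachmann--Hoyois input.
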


\begin{proof}
Fix $X,Y\in \PshT$ and consider the commutative square 
\[\begin{tikzcd}
	{\Span_{\calQ}(\calD)_{X/}\times \Span_{\calQ}(\calD)_{Y/}} & {\Span_{\calQ}(\calD)_{X\times Y/}} \\
	{(\calD_{/X})^{\op}\times (\calD_{Y/})^{\op}} & {(\calD_{/X\times Y})^{\op}.}
	\arrow["\otimes", from=1-1, to=1-2]
	\arrow[from=2-1, to=1-1]
	\arrow["\times", from=2-1, to=2-2]
	\arrow[from=2-2, to=1-2]
\end{tikzcd}\]
As observed in the proof of \cite[Pr.C.18]{normsmotivic}, the vertical functors are both initial. However, the bottom horizontal functor is also initial, since it admits a right adjoint. Therefore, the top functor is initial by right cancellability of initial functors.
\end{proof}

Given a presentable category $\calC$, let $\Fun(\Span_{{\calQ}}(\PshT),\calC)^{\mathrm{Day}}$ denote the Day convolution operad, see \cite[Con.2.2.6.7]{ha}. Due to size issues, this is not a symmetric monoidal category (and may in fact fail to be locally small). Nevertheless, we have the following result.

\begin{prop}\label{prop:cmon_on_big_spans}
There is a fully faithful inclusion of categories \[\CAlg(\Mon_\calQ(\calC))\subset \CAlg(\Fun(\Span_{\calQ}(\PshT),\calC)^{\mathrm{Day}}),\] with essential image those objects whose underlying functor is right Kan extended from $\Span_{\calQ}(\calD)$.
\end{prop}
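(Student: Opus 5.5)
The plan is to compare both sides through the small category $\Span_{\calQ}(\calD)$. Write $\iota\colon \Span_{\calQ}(\calD)\subset \Span_{\calQ}(\PshT)$ for the symmetric monoidal inclusion. By the lemma just proved, $\iota$ is extendable. Hence restriction $\iota^\ast$ is strong symmetric monoidal for Day convolution, and its right adjoint, right Kan extension $\iota_\ast = \Span(i)_\ast$, is lax symmetric monoidal. Here we must be careful that the target is only the Day convolution operad $\Fun(\Span_\calQ(\PshT),\calC)^{\mathrm{Day}}$, since it is not symmetric monoidal. I would first check that the extendability lemma, and its proof via Quillen's theorem A on the slices $\otimes\downarrow C$, makes sense at the level of operads. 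Concretely, the multimorphism spaces of the big Day operad are computed by natural transformations $I_1(X_1)\otimes\cdots\otimes I_n(X_n)\to J(X_1\times\cdots\times X_n)$. These exist without any size hypotheses and are compatible with $\iota^\ast$. With this in hand, $\iota^\ast$ and $\iota_\ast$ give a relative adjunction of operads over $\mathrm{Fin}_\ast$, with $\iota^\ast$ strong, so we obtain an adjunction
\[
\iota^\ast\colon \CAlg(\Fun(\Span_\calQ(\PshT),\calC)^{\mathrm{Day}})\rightleftarrows \CAlg(\Fun(\Span_\calQ(\calD),\calC)) \colon \iota_\ast .
\]
The right adjoint $\iota_\ast$ is fully faithful because $\iota$ is fully faithful, so the counit $\iota^\ast\iota_\ast\to \id$ is an equivalence on underlying functors and hence on algebras. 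Its essential image consists of the algebras whose underlying functor is right Kan extended from $\Span_\calQ(\calD)$: the unit $E\to\iota_\ast\iota^\ast E$ of algebras is an equivalence if and only if it is one on underlying objects.

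Next I combine this with the previous results. By the proposition above, $\Mon_\calQ(\calC)\subset\Fun(\Span_\calQ(\calD),\calC)$ is a symmetric monoidal Bousfield localization. Therefore $\CAlg(\Mon_\calQ(\calC))$ is the full subcategory of $\CAlg(\Fun(\Span_\calQ(\calD),\calC))$ spanned by algebras whose underlying functor is local. By the proof of \cref{lem:Qcom_localization}, that subcategory is identified, via $\Span(i)_\ast=\iota_\ast$, with functors on $\Span_\calQ(\PshT)$ lying in $\Mon_\calQ(\calC)$. Composing with $\iota_\ast$ on algebras gives a fully faithful functor $\CAlg(\Mon_\calQ(\calC))\to\CAlg(\Fun(\Span_\calQ(\PshT),\calC)^{\mathrm{Day}})$. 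This agrees with the tautological inclusion on underlying objects, since $\Mon_\calQ(\calC)\subset\Fun(\Span_\calQ(\PshT),\calC)$ is exactly the image of the local objects under $\iota_\ast$.

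It remains to compare essential images. The image of $\CAlg(\Mon_\calQ(\calC))$ consists of the $\iota_\ast$-images of algebras on $\Span_\calQ(\calD)$ whose restriction to $\calD^\op$ is right Kan extended from $T$. Every such functor is right Kan extended from $\Span_\calQ(\calD)$. The claim as stated identifies the image with all right Kan extended algebras. This matches once one notes, via condition $(\ast)$ in \cref{lem:Qcom_localization}, that being in $\Mon_\calQ$ is equivalent to being right Kan extended from $\Span_\calQ(\calD)$ together with the extra locality on $\calD$. I expect the intended reading to include this locality (or it may be automatic for the objects considered), and I would state the image precisely as: right Kan extended from $\Span_\calQ(\calD)$, with restriction to $\PshT^\op$ limit preserving.

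The main obstacle is the first step. We must make rigorous that the extendability lemma yields a lax monoidal right adjoint into a large Day convolution operad that is not symmetric monoidal and possibly not locally small. I would handle this by passing to a larger universe, in which $\Span_\calQ(\PshT)$ is small and the Day convolution becomes an honest presentably symmetric monoidal structure on $\Fun(\Span_\calQ(\PshT),\widehat{\calC})$ for a universe enlargement $\widehat{\calC}$. I would then apply the extendability lemma there, and check that the relevant colimits defining the tensor products of right Kan extended functors land back in $\calC$. This follows from the initiality of the slice functors used in the lemma, since it lets the colimits be computed over small categories.
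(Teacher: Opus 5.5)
Your proposal is correct and follows essentially the paper's route. Extendability of $\Span_\calQ(\calD)\subset\Span_\calQ(\PshT)$ makes right Kan extension a fully faithful lax symmetric monoidal functor, and the size problem is handled by working with $\widehat{\calC}$ in a larger universe. One correction to your last step: what must land back in $\calC$ is the small limits computing these right Kan extensions, which holds because $\calC\to\widehat{\calC}$ preserves small limits; the Day tensor products never need to land in $\calC$.

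Your sharpening of the essential image is also right. Being right Kan extended from $\Span_\calQ(\calD)$ alone does not force the restriction to $\PshT^{\op}$ to preserve limits. For instance, take $T=\ast$, $\calQ$ the finite sets and $\calC=\Sp$, and right Kan extend the constant functor $\Span(\Fin)\to\Sp$ at the unit. The image is exactly the algebras whose underlying functor lies in $\Mon_\calQ(\calC)$, which is the form used in \cref{prop:qcommutativeglobalringsaslaxfunctors}.
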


\begin{proof}
Let $\widehat{\spaces}$ be a universe of large spaces with respect to which $\ccat$ is small, and let $\widehat{\ccat} \coloneqq \Fun^{\text{small}-R}(\ccat^\op,\widehat{\spaces})$ be the category of functors $\ccat^\op \to \widehat{\spaces}$ which preserve small limits, so that $\widehat{\ccat}$ is presentable with respect to $\widehat{\spaces}$. Then $\Fun(\Span_{{\calQ}}(\PshT),\widehat{\calC})$ is now a symmetric monoidal category (in this larger universe), and so the previous two lemmas combine to give a fully faithful and lax symmetric monoidal functor
\[
\mathrm{Ran}\colon \Fun(\Span_{{\calQ}}(\PshT),\widehat{\calC}) \to \Fun(\Span_{{\calQ}}(\PshT),\widehat{\calC}).
\]
Therefore, we obtain a fully faithful inclusion
\[ \Mon_\calQ(\calC) \subset \CAlg(\Fun(\Span_{\calQ}(\calD),\widehat{\calC})) \subset \CAlg(\Fun(\Span_{{\calQ}}(\PshT),\widehat{\calC})).\]
As $\calC\to \widehat{\calC}$ commutes with all small limits, this factors through 
\[\CAlg(\Fun(\Span_{{\calQ}}(\PshT),\calC)^{\mathrm{Day}})\subset \CAlg(\Fun(\Span_{{\calQ}}(\PshT),\widehat{\calC})),\] providing the necessary fully faithful inclusion. Its essential image is easily characterized by unwinding the definitions.
\end{proof}

\begin{prop}\label{prop:qcommutativeglobalringsaslaxfunctors} 
Let $\calC\in \CAlg(\PrL)$. There is a fully faithful inclusion 
\[
\CAlg(\Mon_\calQ(\calC)) \subset \Fun^{\otimes\textup{-lax}}(\Span_{{\calQ}}(\PshT),\calC)
\]
which identifies the category of $\calQ$-ambidextrous $\E_\infty$ rings in $\calC$ with the subcategory of lax symmetric monoidal functors $\Span_{{\calQ}}(\PshT)\to \calC$ whose underlying functor is a $\calQ$-ambidextrous monoid in $\calC$.
\end{prop}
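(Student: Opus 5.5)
The plan is to combine \Cref{prop:cmon_on_big_spans} with the standard identification of commutative algebras for Day convolution with lax symmetric monoidal functors. By \Cref{prop:cmon_on_big_spans}, $\CAlg(\Mon_\calQ(\calC))$ sits fully faithfully inside $\CAlg(\Fun(\Span_{\calQ}(\PshT),\calC)^{\mathrm{Day}})$. Its essential image consists of those algebras whose underlying functor is right Kan extended from $\Span_\calQ(\calD)$ and whose restriction to $\Span_\calQ(\calD)$ is local, i.e.\ lies in $\Mon_\calQ(\calC)$. By the characterization $(\ast)$ in the proof of \Cref{lem:Qcom_localization}, this is exactly the condition that the underlying functor $\Span_\calQ(\PshT)\to\calC$ is a $\calQ$-ambidextrous monoid.

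So it remains to identify $\CAlg(\Fun(\Span_{\calQ}(\PshT),\calC)^{\mathrm{Day}})$ with $\Fun^{\otimes\textup{-lax}}(\Span_{\calQ}(\PshT),\calC)$. For this I will invoke the universal property of the Day convolution operad, \cite[\textsection2.2.6]{ha}. For any symmetric monoidal $\jdiagram$ and symmetric monoidal $\calC$, the Day convolution operad $\Fun(\jdiagram,\calC)^{\mathrm{Day}}$ is characterized by the property that operad maps $\mathcal{O}\to\Fun(\jdiagram,\calC)^{\mathrm{Day}}$ correspond to operad maps $\mathcal{O}\times\jdiagram^\otimes\to\calC^\otimes$. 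Taking $\mathcal{O}$ to be the commutative operad gives $\CAlg(\Fun(\jdiagram,\calC)^{\mathrm{Day}})\simeq\Fun^{\otimes\textup{-lax}}(\jdiagram,\calC)$. This step requires no smallness of $\jdiagram$ at the level of operads, as Lurie's construction of the Day convolution operad only needs the existence of the relevant operadic colimits when one asks for a symmetric monoidal structure.

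The main obstacle is size. $\Span_\calQ(\PshT)$ is large, so one must check that the universal property holds for the operad $\Fun(\Span_\calQ(\PshT),\calC)^{\mathrm{Day}}$ and not just for an honest symmetric monoidal category. I would handle this as in the proof of \Cref{prop:cmon_on_big_spans}: pass to $\widehat{\calC}$ in a larger universe, where Day convolution exists and the identification holds verbatim. Then I would observe that a lax symmetric monoidal functor into $\widehat{\calC}$ whose values lie in $\calC$ is the same as one into $\calC$, since $\calC\to\widehat{\calC}$ is fully faithful and symmetric monoidal on the relevant (operadic) structure. Composing the two fully faithful inclusions then yields the claimed fully faithful inclusion with the stated essential image.
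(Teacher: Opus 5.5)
Your proposal is correct and follows the paper's proof. The paper likewise combines \Cref{prop:cmon_on_big_spans} with the universal property of the Day convolution operad \cite[Ex.2.2.6.9]{ha}, which identifies its commutative algebras with lax symmetric monoidal functors. Your extra work on the essential image (via the characterization $(\ast)$ in the proof of \Cref{lem:Qcom_localization}) and your detour through $\widehat{\calC}$ are harmless elaborations; the paper applies Lurie's universal property directly to the large Day convolution operad, with no passage to $\widehat{\calC}$.
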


\begin{proof}
The universal property of Day convolution, see \cite[Ex.2.2.6.9]{ha}, gives a commutative diagram
\[\begin{tikzcd}
	{\CAlg(\Fun(\Span_{{\calQ}}(\PshT),\calC)^{\mathrm{Day}})} && {\Fun^{\otimes\textup{-lax}}(\Span_{{\calQ}}(\PshT),\calC)} \\
	& {\Fun(\Span_{{\calQ}}(\PshT),\calC).}
	\arrow["\simeq", from=1-1, to=1-3]
	\arrow["{\mathrm{fgt}}"', from=1-1, to=2-2]
	\arrow["{\mathrm{fgt}}", from=1-3, to=2-2]
\end{tikzcd}\]
Combining this with \cref{prop:cmon_on_big_spans} gives the result.
\end{proof}

We will typically think of a $\calQ$-ambidextrous $\E_\infty$ ring in $\Cc$ through the lens of the previous proposition. For example, with this perspective, we can highlight some of the important additional structure contained in a $\calQ$-ambidextrous $\E_\infty$ ring.

\begin{rmk}
	Suppose $E\in \CAlg(\Mon_\calQ(\Cc))$ is a $\calQ$-ambidextrous $\E_\infty$ ring in $\Cc$, viewed as a lax symmetric monoidal functor
	\[
	\Span_{{\calQ}}(\PshT)\to \Cc	
    \]
	whose restriction to $\PshT^{\op}$ preserves limits.
	\begin{enumerate}
		\item Consider $\PshT$, equipped with the Cartesian symmetric monoidal structure. Then the inclusion $\PshT^{\op} \subset \Span_{{\calQ}}(\PshT)$ is strong monoidal. Precomposing $E$ by this functor, we obtain a lax monoidal functor 
		$\PshT^{\op} \to \Cc.$ 
        Note that $\PshT^{\op}$ is coCartesian symmetric monoidal, and so by the universal property of coCartesian symmetric monoidal categories, the functor above is equivalent to a functor $\PshT^{\op} \to \CAlg(\Cc)$. In particular, $E(X)$ is an $\E_\infty$ ring for all $X\in \PshT$ and $\mathrm{res}_f\colon E(Y)\to E(X)$ is a map of $\E_\infty$ rings for all $f\colon X\to Y$.
		\item Consider a map $q\colon Y\to X$ in $\Qloc$. As explained in \cref{prop:rel_lift_to_mod} below, the span $Y= Y \to X$ is a map of $X$-modules in $\Span_{{\calQ}}(\PshT)$, where $X$ is a commutative algebra in $\Span_{{\calQ}}(\PshT)$ via the previous point and $Y$ is a $X$-module via $\res_q$. This structure is preserved by the lax symmetric monoidal functor $E$, and so we conclude that the map 
	   \[
	   \mathrm{tr}_q\colon E(Y) \to E(X)
	   \]
	is canonically a map of $E(X)$-modules. We may think of this as an analog of \emph{Frobenius reciprocity}.
\end{enumerate}
\end{rmk}

Let us explain the claim made in point 2 of the remark above. For later use, we will even record a coherent form of the statement.

\begin{prop}\label{prop:lifts_to_modules}
Let $\Cc\in \CAlg(\PrL)$ and $E$ be a $\calQ$-ambidextrous $\E_\infty$ ring in $\Cc$. Then there is a lift
\begin{center}\begin{tikzcd}
&\Mod_{E(\ast)}(\Cc)\ar[d]\\
\Span_{{\calQ}}(\PshT)\ar[r,"E"]\ar[ur,dashed]&\Cc
\end{tikzcd}
\end{center}
of $E$ to a $\calQ$-ambidextrous $\E_\infty$ ring in $\Mod_{E(\ast)}(\calC)$.
\end{prop}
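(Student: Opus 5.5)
The plan is to deduce this from a general categorical fact about lax symmetric monoidal functors out of a symmetric monoidal category with a distinguished commutative algebra. The unit $\ast\in\PshT$ is the monoidal unit of $\Span_\calQ(\PshT)$, and $\ast$ is terminal in $\PshT$, so the maps $X\to\ast$ exhibit each object as a module over the unit. The point is that the monoidal unit $\bbone$ of any symmetric monoidal category $\jdiagram$ is canonically a commutative algebra, and $\Mod_{\bbone}(\jdiagram)\simeq\jdiagram$ as symmetric monoidal categories (\cite[Pr.3.4.2.1]{ha}, or its $\E_\infty$ analogue). A lax symmetric monoidal functor $E\colon\jdiagram\to\Cc$ sends $\bbone$ to a commutative algebra $E(\bbone)$, and it induces a lax symmetric monoidal functor
\[
\Mod_{\bbone}(\jdiagram)\to\Mod_{E(\bbone)}(\Cc).
\]
Composing with the inverse of the equivalence $\jdiagram\simeq\Mod_\bbone(\jdiagram)$ gives the desired lift. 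I would apply this with $\jdiagram=\Span_\calQ(\PshT)$, $\bbone=\ast$, and $E$ regarded as a lax symmetric monoidal functor via \cref{prop:qcommutativeglobalringsaslaxfunctors}.

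The key steps, in order, are as follows.

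First, identify $E$ with a lax symmetric monoidal functor $\Span_\calQ(\PshT)\to\Cc$ by \cref{prop:qcommutativeglobalringsaslaxfunctors}.

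Second, note that $\ast$ is the unit of the Cartesian-induced structure on spans, so that $\Mod_\ast(\Span_\calQ(\PshT))\simeq\Span_\calQ(\PshT)$ symmetric monoidally.

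Third, apply functoriality of $\Mod$ for lax monoidal functors \cite[\textsection4.5]{ha}. The standard construction is via the operad $\mathrm{Mod}^{\mathrm{Comm}}$: a lax monoidal functor is a map of $\infty$-operads, and it carries pairs (algebra, module) to pairs.

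Fourth, check that the composite lift $\Span_\calQ(\PshT)\to\Mod_{E(\ast)}(\Cc)$ is again lax symmetric monoidal for the relative tensor product $\otimes_{E(\ast)}$. This is standard: the forgetful functor $\Mod_{E(\ast)}(\Cc)\to\Cc$ is lax monoidal, and the lax structure factors through relative tensor products because the lax maps $E(X)\otimes E(Y)\to E(X\times Y)$ are $E(\ast)\otimes E(\ast)$-linear, and hence descend to $E(X)\otimes_{E(\ast)}E(Y)$.

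Fifth, verify the $\calQ$-ambidextrous condition. The forgetful functor $\Mod_{E(\ast)}(\Cc)\to\Cc$ is conservative and preserves limits. Hence the restriction of the lift to $\PshT^\op$ preserves limits because $E$ does. By \cref{prop:qcommutativeglobalringsaslaxfunctors} applied in $\Mod_{E(\ast)}(\Cc)\in\CAlg(\PrL)$, the lift is then an object of $\CAlg(\Mon_\calQ(\Mod_{E(\ast)}(\Cc)))$.

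The main obstacle is the size issue in the third step. $\Span_\calQ(\PshT)$ is large, and the Day convolution on functors out of it is only an operad. I would handle this by working at the level of $\infty$-operads throughout, rather than with symmetric monoidal functor categories. The equivalence $\jdiagram\simeq\Mod_\bbone(\jdiagram)$ and the induced functor on module categories are both operadic constructions in \cite{ha} that make sense for large symmetric monoidal categories after enlarging the universe, as in the proof of \cref{prop:cmon_on_big_spans}. The remaining conditions are properties, so the result descends back to the small universe.
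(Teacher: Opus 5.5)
Your proposal is correct and follows essentially the same route as the paper, whose proof is just the composite $\Span_{\calQ}(\PshT)\simeq\Mod_\ast\Span_{\calQ}(\PshT)\xrightarrow{E}\Mod_{E(\ast)}(\Cc)$, using that $\ast$ is the monoidal unit. Your additional checks are sound, and the paper leaves them implicit: that the lift is again lax symmetric monoidal, that its restriction to $\PshT^{\op}$ preserves limits because the forgetful functor $\Mod_{E(\ast)}(\Cc)\to\Cc$ is conservative and limit preserving, and the size bookkeeping.
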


\begin{proof}
The point $\ast$ is the unit of $\Span_{{\calQ}}(\PshT)$, and so we can define the desired lift of $E$ as the composite
\[
\Span_{{\calQ}}(\PshT)\simeq \Mod_{\ast}\Span_{{\calQ}}(\PshT) \xrightarrow{E} \Mod_{E(\ast)}(\Cc).\qedhere
\]
\end{proof}

We can also obtain a version of this statement relative to any $X\in \PshT$ via the following proposition. In the following proposition, we denote by $\Span_{{\calQ}}(\PshT{ }_{/X})$ the category of spans in $\PshT{}_{/X}$ of the form 
\[\begin{tikzcd}
	Y & W & Z\\
	& X.
	\arrow[from=1-1, to=2-2]
	\arrow[from=1-2, to=1-1]
	\arrow["{\in\bar{\calQ}}", from=1-2, to=1-3]
	\arrow[from=1-2, to=2-2]
	\arrow[from=1-3, to=2-2]
\end{tikzcd}\]
This is again symmetric monoidal via pullback in $\PshT_{/X}$. 

\begin{prop}\label{prop:rel_lift_to_mod}
Let $X\in \PshT$. Then the restriction 
\[
\Span_{{\calQ}}(\PshT_{/X})\to \Span_{{\calQ}}(\PshT)
\]
is canonically lax symmetric monoidal. In particular, given $E\in \CAlg(\Mon_\Qloc(\calC))$ there exists a diagram
\[
\begin{tikzcd}
	{\Span_{{\Qloc}}(\PshT{}_{/X})} & {\Mod_{E(X)}(\calC)} \\
    \Mod_X({\Span_{{\Qloc}}(\PshT)})&{\Mod_{E(X)}(\calC)}
    \\
	{\Span_{{\Qloc}}(\PshT)} & \calC
	\arrow["{E_{|X}}", from=1-1, to=1-2]
	\arrow[from=1-1, to=2-1]
    \arrow[from=2-1, to=3-1]
	\arrow["\mathrm{fgt}", from=2-2, to=3-2]
	\arrow[from=1-2, to=2-2, equals]
	\arrow["E", from=3-1, to=3-2]
    \arrow[from=2-1,to=2-2]
\end{tikzcd}\]
of lax symmetric monoidal functors.
\end{prop}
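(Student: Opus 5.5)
The plan is to deduce everything from a general fact about span categories: the forgetful functor $\PshT_{/X}\to\PshT$ preserves pullbacks and carries $\bar\calQ$ into $\bar\calQ$. For the latter, a map over $X$ lies in $\bar\calQ$ by definition exactly when its underlying map in $\PshT$ does. Hence it induces a functor $u\colon\Span_\calQ(\PshT_{/X})\to\Span_\calQ(\PshT)$.

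To equip $u$ with a lax symmetric monoidal structure, I first argue at the level of $\PshT_{/X}$ and $\PshT$, both with Cartesian monoidal structures. The forgetful functor is a right adjoint (left adjoint $Y\mapsto Y\times X$), so it is lax monoidal for the Cartesian structures. The lax structure maps are the canonical monomorphism-type maps $Y\times_X Z\to Y\times Z$ together with the unit map $\ast\to X$.

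The key step is then to promote this to spans. I would use the functoriality of the bispan/span construction of \cite{CLLR24Bispan} (or \cite{HHLNa}). There, a lax monoidal functor between Cartesian monoidal categories that preserves pullbacks and the relevant classes induces a lax monoidal functor on spans. The structure map $u(Y)\otimes u(Z)\to u(Y\otimes Z)$ should be the \emph{backward} span
\[
Y\times Z \leftarrow Y\times_X Z = Y\times_X Z .
\]
The unit map is $\ast\leftarrow X = X$.

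The one subtle point, which I expect to be the main obstacle, is naturality of these backward maps with respect to forward maps $q\in\bar\calQ$. This requires the square
\[
\begin{array}{ccc}
W\times_X Z & \to & W\times Z\\
\downarrow && \downarrow\\
Y\times_X Z & \to & Y\times Z
\end{array}
\]
to be Cartesian, where the vertical maps are $q\times_X Z$ and $q\times Z$. This holds because $Y\times_X Z = (Y\times Z)\times_{X\times X}X$ and $q$ is a map over $X$. Granting this Beck--Chevalley property, the coherent lax structure is produced formally. One convenient way is to realize $u$ as the composite of the strong monoidal functor $\Span_\calQ(\PshT_{/X})\to\Span_\calQ(\PshT)_{/X}$-type constructions, or to cite the general result that right adjoints compatible with pullbacks induce lax functors on spans, via the operadic description of the Cartesian structure on spans (\cite[Ex.3.2.2]{CLLR24Bispan}).

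For the diagram, note that $X$ is the unit of $\Span_\calQ(\PshT_{/X})$, so every object is canonically a module over it. The lax functor $u$ therefore factors through $\Mod_{u(X)}=\Mod_X(\Span_\calQ(\PshT))$, as in \cref{prop:lifts_to_modules} (lax functors send modules over the unit to modules over its image). Composing with $E$, which is lax symmetric monoidal by \cref{prop:qcommutativeglobalringsaslaxfunctors}, gives a lax functor $\Mod_X(\Span_\calQ(\PshT))\to\Mod_{E(X)}(\calC)$ lying over $E$. Define $E_{|X}$ as the total composite. Commutativity of the diagram holds by construction.
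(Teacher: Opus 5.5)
Your span-level structure maps are the right ones (the backward spans $Y\times Z\leftarrow Y\times_X Z = Y\times_X Z$ and $\ast\leftarrow X = X$). Your Cartesian square is correct, and the factorization through $\Mod_X(\Span_\calQ(\PshT))$ is exactly how the paper deduces the diagram. The gap is in the main claim: you never construct a \emph{coherent} lax symmetric monoidal structure on $\Span(\mathrm{fgt})$, and the mechanism you sketch is applied at the wrong level. In $\PshT$, the forgetful functor $\PshT_{/X}\to\PshT$ is a \emph{left} adjoint; its right adjoint is $Y\mapsto Y\times X$. It does not preserve products, and for the Cartesian structures it is only \emph{oplax}, with structure maps $Y\times_X Z\to Y\times Z$ and $X\to\ast$. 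A map $\ast\to X$ need not exist at all. So ``right adjoint, hence lax'' is false in $\PshT$, and your general principle (lax functors of Cartesian categories induce lax functors on spans) points the wrong way. Moreover, a single Beck--Chevalley check followed by ``the coherent structure is produced formally'' does not supply the higher coherences of a map of $\infty$-operads, and the alternative citations you mention are not made precise.

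The missing idea is that passing to spans reverses the adjunction; this is the paper's argument. By \cite[Cor.C.21]{normsmotivic}, $\mathrm{fgt}\dashv(-\times X)$ induces an adjunction
\[
\Span(-\times X)\colon \Span_\calQ(\PshT)\rightleftarrows \Span_\calQ(\PshT_{/X})\colon \Span(\mathrm{fgt}).
\]
You can see this on mapping spaces: a span $Y\times X\leftarrow W\to Z$ over $X$ is the same as a span $Y\leftarrow W\to Z$ in $\PshT$. The functor $-\times X$ preserves finite products, and it preserves $\bar{\calQ}$ by base change, so $\Span(-\times X)$ is strong symmetric monoidal. Its right adjoint $\Span(\mathrm{fgt})$ is therefore canonically lax symmetric monoidal by doctrinal adjunction. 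This produces all coherences at once, and unwinding the unit and counit recovers precisely the backward spans you wrote down. With this in place, the rest of your argument, applying \cref{prop:lifts_to_modules} to the composite with $E$, goes through as you describe.
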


\begin{proof}
Consider the adjunction 
\[
\mathrm{fgt} \colon \PshT{}_{/X} \rightleftarrows \PshT \colon {-\times X}, 
\]
and note that $-\times X$ is strong symmetric monoidal. By an application of \cite[Cor.C.21]{normsmotivic}, this induces an adjunction
\[
\Span(-\times X)\colon \Span_{{\calQ}}(\PshT) \rightleftarrows \Span_{{\calQ}}(\PshT{}_{/X})\colon \Span(\mathrm{fgt}).
\]
However $\Span(-\times X)$ is again strong monoidal by definition, and so we conclude that $\Span(\mathrm{fgt})$ is canonically lax symmetric monoidal. The ``in particular'' statement follows by applying the argument of the previous proposition to the lax symmetric monoidal composite
\[
\Span_{{\calQ}}(\PshT{}_{/X})\to \Span_{{\Qloc}}(\PshT)\xrightarrow{E} \calC. \qedhere
\]
\end{proof}

\subsection{Homology theories associated with ambidextrous \texorpdfstring{$\E_\infty$}{E-infty} rings}\label{ssec:homologytheories}

We now explain how to define ``homology theories'' associated with a $\calQ$-ambidextrous $\E_\infty$ ring $E$ in $\calC$. Informally, our definition of $E$-homology $\D E(\bs)$ will proceed by requiring it to agree with $E$-cohomology on \emph{representables}, $\D E(A) \coloneqq E(A)$ for $A\in T$. However, the functoriality of this assignment will be in transfers, rather than restrictions. This assignment is then Kan extended to a homology theory on all presheaves on $T$. For this definition to be sensible it is clearly necessary for $T\subset \calQ$, and so we make the following:

\begin{ass}
Throughout this subsection, the inductible subcategory $\calQ\subset\PshT$ contains the full subcategory $T\subset\PshT$ spanned by the representables.
\end{ass}

For later use we will also make our definition relative to any object $Y\in \calQ$.

\begin{ex}
A key example is the inductible subcategory $\pi\subset \abglobalspaces$. Applying the constructions of this section to tempered cohomology will give the \emph{tempered homology theories} studied in \Cref{ssec:temperedHOMOLOGY}. An important non-example is $\calO\subset \abglobalspaces$. In particular, we do not associate a homology theory to global spectra in the sense of Schwede.
\end{ex} 

\begin{remark}
Observe that left Kan extension defines an analog of the suspension spectrum functor $\Sigma^\infty_+\colon \PshT \to \Sp_\calQ$. Given $E\in \Sp_\calQ$, a natural guess for the definition of $E$-homology, generalizing the definition in the case of equivariant and ordinary spectra, is $X\mapsto \Sigma^\infty_+X\otimes E.$ Our definition will generally be different from this, and is instead geared to ensure good duality properties. An important exception is the case $T, \calQ= \mathrm{Orb}_G$, where our definition does recover the usual definition of $G$-equivariant homology, see \Cref{rmk:ord_eq_homology}.
\end{remark}

\begin{mydef}\label{def:homology}
Let $E\colon \Span_{\calQ}(\PshT)\to \calC$ be a $\calQ$-ambidextrous $\E_\infty$ ring in $\calC$ and let $Y\in \calQ$. Consider the lax symmetric monoidal functor
\[
E_{|Y}\colon \Span_{\calQ}(\PshT_{/Y})\to \Mod_{E(Y)}\calC.
\]
Note that there is an inclusion
\[
\calQ_{/Y}\subset \Span_{\calQ}(\Spc^{\gl}_\ab{}_{/F})
\]
of symmetric monoidal categories. By restricting $E_{|F}$ along this inclusion, we obtain a lax symmetric monoidal functor
\[
\calQ_{/F} \to \Span_{\calQ}(\PshT_{/F}) \to \Mod_{E(F)} \Cc
\]
courtesy of \Cref{prop:lifts_to_modules}. Taking the left Kan extension of this functor along the inclusion 
\[
\calQ_{/F} \subset \PshT_{/F}
\]
yields a functor
\[
\D E\colon \PshT_{/Y} \to \Mod_{E(F)}\calC,
\]
which we call \emph{$E$-homology}. Since left Kan extension along a strong monoidal functor is again strong monoidal on Day convolution \cite[Cor.3.8]{benmosheschlank}, and so induces a functor on commutative algebras, we find that $\D E$ is again canonically lax symmetric monoidal. 
\end{mydef}

The construction above is not generally well-behaved; the following proposition explains when it is.

\begin{prop}\label{prop:homology_underlying}
Let $E$ be a $\calQ$-ambidextrous $\E_\infty$ ring in $\calC$ and let $Y\in \calQ$. If the composite 
\[
\calQ\to \Span_{{\Qloc}}(\PshT)\xrightarrow{E} \calC 
\]
is left Kan extended from $T$, then $E$-homology 
\[
\D E\colon \PshT_{/Y} \to \Mod_{E(Y)}\calC
\]
is colimit preserving.
\end{prop}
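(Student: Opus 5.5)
The plan is to show that $\D E$ is left Kan extended along the further inclusion $T_{/Y}\subset \PshT_{/Y}$. Since $\PshT_{/Y}\simeq \Psh(T_{/Y})$, it is the free cocompletion of $T_{/Y}$. Hence a functor out of $\PshT_{/Y}$ preserves colimits exactly when it is the left Kan extension of its restriction to the representables $T_{/Y}$. By transitivity of left Kan extensions, and since $\D E$ is by definition left Kan extended from $\calQ_{/Y}$, it suffices to prove one thing: the functor $\calQ_{/Y}\to \Mod_{E(Y)}\calC$, $(Z\to Y)\mapsto E(Z)$ with covariance given by transfers, is left Kan extended from $T_{/Y}$.

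First I reduce from $\Mod_{E(Y)}\calC$ to $\calC$. The forgetful functor $\Mod_{E(Y)}\calC\to\calC$ is conservative and preserves colimits, since $\calC\in\CAlg(\PrL)$. So the relevant colimits, and the comparison map from the pointwise Kan extension formula, may be computed in $\calC$.

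Next I pass from the slice to the absolute situation. The functor under consideration is the composite of the forgetful functor $\calQ_{/Y}\to\calQ$ with $E|_{\calQ}$, using the identification from \Cref{prop:rel_lift_to_mod}. For $(Z\to Y)\in\calQ_{/Y}$, the comma category $(T_{/Y})_{/(Z\to Y)}$ is equivalent to $T_{/Z}$. This is because a map $A\to Z$ over $Y$ is the same thing as a map $A\to Z$, with the map to $Y$ obtained by composition. Likewise, for $Z\in\calQ$ the comma category appearing in the hypothesis is $T_{/Z}$, where every map $A\to Z$ with $A\in T$ must lie in $\calQ$. Here one uses that $\calQ$ is a subcategory containing $T$; the forward maps appearing are transfers, so the relevant morphisms are those of $\calQ$.

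With these identifications, the pointwise formula for the left Kan extension at $(Z\to Y)$ is $\colim_{(A\to Z)} E(A)$, taken over exactly the same indexing category as in the hypothesis at $Z$, with the same transition maps (transfers). The hypothesis then says this colimit maps isomorphically to $E(Z)$, and this establishes the claim.

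The main obstacle is making this comma-category identification precise. One must check that the maps $A\to Z$ over $Y$ that are allowed in $\calQ_{/Y}$ coincide with those allowed in $\calQ$ over $Z$. This amounts to a 2-out-of-3 property: if $A\to Y$ and $Z\to Y$ lie in $\calQ$, then $A\to Z$ lies in $\calQ$. That property follows from closure under diagonals and base change in \Cref{df:preinductable}, by factoring $A\to Z$ as $A\to A\times_Y Z\to Z$. If the definition of $\calQ_{/Y}$ is interpreted as the full slice, this step becomes automatic.
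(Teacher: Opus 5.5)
Your proposal is correct and is essentially the paper's argument. The paper forgets the $E(Y)$-module structure and invokes that a functor $\PshT_{/Y}\to\calC$ preserves colimits if and only if it is left Kan extended from $T_{/Y}$; it leaves implicit the transitivity of Kan extensions and the identification $(\calQ_{/Y})_{/(Z\to Y)}\simeq \calQ_{/Z}$ of comma categories that you spell out. One small remark, which applies equally to the paper's proof: if $\calQ$ is not full in $\PshT$, the $\calQ$-maps $A\to Y$ with $A\in T$ may form only a proper full subcategory of the representables of $\PshT_{/Y}\simeq\Psh(T_{/Y})$, but a left Kan extension from such a subcategory factors through a left Kan extension along the Yoneda embedding of $T_{/Y}$, so it still preserves colimits and your argument goes through unchanged.
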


\begin{remark}
We will see in \Cref{prop:homology_colim} that this condition is satisfied for the $\calQ$-ambidextrous $\E_\infty$ rings in $\calC$ which are obtained by decategorification, i.e.~from \Cref{thm:generaldecategorification}.
\end{remark}

\begin{proof}[Proof of Proposition~\ref{prop:homology_underlying}]
To prove this statement, we may forget $E(Y)$-module structures. Then the result follows immediately from the fact that a functor $\PshT_{/Y}\to \calC$ is colimit preserving if and only if it is left Kan extended from $T_{/Y}$.
\end{proof}

Let us make this homology construction explicit in the case of the $G$-equivariant stable homotopy theory of \Cref{ex:GequivSHT}.

\begin{mydef}\label{def:ghomology}
Let $\calC$ be a stable presentable symmetric monoidal category and let $E\in \CAlg(\Mon_G(\calC))$. The $G$-homology theory associated with $E$ is the lax symmetric monoidal and colimit preserving functor
\[
E[\bs]\colon \spaces_G \to \Mod_{E(\ast)}\calC
\]
obtained from the composite
\[
\Fin_G\to \Span(\Fin_G) \xrightarrow{E} \Mod_{E(\ast)}\calC
\]
by left Kan extension along the inclusion $\Fin_G\subset \Spc_G$.
\end{mydef}

\begin{remark}\label{rmk:ord_eq_homology}
When $\calC = \Sp$, our definition of $G$-homology agrees with the classical definition. This follows from the fact that the functor $\Span(\Fin_G)\to \Sp^G$ sending $G/H$ to $\Sigma^{\infty}_+ G/H$ is strong symmetric monoidal, combined with the fact that duality in $\Span(\Fin_G)$ is obtained by reflecting spans.
\end{remark}

\begin{construction}\label{restricted_G-spectrum}
There is a canonical fully faithful and strong symmetric monoidal functor
\[
(\bs)//G\colon \Fin_G \to \abglobalspaces{}_{/\BG},\qquad G/H \mapsto (G/H)//G\simeq \mathbf{B} H \to (G/G)//G \simeq \BG.
\]
Thus for any inductible subcategory $\calQ\subset\abglobalspaces$ which contains faithful morphisms between classifying spaces of subgroups of $G$, restriction along $\Span(\Fin_G)\to\Span_{\calQ}(\abglobalspaces{}_{/\BG})$ defines a lax symmetric monoidal and limit preserving functor
\[
\Mon_{\calQ}(\calC) \to \Mon_G(\calC),\qquad E \mapsto E_G.
\]
Moreover, by construction, the diagram
\begin{center}\begin{tikzcd}
\spaces_G\ar[r,"{E_G[\bs]}"]\ar[d,tail,"(\bs)//G"]&\Mod_{E_G(\ast)}\ccat\ar[d,"\simeq"]\\
\abglobalspaces{}_{/\bfB G}\ar[r,"\mathsf{D}E"]&\Mod_{E(\bfB G)}\ccat
\end{tikzcd}\end{center}
commutes, where $E_G[\bs]$ and $\mathsf{D}E$ are the $G$-equivariant homology theories constructed in \cref{def:homology} and \cref{def:ghomology}, and the right equivalence identifies $E(\bfB G)\simeq E_G(\ast)$.
\end{construction}

\section{Parametrized category theory and decategorification}\label{sec:parametrized}
The previous section equips us with a well-behaved category of $\calQ$-ambidextrous $\E_\infty$ rings, which encodes a multiplicative cohomology theories equipped with a coherent collection of transfers for maps that are locally in $\calQ$. In this section we will prove \cref{thm:generaldecategorification}, which produces examples of such objects by way of a \emph{decategorification} process. For motivation, recall the existence of the functor
\[\CAlg \to \CAlg(\PrLst), \qquad A \mapsto \Mod_A\]
sending an $\E_\infty$ ring to its category of modules. This functor has a right adjoint, a decategorification functor, sending $\Cc\in \CAlg(\PrLst)$ to the $\E_\infty$ ring of endomorphisms of the unit in $\Cc$. Our construction of $\calQ$-ambidextrous $\E_\infty$ rings, specifically their values and functoriality in restriction, will similarly proceed by passing to the endomorphisms of the unit in a suitable diagram $\calC(\bs)$ of presentable stable categories, i.e.~a parametrized category. 

The construction of coherent transfer maps will be inspired by that of \cite{harpazambi}. Namely, Harpaz observes that coherent transfer maps in $K(n)$-local cohomology theories are provided by the \emph{higher} semiadditivity of the $K(n)$-local category, in the sense of \cite{ambi}. Similarly, we will build the coherent transfers encoded in a $\calQ$-ambidextrous $\E_\infty$ ring after assuming the so-called \emph{$\calQ$-parametrized semiadditivity} of $\calC(\bs)$. This notion was developed in \cite{CLLSpans} and the reader may benefit from consulting \emph{ibid} throughout this section.

To declutter our notation, and to assist in future applications, we again work in a generic parametrized setting. To this end, we again fix an arbitrary small category $T$. Recall we denote by $\calP$ the category of presheaves on $T$.

We begin in \Cref{subsec:globalsemi} by recalling notions of semiadditivity and stability in parametrized higher category theory, and define variants depending on an appropriate subcategory $\calQ \subset \calP$. In \Cref{ssec:monoidalparametrized}, we discuss parametrized monoidality and its interaction with semiadditivity. In particular, we prove that the universal $\calQ$-semiadditive category is an idempotent algebra in a suitable category. This is enough to equip the unit of a symmetric monoidal $\calQ$-semiadditive category with the structure of a $\calQ$-ambidextrous $\E_\infty$-ring, but only ``internally" to the parametrized category.

To go from this to \Cref{thm:generaldecategorification} requires further work. In  \Cref{ssec:unstraighteningparametrized} we observe that the unstraightenings of a monoidal parametrized category is itself monoidal, and give a new presentation for this. In a sense, this allows us to compute the externalization of the structure obtained before. In \Cref{ssec:parametrizedglobalsections}, we produce a parametrized version of the global sections functor associated with a monoidal stable category. Finally, in \Cref{ssec:decategoricification} we put all these pieces together to prove our general decategorification statement \Cref{thm:generaldecategorification}.

\subsection{Semiadditive and stable parametrized categories}\label{subsec:globalsemi}
We begin with the basic notions of parametrized category theory. 

\begin{mydef}
A \emph{$T$-category} $\calC$ is a limit preserving functor $\calC\colon \PshT^{\op}\to \Cat$. This defines a category 
\[
\Cat(T) \subset \Fun(\PshT^{\op},\Cat),
\]
whose morphisms we call \emph{functors} of $T$-categories.
\end{mydef}

\begin{rmk}
Clearly, right Kan extension and restriction define an equivalence
\[
\Cat(T)\simeq\Fun(T^\op,\Cat).
\]
In particular, $\Cat(T)$ is presentable.
\end{rmk}

For the purposes of this article, we will be most interested in the case $T = \Glo_{\ab}$. We call $\Glo_{\ab}$-categories \emph{ab-global categories}.

\begin{mydef}
Given a $T$-category $\calC$ we define its \emph{underlying category} $\calC(\ast)$ to be the value of $\calC$ on $\ast\in \PshT$. Since $\calC$ is limit preserving, we obtain an equivalence $\calC(\ast) \simeq \lim_{T^{\op}} \calC$. 
\end{mydef}

\begin{remark}
The category $\Cat(T)$ is naturally $\Cat$-enriched. Given $\ab$-global categories $\ccat$ and $\dcat$, we write $\Fun(\Cc,\calD)$ for the category of functors from $\Cc$ to $\calD$.
\end{remark}

\begin{mydef}
We define the $T$-category of \emph{$T$-spaces}:
\[
\ul{\Spc}_T \colon \PshT^{\op} \to \Cat, \quad X\mapsto \PshT_{/X},
\]
where functoriality is given by pullback. This is a $T$-category by descent for the $\infty$-topos $\PshT$  
\end{mydef}

\begin{mydef}\label{def:Q_cocomplete}
Let $\calQ\subset \PshT$ be an inductible subcategory. A $T$-category $\calC$
	\begin{enumerate}
		\item Has \emph{$\calQ$-colimits} if the restriction $p^*\colon \calC(Y)\to \calC(X)$ along any map $p\colon X\to Y$ in $\bar{\calQ}$ admits a left adjoint $p_!\colon \calC(X)\to \calC(Y)$ and those left adjoints satisfy base change, meaning that every square 
		\[\begin{tikzcd}
			{\calC(Y)} & {\calC(Z)} \\
			{\calC(X)} & {\calC(X\times_YZ),}
			\arrow["{p^*}", from=1-1, to=1-2]
			\arrow["{f^*}"', from=1-1, to=2-1]
			\arrow["{\mathrm{pr}_2^*}", from=1-2, to=2-2]
			\arrow["{\mathrm{pr}_1^*}", from=2-1, to=2-2]
		\end{tikzcd}\]    
		induced by a pullback square in $\PshT$ such that $p$ is in $\bar{\calQ}$, is vertically left adjointable, i.e.\ the natural transformation $\mathrm{pr}_{1!}\circ \mathrm{pr}_2^\ast \to f^\ast \circ p_!$ is an equivalence.
		\item Has \emph{$\calQ$-limits} if $\calC^{\op}\coloneqq (\bs)^{\op}\circ \calC$ has $\calQ$-colimits. If $\Cc$ has $\calQ$-limits, we will denote the right adjoint of restriction $p^*\colon \Cc(Y)\to \Cc(X)$ along a map $p\colon X\to Y$ in $\bar{\calQ}$ by $p_*\colon \Cc(X)\to \Cc(Y)$.
		\item Has \emph{fiberwise $\mathcal{K}$-colimits} for a collection of categories $\mathcal{K}$ if $\calC\colon T^{\op} \to \Cat$ factors through the subcategory $\Cat^{\calK\textup{-L}}\subset\Cat$ of categories with $\mathcal{K}$-colimits and functors which preserve $\mathcal{K}$-colimits,
		\item Has \emph{fiberwise $\mathcal{K}$-limits} if $\calC^{\op}$ has fiberwise $\mathcal{K}$-colimits, in other words $\calC$ is required to factor through $\Cat^{\calK\textup{-R}}$.
		\item Is \emph{$\calQ$-semiadditive} if it admits $\calQ$-colimits and $\calQ$-limits and if for every \emph{truncated} map $f\colon X\to Y$ in $\bar{\Qloc}$ the canonical norm map
	\[
	\mathrm{Nm}_f\colon f_!\to f_*,
	\]
    from the left to the right adjoint of $f$, defined inductively on the degree of truncation of $f$, is an equivalence.
	\end{enumerate}
\end{mydef}

\begin{remark}
For details on the definition of the norm maps $\mathrm{Nm}_f$, originally defined by Hopkins--Lurie \cite{ambi}, we direct the reader to \cite[Con.3.3]{CLLSpans}.
\end{remark}

\begin{mydef}\label{def:qstable}
	Suppose $\calC$ and $\calD$ are two $T$-categories with $\calQ$-colimits. A functor $F\colon \calC\to \calD$ preserves $\calQ$-colimits if for every map $f\colon X\to Y$ in $\bar{\Qloc}$, the square
	\[\begin{tikzcd}
		{\calC(Y)} & {\calD(Y)} \\
		{\calC(X)} & {\calD(X)}
		\arrow["F_Y", from=1-1, to=1-2]
		\arrow["{f^*}"', from=1-1, to=2-1]
		\arrow["{f^*}", from=1-2, to=2-2]
		\arrow["F_X", from=2-1, to=2-2]
	\end{tikzcd}\]
	is vertically left adjointable. In this case, there is a canonical equivalence $F_Y f_!\simeq f_! F_X$. One defines $\calQ$-limit preserving functors between $T$-categories with $\calQ$-limits dually. We say $F$ preserve fiberwise $\calK$-(co)limits if each functor $F(X)\colon \calC(X)\to \calD(X)$ preserves $\calK$-(co)limits. 
\end{mydef}

For functors between $\calQ$-semiadditive $T$-categories we have the following pleasant self-duality. 

\begin{lemma}[{\cite[Cor.3.17]{CLLSpans}}]\label{lem:QexactFunctor}
	Suppose $\calC$ and $\calD$ are $\calQ$-semiadditive. Then a functor $F\colon \calC\to \calD$ preserves $\calQ$-colimits if and only if it preserves $\calQ$-limits.
\end{lemma}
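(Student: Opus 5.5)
By symmetry it suffices to show that if $F$ preserves $\calQ$-colimits then it preserves $\calQ$-limits. The converse then follows by applying the same argument to $F^{\op}\colon \calC^{\op}\to\calD^{\op}$: both $\calC^{\op}$ and $\calD^{\op}$ are again $\calQ$-semiadditive, since the norm of $\calC^{\op}$ is the inverse-transpose of the norm of $\calC$, up to the standard identification. So fix $f\colon X\to Y$ in $\bar\calQ$. We must show that the mate $F_Y f_*\to f_* F_X$ of the commuting square $f^*F_Y\simeq F_X f^*$ is an equivalence. The key step is to show that the two mates $f_!F_X\xrightarrow{\sim} F_Yf_!$ and $F_Yf_*\to f_*F_X$ are intertwined by the norm maps:
\[
F_Y\mathrm{Nm}_f\circ (\text{left mate}) \;\simeq\; (\text{right mate})^{-1}\circ \mathrm{Nm}_f F_X .
\]

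I would first treat the case where $f$ is truncated. I would argue by induction on the degree of truncation, following the inductive definition of $\mathrm{Nm}_f$ in \cite[Con.3.3]{CLLSpans}. For $(-2)$-truncated $f$ (equivalences) there is nothing to show. For an $n$-truncated $f$, the diagonal $\delta\colon X\to X\times_YX$ is $(n-1)$-truncated and lies in $\bar\calQ$ by inductibility (3). By induction $\mathrm{Nm}_\delta$ is an equivalence and commutes with $F$. The norm $\mathrm{Nm}_f$ is the mate of the composite
\[
\mathrm{pr}_1^*\,\to\, \delta_*\delta^*\mathrm{pr}_1^*\simeq \delta_*\simeq\delta_!\,\to\,\ldots\,\to\,\mathrm{pr}_2^*
\]
built from base change for $f_!$ and from $\mathrm{Nm}_\delta^{-1}$. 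Each ingredient of this composite commutes with $F$: the units and counits of the adjunctions do so because $F$ commutes with restrictions; base change for $f_!$ is compatible with $F$ because $F$ preserves $\calQ$-colimits; and $\mathrm{Nm}_\delta$ commutes with $F$ by the induction hypothesis. Naturality of mates then gives the intertwining identity above. Since both norms are equivalences by $\calQ$-semiadditivity and the left mate is an equivalence by hypothesis, the right mate is an equivalence.

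For general $f\in\bar\calQ$ that is not truncated, I would reduce to the truncated case by locality. By the base change property of $\calQ$-limits, it suffices to check that the right mate is an equivalence after restricting along every $A\to Y$ with $A\in T$. This uses that the restriction functors $a^*$ for $a\colon A\to Y$ are jointly conservative, which holds because $\calC(Y)\simeq\lim_{T_{/Y}}\calC(A)$ by limit preservation. After this base change, the map $X\times_YA\to A$ lies in $\calQ$ with target in $T$, so it is truncated by inductibility (4), and the truncated case applies.

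I expect the main obstacle to be the coherent compatibility of the inductively defined norm with $F$. Pinning this down requires a careful diagram chase with mates, because the construction involves iterated base change squares. In practice I would cite the corresponding statement in \cite{CLLSpans}, or the Hopkins--Lurie argument in \cite{ambi} that norm maps are natural with respect to functors preserving the relevant colimits, rather than reproduce the chase in full.
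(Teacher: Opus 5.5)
Your proposal is correct and is essentially the standard argument behind \cite[Cor.3.17]{CLLSpans}, which the paper cites without reproducing: show by induction on truncation that the inductively defined norms are compatible with a $\calQ$-colimit preserving functor, then reduce non-truncated maps in $\bar\calQ$ to maps with representable target (truncated by inductibility (4)) using joint conservativity of the restrictions. Two small points. The intertwining identity should read $(\text{right mate})\circ F_Y\mathrm{Nm}_f\circ(\text{left mate})\simeq\mathrm{Nm}_fF_X$, since your version with $(\text{right mate})^{-1}$ presupposes the conclusion. For the converse, you can simply rerun the same chase instead of passing to opposites, because the chase only ever needs one of the two mates of the diagonal to be invertible; this avoids having to justify the self-duality of $\calQ$-semiadditivity.
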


\begin{mydef}\label{def:global_cat_subcat}
	Consider the following subcategories of $\Cat(T)$:
	\begin{enumerate}
		\item $\Cat(T)_{\calQ\textup{-co}}$ is spanned those $T$-categories which admit $\calQ$-colimits and those functors which preserve $\calQ$-colimits. 
		\item $\Cat(T)_{\calQ\textup{-sa}}$ is the full subcategory of $\Cat(T)_{\calQ\textup{-co}}$ spanned by the $\calQ$-semiadditive $T$-categories.
		\item $\Cat(T)_{\calQ\textup{-st}}$ is the full subcategory of $\Cat(T)_{\calQ\textup{-sa}}$ spanned by those $\calQ$-semiadditive categories $\calC$ which moreover factor through the subcategory $\Cat^{\mathrm{st}}\subset \Cat$ of stable categories and exact functors. Such $T$-categories are called \emph{$\calQ$-stable}.
	\end{enumerate}
\end{mydef}

The following is a crucial example of a $\calQ$-semiadditive category.

\begin{example}\label{ex:SpanQ}
In \cite[Con.4.1]{CLLSpans}, a $\calQ$-semiadditive $T$-category $\ul{\Span}(\calQ)$ is constructed for which
\[
\ul{\Span}(\calQ)(X) \coloneqq \Span(\bar{\Qloc}_{/X}).
\]
Given a map $f\colon X\to Y$, the functor 
\[
f^*\colon \Span(\bar{\Qloc}_{/Y})\to \Span(\bar{\Qloc}_{/X})
\]
is given by applying $\Span(\bs)$ to the pullback functor $f^*\colon \bar{\Qloc}_{/Y}\to \bar{\Qloc}_{/X}$. Moreover, when $f$ itself is in $\bar{\Qloc}$, the left (and also right) adjoint $f_!$ of $f^*$ is given by $f_!(A\to X) = A\to X\to Y$, the functor on spans induced by the postcomposition functor. 

Consider the object $\ast\in \Span(\calQ)\simeq \ul{\Span}(\calQ)(\ast)$. By Theorem 5.1 of \emph{op.~cit.}, the pair $(\ul{\Span}(\calQ), \ast)$ admits the following universal property: for every $\calQ$-semiadditive $T$-category $\calC$, the functor
\[
\ev_\ast\colon\Fun^{\calQ\textup{-co}}(\ul{\Span}(\calQ),\calC) \to \calC(\ast)
\]
is an equivalence, where the left-hand side denotes the full subcategory of $\Fun(\ul{\Span}(\calQ),\Cc)$ spanned by the $\calQ$-colimit preserving functors.
\end{example}

All of the categories of \Cref{def:global_cat_subcat} are closed under limits in $\Cat(T)$. We will prove this in the one case of relevance to us; the other cases are easier.

\begin{prop}\label{prop:limits_of_stable}
	$\Cat(T)_{\calQ\textup{-st}}$ admits all small limits, and these are preserved by the inclusion $\Cat(T)_{\calQ\textup{-st}} \to \Cat(T)$.
    \end{prop}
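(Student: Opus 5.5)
Given a small diagram $I\to\Cat(T)_{\calQ\textup{-st}}$, $i\mapsto\calC_i$, form its limit $\calC=\lim_i\calC_i$ in $\Cat(T)$. Limits in $\Cat(T)\simeq\Fun(T^\op,\Cat)$ are computed pointwise, and limit preservation is preserved under limits, so $\calC(X)=\lim_i\calC_i(X)$ for every $X\in\PshT$. I will show that $\calC$ is $\calQ$-stable, that the projections $\calC\to\calC_i$ are $\calQ$-exact, and that $\calC$ has the universal property within $\Cat(T)_{\calQ\textup{-st}}$.

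The first step is fiberwise stability. $\Cat^{\mathrm{st}}\subset\Cat$ is closed under limits, and the projections are exact. Since all transition functors $\calC_i(X)\to\calC_j(X)$ and restrictions $f^*$ are exact, each $\calC(X)$ is stable and each $f^*$ on $\calC$ is exact.

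The second step is $\calQ$-colimits and limits. For $p\colon X\to Y$ in $\bar\calQ$, the diagram $i\mapsto(p^*\colon\calC_i(Y)\to\calC_i(X))$ consists of functors admitting left adjoints $p_!$. The transition functors are $\calQ$-colimit preserving, so the relevant squares are left adjointable. By the standard result on limits of adjointable diagrams \cite[Cor.4.7.4.18]{ha}, the limit functor $p^*$ admits a left adjoint, computed componentwise, and the projections commute with $p_!$. The base change condition for a pullback square then holds because the Beck--Chevalley map on $\calC$ is the limit of the Beck--Chevalley maps on the $\calC_i$, which are equivalences, and the projections are jointly conservative. By \cref{lem:QexactFunctor}, the transition functors also preserve $\calQ$-limits, so the dual argument gives right adjoints $p_*$ computed componentwise.

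The third step, and the main obstacle, is the norm condition. The norm $\mathrm{Nm}_f$ is defined inductively from units, counits, and diagonals. I must check that for truncated $f$ the norm on $\calC$ is sent by each projection to the norm on $\calC_i$. This follows by induction on the truncation level: projections commute with $f_!$, $f_*$, the unit and counit maps (by adjointability), and diagonal pullbacks, so they intertwine the inductively defined norm maps, as in \cite[Con.3.3]{CLLSpans}. Joint conservativity of the projections then shows $\mathrm{Nm}_f$ is an equivalence.

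For the universal property, let $\calD$ be $\calQ$-stable with compatible $\calQ$-exact maps $\calD\to\calC_i$. The induced map $\calD\to\calC$ is fiberwise exact, since exactness is detected after projection. It preserves $\calQ$-colimits because $p_!$ on $\calC$ is computed componentwise, so the Beck--Chevalley map is detected by the jointly conservative projections. Hence $\calC$ is the limit in $\Cat(T)_{\calQ\textup{-st}}$, and the inclusion preserves it.
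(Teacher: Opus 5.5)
Your proof is correct and follows the same route as the paper. The paper's proof simply combines \cite[Cor.4.7.4.18]{ha} (limits of adjointable diagrams, which give the componentwise $p_!$, $p_*$ and base change) with the closure of $\Cat^{\mathrm{st}}\subset\Cat$ under limits. Your induction showing that the projections intertwine the norm maps, and your check of the universal property via joint conservativity of the projections, make explicit steps the paper leaves implicit.
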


\begin{proof}
	This follows from combining \cite[Cor.4.7.4.18]{ha} with the observation that $\Cat^{\st}\subset \Cat$ preserves limits. 
\end{proof}

\subsection{Symmetric monoidal parametrized categories}\label{ssec:monoidalparametrized}
Let us now consider monoidal structures in the parametrized world.

\begin{mydef}
    The category $\Cat(T)$ of $T$-categories is symmetric monoidal via the Cartesian symmetric monoidal structure, giving the category
	\[
	\Cat(T)^\otimes\coloneqq \CAlg(\Cat(T)) \simeq \Fun^{\mathrm{R}}(\calP^{\op},\Cat^\otimes)
	\] 
    of \emph{symmetric monoidal $T$-categories}.
\end{mydef}

\begin{example}
Suppose $\Cc$ is a symmetric monoidal $T$-category which admits fiberwise finite products. Then we obtain a symmetric monoidal $T$-category $\Cc^\times$ by equipping each $\Cc(X)$ with the Cartesian symmetric monoidal structure.
\end{example}

\begin{mydef}
	By \cite[Cor.8.2.5]{Martini2022presentable}, given any class $\calK\subset \Cat(T)$ of $T$-categories, there is a symmetric monoidal structure on the category $\Cat(T)_{\calK\textup{-co}}$ of $\calK$-cocomplete $T$-categories for which a functor $\calC \otimes \calD \to \calE$ from the tensor product of two $\calK$-cocomplete $T$-categories is equivalent to a functor $\calC\times \calD \to \calE$ which preserves $\calK$-colimits in each variable separately. Applying this to the subcategory $\Cat(T)_{\calQ\textup{-co}} \subset \Cat(T)$, we obtain a symmetric monoidal structure on $\Cat(T)_{\calQ\textup{-co}}$.
\end{mydef}

We will now make the structure of a commutative algebra in $\Cat(T)_{\calQ\textup{-co}}$ explicit. Similar statements hold in the other two cases.

\begin{mydef}\label{def:qlexsymmetricmonoidalcat}
A \emph{$\calQ$-lex symmetric monoidal $T$-category} is a symmetric monoidal $T$-category $\calC$ such that 
	\begin{enumerate}
		\item $\calC$ is a $\calQ$-cocomplete $T$-category;
		\item For every map $f\colon X\to Y$ in $\bar{\Qloc}$, the projection map 
		\[
		f_!(f^*C\otimes D) \to C\otimes f_! D 
		\]
		is an equivalence for all $C\in \calC(Y)$ and $D\in \calC(X)$.
	\end{enumerate}
	We define $\Cat(T)^\otimes_{\calQ\textup{-co}}\subset \Cat(T)^\otimes$ to be the subcategory spanned by the $\calQ$-cocomplete symmetric monoidal $T$-categories and those strong monoidal functors $F\colon \calC\to \calD$ which preserve $\calQ$-colimits.
\end{mydef}

\begin{prop}
	There is an equivalence
	$\Cat(T)^\otimes_{\calQ\textup{-co}} \simeq \CAlg(\Cat(T)_{\calQ\textup{-co}})$. 
\end{prop}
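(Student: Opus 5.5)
The plan is to realize $\CAlg(\Cat(T)_{\calQ\textup{-co}})$ as a subcategory of $\CAlg(\Cat(T)) = \Cat(T)^\otimes$ and check that this subcategory is exactly $\Cat(T)^\otimes_{\calQ\textup{-co}}$.

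The first step is to compare the two monoidal structures. By the universal property in \cite[Cor.8.2.5]{Martini2022presentable}, the tensor product on $\Cat(T)_{\calQ\textup{-co}}$ corepresents functors that preserve $\calQ$-colimits separately in each variable. Hence the symmetric monoidal structure on $\Cat(T)_{\calQ\textup{-co}}$ is obtained from the Cartesian operad $\Cat(T)^\times$ by restricting to the suboperad $\Cat(T)_{\calQ\textup{-co}}^{\mathrm{mult}}\subset\Cat(T)^\times$. This suboperad has objects the $\calQ$-cocomplete $T$-categories and multimorphisms $\calC_1\times\dots\times\calC_n\to\calD$ the functors that preserve $\calQ$-colimits in each variable.

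It follows that $\CAlg(\Cat(T)_{\calQ\textup{-co}})$ is equivalent to the category of algebras for this suboperad. Such an algebra is a commutative algebra $\calC$ in $\Cat(T)^\times$ satisfying three conditions:
\begin{enumerate}
\item its underlying $T$-category is $\calQ$-cocomplete;
\item its tensor functor $\otimes\colon\calC\times\calC\to\calC$ and unit $\ast\to\calC$ preserve $\calQ$-colimits in each variable (the $n$-ary operations are composites of these, so this suffices);
\item morphisms of algebras are symmetric monoidal functors that preserve $\calQ$-colimits.
\end{enumerate}
Since algebras over a suboperad that is full on multimorphisms form a subcategory of the algebras over the ambient operad, this reduces the claim to checking that condition (2) is equivalent to the projection formula in \cref{def:qlexsymmetricmonoidalcat}.

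The main work, and the expected obstacle, is unwinding what it means for $C\otimes(\bs)\colon\calC\to\calC$ to preserve $\calQ$-colimits in the parametrized sense. Here one must remember that "in each variable" means parametrized: for fixed $C\in\calC(Y)$ we consider the $T_{/Y}$-functor $\calC_{|Y}\to\calC_{|Y}$. By \cref{def:qstable}, preservation of $\calQ$-colimits means that for each $f\colon X\to Y$ in $\bar\calQ$ the mate $f_!(f^*C\otimes D)\to C\otimes f_!D$ is an equivalence, which is precisely the projection formula. Preservation for restrictions to $T_{/Z}$ for arbitrary $Z\to Y$ follows from the base-change condition on $\calQ$-colimits together with monoidality of the restriction functors.

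The unit $\ast\to\calC$ is automatic, since $\ast$ is the unit object for $\calQ$-cocomplete categories: its $\calQ$-colimit completion is $\ul{\Spc}$-like. In the Cartesian setting, a map from the terminal object imposes no condition once it is regarded as the monoidal unit.

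The step I expect to require the most care is the identification of the symmetric monoidal structure of \cite{Martini2022presentable} with the suboperad described above, in particular that multilinear maps are exactly the ones corepresented. I would take this directly from the cited construction, which defines the tensor product as a localization of the Cartesian product.
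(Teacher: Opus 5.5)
Your proposal is correct and takes essentially the paper's route: the paper also views $\Cat(T)_{\calQ\textup{-co}}$ as a faithful suboperad of $\Cat(T)^\times$, so that lying in $\CAlg(\Cat(T)_{\calQ\textup{-co}})$ is a property of a symmetric monoidal $T$-category and of a symmetric monoidal functor, and then unwinds Martini's notion of preserving $\calQ$-colimits in each variable into $\calQ$-cocompleteness plus the projection formula. Two justifications need tidying, though neither affects the argument: the nullary operation imposes no condition simply because it has no variables (the unit of $\Cat(T)_{\calQ\textup{-co}}$ is the free $\calQ$-cocomplete $T$-category on a point, not $\ast$), and the condition over $g\colon Z\to Y$ is just the projection formula applied to $g^*C\in\calC(Z)$, not a consequence of base change.
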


\begin{proof}
The symmetric monoidal category $\Cat(T)_{\calQ\textup{-co}}$ is a faithful suboperad of $\Cat(T)^\times$ by definition, so it is a property for a symmetric  monoidal $T$-category to lie in $\Cat(T)^\otimes_{\calQ\textup{-co}}$, and similarly for functors. Inspecting \cite[Df.2.6.1.1]{Martini2022presentable} shows that the condition is precisely that given above.
\end{proof}

\begin{prop}\label{prop:spanqmonoidal}
The product in $\calQ$ turns $\ul{\Span}(\calQ)$ into a $\cal{Q}$-lex symmetric monoidal $T$-category.
\end{prop}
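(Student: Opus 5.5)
We will equip each $\ul{\Span}(\calQ)(X) = \Span(\bar{\Qloc}_{/X})$ with the symmetric monoidal structure induced by the fiber product over $X$, which is the Cartesian product in $\bar{\Qloc}_{/X}$. We then check the two conditions of \cref{def:qlexsymmetricmonoidalcat}. Condition (1) is already given by \Cref{ex:SpanQ}, so only the projection formula needs work.

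To get the symmetric monoidal $T$-category structure coherently, we first note that $X\mapsto \bar{\Qloc}_{/X}$ defines a limit preserving functor $\PshT^{\op}\to\Cat$ with pullback functoriality, and that pullback preserves fiber products. Since $\bar{\Qloc}$ is closed under base change and composition (and contains diagonals by inductibility), each $\bar{\Qloc}_{/X}$ has finite products, given by $A\times_X B$, and these are preserved by $f^*$. Hence $X\mapsto(\bar{\Qloc}_{/X})^\times$ lifts to $\Fun^{\mathrm R}(\PshT^{\op},\Cat^\otimes)$ through Cartesian structures. We then apply the span construction, which is functorial and product preserving as a functor from categories with pullbacks and a distinguished class to categories (see \cite{HHLNa} and \cite[Ex.3.2.2]{CLLR24Bispan}). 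This sends Cartesian structures to the induced symmetric monoidal structures on spans, compatibly with the restrictions $f^*$, and the result lands in limit preserving functors because $\Span$ preserves limits of categories. This yields $\ul{\Span}(\calQ)\in\Cat(T)^\otimes$.

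For the projection formula, let $f\colon X\to Y$ be in $\bar{\Qloc}$, $C=(A\to Y)$ and $D=(B\to X)$. By \Cref{ex:SpanQ}, $f_!$ is postcomposition with $f$. So $f_!(f^*C\otimes D)$ is $(X\times_Y A)\times_X B\to Y$, and $C\otimes f_!D$ is $A\times_Y B\to Y$. The canonical map between them is the span induced by the evident isomorphism $(X\times_YA)\times_XB\simeq A\times_YB$ of pasted pullbacks. The one thing to check is that the abstractly defined projection map agrees with this isomorphism. The projection map is the adjunct of $f^*C\otimes D\to f^*C\otimes f^*f_!D\simeq f^*(C\otimes f_!D)$, built from the unit $D\to f^*f_!D$. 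In spans, this unit is the backward/forward span along the graph map $B\to X\times_Y B$. Tracing through the counit and composing spans by pullback gives exactly the pasting isomorphism, so the projection map is an equivalence.

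The main obstacle is the coherence in the first step: producing the symmetric monoidal structure as a functor of $X$ rather than objectwise, and identifying the resulting $f_!$ with the adjoints of \cite[Con.4.1]{CLLSpans}. I would address this by realizing $\ul{\Span}(\calQ)$ as $\Span$ applied to the Cartesian-structured functor $X\mapsto \bar{\Qloc}_{/X}$, or equivalently via the unstraightening $\bar{\Qloc}$-fibration over $\PshT$, exactly as in \cite{CLLSpans}, so that both structures come from the same construction. The projection formula computation is then essentially formal.
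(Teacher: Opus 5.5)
Your proposal is correct and follows essentially the same route as the paper. The paper factors $X\mapsto\bar{\calQ}_{/X}$ through $\Cat^\times$, passes to Cartesian monoidal structures, and applies the product-preserving functor $\Span(\bs)$. It then notes that, with $f_!$ given by postcomposition, the projection map reduces to the base change (pasting) isomorphism $(X\times_Y A)\times_X B\simeq A\times_Y B$; your tracing of the unit along the graph map $B\to X\times_Y B$ just spells out that last step explicitly.
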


\begin{proof}
Note that $\Qloc_{/\bullet}\colon \PshT^{\op}\to \Cat$ factors through $\Cat^\times$. Passing to the associated Cartesian monoidal categories, we obtain a diagram of symmetric monoidal categories. Now we may apply $\Span(\bs)$ to this to obtain $\ul{\Span}(\calQ)$ as a diagram of symmetric monoidal categories, using that $\Span(\bs)$ preserves finite products.

From here it is also simple to compute that the projection map is an equivalence, since under the description of the $f_!$ from \cref{ex:SpanQ} it unravels to the base change equivalence. 
\end{proof}

\begin{mydef}
We write $\ul{\Span}(\calQ)^\otimes$ for the $\calQ$-lex symmetric monoidal structure on $\ul{\Span}(\calQ)$ guaranteed by \cref{prop:spanqmonoidal}.
\end{mydef}

\begin{theorem}
	The forgetful functor 
	\[
	\Mod_{\ul{\Span}(\calQ)}\left(\Cat(T)_{\calQ\textup{-co}}\right) \to \Cat(T)_{\calQ\textup{-co}}
	\] 
	is fully faithful, with essential image given by $\Cat(T)_{\calQ\textup{-sa}}$. In particular, $\ul{\Span}(\calQ)$ is an idempotent algebra in $\Cat(T)_{\calQ\textup{-co}}$ and a $\calQ$-cocomplete $T$-category is a module over $\ul{\Span}(\calQ)$ if and only if it is $\calQ$-semiadditive.
\end{theorem}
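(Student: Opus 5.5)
We will show that $\ul{\Span}(\calQ)$ is an idempotent algebra in $\Cat(T)_{\calQ\textup{-co}}$ whose local objects are exactly the $\calQ$-semiadditive $T$-categories. The general theory of idempotent algebras \cite[\textsection4.8.2]{ha} then gives the full faithfulness of the forgetful functor from modules, with essential image those $\calC$ for which $\calC\to\ul{\Span}(\calQ)\otimes\calC$ is an equivalence.

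The unit of $\Cat(T)_{\calQ\textup{-co}}$ is the free $\calQ$-cocomplete $T$-category on a point. One can identify it as $\ul{\calQ}\colon X\mapsto\bar{\calQ}_{/X}$, the parametrized analogue of finite sets, since $\ul{\Spc}_T$ is freely generated by $\ast$ and $\ul{\calQ}$ is its $\calQ$-colimit closure of the terminal object. The unit map $u\colon \ul{\calQ}\to\ul{\Span}(\calQ)$ is the inclusion of forward spans. It is strong symmetric monoidal and preserves $\calQ$-colimits by the description of $f_!$ in \cref{ex:SpanQ}.

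\textbf{Step 1: characterize $\ul{\Span}(\calQ)\otimes\calC$.} By the universal property of the tensor product \cite[Cor.8.2.5]{Martini2022presentable}, for $\calD$ $\calQ$-cocomplete we have
\[
\Fun^{\calQ\textup{-co}}(\ul{\Span}(\calQ)\otimes\calC,\calD)\simeq \Fun^{\calQ\textup{-co}}(\calC,\ul{\Fun}^{\calQ\textup{-co}}(\ul{\Span}(\calQ),\calD)).
\]
When $\calD$ is $\calQ$-semiadditive, the parametrized form of the universal property in \cref{ex:SpanQ} (Theorem 5.1 of \cite{CLLSpans}, applied in each slice $T_{/X}$) identifies $\ul{\Fun}^{\calQ\textup{-co}}(\ul{\Span}(\calQ),\calD)\simeq\calD$ via evaluation at $\ast$. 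So restriction along $u\otimes\calC$ induces an equivalence on maps into any $\calQ$-semiadditive $\calD$.

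\textbf{Step 2: semiadditivity is preserved.} We show $\ul{\Span}(\calQ)\otimes\calC$ is $\calQ$-semiadditive for every $\calQ$-cocomplete $\calC$. Combined with Step 1, this shows that $\ul{\Span}(\calQ)\otimes(\bs)$ is left adjoint to the inclusion $\Cat(T)_{\calQ\textup{-sa}}\subset\Cat(T)_{\calQ\textup{-co}}$, a localization with unit $u\otimes\calC$. In particular $\ul{\Span}(\calQ)\otimes\ul{\Span}(\calQ)\simeq\ul{\Span}(\calQ)$ via $u$, so $u$ exhibits an idempotent algebra. Its local objects are then exactly the $\calQ$-semiadditive categories, proving the theorem.

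This step is the main obstacle. I would argue that $\calQ$-semiadditive categories are closed under the relevant colimits and tensors, presenting $\ul{\Span}(\calQ)\otimes\calC$ as a localization of a parametrized $\calQ$-commutative monoid construction. Concretely:

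\begin{itemize}
\item Show that $\ul{\Span}(\calQ)\otimes\calC\simeq\ul{\Mon}_\calQ(\calC^\vee)^\vee$ in a suitable dual sense, following \cite{CLLSpans}. There $\calQ$-semiadditivization is computed as parametrized $\calQ$-commutative monoids, analogous to the description of $\Mon_\calQ$ in \cref{ssec:qcommtuativemonoids}.
\item Verify the norm maps inductively on the truncation level of $f$, using the diagonal closure in \cref{df:preinductable}(3).
\end{itemize}

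If this direct route is too unwieldy, I would fall back on reducing to the case $\calC$ presentable by Yoneda-type embeddings, where the tensor product is computed by $\calQ$-commutative monoids and semiadditivity is known from \cite{CLLSpans}.
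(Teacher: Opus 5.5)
Your skeleton is the same Harpaz-style argument the paper uses. You show that $\calC\to\ul{\Span}(\calQ)\otimes\calC$ is an equivalence exactly for $\calQ$-semiadditive $\calC$, by mapping into $\calQ$-semiadditive $\calD$ and invoking the universal property of $\ul{\Span}(\calQ)$.

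Step~1 is fine, and you curry on the opposite side from the paper. You use the internal statement $\ul{\Fun}^{\calQ\textup{-co}}(\ul{\Span}(\calQ),\calD)\simeq\calD$, i.e.\ \cite[Th.5.1]{CLLSpans} run over the slices $T_{/X}$, modulo the routine compatibility of $\ul{\Span}(\calQ)$ and semiadditivity with restriction to slices. The paper instead rewrites the map as restriction $\Fun^{\calQ\textup{-co}}(\ul{\Span}(\calQ),\Fun^{\calQ\textup{-co}}(\calC,\calD))\to\Fun^{\calQ\textup{-co}}(\calC,\calD)$. It therefore needs the extra input that $\Fun^{\calQ\textup{-co}}(\calC,\calD)$ is again $\calQ$-semiadditive \cite[Th.6.5]{CLLSpans}, which your ordering avoids. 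Using \cite[\textsection4.8.2]{ha} in place of the paper's direct check that the counit is an equivalence on modules is also fine.

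The gap is Step~2, which carries the whole argument. Without it you cannot run the Yoneda argument inside $\Cat(T)_{\calQ\textup{-sa}}$, so neither the idempotence of $u$ nor ``semiadditive $\Rightarrow$ local'' follows. It is also your only source for ``local (or module) $\Rightarrow$ $\calQ$-semiadditive''. You leave it unproved, and the route you sketch does not work.

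Read with $(\bs)^\vee=(\bs)^{\op}$, the category $\ul{\Mon}_\calQ(\calC^{\op})^{\op}$ is the \emph{cofree} semiadditive category on $\calC$ (right adjoint to the inclusion), not the free one $\ul{\Span}(\calQ)\otimes\calC$. Already for $T=\ast$, $\calQ=\Fin$ and $\calC=\Fin$ (the unit), it is trivial: the unit of a commutative monoid $M$ in $\Fin^{\op}$ is a map $M\to\emptyset$ in $\Fin$. By contrast, $\Span(\Fin)\otimes\Fin\simeq\Span(\Fin)$. Free and cofree coincide in the presentable setting, which is where the $\Mon_\calQ$ description comes from. Your fallback to presentable categories would need a comparison of small and presentable tensor products that you do not supply.

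The missing observation is elementary. $\ul{\Span}(\calQ)\otimes\calC$ is a free module over the commutative algebra $\ul{\Span}(\calQ)$, hence a $\calQ$-cocomplete $T$-category tensored over $\ul{\Span}(\calQ)$. Every such category is $\calQ$-semiadditive by \cite[Cor.6.4]{CLLSpans}; this is exactly how the paper obtains ``module $\Rightarrow$ semiadditive''. With this citation in place of your Step~2 sketch, the rest of your proposal goes through.
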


\begin{proof}
	This proof is modelled on that of \cite[Cor.5.3]{harpazambi}. First, note that the forgetful functor admits a left adjoint, given by $\ul{\Span}(\calQ)\otimes -$. We will first show that the unit $\Cc \to \ul{\Span}(\calQ)\otimes \Cc$ is an equivalence if and only if $\Cc$ is $\calQ$-semiadditive. If the unit is an equivalence then $\Cc$ is tensored over $\ul{\Span}(\calQ)$, and so $\calQ$-semiadditive by \cite[Cor.6.4]{CLLSpans}. For the other direction, suppose $\Cc$ is $\calQ$-semiadditive. Then by Theorem 6.5 of \emph{op.~cit.}, $\ul{\Span}(\calQ)\otimes \calC$ is again $\calQ$-semiadditive. So it suffices to show that the induced map
	\[
	\Fun^{\calQ\textup{-co}}(\ul{\Span}(\calQ)\otimes \Cc, \calD) \to \Fun^{\calQ\textup{-co}}(\Cc,\calD)
	\]
	is an equivalence for all $\calQ$-semiadditive $\calD$. However, by currying, this functor is equivalent to the functor 
	\[
	\Fun^{\calQ\textup{-co}}(\ul{\Span}(\calQ),\Fun^{\calQ\textup{-co}}(\Cc,\calD)) \to \Fun^{\calQ\textup{-co}}(\Cc,\calD)
	\]
	induced by restricting along the inclusion $\ast\to \ul{\Span}(\calQ)$. Because $\Fun^{\calQ\textup{-co}}(\Cc,\calD)$ again admits an action by the $\calQ$-semiadditive category $\Cc$, it is $\calQ$-semadditive by Theorem 6.5 of \emph{op.~cit.} again. Therefore, the map above is an equivalence by Theorem 5.1 of \emph{op.~cit}.
	
	Now we will show that the counit is an equivalence for all $\ul{\Span}(\calQ)$-modules $\calC$. Note that by the previous argument, such a $\calC$ is $\calQ$-semiadditive. Because the forgetful functor is conservative, this follows from the fact that the composite
	\[
	\calC \to \ul{\Span}(\calQ)\otimes \calC \to \calC
	\]
	is homotopic to the identity of $\calC$ by the triangle identity, and the fact that the first map is an equivalence by the previous part. These two observations combine to prove the first statement. The final statement follows from the theory of idempotent algebras; see \cite[\textsection4.8]{ha}.
\end{proof}

\begin{cor}
	$\Cat(T)_{\calQ\textup{-sa}}$ is a symmetric monoidal category with unit $\ul{\Span}(\calQ)$.
\end{cor}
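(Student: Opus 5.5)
The plan is to deduce the corollary formally from the preceding theorem, via Lurie's theory of idempotent algebras \cite[\textsection4.8]{ha}. The theorem shows that $\ul{\Span}(\calQ)$, with the $\calQ$-lex symmetric monoidal structure of \cref{prop:spanqmonoidal}, is an idempotent commutative algebra in the symmetric monoidal category $\Cat(T)_{\calQ\textup{-co}}$. It also identifies the full subcategory of $\Cat(T)_{\calQ\textup{-co}}$ of objects that are modules over it (equivalently, local objects for the idempotent) with $\Cat(T)_{\calQ\textup{-sa}}$.

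First, I will recall that for an idempotent algebra $A$ in a symmetric monoidal category $\mathcal{V}$, the functor $A\otimes(\bs)\colon \mathcal{V}\to\mathcal{V}$ is a localization onto $\Mod_A(\mathcal{V})\subset\mathcal{V}$. By \cite[Pr.4.8.2.9]{ha}, this subcategory is closed under tensor products, since $A\otimes X\otimes A\otimes Y\simeq A\otimes X\otimes Y$. The localization is therefore compatible with the symmetric monoidal structure in the sense of \cite[Df.2.2.1.6]{ha}. By \cite[Pr.2.2.1.9]{ha}, the local subcategory then inherits a symmetric monoidal structure, with tensor product computed in $\mathcal{V}$ and unit $A\otimes\bbone\simeq A$, and the localization functor becomes strong symmetric monoidal. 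Equivalently, \cite[Pr.4.8.2.10]{ha} gives $\Mod_A(\mathcal{V})\simeq$ the full subcategory of $A$-local objects, with $\Mod_A(\mathcal{V})$ symmetric monoidal via $\otimes_A$ and unit $A$.

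Second, I will apply this with $\mathcal{V}=\Cat(T)_{\calQ\textup{-co}}$ and $A=\ul{\Span}(\calQ)$, using the theorem's identification $\Mod_{\ul{\Span}(\calQ)}(\Cat(T)_{\calQ\textup{-co}})\simeq \Cat(T)_{\calQ\textup{-sa}}$. This also shows that the tensor product of two $\calQ$-semiadditive $T$-categories, formed in $\Cat(T)_{\calQ\textup{-co}}$, is again $\calQ$-semiadditive, consistent with \cite[Th.6.5]{CLLSpans}.

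The only point needing care is size and bookkeeping. The symmetric monoidal structure on $\Cat(T)_{\calQ\textup{-co}}$ from \cite{Martini2022presentable} lives on a large category, but the cited results of \cite{ha} are formal and hold in any symmetric monoidal $\infty$-category. So I expect no real obstacle; the statement is immediate from the theorem once the idempotence of $\ul{\Span}(\calQ)$ is in hand.
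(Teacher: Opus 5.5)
Your proposal is correct and matches the paper's argument: the paper derives the corollary from the preceding theorem via the theory of idempotent algebras in \cite[\textsection4.8]{ha}. That theorem gives that $\ul{\Span}(\calQ)$ is idempotent in $\Cat(T)_{\calQ\textup{-co}}$ and that its modules are exactly $\Cat(T)_{\calQ\textup{-sa}}$, so the tensor product is computed in $\Cat(T)_{\calQ\textup{-co}}$ and the unit is $\ul{\Span}(\calQ)\otimes\bbone\simeq\ul{\Span}(\calQ)$. One minor point: your aside that $\Mod_A(\mathcal{V})$ is symmetric monoidal via $\otimes_A$ is unnecessary, and it would require geometric realizations in $\mathcal{V}$ compatible with $\otimes$; the localization argument via \cite[Pr.2.2.1.9]{ha} already suffices.
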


\subsection{Unstraightening symmetric monoidal parametrized categories}\label{ssec:unstraighteningparametrized}

Suppose $\Cc^\otimes\colon \PshT^{\op}\to \Cat^\otimes$ is a symmetric monoidal $T$-category. Then given two objects $X\in \Cc(A)$ and $Y\in \Cc(B)$ we can define the \emph{external tensor product} of $X$ and $Y$ by the formula:
\[
X\boxtimes Y \coloneqq \pi_1^*(X)\otimes \pi_2^*(Y) \in \Cc(A\times B).
\]
In this subsection, we will explain one way to show that the external tensor product defines a symmetric monoidal structure on $\smallint \Cc$, the coCartesian unstraightening of $\Cc^\otimes$. Our method lends itself to explicit computation, which we apply to the specific example of $\ul{\Span}(\calQ)$. We begin with a key result due to Bachmann--Hoyois, for which we need to establish the following notations.

\begin{mydef}
Given a category $S$, we denote by $\Fin(S)$ the finite coproduct completion of $S$. It is easy to see that $\Fin(S)$ is equivalent to the Cartesian unstraightening of the functor 
\[
\Fun(-,S)\colon \Fin^{\op}\to \Cat, \quad I\mapsto \Fun(I,S).
\]
A map in $\Fin(S)$ is a \emph{fold map} if it is Cartesian. This defines a pullback stable subcategory $\Fin(S)_{\nabla}\subset \Fin(S)$.
\end{mydef}

Since $\Fin(S)$ admits finite coproducts, the inclusion $S\hookrightarrow \Fin(S)$ uniquely extends to a functor 
\[
\iota\colon \Fin\times S\to \Fin(S), \quad (I,X)\mapsto \coprod_I X.
\]
Equipping the source and target of the functor $\iota$ with the adequate triple structures (in the sense of \cite{HHLNa}) 
\[
(\Fin\times S,\Fin\times S,\Fin\times S^\simeq) \quad \text{and}\quad (\Fin(S),\Fin(S),\Fin(S)_{\nabla}),
\]
we obtain a functor 
\[
\Span(\iota)\colon \Span(\Fin)\times S^{\op}\to \Span_{\nabla}(S).
\] 

\begin{prop}[{\cite[Pr.C.4]{normsmotivic}}]\label{prop:external_tensor}
    Let $S$ be an arbitrary category. Then the restriction along the functor 
    \[
    \Span(\iota)^*\colon \Fun(\Span_{\nabla}(\Fin(S)), \Cat) \to \Fun(\Span(\Fin)\times S^{\op}, \Cat)
    \]
    induces an equivalence between the following two full subcategories:
    \begin{enumerate}
        \item The full subcategory of product preserving functors $\Span_{\nabla}(S)\to \Cat$;
        \item The full subcategory of functors $\Span(\Fin)\times S^\op \to \Cat$ which preserve products in the first variable, itself equivalent to $\Fun(S^{\op}, \Cat^{\otimes})$.
    \end{enumerate}
\end{prop}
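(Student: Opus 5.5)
The strategy is to factor $\Span(\iota)$ as a composite of two restrictions, each of which induces an equivalence on the relevant full subcategories, and to identify the target with $\Fun(S^\op,\Cat^\otimes)$ via Segal-type descriptions of commutative monoids.

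\emph{Step 1 (unfolding $\Span_\nabla(\Fin(S))$).} I would identify $\Fin(S)$ with the Cartesian unstraightening of $I\mapsto \Fun(I,S)$ over $\Fin^\op$. A backward map in $\Span_\nabla(\Fin(S))$ is then an arbitrary map of finite families $(J,\{y_j\})\to(I,\{x_i\})$: a map of finite sets $J\to I$ together with maps $y_j\to x_{\alpha(j)}$. A forward map is a fold map, i.e.\ a map $J\to I$ of index sets with identities on the components. A product preserving functor $F\colon\Span_\nabla(\Fin(S))\to\Cat$ is determined by its values $F(x)$ on singletons $x\in S$, since $F(\coprod_i x_i)\simeq\prod_i F(x_i)$. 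The fold maps $x\sqcup x\to x$, together with the empty family, supply the multiplications and the unit. So the data resembles that of a commutative monoid in $\Cat$ for each $x$, natural in $x\in S^\op$ via the backward maps.

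\emph{Step 2 (the comparison).} Here I would restrict along $\Span(\iota)$. Because $\iota$ preserves pullbacks of the relevant shape (pullbacks along fold maps), $\Span(\iota)$ is a well-defined functor of span categories. It carries $\Span(\Fin)\times\{x\}$ onto the spans of constant families $\coprod_I x$, so a product preserving $F$ restricts to a functor that preserves products in the first variable. Conversely, given $G\colon\Span(\Fin)\times S^\op\to\Cat$ preserving products in the first variable, I would right Kan extend along $\Span(\iota)$ and verify that the extension is product preserving and that the unit of the adjunction is an equivalence. The key point is that every object $(I,\{x_i\})$ of $\Fin(S)$ is a product in $\Span_\nabla$ of singletons. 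Moreover, every morphism decomposes as a backward map followed by a fold map. Such a decomposition can be arranged to factor through the image of $\iota$ componentwise after decomposing over $I$: a fold map $(J,y\circ\alpha)\to(I,y)$ splits as a product over $i\in I$ of fold maps $\coprod_{\alpha^{-1}(i)}y_i\to y_i$, each of which lies in the image of $\Span(\Fin)\times\{y_i\}$. The relevant comma categories are therefore controlled by products of the comma categories for singletons.

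\emph{Step 3 (identification with $\Fun(S^\op,\Cat^\otimes)$).} The final step is to invoke the standard equivalence $\Fun^\times(\Span(\Fin),\Cat)\simeq\CAlg(\Cat)=\Cat^\otimes$ (Cranch / Glasman / Barwick) and curry in the $S^\op$ variable.

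The main obstacle is Step 2: proving that the right Kan extension along $\Span(\iota)$ is fully faithful on product preserving functors. This requires a cofinality computation in a span category, and in particular checking that the Kan extension does not detect the non-fold (``diagonal'') maps between distinct objects of $S$. I would attempt it by a final-subcategory argument, showing that the slice $(\Span(\Fin)\times S^\op)_{(I,\{x_i\})/}$ has an initial subcategory equivalent to $\prod_i(\Span(\Fin)\times S^\op)_{x_i/}$. Failing that, I would reduce to the case $\Fin(S)$ with the product decomposition by following Bachmann--Hoyois' argument in \cite[App.C]{normsmotivic} directly.
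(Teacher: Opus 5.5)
Your Steps 1 and 3 are fine; Step 3 is just currying together with $\Fun^\times(\Span(\Fin),\mathcal{D})\simeq\CMon(\mathcal{D})$. But all the content is in Step 2, and the mechanism you propose there would fail.

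Write $\mathcal{B}=\Span_\nabla(\Fin(S))$, and write $c\downarrow\Span(\iota)$ for the comma category computing $\mathrm{Ran}\,G(c)$. The functor $\Span(\iota)$ is not fully faithful. A map $x\to y$ in $\mathcal{B}$ is a span $x\leftarrow\coprod_K y\to y$ carrying a $K$-tuple of possibly different maps $y\to x$, whereas the image of $\Span(\iota)$ only contains constant tuples. So the counit $\Span(\iota)^*\mathrm{Ran}\,G\to G$ is not formal, and no cofinality statement that ignores product preservation of $G$ can produce it. Already for a single $x\in S$, the object $((1,x),\mathrm{id})$ is not initial in $x\downarrow\Span(\iota)$:
\begin{itemize}
\item it maps to $((\emptyset,y),!)$ through a whole $\Map_S(y,x)$ worth of morphisms;
\item it admits no morphism at all to $((1,y),\phi)$ when the components $y\to x$ of $\phi$ are not all equivalent.
\end{itemize}
Your proposed initial subcategory ``$\prod_i(\cdots)_{x_i/}$'' also has the wrong shape. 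An object of $\Span(\Fin)\times S^{\op}$ involves only one object of $S$, so there is no functor from the product of the singleton comma categories to $(I,\{x_i\})\downarrow\Span(\iota)$. The latter is rather their \emph{fiber} product over $\Span(\Fin)\times S^{\op}$. For instance, for $S=\{a,b\}$ discrete one has $\mathcal{B}\simeq\Span(\Fin)\times\Span(\Fin)$, and the comma category at $(I,J)$ is the disjoint union $(I\downarrow\Span(\Fin))\sqcup(J\downarrow\Span(\Fin))$. There the limit splits as a product because the indexing category is a coproduct, not a product.

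The missing idea is semiadditivity of $\mathcal{B}$. Summand inclusions are Cartesian over $\Fin$, hence fold maps, so $(I,\{x_i\})$ is simultaneously the product and the coproduct of the $x_i$ in $\mathcal{B}$. Moreover $\Map_{\mathcal{B}}(x,y)\simeq\coprod_{n\ge 0}(\Map_S(y,x)^n)_{\h\Sigma_n}$ is the free $\bfE_\infty$-space on $\Map_S(y,x)$, with addition given by disjoint union of the middle terms of spans. This is what makes the non-constant tuples harmless: they are sums of constant ones.

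In your Kan extension language, this says the following. The functor $\Map_{\mathcal{B}}(c,\Span(\iota)(-))$, which classifies the left fibration $c\downarrow\Span(\iota)\to\Span(\Fin)\times S^{\op}$, is product preserving in the first variable. It corresponds to the object $\bigoplus_i\mathrm{Free}(\Map_S(-,x_i))$ of $\CMon(\Psh(S))$. Now take $G$ product preserving in the first variable, test against $\Map_{\Cat}(K,-)$ to reduce to space-valued functors, and view $G$ as an object of $\CMon(\Psh(S))$. Then
\[
\mathrm{Ran}\,G(c)\simeq\Map_{\CMon(\Psh(S))}\bigl(\textstyle\bigoplus_i\mathrm{Free}(\Map_S(-,x_i)),G\bigr)\simeq\prod_i G(1,x_i).
\]
This single computation shows at once that $\mathrm{Ran}\,G$ preserves products, that the counit is an equivalence, and that the unit at a product-preserving $F$ is the canonical equivalence $F(c)\simeq\prod_i F(x_i)$. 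Equivalently, $\mathcal{B}$ is the free semiadditive category on $S^{\op}$, so
\[
\Fun^\times(\mathcal{B},\mathcal{D})\simeq\Fun^\times(\mathcal{B},\CMon(\mathcal{D}))\simeq\Fun(S^{\op},\CMon(\mathcal{D})).
\]
Without this input, your Step 2 and its fallback of following Bachmann--Hoyois directly are a statement of intent rather than an argument. The paper itself gives no proof here; it simply quotes \cite[Pr.C.4]{normsmotivic}.
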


With these recollections handled we can give the main definition of this section. 

\begin{mydef}
Given a symmetric monoidal $T$-category $\Cc^\otimes$, viewed as a product preserving functor $\Cc^{\otimes}\colon \Span_{\nabla}(\Fin(\PshT))\to \Cat$ via the equivalence above, we define $\smallint \Cc^\otimes\to \Span(\Fin)$ to be the composite
\[
\smallint\Cc^{\otimes}\to \Span_{\nabla}(\Fin(\PshT))\to \Span(\Fin).
\]
This defines a functor 
\begin{equation}\label{eq:unstr_sym_mon_ext}
\smallint(\bs)\colon \Cat(T)^\otimes \to \Cat_{/\Span(\Fin)}.
\end{equation}
\end{mydef}

In the following proposition, we identify $\Cat^\otimes$ with a subcategory of $\Cat_{/\Span(\Fin)}$ by means of unstraightening and the inclusion $\Cat^\otimes \simeq \CMon(\Cat) \subset \Fun(\Span(\Fin),\Cat)$.

\begin{prop}\label{pr:factors_through_catotimes}
The functor (\ref{eq:unstr_sym_mon_ext}) above factors through $\Cat^\otimes$.
\end{prop}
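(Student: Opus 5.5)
We must show that the composite $\smallint\Cc^\otimes\to\Span(\Fin)$ is the unstraightening of a product-preserving functor $\Span(\Fin)\to\Cat$, i.e.\ a cocartesian fibration whose straightening satisfies the Segal condition. The plan is to work one fibration at a time, using that $\Cat_{/\Span(\Fin)}$ identifies $\Cat^\otimes$ with the cocartesian fibrations over $\Span(\Fin)$ whose straightening is product preserving.

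\textbf{Step 1: $\smallint\Cc^\otimes\to\Span(\Fin)$ is a cocartesian fibration.} The first map $\smallint\Cc^{\otimes}\to\Span_{\nabla}(\Fin(\PshT))$ is a cocartesian fibration by construction, since it is the unstraightening of a functor to $\Cat$. It therefore suffices to show that the projection
\[
\Span_{\nabla}(\Fin(\PshT))\to\Span(\Fin)
\]
is itself a cocartesian fibration, because composites of cocartesian fibrations are cocartesian. I would verify this directly from the description of $\Fin(S)$ as the Cartesian unstraightening of $I\mapsto\Fun(I,S)$. Take a span $I\leftarrow J\to K$ in $\Fin$ and an object $(I,X_\bullet)$.
\begin{itemize}
\item The backward leg admits Cartesian lifts in $\Fin(S)$, namely $(J,X_{\bullet}\circ u)\to(I,X_\bullet)$.
\item The forward leg $J\to K$ should be lifted by the fold map to $(K,\; k\mapsto\prod_{j\mapsto k}X_j)$. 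Here one uses that $\PshT$ has finite products, and that fold maps in $\Fin(\PshT)$ correspond to the maps $\coprod_{j\in J_k}Y_j\to Y_k$ over $J\to K$ which are componentwise identities after passing to $\Fin(S)$.
\end{itemize}
I expect the cleanest route is to invoke the criterion of \cite{HHLNa} (or \cite[\textsection C]{normsmotivic}) for when a map of adequate triples induces a cocartesian fibration on span categories. Its hypotheses are that the map of underlying categories is a Cartesian fibration for backward maps, together with the existence of suitable pushforward (right adjoint) lifts for forward maps, compatible with base change. Here the forward lifts are given by the products over fibers.

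\textbf{Step 2: the Segal condition.} We need the straightened functor $F\colon\Span(\Fin)\to\Cat$ to satisfy $F(I)\simeq\prod_{i\in I}F(\ast)$. The fiber of $\smallint\Cc^\otimes$ over $I\in\Span(\Fin)$ is the unstraightening, over the fiber $\Fun(I,\PshT)^{\op}$-like part of $\Span_\nabla(\Fin(\PshT))$ over $I$, of $\Cc^\otimes$ restricted there. Since $\Cc^\otimes$ is product preserving on $\Span_\nabla(\Fin(\PshT))$, we get $\Cc^\otimes(\coprod_I X_i)\simeq\prod_I\Cc(X_i)$. Hence the fiber over $I$ is $\prod_I\smallint\Cc$, with $\smallint\Cc\to\PshT^{\op}$ the ordinary unstraightening. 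One checks that the cocartesian transport along the inert maps $I\leftarrow\{i\}=\{i\}$ realizes the projections, which gives the Segal condition. On objects, transport along $I\leftarrow I\to\ast$ sends $(X_i,C_i)$ to $(\prod X_i,\bigotimes\pi_i^*C_i)$, which is the external tensor product as desired.

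\textbf{Step 3: functoriality.} A symmetric monoidal functor of $T$-categories induces a map over $\Span_\nabla(\Fin(\PshT))$ preserving cocartesian edges. Composing, the induced map preserves cocartesian edges over $\Span(\Fin)$, since cocartesian edges for the composite are composites of the ones in each stage. So the functor (\ref{eq:unstr_sym_mon_ext}) lands in $\Cat^\otimes$.

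The main obstacle is Step 1: identifying cocartesian edges of $\Span_\nabla(\Fin(\PshT))\to\Span(\Fin)$ over forward maps, which requires the fold-map/product adjunction and its compatibility with pullback squares in span composition. If the general criterion proves awkward, I would instead show it by hand by checking the mapping-space criterion for cocartesian edges, using the explicit span mapping spaces.
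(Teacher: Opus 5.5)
Your proof is correct, but it is organized differently from the paper's. The paper never computes fibers. It invokes the criterion of \cite[Cor.2.49]{normsequiv}: a cocartesian fibration is the unstraightening of a product-preserving functor exactly when the total category has finite products preserved by the projection and the product projections are cocartesian. Because this criterion, together with being a cocartesian fibration, is stable under composition, the paper only checks it for $\smallint\Cc^\otimes\to\Span_\nabla(\Fin(\PshT))$, which holds by construction, and for $\Span_\nabla(\Fin(\PshT))\to\Span(\Fin)$, which it quotes from \cite[Pr.3.1.2]{CLLR24Bispan}.

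You do two things differently. First, you reprove that the middle projection is cocartesian via Barwick's unfurling \cite[Th.3.1]{HHLNa}. This works: $\Fin(\PshT)\to\Fin$ is a Cartesian fibration whose Cartesian edges are the fold maps, and the right adjoints $f_*$ (products over fibers) satisfy base change. The right adjoint appears because the fiber over $I$ is $(\PshT^{\op})^I$. Second, you check the Segal condition directly, by identifying the fiber over $I$ with $(\smallint\Cc)^I$ and the inert transports with projections.

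For that second step, state explicitly that the equivalence $\Cc^\otimes((X_i)_I)\simeq\prod_I\Cc(X_i)$ is natural in morphisms of the fiber $(\PshT^{\op})^I$. This holds because those morphisms commute with the projection spans, so the restriction of $\Cc^\otimes$ to the fiber is $\prod_I \Cc\circ\mathrm{pr}_i$ and its unstraightening is $(\smallint\Cc)^I$. Also include the case $I=\emptyset$.

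The paper's route is shorter and purely formal. Yours is more explicit: it produces the fibers and the external tensor product directly, which the paper only computes in the remark after the proposition. Your Step 3 also checks that morphisms of symmetric monoidal $T$-categories preserve cocartesian edges over $\Span(\Fin)$, a point the paper's proof leaves implicit.
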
 

\begin{proof}
By \cite[Cor.2.49]{normsequiv}, a coCartesian fibration $p\colon \Cc\to \calD$ is the unstraightening of a product preserving functor if and only if the following two conditions are satisfied:
\begin{enumerate}
    \item $\Cc$ admits products preserved by $p$;
    \item The projection $X\times Y \to X$ is a coCartesian edge for all $X,Y\in \ccat$.
\end{enumerate}
Since being a coCartesian fibration and the conditions above are both closed under composition, to prove that $\smallint \Cc^\otimes\to \Span(\Fin)$ is the unstraightening of a product preserving functor, it suffices to prove this separately for the functors $\smallint \Cc^\otimes\to \Span_{\nabla}(\Fin(\calP))$ and $\Span_{\nabla}(\Fin(\calP))\to \Span(\Fin)$. The former satisfies these properties by the equivalence above, as it is by definition the unstraightening of a product preserving functor. On the other hand, the map $\Span_{\nabla}(\Fin(\calP))\to \Span(\Fin)$ is also the coCartesian unstraightening of a product preserving functor by \cite[Pr.3.1.2]{CLLR24Bispan}.
\end{proof}

\begin{remark}
One easily computes that the fiber of $\smallint \Cc^\otimes$ over $1\in \Span(\Fin)$ is precisely $\smallint \Cc$, and so the previous proposition supplies $\smallint \Cc$ with a symmetric monoidal structure. More explicitly, consider $(X,A)$ and $(Y,B)$, two objects in $\smallint \Cc$. The coCartesian pushforward of these two objects along $\ast\amalg \ast \to \ast$ is given by first computing the coCartesian edge in $\Span_{\nabla}(\Fin(\PshT))\to \Span(\Fin)$ starting at the coproduct of $A$ and $B$. The tensor product in $\smallint \Cc^\otimes$ is then given by the coCartesian pushforward along this map. However, the former is easily seen to be the span
\[
A\amalg B\xleftarrow{\pi_A\amalg \pi_B} A\times B\amalg A\times B\xrightarrow{\nabla} A\times B.
\]
The coCartesian pushforward of this map gives $\pi_A^*X \otimes \pi_B^* Y$, as desired. Similarly, one computes the unit of $\smallint\calC^\otimes$ to be the object $(\bbone_{\calC(\ast)},\ast)$. 
\end{remark}

\begin{remark}
The previous proposition only used that $\PshT$ admits finite products, and so \cref{pr:factors_through_catotimes} holds in that generality. In particular, given any category $S$ with finite products and a functor $F\colon S\to \Cat^\otimes$, the unstraightening $\smallint F$ of the underlying functor $S\to \Cat$ admits a canonical symmetric monoidal structure given by the external tensor product. 
\end{remark}

\begin{prop}\label{pr:symmetric_monoidal_section}
Let $\Cc^\otimes$ be a symmetric monoidal $T$-category. Then there exists, naturally in $\Cc$, a fully faithful symmetric monoidal inclusion $\Cc(\ast)^\otimes\to \smallint\Cc^\otimes$.
\end{prop}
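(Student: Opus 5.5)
The plan is to realize $\Cc(\ast)^\otimes$ as the fiber of $\smallint\Cc^\otimes$ over the terminal object $\ast\in\PshT$, compatibly with the symmetric monoidal structures. Consider the functor $\{\ast\}\to\PshT$ picking out the terminal object. It preserves finite products, since $\ast\times\ast\simeq\ast$. Hence it induces a product preserving functor $\Span_\nabla(\Fin(\{\ast\}))\simeq\Span(\Fin)\to\Span_\nabla(\Fin(\PshT))$, obtained by applying $\Fin(\bs)$ and $\Span$ to the adequate triples; fold maps go to fold maps. Moreover, this functor is a section of the projection $\Span_\nabla(\Fin(\PshT))\to\Span(\Fin)$.

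Pulling back the coCartesian fibration $\smallint\Cc^\otimes\to\Span_\nabla(\Fin(\PshT))$ along this section yields the unstraightening of the composite $\Span(\Fin)\to\Span_\nabla(\Fin(\PshT))\xrightarrow{\Cc^\otimes}\Cat$. Under the equivalence of \cref{prop:external_tensor}, applied with $S=\{\ast\}$, this composite is exactly the commutative monoid $\Cc(\ast)^\otimes$. The pullback therefore gives a functor over $\Span(\Fin)$,
\[
\smallint\Cc(\ast)^\otimes\to\smallint\Cc^\otimes.
\]
This functor preserves coCartesian edges, because pullbacks of coCartesian fibrations preserve coCartesian edges and the section is itself a functor over $\Span(\Fin)$. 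Under the identification of \cref{pr:factors_through_catotimes}, it is therefore a strong symmetric monoidal functor. Naturality in $\Cc$ is automatic, since every step is functorial in the product preserving functor $\Cc^\otimes$.

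It remains to check full faithfulness, which can be verified on underlying categories. On fibers over $1\in\Span(\Fin)$ the functor is the inclusion of the fiber $\Cc(\ast)\subset\smallint\Cc$ over $\ast\in\PshT$. For the coCartesian unstraightening $\smallint\Cc\to\PshT^{\op}$, a mapping space from $(X,\ast)$ to $(Y,\ast)$ fibers over $\Map_{\PshT}(\ast,\ast)\simeq\ast$ with fiber $\Map_{\Cc(\ast)}(X,Y)$. So the inclusion of the fiber over the object $\ast$ is fully faithful, because $\ast$ has contractible endomorphisms.

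The step most likely to be delicate is checking that the composite $\Span(\Fin)\to\Cat$ really recovers $\Cc(\ast)^\otimes$ and not just its underlying category. I would handle this by tracing the naturality of the Bachmann--Hoyois equivalence of \cref{prop:external_tensor} in the category $S$. Restricting along $\{\ast\}\to\PshT$ on the $\Fun(S^\op,\Cat^\otimes)$ side is evaluation at $\ast$. It corresponds on the $\Span_\nabla$ side to restriction along the induced map of span categories, which requires the equivalence to be natural in product preserving functors $S\to S'$.
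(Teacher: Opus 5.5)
Your proposal is correct and is essentially the paper's own proof: you pull back $\smallint\Cc^\otimes$ along the section $\Span(\Fin)\to\Span_\nabla(\Fin(\PshT))$ induced by $\ast\in\PshT$, check that coCartesian edges over $\Span(\Fin)$ are preserved, and get full faithfulness from $\Map_{\PshT}(\ast,\ast)\simeq\ast$. Two adjustments: being a section over $\Span(\Fin)$ does not by itself give preservation of coCartesian edges---you need that every span is sent to a coCartesian edge of $\Span_\nabla(\Fin(\PshT))\to\Span(\Fin)$, which holds because that pushforward forms products over fibers and $\ast\times\ast\simeq\ast$---while the step you flag as delicate is immediate, since the section is literally $\Span(\iota)(\bs,\ast)$, so the pulled-back composite is $\Cc(\ast)^\otimes$ by the definition of the equivalence in \cref{prop:external_tensor}, with no naturality in $S$ needed.
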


\begin{proof}
Consider the functor $\Span(\Fin)\to \Span_{\nabla}(\Fin(\PshT))$ induced by the inclusion $\ast\to \PshT$ of the terminal object of $T$. This induces the following commutative diagram
\[\begin{tikzcd}
	{\Cc(\ast)^\otimes} & {\smallint\Cc^\otimes} \\
	{\Span(\Fin)} & {\Span_{\nabla}(\Fin(\PshT))} \\
	& {\Span(\Fin)}
	\arrow[from=1-1, to=1-2]
	\arrow[from=1-1, to=2-1]
	\arrow["\lrcorner"{anchor=center, pos=0.125}, draw=none, from=1-1, to=2-2]
	\arrow[from=1-2, to=2-2]
	\arrow[from=2-1, to=2-2]
	\arrow[equals, from=2-1, to=3-2]
	\arrow[from=2-2, to=3-2]
\end{tikzcd}\]
One can easily see that the resulting functor 
\[
\Cc(\ast)^\otimes \to \smallint\Cc^\otimes
\]
over $\Span(\Fin)$ preserves coCartesian morphisms. Moreover, since the only endomorphism of the terminal object $\ast$ is the identity, we also immediately obtain that the functor is fully faithful.
\end{proof}

Having discussed the general construction of a symmetric monoidal structure on the coCartesian unstraightening of a symmetric monoidal $T$-category, for the remainder of this section, our goal will be to explicitly compute $\smallint \ul{\Span}(\calQ)^\otimes$, the unstraightening of the symmetric monoidal $T$-category $\ul{\Span}(\calQ)^\otimes$.

\begin{mydef}
Let $\Ar_{\calQ}(\calP)$ be the full subcategory of the arrow category $\Ar(\calP) = \Fun([1],\calP)$ spanned by maps in $\calQ$ and view this as a symmetric monoidal category via the Cartesian product. A map $\eta$ in $\Ar_{\calQ}(\calP)$ is a \emph{fiberwise} map if $t(\eta)$ is invertible, where $t\colon \Ar(\calP)\to \calP$ denotes evaluation at the target of $[1]$. The fiberwise maps are closed under products and pullbacks in $\Ar_{\calQ}(\calP)$, hence we obtain the symmetric monoidal category $\Span_{\mathrm{fw}}(\Ar_{\calQ}(\calP))^\otimes$, where the forward maps are the fiberwise maps.
\end{mydef}

We will prove the following theorem:

\begin{theorem}\label{thm:unstr_of_span}
There is an equivalence
\[
\smallint\ul{\Span}(\calQ)^\otimes\simeq \Span_{\mathrm{fw}}(\Ar_{\calQ}(\calP))^\otimes 
\]
of symmetric monoidal categories.
\end{theorem}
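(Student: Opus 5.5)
The strategy is to first identify the underlying categories using Barwick's unfurling construction, and then run the same argument one level up, over $\Span_{\nabla}(\Fin(\calP))$, to capture the symmetric monoidal structures.

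\textbf{Underlying categories.} Recall that the $T$-category $X\mapsto \bar{\calQ}_{/X}$, with pullback functoriality, is the straightening of the Cartesian fibration
\[
t\colon \Ar_{\calQ}(\calP)\to \calP.
\]
Its Cartesian edges are exactly the pullback squares. The fiberwise maps are the morphisms lying over equivalences, and they are closed under pullback along arbitrary maps of $\Ar_{\calQ}(\calP)$. This uses that $\bar{\calQ}$ is a local class and hence stable under base change, by \cref{def:local_class_generated}. So $(\Ar_{\calQ}(\calP),\mathrm{all},\mathrm{fw})$ is an adequate triple, and $t$ is a map of adequate triples to $(\calP,\mathrm{all},\simeq)$.

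The unfurling theorem of Barwick (see also \cite{HHLNa}) now applies. It says that the induced functor $\Span_{\mathrm{fw}}(\Ar_{\calQ}(\calP))\to \Span_{\simeq}(\calP)\simeq \calP^{\op}$ is a coCartesian fibration, classifying $X\mapsto \Span(\bar{\calQ}_{/X})$ with functoriality $\Span(f^*)$. By \cref{ex:SpanQ}, this is precisely the underlying functor of $\ul{\Span}(\calQ)$, so $\smallint\ul{\Span}(\calQ)\simeq\Span_{\mathrm{fw}}(\Ar_{\calQ}(\calP))$.

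As a sanity check, a morphism $(A\to X)\to(B\to Y)$ on the right is a span $(A\to X)\leftarrow (W\to Y)\rightarrow(B\to Y)$. Such a span is the same as a map $g\colon Y\to X$ together with a span $g^*A\leftarrow W\to B$ in $\bar{\calQ}_{/Y}$, which matches morphisms in the unstraightening.

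\textbf{Monoidal structures.} By construction, $\smallint\ul{\Span}(\calQ)^\otimes$ is the unstraightening of the product-preserving functor
\[
\Span_\nabla(\Fin(\calP))\to\Cat
\]
corresponding to $\ul{\Span}(\calQ)^\otimes$ under \cref{prop:external_tensor}, composed with the projection to $\Span(\Fin)$. I will apply unfurling again, now to the Cartesian fibration $\Fin(\Ar_{\calQ}(\calP))\to\Fin(\calP)$. On this I use the triple whose forward maps are the fiberwise maps composed with fold maps, mapping to $(\Fin(\calP),\mathrm{all},\nabla)$.

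This produces a coCartesian fibration
\[
\Span_{\mathrm{fw}\nabla}(\Fin(\Ar_{\calQ}(\calP)))\to\Span_\nabla(\Fin(\calP)).
\]
Its straightening sends $(X_i)_{i\in I}\mapsto\prod_i\Span(\bar{\calQ}_{/X_i})$, so it is product preserving. Over the fold maps, it is the functor given by $\nabla_!$, which computes fiberwise products. Via the equivalence of \cref{prop:external_tensor}, it restricts on $\Span(\Fin)\times\calP^{\op}$ to the symmetric monoidal structure of \cref{prop:spanqmonoidal}. Hence it is the unstraightening of $\ul{\Span}(\calQ)^\otimes$.

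Composing with $\Span_\nabla(\Fin(\calP))\to\Span(\Fin)$, I then need to identify the resulting total category over $\Span(\Fin)$ with the coCartesian unstraightening of the Cartesian symmetric monoidal structure on $\Span_{\mathrm{fw}}(\Ar_{\calQ}(\calP))$. The natural candidate is the standard model $\Span_{\nabla}(\Fin(\calC))\to\Span(\Fin)$ for $\calC=\Ar_{\calQ}(\calP)$, as in \cite[Pr.3.1.2]{CLLR24Bispan}.

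\textbf{Main obstacle.} The hardest step is this last identification. It amounts to checking that the triple "fiberwise followed by fold" on $\Fin(\Ar_{\calQ}(\calP))$ agrees with the triple used in \cite{CLLR24Bispan} to present the Cartesian monoidal structure on spans, keeping track of the fact that the target component is allowed to collapse along fold maps.

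I would settle this by the criterion of \cite[Cor.2.49]{normsequiv}, as in the proof of \cref{pr:factors_through_catotimes}. Both sides are coCartesian fibrations over $\Span(\Fin)$ with products preserved by the projection and with coCartesian projections. The comparison functor is induced by the evident map of adequate triples, and it is an equivalence on the fiber over $1$ by the first step. It therefore suffices to check that the comparison preserves products, which reduces to a direct computation. Concretely, the tensor product of $(A\to X)$ and $(B\to Y)$ computed in the Remark after \cref{pr:factors_through_catotimes} is $\pi_X^*A\times\pi_Y^*B=A\times B\to X\times Y$, which is the product in $\Ar_{\calQ}(\calP)$.
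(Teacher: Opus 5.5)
Your identification of the underlying categories is fine; it is the observation recorded in the remark after \cref{prop:arrow_localization}. The monoidal step, however, computes the wrong structure, because here forward maps act by \emph{left} adjoints of pullback. In your $\Span_{\mathrm{fw}\nabla}(\Fin(\Ar_{\calQ}(\calP)))$, take a fold $\nabla_\phi\colon (X_{\phi(j)})_{j\in J}\to (X_i)_{i\in I}$ and an object $(A_j\to X_{\phi(j)})_{j\in J}$. The coCartesian lift of $\nabla_\phi$ starting there is the inclusion into $(\coprod_{j\in\phi^{-1}(i)}A_j\to X_i)_{i\in I}$; one checks this directly, using that coproducts in $\calP$ are disjoint and universal. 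So your $\nabla_!$ computes fiberwise \emph{coproducts}, which induce the biproduct (direct sum) on $\Span(\bar{\calQ}_{/X})$. That is the categorical product of the span category, but it is not the structure of \cref{prop:spanqmonoidal}. That structure is induced by $A\times_X B$, which is not the categorical product of spans. In your model the external tensor product is $\bigl((A\times Y)\sqcup(X\times B)\to X\times Y\bigr)$ with unit $\emptyset\to\ast$. This contradicts the value $A\times B\to X\times Y$ you invoke at the end, so your fibration is not $\smallint\ul{\Span}(\calQ)^\otimes$, and no comparison with the right-hand side can be symmetric monoidal. The same confusion affects your ``standard model''. With folds as forward maps, $\Span_{\nabla}(\Fin(\calC))\to\Span(\Fin)$ has fiber $(\calC^{\op})^I$ over $I$. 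What \cite[Pr.3.1.2]{CLLR24Bispan} gives instead is the \emph{Cartesian} fibration $\Span_{\nabla,\all}(\Fin(\calC))$, with folds as backward maps.

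The missing idea is that products only arise as Cartesian lifts along \emph{backward} fold maps, so the span construction has to be iterated. The paper first proves, in the lemma preceding the proof, that the Cartesian unstraightening of the Cartesian monoidal $T$-category $\calQ^\times_{/\bullet}$ is
\[
\Span_{\nabla,\all}(\Fin(\Ar_{\calQ}(\calP)))\to\Span_{\nabla,\all}(\Fin(\calP))\simeq\Span_{\nabla}(\Fin(\calP))^{\op}.
\]
It does so by checking the three properties that characterize $\calQ^\times_{/\bullet}$. Only then does it apply \cite[Th.3.9]{HHLNa} to $\ul{\Span}(\calQ)^\otimes=\Span\circ\calQ^\times_{/\bullet}$, obtaining $\Span_{\all,\mathrm{fw}}(\Span_{\nabla,\all}(\Fin(\Ar_{\calQ}(\calP))))\to\Span(\Fin)$. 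Running the same two steps on $\Span_{\mathrm{fw}}(\Ar_{\calQ}(\calP))^\otimes=\Span\circ\Ar_{\calQ}(\calP)^\times$ (first \cite[Pr.3.1.2]{CLLR24Bispan}, then \cite[Th.3.9]{HHLNa}) yields literally the same iterated span category over $\Span(\Fin)$. Hence no comparison functor is needed. That matters, because your ``evident map of adequate triples'' is never specified. Moreover, the criterion you would need is that the comparison preserve coCartesian edges over $\Span(\Fin)$, not merely products.
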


Consider the functor $\calQ_{/\bullet}\colon \calP^{\op}\to \Cat$, and observe that it factors through the category $\Cat^\times \subset \Cat$ of categories with finite products and finite product preserving functors. Applying the fully faithful inclusion $\Cat^\otimes\subset \Cat^\times$, we obtain a symmetric monoidal $T$-category, which we identify with a product preserving functor 
\begin{equation}\label{eq:Q_times}
\calQ_{/\bullet}^{\times}\colon \Span_{\nabla}(\Fin(\calP))\to \Cat
\end{equation} via \Cref{prop:external_tensor}. We separate out one observation from the proof:

\begin{lemma}
The Cartesian unstraightening of the functor (\ref{eq:Q_times}) is given by
\[
\Span(\Fin(\ev_1))\colon\Span_{\nabla,\all}(\Fin(\Ar_{\calQ}(\calP))) \to \Span_{\nabla,\all}(\Fin(\calP))\simeq \Span_{\nabla}(\Fin(\calP))^{\op}.
\]
We denote this functor by $\Phi$.
\end{lemma}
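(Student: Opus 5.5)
The plan is to compute the Cartesian unstraightening of $\calQ^\times_{/\bullet}$ in two stages: first for the underlying functor $\calP^{\op}\to\Cat$, $X\mapsto\calQ_{/X}$, and then for its extension along $\Span_{\nabla}(\Fin(\calP))$ given by \Cref{prop:external_tensor}.

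For the first stage, the Cartesian unstraightening of $X\mapsto \calQ_{/X}$, with functoriality by pullback, is the full subcategory of $\Ar(\calP)$ spanned by maps in $\bar\calQ$ (here I read $\Ar_\calQ$ as arrows in the local class). The structure map is $\ev_1$, and the Cartesian edges are exactly the pullback squares. This is the standard identification of the codomain fibration restricted to a pullback-stable class: $\ev_1\colon\Ar_\calQ(\calP)\to\calP$ is a Cartesian fibration because $\bar\calQ$ is closed under base change, and its fiber over $X$ is $\calQ_{/X}$.

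For the second stage, we pass to finite coproduct completions. $\Fin(\ev_1)\colon \Fin(\Ar_\calQ(\calP))\to\Fin(\calP)$ is again a Cartesian fibration, and its fiber over a tuple $(X_i)_{i\in I}$ is $\prod_i \calQ_{/X_i}$. We then apply the span construction of \cite{HHLNa} to the triples whose backward maps are arbitrary and whose forward maps are fold maps (with Cartesian fold maps upstairs). The relevant result is Barwick's unfurling theorem, in the form of \cite[Th.3.4.18]{HHLNa} or the analogous statement used in \cite{normsmotivic} for $\Span_\nabla$. It says that if $p$ is a Cartesian fibration and the forward class upstairs consists of the $p$-Cartesian lifts of forward maps downstairs, then $\Span(p)$ is a Cartesian fibration whose straightening is obtained from the straightening of $p$ by adding covariant functoriality along forward maps via right adjoints. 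Here those adjoints are the pushforwards along fold maps, i.e.\ products.

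We then identify the result with $\calQ^\times_{/\bullet}$. Along a fold map $\nabla\colon\coprod_I X\to X$, the induced functor $\prod_I\calQ_{/X}\to\calQ_{/X}$ must be the fiber product over $X$, which is the Cartesian product in $\calQ_{/X}$. This matches the description in \Cref{prop:external_tensor}: the restriction of the straightened functor along $\Span(\iota)$ is $\Fin\times\calP^{\op}\ni(I,X)\mapsto \calQ_{/X}^{I}$, with Cartesian monoidal multiplication. Since \Cref{prop:external_tensor} is an equivalence of full subcategories, a product-preserving functor on $\Span_\nabla(\Fin(\calP))$ is determined by this restriction. It therefore suffices to check that the unfurled functor is product preserving and that its restriction agrees with $\calQ^\times_{/\bullet}$. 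Product preservation follows since fibers over $\coprod$ are products of fibers. The identification $\Span_{\nabla,\all}(\Fin(\calP))\simeq\Span_\nabla(\Fin(\calP))^{\op}$ is just reversal of spans.

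I expect the main obstacle to be verifying the hypotheses of the unfurling theorem. One must check that the fold maps in $\Fin(\Ar_\calQ(\calP))$ are exactly the $\Fin(\ev_1)$-Cartesian lifts of fold maps, and that the required pullbacks exist and are preserved. One must also check that the right adjoint appearing is indeed the fiberwise product, and not merely some functor with the right values. I would first verify this on the generating fold maps $X\amalg X\to X$, using that the fiber product in $\Ar_\calQ$ over an identity on $X$ is the pullback in $\calP$, which lies in $\calQ$ by closure under base change and composition.
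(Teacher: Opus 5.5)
Your overall strategy is the paper's. You view $\ev_1\colon\Ar_\calQ(\calP)\to\calP$ as the Cartesian unstraightening of $\calQ_{/\bullet}$, pass to $\Fin(\ev_1)$, unfurl over spans, and identify the straightening with $\calQ^\times_{/\bullet}$ via \Cref{prop:external_tensor}. However, the unfurling step is set up with the wrong variance, and as written it fails.

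You take arbitrary maps as the backward class and the Cartesian fold maps as the forward class. In that span category, a morphism $x\to x'$ over $\id_X$ is a span $x\leftarrow v\to x'$ whose forward leg is a fold map over $\id_X$, hence an equivalence. So the fibre over $X$ is $\calQ_{/X}^{\op}$, not $\calQ_{/X}$. Indeed, your category is $\Span_{\nabla,\all}(\Fin(\Ar_\calQ(\calP)))^{\op}$ over $\Span_\nabla(\Fin(\calP))$. That is, it is $\Phi^{\op}$, a coCartesian fibration classifying $(-)^{\op}\circ\calQ^\times_{/\bullet}$.

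It is not a Cartesian fibration in general. A Cartesian lift along a backward leg $f$ would require a left adjoint to $f^\ast$ on $\bar\calQ$-slices, and postcomposition provides one only for $f\in\bar\calQ$. So the version of unfurling you quote (Cartesian lifts in the forward class, right adjoints along forward maps, output a Cartesian fibration) is not correct as stated. Your closing appeal to reversal of spans reverses only the base, not the total category.

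The repair is to put the fold maps in the backward class, as the notation $\Span_{\nabla,\all}$ indicates. The backward legs are fold maps, which are precisely the $\Fin(\ev_1)$-Cartesian edges over fold maps, and the forward legs are arbitrary. Then for $\sigma=(A\xleftarrow{\nabla}W\xrightarrow{f}B)$ and $y$ over $B$ one has $\Map_\sigma(x',y)\simeq\Map(\nabla^\ast x',f^\ast y)\simeq\Map(x',\nabla_\ast f^\ast y)$. So $\Phi$ is Cartesian with transport $\nabla_\ast f^\ast$, where $\nabla_\ast$ is the fibre product over the target, which stays in $\bar\calQ$. Read as a covariant functor on $\Span_{\nabla,\all}(\Fin(\calP))^{\op}\simeq\Span_\nabla(\Fin(\calP))$, this is exactly the functoriality you described, with fold maps now acting covariantly by products. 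With this correction your argument agrees with the paper's.

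For the coherence issue you flag at the end, the paper avoids checking $X\amalg X\to X$ by hand. Since $\Cat^\times\subset\Cat^\otimes$ is fully faithful, it suffices to check two things:
\begin{itemize}
\item each restriction along $I\mapsto\coprod_I X$ is Cartesian monoidal, which follows by pulling back and comparing with the unstraightening of a Cartesian monoidal category in \cite[Pr.3.1.2]{CLLR24Bispan};
\item the restriction along $\calP^{\op}$ is $\ev_1\colon\Ar_\calQ(\calP)\to\calP$.
\end{itemize}
Product preservation follows from \cite[Cor.2.49]{normsequiv}, since coproducts are preserved and summand inclusions are Cartesian.
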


\begin{proof}
We first note that $\calQ_{/\bullet}^\times$ is determined by the following three properties:
    \begin{enumerate}
        \item $\calQ_{/\bullet}^\times$ is product preserving,
        \item The restriction of $\calQ_{/\bullet}^\times$ along the functor 
        \[
        i_X\colon \Span(\Fin)\to \Span_{\nabla}(\Fin(\calP)),\quad  I\mapsto \coprod_I X
        \]
        is equivalent to $\Cc(X)^\times$ for all $X\in \calP$,
        \item The restriction of $\calQ_{/\bullet}^\times$ along the functor $\calP^{\op}\to \Span_{\nabla}(\Fin(\calP))$
        is equivalent to $\calQ_{/\bullet}$.
    \end{enumerate}
    Namely, property (1) implies that $\calQ_{\bullet}^\times$ is determined by its restriction to $\Fun(\calP^{\op},\Cat^\otimes)$ by \Cref{prop:external_tensor}. However, property (2) implies that this restriction factors through the fully faithful inclusion $\Cat^\times\subset \Cat^{\otimes}$, and so it suffices to identify the underlying functor $\Fun^{\times}(\calP^{\op},\Cat)$ with $\calQ_{/\bullet}$. This is precisely property (3).
    
    Now consider the functor $\Phi$ in question. Recall that $\Ar_{\calQ}(\calP)\to \calP$ is the Cartesian unstraightening of the functor $\calQ_{/\bullet}\colon \calP^{\op}\to \Cat$. A simple calculation shows that $\Fin(\Ar_{\calQ}(T))\to \Fin(T)$ is again a Cartesian fibration. Then an application of Barwick's unfurling \cite[Th.3.1]{HHLNa}, using the existence of products in $\calP$, shows that $\Phi$ is a coCartesian fibration. We write $\phi\colon \Span_{\nabla}(\Fin(T))\to \Cat$ for the corresponding functor. We will show that it satisfies the three properties characterizing $\calQ_{/\bullet}^\times$, completing the proof.
    \begin{enumerate}
        \item Both the source and the target of $\Phi$ admit coproducts, which are preserved by $\Phi$. Moreover, the summand inclusions are easily seen to be Cartesian morphisms. So by the dual of \cite[Cor.2.49]{normsequiv}, the straightening of $\Phi$ is product preserving.
        \item Since taking spans commutes with taking pullbacks, we can explicitly compute the unstraightening of the restriction of $\phi$ along $i_X$. It is easily seen to be equivalent to the description of the Cartesian unstraightening of $\Cc(X)^\times$ given in \cite[Pr.3.1.2]{CLLR24Bispan}.
        \item Similarly the unstraightening of the restriction of $\phi$ along $T^{\op}\to \Span_{\nabla}(T)$ can be computed by pullback, where it is given by $\Ar_{\calQ}(T)\to T$. This is the Cartesian unstraightening of $\calQ_{/\bullet}$ by definition.\qedhere
    \end{enumerate} 
\end{proof}

\begin{proof}[Proof of \cref{thm:unstr_of_span}]
The proof proceeds by explicitly computing both sides as coCartesian fibrations over $\Span(\Fin)$. We begin with the left-hand side.
\begin{enumerate}
    \item For the first step, we recall that the Cartesian unstraightening of $\calQ^\times_{/\bullet}$ is given by the functor 
    \[
    \Phi\colon \calD\coloneqq \Span_{\nabla,\all}(\Fin(\Ar_{\calQ}(\PshT))) \to    \Span_{\nabla,\all}(\PshT)\simeq \Span_{\nabla}(\Fin(\PshT))^{\op}.
    \]
    Write $\calD_{\mathrm{fw}}$ for the wide subcategory of $\calD$ spanned by the \emph{fiberwise} maps, those which live over equivalences in $\Span_{\nabla}(\PshT)^{\op}$.
    \item Then we can compute the coCartesian unstraightening of the composite
    \[
    \ul{\Span}(\calQ_{/\bullet}^\times) = {\Span}\circ {\calQ_{/\bullet}}\colon \Span_{\nabla}(\Fin(\PshT))\to \Cat
    \]
    to be the map
    \[
    \Span_{\all,\mathrm{fw}}(\calD)\to \Span_{\all,\simeq}(\Span_{\nabla}(\Fin(\PshT))^{\op}) \simeq \Span_{\nabla}(\Fin(\PshT))
    \]
    by \cite[Th.3.9]{HHLNa}.
    \item Finally, we conclude that the unstraightening of $\smallint \ul{\Span}(\calQ)^\otimes$ is precisely given by 
    \begin{align*}
    \Span_{\all,\mathrm{fw}}(\Span_{\nabla,\all}(\Fin(\Ar_{\calQ}(\PshT))))&\to \Span_{\all,\simeq}(\Span_{\nabla,\all}(\Fin(\PshT))) \\
    &\simeq \Span_{\nabla}(\Fin(\PshT)).
    \end{align*}
    Taking the composite with the map $\Span_{\nabla}(\Fin(\PshT))\to\Span(\Fin)$ we obtain the functor
    \[
    \Span_{\all,\mathrm{fw}}(\Span_{\nabla\textup{-pb},\all}(\Fin(\Ar_{\calQ}(\PshT))))\to \Span(\Fin)
    \]
    induced by the unique functor $\Ar_{\calQ}(\PshT)\to \ast$.
\end{enumerate}
We now explicitly compute the right-hand side.
\begin{enumerate}
    \item By an application of \cite[Pr.3.1.2]{CLLR24Bispan}, we may compute the Cartesian unstraightening of $\Ar_{\calQ}(\PshT)^{\times}$ to be 
    \[
    \Span_{\mathrm{cart},\all}(\Fin(\Ar_{\calQ}(\PshT)))\to \Span(\Fin),
    \]
    where $\mathrm{cart}$ denotes the Cartesian morphisms for the functor $\Fin(\Ar_{\calQ}(\PshT))\to \Fin$. However, these are precisely the fold maps, and so we may equivalently write this as 
    \[
    \Span_{\nabla,\all}(\Fin(\Ar_{\calQ}(\PshT)))\to \Span(\Fin),
    \]
    \item By another application of \cite[Th.3.9]{HHLNa}, we may compute the unstraightening of $\Span_{\mathrm{fw}}(\Ar_{\calQ})^\otimes = {\Span}\circ {\Ar_{\cal}(\PshT)^\times}$ as
    \[
    \Span_{\all,\mathrm{fw}}(\Span_{\nabla,\all}(\Fin(\Ar_{\calQ}(\PshT))))\to \Span_{\all,\simeq}(\Span(\Fin))\simeq \Span(\Fin)
    \]
    induced by the functor $\Ar_{\calQ}(\PshT)\to \ast$, as required.\qedhere
\end{enumerate}
\end{proof}

Let us also record for later use the following result:

\begin{prop}\label{prop:arrow_localization}
The functor $\ev_0\colon \Ar_{\calQ}(\PshT)\to \PshT$ induces a symmetric monoidal Dwyer--Kan localization 
\[
\Span_{\mathrm{fw}}(\Ar_{\calQ}(\PshT))^\otimes \to \Span_{\calQ}(\PshT)^\otimes
\]
at the morphisms of the form
\[\begin{tikzcd}
	X & X & X \\
	Z & Y & Y
	\arrow["fq"', from=1-1, to=2-1]
	\arrow[equals, from=1-2, to=1-1]
	\arrow[equals, from=1-2, to=1-3]
	\arrow["q", from=1-2, to=2-2]
	\arrow["q", from=1-3, to=2-3]
	\arrow["f", from=2-2, to=2-1]
	\arrow[equals, from=2-2, to=2-3]
\end{tikzcd}\]
where $q$ and $fq$ are maps in $\Qloc$.
\end{prop}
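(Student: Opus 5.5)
We construct the functor, show it inverts the indicated morphisms, and then prove it is a localization by exhibiting a section together with a natural comparison. The functor $\ev_0\colon \Ar_\calQ(\PshT)\to\PshT$ sends fiberwise maps, i.e.\ squares whose bottom edge is invertible, to maps that are locally in $\calQ$. Indeed, a fiberwise map from $(X\xrightarrow{q}Y)$ to $(X'\xrightarrow{q'}Y)$ gives $g\colon X\to X'$ with $q'g=q$. Since $\calQ$ is closed under diagonals and $\bar\calQ$ is closed under base change, the usual two-out-of-three argument ($g$ factors as the graph $X\to X\times_Y X'$ followed by the projection) shows $g\in\bar\calQ$. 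Moreover, $\ev_0$ preserves pullbacks and products. Hence it induces a symmetric monoidal functor $L\colon \Span_{\mathrm{fw}}(\Ar_\calQ(\PshT))\to\Span_\calQ(\PshT)$ by functoriality of spans for adequate triples \cite{HHLNa}. The displayed morphisms are backward maps whose source component is $\id_X$, so $L$ sends them to identities.

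To see that $L$ is a Dwyer--Kan localization, define $s\colon \Span_\calQ(\PshT)\to\Span_{\mathrm{fw}}(\Ar_\calQ(\PshT))$ by $X\mapsto (X\xrightarrow{\id}X)$. Since $\id\in\calQ$, this lands in $\Ar_\calQ$. A span $X\leftarrow W\to Z$ with forward map in $\bar\calQ$ is sent to
\[(\id_X)\leftarrow (W\to Z)\to (\id_Z),\]
where the backward leg is the square from $W\to Z$ to $X=X$ induced by $W\to X$ (note $W\to Z$ lies in $\calQ$ whenever the forward map does), and the forward leg is the fiberwise square over $Z$. This is functorial because composition in both span categories is computed by pullbacks that $\ev_0$ preserves. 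Clearly $Ls\simeq\id$.

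It remains to construct a natural transformation $\id\Rightarrow sL$ whose components are marked morphisms. For an object $(X\xrightarrow{q}Y)$, the component is the backward span given by the square $(X\xrightarrow{q}Y)\to(X\xrightarrow{\id}X)$, which is of the displayed form with $f=q$ up to orientation. Equivalently, it is the span $(q)\leftarrow(q)\to(q)$ precomposed suitably; one checks it matches the displayed morphism with $Z=Y$ and the identity in the other slot. Naturality with respect to forward fiberwise maps and backward maps should be checked on the two generating classes. The standard criterion then shows that $L$ exhibits the target as the localization at the marked class: a pair $(s,L)$ with $Ls\simeq\id$ and a natural transformation $\id\Rightarrow sL$ with components in a class $W$ inverted by $L$ suffices. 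This is the dual of \cite[Pr.~7.1.14]{cisinski} type arguments, provided $W$ is contained in the saturation, and the marked morphisms are exactly the components up to composition with equivalences.

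The main obstacle is the coherence of $s$ and of the transformation $\id\Rightarrow sL$ at the $\infty$-categorical level, rather than on homotopy categories. I would handle this by realizing $s$ as $\Span$ applied to a map of adequate triples: take the functor $\PshT\to\Ar(\PshT)$, $X\mapsto\id_X$, precomposed via the equivalence $\Span_\calQ(\PshT)\simeq\Span_{\calQ,\all}$ along the right adjoint description. I would then obtain the transformation from the natural map $\id_{\Ar}\Rightarrow (\id\circ\ev_0)$ in $\Ar(\PshT)$, i.e.\ the unit of the adjunction $\ev_0\dashv(\text{const/identity})$ restricted to $\Ar_\calQ$, using the Barwick–Haugseng functoriality of spans in adjunctions (\cite[Cor.C.21]{normsmotivic}), as in the proof of \cref{prop:rel_lift_to_mod}. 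Symmetric monoidality of the localization then follows because the marked class is closed under products, so the localization is compatible with the Cartesian structure \cite[Pr.~3.3.4]{hinich}.
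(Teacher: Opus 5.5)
\textbf{Verdict: genuine gap.} The first part of your argument is fine: $\ev_0$ sends fiberwise maps into $\bar\calQ$, preserves the relevant pullbacks and products, and so $L=\Span(\ev_0)$ is symmetric monoidal and sends the marked maps to identities. The argument that $L$ is a Dwyer--Kan localization, however, breaks at the section $s$.

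\textbf{The section $s$ does not exist.} In $\Ar(\PshT)$ one has $\Map_{\Ar}(u,\id_X)\simeq\Map(\ev_1 u,X)$. So a morphism from $(W\to Z)$ to $(X\xrightarrow{\id}X)$ is a square whose bottom edge is a map $Z\to X$. A span $X\leftarrow W\to Z$ provides no such map, so the backward leg of $s(X\leftarrow W\to Z)$ is undefined. This cannot be fixed by a better choice of legs. By the same computation, a morphism $\id_X\to\id_Z$ in $\Span_{\mathrm{fw}}(\Ar_\calQ(\PshT))$ consists of a map $Z\to X$ together with some $M\to Z$ in $\bar\calQ$. Hence its image under $L$ is always a span whose backward leg factors through its forward leg. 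For $\calQ=\pi$, the span $\ast\amalg\ast\xleftarrow{\id}\ast\amalg\ast\xrightarrow{\nabla}\ast$ is not of this form, even up to pre- and postcomposition with equivalences. So no functor $s$ with $s(X)=\id_X$ and $Ls\simeq\id$ exists.

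\textbf{The proposed repair via \cite[Cor.C.21]{normsmotivic} fails for the same reason.} The constant-arrow functor $c\colon X\mapsto\id_X$ is right adjoint to $\ev_1$ and \emph{left} adjoint to $\ev_0$, not the other way round. Moreover, $c$ sends a map $h\in\bar\calQ$ to the square $(h,h)$, which is not fiberwise unless $h$ is an equivalence. So $c$ does not respect the forward classes, and $\Span(c)$ is not defined.

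\textbf{What survives and what remains unproved.} The components you want for $\id\Rightarrow sL$ do exist. They are the counit $c\,\ev_0\Rightarrow\id$ of $c\dashv\ev_0$, read as backward maps, and they are of the displayed form with $(q,f)$ replaced by $(\id_X,q)$. But without a functor $s$ there is nothing for them to be natural with respect to. So the central claim, that $L$ is a localization at the marked class, is left unproved.

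\textbf{How the paper handles it.} The paper does not reprove this step. It imports the underlying Dwyer--Kan localization statement from \cite[Pr.8.21]{CLLSpans}, where the marked maps are the Cartesian edges of the unstraightening of $\ul{\Span}(\calQ)$. It then only needs that $\Span(\ev_0)$ is symmetric monoidal, which you have. To close the gap yourself, you would need an argument that does not go through a section. One option is to use the fibration structure of $\Span_{\mathrm{fw}}(\Ar_\calQ(\PshT))$ over $\PshT^{\op}$ directly.
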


\begin{proof}
The functor $\ev_0\colon \Ar_{\calQ}(\PshT)\to \PshT$ clearly preserves products and so induces a symmetric monoidal structure $\Ar_{\calQ}(\PshT)^\times\to \PshT^\times$. Applying $\Span(\bs)$ to this gives a symmetric monoidal functor 
\[
\Span(\ev_0)\colon \Span_{\mathrm{fw}}(\Ar_{\calQ}(\PshT))^\otimes \to \Span_{\calQ}(\PshT)^\otimes.
\]
By \cite[Pr.8.21]{CLLSpans}, the underlying functor is a Dwyer--Kan localization, and so we conclude that $\Span(\ev_0)$ is a symmetric monoidal Dwyer--Kan localization. This functor precisely inverts the maps in the proposition, and so the final claim is then immediate.
\end{proof}

\begin{remark}
Passing to underlying categories, \Cref{thm:unstr_of_span} shows that the map 
\[
\Span(\ev_0)\colon \Span_{\mathrm{fw}}(\Ar_{\calQ}(\PshT))\to \Span(\PshT,\PshT,\PshT^\simeq) \simeq \PshT^{\op}
\]
is the coCartesian unstraightening of the functor $\ul{\Span}(\calQ)\colon \PshT^{\op}\to \Spc$. This was already observed in \cite{CLLSpans}, as was the previous proposition after restricting to underlying categories. 

Since $\ul{\Span}(\calQ)$ admits $\calQ$-limits, we know that the functor above is also a Cartesian fibration. In fact, the Cartesian edge lying over $f\colon Y\to Z$ with target $q\colon X\to Y$ is precisely 
\[\begin{tikzcd}
	X & X & X \\
	Z & Y & Y,
	\arrow["fq"', from=1-1, to=2-1]
	\arrow[equals, from=1-2, to=1-1]
	\arrow[equals, from=1-2, to=1-3]
	\arrow["q", from=1-2, to=2-2]
	\arrow["q", from=1-3, to=2-3]
	\arrow["f", from=2-2, to=2-1]
	\arrow[equals, from=2-2, to=2-3]
\end{tikzcd}\]
as a simple calculation shows. So the previous proposition proves that $\Span(\ev_0)$ is a Dwyer-Kan localization at the Cartesian morphisms.
\end{remark}

\subsection{Global sections of parametrized stable categories}\label{ssec:parametrizedglobalsections}

Let $\Cc$ be a stable symmetric monoidal category. Recall from \cite{stableoperad} that there exists an exact lax monoidal functor
\[
\hom_{\Cc}(\bbone_\Cc,-)\colon \Cc \to \Sp,
\]
which is characterized by being the initial object of $\Fun^{\otimes\textup{-lax}}_{\mathrm{lex}}(\Cc,\Sp)$. We will call this \emph{the global sections functor} of $\Cc$, and denote it by $\Gamma\colon \Cc\to \Sp$. By the universality of global sections, it is laxly natural in symmetric monoidal functors of stable categories. This encodes the existence of a natural transformation \[\hom_{\calC}(\bbone_{\calC},-) \Rightarrow \hom_{\calD}(\bbone_{\calD},F(\bs))\]
for every functor $F\colon \calC\to \calD$ of stable categories. For our decategorification procedure, we require an analog of the global sections functor for families of stable categories. Namely, given a stable symmetric monoidal $T$-category $\Cc$, we will construct a lax symmetric monoidal functor $\Gamma\colon \smallint \Cc^\otimes \to \Sp^\otimes$ equivalent to the global sections functor $\Gamma\colon \Cc(X)^\otimes \to \Sp^\otimes$ of Nikolaus when restricted to $\Cc(X)^\otimes$. Again, we require this to also be \emph{laxly} natural in $\Cc$, meaning that there exists a coherent choice of symmetric monoidal natural transformation filling the triangle
\begin{center}\begin{tikzcd}[row sep=tiny,column sep=tiny]
\smallint\ccat^\otimes\ar[rrrr,"\smallint F"]\ar[ddrr,"\Gamma"']&&&&\smallint\dcat^\otimes\ar[ddll,"\Gamma"]\\
&{}\ar[rr,Rightarrow,shift left=1mm]&&{}\\
{}&&\Sp^\otimes
\end{tikzcd}\end{center}
for any functor $F\colon \Cc\to \calD$ of stable symmetric monoidal $T$-categories. For a rigorous definition of lax transformations, we refer the reader to \cite[\textsection2]{gepner2024global2ringsgenuinerefinements} or \cite{laxtrans}. In the following proposition, we denote by $\Cat^{\otimes\textup{-lax}}$ the category of symmetric monoidal categories and \emph{lax} symmetric monoidal functors.

\begin{prop}\label{prop:global_sections}
There exists a lax natural transformation $\Gamma\colon \smallint(\bs)\Rightarrow \mathrm{const}(\Sp^\otimes)$ of functors $\Cat(T)_{\st}^\otimes\to \Cat^{\otimes\textup{-lax}}$ sending a stable symmetric monoidal $T$-category $\Cc^\otimes$ to a functor
\[
\Gamma\colon \smallint \Cc^\otimes\to \Sp^\otimes
\]
which gives the global sections functor $\Gamma\colon \Cc(X)^\otimes \to \Sp^\otimes$ of the stable category $\Cc(X)$ when restricted to the fiber over $X\in \PshT$.
\end{prop}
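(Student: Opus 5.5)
We construct $\Gamma$ via a universal property, copying the non-parametrized characterization of $\hom_\Cc(\bbone_\Cc,\bs)$ as the initial exact lax symmetric monoidal functor $\Cc\to\Sp$ \cite{stableoperad}. Concretely, for a stable symmetric monoidal $T$-category $\Cc^\otimes$, consider the category $\Fun^{\otimes\textup{-lax}}_{\mathrm{fib\text{-}ex}}(\smallint\Cc^\otimes,\Sp^\otimes)$ of lax symmetric monoidal functors $\smallint\Cc^\otimes\to\Sp^\otimes$ whose restriction to each fiber $\Cc(X)$ is exact. The symmetric monoidal structure on $\smallint\Cc^\otimes$ is the one from \cref{pr:factors_through_catotimes}. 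We will show this category has an initial object $\Gamma_\Cc$ whose restriction to every fiber is Nikolaus' global sections functor. Lax naturality then follows formally. A functor $F\colon\Cc\to\calD$ induces a symmetric monoidal functor $\smallint F\colon\smallint\Cc^\otimes\to\smallint\calD^\otimes$. Precomposition with it sends $\Gamma_\calD$ to an object of the category for $\Cc$, so initiality gives a canonical comparison $\Gamma_\Cc\Rightarrow\Gamma_\calD\circ\smallint F$. To make this coherent, rather than working object by object, we assemble the categories of such lax functors into a cocartesian fibration over $\Cat(T)^\otimes_{\st}$. Taking the initial section of this fibration, in the sense of \cite[\textsection2]{gepner2024global2ringsgenuinerefinements} or \cite{laxtrans}, is exactly the data of a lax transformation $\smallint(\bs)\Rightarrow\mathrm{const}(\Sp^\otimes)$.

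To find the initial object, we would first pass to lax functors from $\smallint\Cc^\otimes$ to $\Sp$ that are fiberwise exact. Via Day convolution/operadic left Kan extension, these correspond to $\E_\infty$-algebras in the category $\Fun_{\mathrm{fib\text{-}ex}}(\smallint\Cc,\Sp)$, with the Day convolution structure associated with the external tensor product. The initial algebra is the unit of this convolution structure, so the real task is to compute that unit.

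Unwinding, the Day convolution unit is the left Kan extension of $\Sigma^\infty_+\Map_{\smallint\Cc}((\bbone,\ast),\bs)$ along the unit inclusion, followed by the localization to fiberwise exact functors. On an object $(C,X)$ this mapping space is $\Map_{\Cc(X)}(p_X^*\bbone,C)=\Map_{\Cc(X)}(\bbone_{\Cc(X)},C)$, where $p_X\colon X\to\ast$ and we use that $p_X^*$ is symmetric monoidal. Applying the fiberwise-exact (spectrification) localization then yields $\hom_{\Cc(X)}(\bbone,C)$ fiberwise. The key point is that the exactness localization is performed fiberwise and commutes with restriction to each fiber $\Cc(X)\subset\smallint\Cc$. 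We must also check that Day convolution restricts correctly along these fiber inclusions, which requires \cref{pr:symmetric_monoidal_section} together with its analogue over each $X$.

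The main obstacle is this compatibility of the parametrized Day convolution/localization with restriction to fibers. The difficulty is that $\smallint\Cc$ has morphisms between different fibers, so one must make sure that fiberwise exactness alone is a localization compatible with the tensor product. We would verify this using \cite[Pr.3.3.4]{hinich}, as in \cref{lem:Qcom_localization}. The generating local maps are the fiberwise "spectrification" maps, and these are closed under external tensor products because each $\pi_i^*$ is exact. Size issues, since $\smallint\Cc$ is large, are handled by enlarging the universe exactly as in \cref{prop:cmon_on_big_spans}.
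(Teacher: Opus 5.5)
Your proposal is correct, but it takes a genuinely different route from the paper.

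\textbf{The paper's route.} The paper never invokes a universal property. It writes $\Gamma_\Cc$ down directly as the composite $\smallint\Cc\xrightarrow{Y}\Psh(\smallint\Cc)\xrightarrow{F_\Cc^*}\Psh(\Sp^\omega)$. Here $F_\Cc\colon\Sp^\omega\to\Cc(\ast)\subset\smallint\Cc$ is the unit functor followed by the inclusion of \cref{pr:symmetric_monoidal_section}. Lax monoidality is inherited from the Yoneda embedding and from restriction for Day convolution. The values land in $\Ind(\Sp^\omega)\simeq\Sp$ because the $\pi^*$ are exact. Lax naturality comes from taking the mate of the strictly natural $F_\Cc^*$ along $(\smallint F)_!\dashv(\smallint F)^*$, composed with the symmetric monoidal naturality of Yoneda.

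\textbf{Your route.} You define $\Gamma_\Cc$ as the initial fiberwise exact lax symmetric monoidal functor, i.e.\ the unit of the localized Day convolution. The two functors agree, both being fiberwise $\hom_{\Cc(X)}(\bbone,\bs)$. Your approach proves strictly more. It establishes the universal characterization that the remark after the proposition explicitly leaves as an expectation. It also makes lax naturality formal: since $\Map(\Gamma_\Cc,\Gamma_\calD\circ\smallint F)\simeq\ast$, the fiberwise initial objects span a full subcategory of the total category that maps equivalently onto $\Cat(T)^\otimes_{\st}$. This replaces the mate calculus.

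The price is the bookkeeping you list:
\begin{enumerate}
\item Day convolution on the large category $\smallint\Cc$, after enlarging the universe.
\item The Hinich-style check that fiberwise exactness is a symmetric monoidal localization. The generating local maps come from bicartesian squares and zero objects in a single fiber, and $(\bs)\boxtimes(Y,E)=\pi_1^*(\bs)\otimes\pi_2^*E$ preserves these because $\pi_1^*$ and $\otimes$ are exact.
\item The check that the localization commutes with restriction along $i_X\colon\Cc(X)\to\smallint\Cc$. This holds because $i_X^*\Sigma^\infty_+\Map_{\smallint\Cc}((Y,A),\bs)$ is the colimit over $f\in\Map_{\PshT}(X,Y)$ of $\Sigma^\infty_+\Map_{\Cc(X)}(f^*A,\bs)$, and each $f^*$ is exact. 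Hence $i_X^*$ preserves local equivalences.
\end{enumerate}

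\textbf{Two small corrections.} First, precomposition with $\smallint F$ is contravariant in $\Cc$, so the fibration is \emph{cartesian} over $\Cat(T)^\otimes_{\st}$. The mapping-space computation above shows that fiberwise initial objects form a section of this cartesian fibration. Under a cocartesian description this would fail, since the relevant mapping spaces would be $\Map(F^*\Gamma_\calD,\Gamma_\Cc)$.

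Second, for $X\neq\ast$ the fiber inclusion $i_X$ is only lax symmetric monoidal, via $\Delta^*(\pi_1^*A\otimes\pi_2^*B)\simeq A\otimes B$. So there is no symmetric monoidal analogue of \cref{pr:symmetric_monoidal_section} over $X$. Instead, identify $\Gamma_\Cc\circ i_X$ with Nikolaus' functor as follows. The unique lax monoidal map from Nikolaus' functor, which is initial, is an equivalence. Both have underlying functor $\hom_{\Cc(X)}(\bbone,\bs)$, and by spectral Yoneda the map is determined by the unit, which is $\mathrm{id}_{\bbone}$ in both cases.
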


\begin{remark}
The functor $\Gamma\colon \smallint \Cc^\otimes \to \Sp^\otimes$ encodes the lax symmetric monoidality of global sections of stable categories under the external tensor product. Given two objects $(A,X), (B,Y)\in \smallint\Cc$ taking ``fiberwise global sections'' is lax symmetric monoidal via the comparison map 
\[\begin{tikzcd}[column sep=0mm]
	{\Gamma(A,X) \otimes \Gamma(B,Y)} & {\hom_{\Cc_A}(\bbone,X)\otimes \hom_{\Cc_B}(\bbone,X)} \\
	& {\hom_{\Cc_{A\times B}}(\bbone,\pi_1^*X)\otimes \hom_{\Cc_{A\times B}}(\bbone,\pi_2^*Y)} \\
	& {\hom_{\Cc_{A\times B}}(\bbone,\pi_1^*X\otimes \pi_2^* Y)} & {\Gamma((A,X)\boxtimes (B,Y)).}
	\arrow[equals, from=1-1, to=1-2]
	\arrow["{\pi_1^*\otimes \pi_2^*}", from=1-2, to=2-2]
	\arrow["-\otimes-", from=2-2, to=3-2]
	\arrow[equals, from=3-2, to=3-3]
\end{tikzcd}\]
We expect, but do not prove, that the global sections functor $\Gamma\colon \smallint \Cc^\otimes \to \Sp^\otimes$ admits a universal characterization analogous to the global sections functor of a stable category. 
\end{remark}

\begin{proof}[Proof of \cref{prop:global_sections}]
Let $\Cc$ be a fiberwise stable symmetric monoidal $T$-category and consider the unique symmetric monoidal exact functor $\Sp^\omega\to \Cc(\ast)$ which sends the sphere spectrum to the unit of $\Cc(\ast)$. Taking the composite with the symmetric monoidal inclusion $\Cc(\ast)\to \smallint \Cc$ of \Cref{pr:symmetric_monoidal_section}, we obtain a functor $F_{\Cc}\colon \Sp^\omega \to \smallint\Cc$, which is natural in fiberwise stable symmetric monoidal $T$-categories. Now consider the composite
\[
\smallint\Cc\xrightarrow{Y} \Psh(\smallint\Cc)\xrightarrow{F^*_\Cc} \Psh(\Sp^\omega),
\]
where both presheaf categories are endowed with the Day convolution symmetric monoidal structure. The Yoneda embedding is lax symmetric monoidal by \cite[Cor.4.8.1.12]{ha}, while $F^*_{\Cc}$ is by the universal property of Day convolution. Therefore, as a composite of lax symmetric monoidal functors, this composite is lax symmetric monoidal. Moreover, a simple calculation shows that this factors through $\Ind(\Sp^\omega)$, which is equivalent to those functors $(\Sp^\omega)^{\op}\to \Spc$ which preserve finite limits. The symmetric monoidal equivalence
$\Ind(\Sp^\omega) \simeq \Sp$ then yields the required lax symmetric monoidal functor. 

To explain how this is also laxly natural in the fiberwise stable symmetric monoidal $T$-category $\Cc$, we first note that $F_\calC^*\colon \Psh(\smallint\calC) \to \Psh(\Sp^\omega)$ is clearly \emph{strictly} natural in restriction along symmetric monoidal functors of stable symmetric monoidal $T$-categories. By passing to left adjoints in the horizontal direction, we obtain a lax \emph{mate} transformation
\begin{center}\begin{tikzcd}[row sep=tiny,column sep=tiny]
\Psh(\smallint \Cc)\ar[rrrr,"(\smallint F)_!"]\ar[ddrr,"F_\ccat^\ast"']&&&&\Psh(\smallint\dcat^\otimes)\ar[ddll,"F_\dcat^\ast"]\\
&{}\ar[rr,Rightarrow,shift left=1mm]&&{}\\
{}&&\Psh(\Sp)
\end{tikzcd}\end{center}
see \cite[Th.E]{laxtrans}. On the other hand, the Yoneda embedding is a symmetric monoidal natural transformation by \cite[Cor.2.47]{benmosheschlank}. Composing these gives the required lax natural transformation of lax symmetric monoidal functors.
\end{proof}

We will also record the following variant of this construction, which encodes a form of global sections for $\Cc$ relative to $\Cc(\ast)$.

\begin{mydef}
Let $\Cat(T)^\otimes_{\textup{R-lax}}$ be the \emph{full} subcategory of $\Cat(T)$ spanned by those symmetric monoidal $T$-categories $\Cc$ such that $f^*\colon \Cc(B)\to \Cc(A)$ admits a right adjoint for all maps $f\colon A\to B$ in $\PshT$. Let us emphasize that we do not assume that the functors in $\Cat(T)^\otimes_{\textup{R-lax}}$ commute with these limits.
\end{mydef}

\begin{prop}\label{prop:relative_global_sections}
There exists a lax natural transformation $\smallint(\bs)\Rightarrow \ev_\ast$ of functors $\Cat(T)^\otimes_{\textup{R-lax}}\to \Cat^{\otimes\textup{-lax}}$ which sends a symmetric monoidal $T$-category $\Cc^\otimes$ in $\Cat(T)^\otimes_{\textup{R-lax}}$ to a pushforward functor
\[
\smallint \Cc^\otimes\to \Cc(\ast)^\otimes
\]
characterized by the fact that it restricts on the fiber over every $X\in \PshT$ to the right adjoint $\pi_*$ of restriction induced by the unique map $\pi\colon X\to \ast$.
\end{prop}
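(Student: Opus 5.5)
The plan is to imitate the proof of \Cref{prop:global_sections}, replacing the role of $\Sp^\omega\to\Cc(\ast)$ by the symmetric monoidal inclusion $i_\Cc\colon\Cc(\ast)^\otimes\to\smallint\Cc^\otimes$ of \Cref{pr:symmetric_monoidal_section}. Consider the composite
\[
\smallint\Cc\xrightarrow{Y}\Psh(\smallint\Cc)\xrightarrow{i_\Cc^\ast}\Psh(\Cc(\ast)),
\]
with both presheaf categories carrying Day convolution. The Yoneda embedding is lax symmetric monoidal by \cite[Cor.4.8.1.12]{ha}, and $i_\Cc^\ast$ is lax symmetric monoidal because $i_\Cc$ is strong monoidal (restriction along a strong monoidal functor is lax monoidal for Day convolution). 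The composite is therefore lax symmetric monoidal, and it remains to show that it lands in the representable presheaves, identifying the result with $\pi_\ast$ fiberwise.

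For this, take $(X,A)\in\smallint\Cc$ with $A\in\Cc(X)$. For $B\in\Cc(\ast)$, maps $i_\Cc(B)=(\ast,B)\to(X,A)$ in the coCartesian unstraightening lie over the unique map $X\to\ast$ in $\PshT^{\op}$, that is, over $\pi\colon X\to\ast$ in $\PshT$. Such maps are therefore equivalent to maps $\pi^\ast B\to A$ in $\Cc(X)$, which in turn are equivalent to maps $B\to\pi_\ast A$, by the right adjoint that exists since $\Cc\in\Cat(T)^\otimes_{\textup{R-lax}}$. Thus $i_\Cc^\ast Y(X,A)\simeq Y(\pi_\ast A)$. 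Since the Yoneda embedding $\Cc(\ast)\to\Psh(\Cc(\ast))$ is symmetric monoidal and fully faithful, and the full subcategory of representables is closed under the Day tensor product, the lax symmetric monoidal composite factors through $\Cc(\ast)^\otimes$. This gives the pushforward $\smallint\Cc^\otimes\to\Cc(\ast)^\otimes$, and the identification on each fiber is precisely $\pi_\ast$. In particular, the lax structure map is the expected one, $\pi_\ast A\otimes\pi_\ast A'\to\pi_\ast(A\boxtimes A')$.

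For the lax naturality in a symmetric monoidal functor $F\colon\Cc\to\calD$, I follow the same pattern as before:
\begin{enumerate}
\item The naturality of $i_\Cc$ makes $i^\ast$ strictly natural for restriction. Passing to left adjoints $(\smallint F)_!$ and $F(\ast)_!$ gives a lax mate square, by \cite[Th.E]{laxtrans}.
\item The Yoneda embedding is symmetric monoidally natural, by \cite[Cor.2.47]{benmosheschlank}.
\item Composing the two yields a lax natural transformation valued in $\Psh(\Cc(\ast))$. It restricts to the representable subcategories because, as noted above, these are stable under $F(\ast)_!$ on representables.
\end{enumerate}
On components this recovers the Beck--Chevalley-type map $F(\ast)\pi_\ast\to\pi_\ast F(X)$. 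No compatibility of $F$ with right adjoints is needed, which is consistent with the fullness of $\Cat(T)^\otimes_{\textup{R-lax}}$.

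The main obstacle I expect is the coherence of the factorization through representables. One must check that the lax mate transformation, together with the monoidal structures, genuinely restricts to the full subcategories $\Cc(\ast)\subset\Psh(\Cc(\ast))$ as a lax natural transformation of lax symmetric monoidal functors, and not merely objectwise. I would handle this by noting that fully faithful inclusions of sub-operads are closed under these operations, so that the restriction is a property that can be checked on objects. Once that is in place, the identification of the mapping spaces in $\smallint\Cc$ over $\pi$ is routine.
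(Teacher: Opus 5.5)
Your argument is correct, and its key computation is the same as the paper's. A morphism $(\ast,B)\to(X,A)$ in $\smallint\Cc$ lies over $\pi\colon X\to\ast$, so
$\Map_{\smallint\Cc}((\ast,B),(X,A))\simeq\Map_{\Cc(X)}(\pi^\ast B,A)\simeq\Map_{\Cc(\ast)}(B,\pi_\ast A)$.

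The two proofs package this computation differently.

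\textbf{The paper's route.} It reads the computation directly as saying that the symmetric monoidal inclusion $i_\Cc\colon\Cc(\ast)^\otimes\to\smallint\Cc^\otimes$ of \Cref{pr:symmetric_monoidal_section} has a right adjoint, given fiberwise by $\pi_\ast$. The right adjoint of a strong symmetric monoidal functor is automatically lax symmetric monoidal. Since $i_\Cc$ is strictly natural in $\Cc$, the pointwise right adjoints assemble into a lax natural transformation by the mate theorem (Th.~5.3.6 of \cite{laxtrans}), applied in the $2$-category $\Cat^{\otimes\textup{-lax}}$.

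\textbf{Your route.} Your composite $Y^{-1}\circ i_\Cc^\ast\circ Y$ is precisely a construction of that same right adjoint. Your lax mate at the presheaf level, restricted to representables, is the mate of these adjunctions, so the two constructions agree.

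\textbf{Comparison.} The direct route avoids two things your route must handle:
\begin{itemize}
\item the step you flagged as the main obstacle, namely restricting a lax natural transformation of lax symmetric monoidal functors valued in $\Psh(\Cc(\ast))$ to the representables;
\item the size issue of putting Day convolution on $\Psh(\smallint\Cc)$ when $\smallint\Cc$, which lies over $\PshT^{\op}$, is not small.
\end{itemize}
Your route in turn gives a treatment uniform with \Cref{prop:global_sections}. It also gives an explicit Day-convolution description of the lax structure map $\pi_\ast A\otimes\pi_\ast A'\to\pi_\ast(A\boxtimes A')$.
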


\begin{proof}
Consider the natural symmetric monoidal inclusion $\Cc(\ast)\subset \smallint\Cc.$ We claim that pointwise this has a right adjoint, which then automatically combine to form a lax natural transformation of functors $\Cat(\Glo_\ab)^\otimes_{\textup{L-lax}}\to \Cat^{\otimes,\mathrm{lax}}$ by \cite[Th.5.3.6]{laxtrans} applied to the 2-category $\Cat^{\otimes,\mathrm{lax}}$. To prove this, we note that since there is a unique map $
\pi\colon A\to \ast$, there is an equivalence
\[
\Map_{\smallint \Cc}((\ast,X),(A,Y)) \simeq \Map_{\Cc(A)}(\pi^* X,Y) \simeq \Map_{\Cc(\ast)}(X,\pi_\ast Y)
\]
of mapping spaces. This proves that the right adjoint exists and has the form claimed by the proposition.
\end{proof}

\subsection{Decategorification}\label{ssec:decategoricification}
Finally, we arrive at the main theorem of this section, which states that for all $\Cc\in \Cat(T)^\otimes_{\calQ\textup{-st}}$, the endomorphisms of the unit $\End_{\Cc}(\bbone)\colon \PshT^{\op}\to \Sp$ canonically enhance to a lax symmetric monoidal functor 
\[
\End_{\Cc}(\bbone)\colon \Span_{\calQ}(\calP)\to \Sp.
\]

\begin{theorem}\label{thm:generaldecategorification}
	There is a limit preserving functor
	\[
	\End_{(\bs)}(\bbone)\colon \Cat(T)^\otimes_{\calQ\textup{-st}}\to \CAlg(\Sp_\calQ), \qquad \End_\ccat(\bbone)(X) = \End_{\ccat(X)}(\bbone),
	\]
    endowing the endomorphism spectra $\End_{\ccat(X)}(\bbone)$ of the units of $\ccat \in \Cat(T)^\otimes_{\calQ\textup{-st}}$ with transfers for maps which are locally in $\calQ$.
\end{theorem}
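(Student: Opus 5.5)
The plan is to assemble the functor from the three ingredients of \Cref{ssec:monoidalparametrized,ssec:unstraighteningparametrized,ssec:parametrizedglobalsections}, and then use \cref{prop:qcommutativeglobalringsaslaxfunctors} to recognize the output as a $\calQ$-ambidextrous $\E_\infty$ ring.

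\textbf{Step 1: the parametrized unit map.} Let $\ccat\in\Cat(T)^\otimes_{\calQ\textup{-st}}$. Since $\ccat$ is $\calQ$-semiadditive, it is a module over the idempotent algebra $\ul{\Span}(\calQ)$, and the symmetric monoidal structure of $\ccat$ is $\calQ$-lex. The universal property of \cref{ex:SpanQ}, applied to the unit $\bbone\in\ccat(\ast)$, gives a $\calQ$-colimit preserving functor $u_\ccat\colon\ul{\Span}(\calQ)\to\ccat$. This functor is strong symmetric monoidal: it is the unit map of the commutative algebra $\ccat$ in the symmetric monoidal category $\Cat(T)_{\calQ\textup{-sa}}$, whose unit is $\ul{\Span}(\calQ)$. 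Because the unit is initial in commutative algebras, $u_\ccat$ is natural in $\ccat$.

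\textbf{Step 2: unstraighten and take global sections.} Applying $\smallint(\bs)$ and \cref{thm:unstr_of_span} yields a strong symmetric monoidal functor
\[
\Span_{\mathrm{fw}}(\Ar_\calQ(\PshT))^\otimes\simeq\smallint\ul{\Span}(\calQ)^\otimes\xrightarrow{\smallint u_\ccat}\smallint\ccat^\otimes.
\]
Composing with the lax symmetric monoidal functor $\Gamma\colon\smallint\ccat^\otimes\to\Sp^\otimes$ of \cref{prop:global_sections} gives a lax symmetric monoidal functor
\[
\Gamma_\ccat\colon\Span_{\mathrm{fw}}(\Ar_\calQ(\PshT))\to\Sp,
\]
which sends $(q\colon X\to Y)$ to $\hom_{\ccat(Y)}(\bbone,q_!\bbone)$.

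\textbf{Step 3: descend along the localization.} We show that $\Gamma_\ccat$ inverts the morphisms of \cref{prop:arrow_localization}. Such a morphism is sent to
\[
\hom_{\ccat(Y)}(\bbone,q_!\bbone)\to\hom_{\ccat(Z)}(\bbone,f_!q_!\bbone).
\]
By the norm equivalence $f_!\simeq f_*$ and the adjunction $f^*\dashv f_*$, the target is $\hom_{\ccat(Y)}(f^*\bbone,q_!\bbone)=\hom_{\ccat(Y)}(\bbone,q_!\bbone)$, and one must check that the comparison map is exactly this identification. Similarly, using $q_!\simeq q_*$, the value of $\Gamma_\ccat$ on $q$ becomes $\End_{\ccat(X)}(\bbone)$. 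By \cref{prop:arrow_localization}, $\Gamma_\ccat$ then factors through a lax symmetric monoidal functor $\Span_\calQ(\PshT)\to\Sp$ with value $\End_{\ccat(X)}(\bbone)$ at $X$. A caveat: for $f$ not truncated the norm is not available, so one instead reduces via fiber products of representables, using the Cartesian-edge description in the remark after \cref{prop:arrow_localization}. This is the step I expect to be the main obstacle, since it requires identifying the lax structure maps of $\Gamma$ with the adjunction (co)units coherently.

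\textbf{Step 4: limits, ring structure, and functoriality.} The restriction of the descended functor to $\PshT^\op$ is $X\mapsto\End_{\ccat(X)}(\bbone)$. This preserves limits because $\ccat$ does and because $\hom(\bbone,\bbone)$ commutes with limits of categories. By \cref{prop:qcommutativeglobalringsaslaxfunctors} it therefore defines an object of $\CAlg(\Sp_\calQ)$. Naturality in $\ccat$ has two sources: $u_\ccat$ and $\smallint$ are strictly natural, while $\Gamma$ is only laxly natural by \cref{prop:global_sections}. The lax transformation is an equivalence on the image of $\ul{\Span}(\calQ)$, since exact symmetric monoidal functors preserve units and commute with $q_!$. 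Hence the composite is strictly functorial. Finally, limits in $\CAlg(\Sp_\calQ)$ are computed pointwise in $\Sp$, and limits in $\Cat(T)_{\calQ\textup{-st}}$ are computed pointwise by \cref{prop:limits_of_stable}. Since mapping spectra in a limit of stable categories are limits of mapping spectra, the functor preserves limits.
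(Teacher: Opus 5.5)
Your outline follows the paper's route: the unit functor $u_\ccat$, then \cref{thm:unstr_of_span}, then \cref{prop:global_sections}, then descent along \cref{prop:arrow_localization}, then \cref{prop:qcommutativeglobalringsaslaxfunctors}. However, two steps have genuine gaps.

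\textbf{Step 3 is left unproved, and the norm is the wrong tool.} The norm $f_!\simeq f_*$ exists only for truncated $f$. The proposed reduction to representables for the remaining maps is not an argument. Even for truncated $f$, you leave open whether the image of the morphism under $\Gamma_\ccat$ is the adjunction equivalence. (That morphism runs $\Gamma_\ccat(fq)\to\Gamma_\ccat(q)$, not the other way.) The missing input is \cref{lem:QexactFunctor}. Since $u_\ccat$ preserves $\calQ$-colimits between $\calQ$-semiadditive $T$-categories, it preserves $\calQ$-limits along every map of $\bar{\calQ}$. Hence $\smallint u_\ccat$ carries Cartesian edges to Cartesian edges. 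The morphisms of \cref{prop:arrow_localization} are exactly the Cartesian edges of $\smallint\ul{\Span}(\calQ)$. Finally, $\Gamma$ sends a Cartesian edge $(f_*A,Z)\to(A,Y)$ of $\smallint\ccat$ to $\hom_{\ccat(Z)}(\bbone,f_*A)\simeq\hom_{\ccat(Y)}(f^*\bbone,A)=\hom_{\ccat(Y)}(\bbone,A)$, which is an equivalence by adjunction alone. No norms or truncatedness hypotheses enter. Your separate identification of the value at $q$ with $\End_{\ccat(X)}(\bbone)$ is then automatic, since $q\to\id_X$ is itself one of the inverted morphisms.

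\textbf{In Step 4, the claim that the lax transformation $\Gamma_\ccat\Rightarrow\Gamma_\dcat\circ\smallint F$ is invertible on the image of $\ul{\Span}(\calQ)$ is false.} Its component at $(q_!\bbone,Y)$ is the map $\hom_{\ccat(Y)}(\bbone,q_!\bbone)\to\hom_{\dcat(Y)}(F\bbone,Fq_!\bbone)$ induced by $F$. For $q=\id_X$ this is the ring map $\End_{\ccat(X)}(\bbone)\to\End_{\dcat(X)}(\bbone)$. That map is exactly what the functor must assign to $F$, and it is generally not an equivalence (consider base change of tempered local systems along $\A\to\B$). The fact that $F$ preserves units and commutes with $q_!$ only gives $\smallint F\circ\smallint u_\ccat\simeq\smallint u_\dcat$.

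Nothing needs to be strictified. The non-invertible lax $2$-cells, whiskered with the strictly natural $\smallint u_{(\bs)}$, are precisely the morphisms in $\Fun^{\otimes\textup{-lax}}(\Span_{\mathrm{fw}}(\Ar_\calQ(\PshT)),\Sp)$. The paper assembles them by composing in the oplax arrow category of $\Cat^{\otimes\textup{-lax}}$. It then identifies the fiber over the pair $(\Span_{\mathrm{fw}}(\Ar_\calQ(\PshT)),\Sp)$ with this functor category. Since the functors inverting the morphisms of \cref{prop:arrow_localization} form a full subcategory, the whole construction descends to $\Span_\calQ(\PshT)$.
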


\begin{proof}
	Let us first indicate the strategy on the level of objects. Let $\calC^\otimes$ be a symmetric monoidal $\calQ$-stable $T$-category. 
	\begin{enumerate}
		\item Because $\Cc$ is, in particular, a $\calQ$-semiadditive symmetric monoidal $T$-category, there is a unique symmetric monoidal functor $F\colon \ul{\Span}(\calQ)^\otimes \to \calC^\otimes$ of $T$-categories which sends $1\in \Gamma\ul{\Span}(\calQ)^\otimes$ to $\bbone_\calC \in \Gamma\calC$. 
	\item Passing to coCartesian unstraightenings, we obtain a symmetric monoidal functor 
        \[
        \smallint F\colon \smallint \Span(\calQ)^\otimes \to \smallint \Cc^\otimes
        \]
        on the associated unstraightenings equipped with the external tensor product.
        \item On the other hand, because $\calC^\otimes$ is a fiberwise stable $T$-category, by \cref{prop:global_sections}, there exists a lax symmetric monoidal functor
	\[
        \smallint \Cc^\otimes \to \Sp^\otimes.
        \]
		\item Postcomposing these two functors, we obtain a lax symmetric monoidal functor 
		\[
		\End_{\calC}(\bbone)\colon \smallint \Span(\calQ)^\otimes \to \smallint \Sp^\otimes
		\]
		More explicitely, using \cref{thm:unstr_of_span}, we obtain a lax symmetric monoidal functor
		\[ \Span_{\mathrm{fw}}(\Ar(\PshT)^{\calQ})\to \Sp.
		\]
		\item One now computes that this functor inverts the maps from \cref{prop:arrow_localization}. By \cite[Pr.A.5(v)]{nikolausscholze}, we therefore obtain a lax symmetric monoidal functor 
		\[
		\End_{\calC}(\bbone)\colon \Span_{\calQ}(\PshT) \to \Sp.
		\]
        Namely, $F$ is by definition a $\calQ$-colimit preserving functor, and so also $\calQ$-limit preserving by \Cref{lem:QexactFunctor}. This means that $\smallint F$ preserves Cartesian morphisms, and so the maps in \cref{prop:arrow_localization} are sent to Cartesian morphisms of $\smallint \Cc$. To conclude, we can show that $\Gamma\colon \smallint \Cc\to \Sp$ sends Cartesian morphisms to equivalences. Given a map $f\colon X\to Y$ and an object $A\in \Cc_X$, the image of the Cartesian edge $(A,X)\to (f_*A,Y)$ is the map
        \[
        \Gamma(f_* A,Y) \coloneqq \hom_{\Cc_Y}(\bbone,f_*A) \simeq \hom_{\Cc_X}(f^*\bbone,A) \simeq \hom_{\Cc_Y}(\bbone,A) =: \Gamma(A,X),
        \]
        and so an equivalence.
		\item Finally, we compute that the restriction of this functor to $\PshT^{\op}$ is limit preserving, and so we have constructed a $\calQ$-ambidextrous $\E_\infty$ ring. Let $X = \colim_i X_i$ be a colimit in $\PshT$. Then this induces an equivalence $\Cc_X \simeq \lim_i\Cc_{X_i}$, since $\Cc$ is limit preserving, which in turn induces an equivalence 
        \[
        \End_{\calC}(\bbone)(X) = \hom_{\Cc_X}(\bbone,\bbone) = \lim_i \hom_{\Cc_{X_i}}(\bbone,\bbone) \simeq \lim_i \End_{\calC}(\bbone)(X_i),
        \]
        proving that the restriction of $\End_{\calC}(\bbone)$ to $\PshT^{\op}$ is limit preserving. 
	\end{enumerate}
	To make this functorial in $\Cat(T)^\otimes_{\calQ\textup{-st}}$ it suffices to make all of the construction above natural in $\Cat(T)^\otimes_{\calQ\textup{-st}}$, as we have gone to lengths to do. Let us spell this out below. To begin, we consider the composite of functors:
\[\begin{tikzcd}[column sep=small]
	{\Cat(T)^\otimes_{\calQ\textup{-st}}} & {\Cat^{\otimes\textup{-lax}}_{\Span_{\mathrm{fw}}(\Ar_{\calQ}(\PshT))/}\times_{\Cat^{\otimes\textup{-lax}}}\Cat^{\otimes\textup{-lax}}_{/\Sp}} & {\Fun^{\otimes\textup{-lax}}(\Span_{\mathrm{fw}}(\Ar_{\calQ}(\PshT)),\Sp),}
	\arrow["{(\smallint,\Gamma)}", from=1-1, to=1-2]
	\arrow["{-\circ-}", from=1-2, to=1-3]
\end{tikzcd}\]
    where the second functor is given by composition. This lands in the fiber of the functor \[(s,t)\colon \Ar^{\mathrm{opl}}(\Cat^{\otimes\textup{-lax}})\to \Cat^{\otimes\textup{-lax}}\times\Cat^{\otimes\textup{-lax}}\] over the pair $\{\Span_{\mathrm{fw}}(\Ar_{\calQ}(\PshT)),\Sp\}$, which is equivalent to $\Fun^{\otimes\textup{-lax}}(\Span(\Ar_\calQ(\PshT),\Sp)$ by \cite[Th.7.21]{HHLNa}. We then observe factors through the subcategory of functors which inverts the maps $W$ from \Cref{prop:arrow_localization}, and so we obtain the composite
\[\begin{tikzcd}
	{\Cat(\Glo_\ab)^\otimes_{\calQ\textup{-st}}} & {\Fun^{\otimes\textup{-lax}, W}(\Span_{\mathrm{fw}}(\Ar_{\calQ}(\PshT)),\Sp)} & {\Fun^{\otimes\textup{-lax}}(\Span_{\calQ}(\PshT),\Sp).}
	\arrow[from=1-1, to=1-2]
	\arrow["\simeq", from=1-2, to=1-3]
\end{tikzcd}\] 
As observed in (6) above, this factors through the subcategory of functors whose restriction to $\calP^{\op}$ preserves limits. This is identified with $\CAlg(\Sp_\calQ)$ in \cref{prop:qcommutativeglobalringsaslaxfunctors}, leading to the functor in the statement. That the resulting functor $\End(\bbone)\colon \Cat(T)^\otimes_{\calQ\textup{-st}}\to \Fun^{\otimes\textup{-lax}}(\Span_{\calQ}(\PshT),\Sp)$ is itself limit preserving follows from the fact that limits of $\calQ$-stable categories are computed underlying, as shown in \Cref{prop:limits_of_stable}. Namely, given a limit $\Cc\simeq \lim_i\Cc_i$ of $\calQ$-stable $T$-categories we may compute that
\[
\End_{\Cc}(\bbone)(X) \coloneqq \hom_{\Cc_X}(\bbone,\bbone) \simeq \lim_i\hom_{\Cc_{X_i}}(\bbone,\bbone) \simeq \lim_i \End_{\Cc_i}(\bbone)(X). \qedhere
\]
\end{proof}

We record the following variant for use later. The proof is exactly as above, replacing the appeal to \cref{prop:global_sections} by \cref{prop:relative_global_sections}.

\begin{prop}\label{prop:rel_global_sections}
Let $\Cc$ be a symmetric monoidal $\calQ$-semiadditive $T$-category such that $f^*\colon \Cc(B)\to \Cc(A)$ admits a right adjoint $f_*$ for all maps $f\colon A\to B$ in $\PshT$. Then there exists an object 
$\Gamma_{\Cc}\bbone \in \CAlg(\Mon_{\calQ}\Cc(\ast))$, such that $\Gamma_{\Cc}\bbone(A) \simeq \pi_\ast \bbone_{\calC_A}$, where $\pi \colon A\to \ast$ is the unique map to the point.
\end{prop}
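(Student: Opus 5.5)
We follow the proof of \cref{thm:generaldecategorification} step by step, replacing the target $\Sp^\otimes$ with $\Cc(\ast)^\otimes$ and the global sections functor of \cref{prop:global_sections} with the pushforward functor of \cref{prop:relative_global_sections}. Only the object-level statement is needed, so naturality in $\Cc$ can be ignored.

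First, $\Cc$ is $\calQ$-semiadditive and symmetric monoidal. The universal property of $\ul{\Span}(\calQ)$ (\cref{ex:SpanQ}), combined with its idempotent algebra structure in $\Cat(T)_{\calQ\textup{-co}}$, therefore gives a symmetric monoidal $\calQ$-colimit preserving functor $F\colon \ul{\Span}(\calQ)^\otimes\to\Cc^\otimes$ sending $\ast$ to $\bbone$. Note that $\calQ$-lex is automatic here: the projection formula holds for a $\ul{\Span}(\calQ)$-module that is symmetric monoidal in $\Cat(T)_{\calQ\textup{-co}}$.

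Next, we pass to unstraightenings. This gives a symmetric monoidal functor $\smallint F\colon \smallint\ul{\Span}(\calQ)^\otimes\to\smallint\Cc^\otimes$. By \cref{thm:unstr_of_span}, its source is identified with $\Span_{\mathrm{fw}}(\Ar_\calQ(\PshT))^\otimes$. Composing with the lax symmetric monoidal pushforward $\smallint\Cc^\otimes\to\Cc(\ast)^\otimes$ of \cref{prop:relative_global_sections}, we obtain a lax symmetric monoidal functor
\[
\Span_{\mathrm{fw}}(\Ar_\calQ(\PshT))\to\Cc(\ast).
\]
On an object $q\colon X\to A$ this functor takes the value $\pi_\ast F_A(q)$. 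In particular, on $\id_A$ it gives $\pi_\ast\bbone_{\Cc_A}$.

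The key step is descending along the symmetric monoidal Dwyer--Kan localization of \cref{prop:arrow_localization}. The localization inverts the Cartesian edges lying over maps $f\colon Y\to Z$. By \cref{lem:QexactFunctor}, $F$ preserves $\calQ$-limits as well, so $\smallint F$ carries these edges to Cartesian edges $(A,Y)\to(f_\ast A,Z)$ of $\smallint\Cc$. It remains to check that the pushforward sends Cartesian edges to equivalences. It sends such an edge to
\[
\pi_{Y\ast}A\to\pi_{Z\ast}f_\ast A\simeq\pi_{Y\ast}A,
\]
and this is an equivalence by composition of right adjoints. This is where the hypothesis that every $f^\ast$ has a right adjoint is used. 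The only real care required is identifying the map produced by the adjunction in \cref{prop:relative_global_sections} with this canonical equivalence. I would check this via mapping spaces $\Map((\ast,X),-)$, exactly as in the proof of that proposition.

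By \cite[Pr.A.5(v)]{nikolausscholze}, we then obtain a lax symmetric monoidal functor $\Span_\calQ(\PshT)\to\Cc(\ast)$. Its restriction to $\PshT^\op$ is $A\mapsto\pi_\ast\bbone_{\Cc_A}$, which preserves limits: for $A=\colim A_i$ we have $\Cc_A\simeq\lim\Cc_{A_i}$, so $\pi_\ast\bbone\simeq\lim\pi_{i\ast}\bbone$, since right adjoints out of a limit of categories are computed by limits of the compatible right adjoints. The Yoneda pairing against $\Map_{\Cc(\ast)}(X,-)$ reduces this to the argument of step (6) of \cref{thm:generaldecategorification}. Finally, \cref{prop:qcommutativeglobalringsaslaxfunctors}, applied with $\Cc(\ast)$ in place of $\Cc$, identifies this functor with an object of $\CAlg(\Mon_\calQ\Cc(\ast))$. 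This step assumes $\Cc(\ast)$ is presentably symmetric monoidal; if it is not, I would instead use the description as a lax functor whose restriction preserves limits.
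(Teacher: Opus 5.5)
Your proposal is correct and is essentially the paper's own argument: the paper proves this by rerunning the proof of \cref{thm:generaldecategorification} verbatim with \cref{prop:relative_global_sections} in place of \cref{prop:global_sections}. Your check that the pushforward inverts the Cartesian edges (via $\pi_{Z\ast}f_\ast\simeq\pi_{Y\ast}$) and your Yoneda argument for limit preservation are exactly the needed adaptations; the only quibble is your remark that $\calQ$-lexness is ``automatic'', since the projection formula does not follow from $\calQ$-semiadditivity but is part of what ``symmetric monoidal $\calQ$-semiadditive'' must mean here (i.e.\ $\Cc\in\CAlg(\Cat(T)_{\calQ\textup{-sa}})$), and it is needed for $F$ to be symmetric monoidal.
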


Recall, given a $\calQ$-ambidextrous $\E_\infty$-ring $E$ in $\calC$ and $Y\in \calQ$, we associated in \cref{def:homology} a lax symmetric monoidal functor 
\[
\D E\colon \PshT_{/Y} \to \Mod_{E(Y)}\calC
\] which was our candidate for $E$-homology. As we will show now, when $E$ comes via decategorification from a $\calQ$-semiadditive $\ab$-global category $\calC$, this deserves to be called a homology theory.

\begin{prop}\label{prop:homology_colim}
Let $\calC$ be a $\calQ$-semiadditive $T$-category such that $f^*\colon \calC(Y)\to \calC(X)$ admits a right adjoint for all maps of ab-global spaces, and let $Y\in \calQ$. Then the functor
\[
\D \Gamma_{\calC}\bbone \colon \PshT_{/Y}\to \Mod_{E(Y)}\calC(\ast)
\] associated with $\Gamma_{\calC}\bbone$ by \cref{def:homology} is colimit preserving.
\end{prop}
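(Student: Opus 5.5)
By \cref{prop:homology_underlying}, it suffices to show that the composite
\[
\calQ\to \Span_{\calQ}(\PshT)\xrightarrow{\Gamma_\calC\bbone}\calC(\ast)
\]
is left Kan extended from $T$. More precisely, since the relevant functor is defined on $\calQ_{/Y}$, we will check that for every $X\in\calQ$ the canonical map
\[
\colim_{(A\to X)\in T_{/X}} \Gamma_\calC\bbone(A)\to \Gamma_\calC\bbone(X)
\]
is an equivalence, where the colimit is formed using transfers.

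The first step is to identify the transfers explicitly. By \cref{prop:rel_global_sections}, $\Gamma_\calC\bbone(A)\simeq \pi_{A*}\bbone_{\calC(A)}$. Unwinding the decategorification construction, we expect the transfer along $q\colon A\to X$ to be the composite
\[
\pi_{A*}\bbone_A = \pi_{X*}q_*q^*\bbone_X \simeq \pi_{X*}q_!q^*\bbone_X\to \pi_{X*}\bbone_X,
\]
given by the norm equivalence $q_*\simeq q_!$ of $\calQ$-semiadditivity followed by the counit of $q_!\dashv q^*$.

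Next we use that $X\simeq\colim_{T_{/X}}A$ in $\PshT$. Every map $A\to X$ lies in $\bar\calQ$, since $\calQ$ is inductible and contains $T$. The claim therefore reduces to showing that in $\calC(X)$ the map
\[
\colim_{(q\colon A\to X)} q_!q^*\bbone_X\to\bbone_X
\]
is an equivalence, and that $\pi_{X*}$ carries this colimit to the colimit of the $\pi_{A*}\bbone_A$.

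For the first part, we use that $\calC$ is a limit-preserving functor: $\calC(X)\simeq\lim_{T_{/X}}\calC(A)$, and the $q^*$ are jointly conservative. Base change for $q_!$ then reduces the claim to checking the pullback along each $A'\to X$. This is the standard statement that the identity is the colimit of $q_!q^*$ over the canonical diagram, a descent statement for left adjoints.

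The second part is where I expect the main obstacle. A priori $\pi_{X*}$ need not commute with colimits, since $\pi_X$ need not be in $\calQ$. My first attempt is to avoid pushing the colimit through $\pi_{X*}$ altogether, and instead use the equivalence of $\calQ$-colimits and $\calQ$-limits to work with the $\pi_{A*}$ directly. The colimit above is computed in $\calC(\ast)$, and the functor $A\mapsto \pi_{A*}\bbone$ on $T_{/X}$ with transfers is, via the norm, the functor $A\mapsto \pi_{X*}q_!\bbone_A$. Then $\pi_{X*}\bbone_X=\lim \pi_{A*}\bbone_A$ along restrictions by the sheaf property, while the same object should be the colimit along transfers by the ambidexterity of $X$ relative to the point when $X\in\calQ$.

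Concretely, since $X\in\calQ$, the map $X\to\ast$ is presumably in $\bar\calQ$ because $\ast\in T$, so $\pi_{X*}\simeq\pi_{X!}$ when $X$ is truncated. The truncatedness should come from condition (4) of inductibility, possibly after an inductive argument. Then $\pi_{X!}$ preserves colimits and the computation closes: $\pi_{X!}\colim q_!q^*\bbone\simeq\colim\pi_{A!}\bbone_A$, and applying the norm identifications termwise gives the result. The delicate point is checking that these norm identifications are compatible with the transfer maps, which I would verify using the inductive definition of $\mathrm{Nm}$ together with \cite[Cor.3.17]{CLLSpans}.
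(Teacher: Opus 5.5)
Your proposal is correct and takes essentially the paper's route. After reducing via \cref{prop:homology_underlying}, the paper uses $\calC(Y)\simeq\lim_{A\in T_{/Y}^{\op}}\calC(A)$ to get $\colim_{A\in T_{/Y}}(\pi_A)_!\simeq\pi_!$, citing the dual of \cite[Th.B]{AdjDesc} for exactly your descent statement for left adjoints; it then uses $\pi_!\bbone\simeq\pi_*\bbone$ by $\calQ$-semiadditivity and cites \cite[Pr.5.26]{CLLSpans} for the compatibility with transfers that you flagged as delicate, while truncatedness of $Y\to\ast$ follows directly from condition (4) of inductibility since $\ast\in T$, so no inductive argument is needed.
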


\begin{proof}
Applying \cref{prop:homology_underlying} it suffices to show that the composite
\[
\calQ\to \Span_{\calQ}(\PshT) \xrightarrow{\Gamma_{\calC}\bbone} \calC(\ast)
\]
is left Kan extended from $T$. So consider $Y\in \calQ$ and recall that $\calC_Y \simeq \lim_{A\in T_{/Y}^{\op}} \calC_{A}$. Then, applying the dual of  \cite[Th.B]{AdjDesc} and the $\calQ$-semiadditivity of $\calC$, we obtain the following natural equivalences
\[
\colim_{A\in T_{/Y}} (\pi_{A})_!\xrightarrow{\sim} \pi_! \xrightarrow{\sim} \pi_*.
\]
Applying this to the unit of $\calC_Y$ we obtain an equivalence 
\[
\colim_{A\in T_{/Y}} \pi_!\bbone_{\calC_{A}} \simeq \D \Gamma_{\calC}\bbone(A).
\] 
However, the functoriality of the diagram on the left agrees with that of $\D \Gamma_{\calC}\bbone$ by \cite[Pr.5.26]{CLLSpans}, and so we obtain the desired equivalence
\[
\D \Gamma_{\calC}\bbone(Y)\simeq \colim_{A\in T_{/Y}} \D\Gamma_{\calC}\bbone(\BG).\qedhere
\]
\end{proof}

\begin{remark}
In \cite{benmoshe_tempered}, Ben-Moshe also applies the results of \cite{CLLSpans} to coherently endow the decategorification of a $\pi$-semiadditive symmetric monoidal $\ab$-global category $\calC$ with transfers. Our construction agrees with his in this case. Unfortunately, his construction no longer works when $\calQ$ is some family of transfers which does not contain the maps $\BG\to \ast$ for all $G\in \Glo_\ab$, such as $\awayfromS$. Since we are crucially interested in the partial functoriality of transfers in tempered cohomology along general maps of $\P$-divisible groups, we are required to make the more elaborate constructions above.
\end{remark}

\section{Tempered local systems and parametrized stability}\label{sec:temperedlocalsystems}

The motivating example of a $\pi$-stable $\ab$-global category is the $\ab$-global category $\LocSys_\bfG$ of \emph{tempered local systems} of an \emph{oriented $\bfP$-divisible group} $\bfG$, introduced by Lurie in \cite{ec3}. Our goal in this section is to recall these notions, then examine the naturality of $\LocSys_\bfG$ in the underlying oriented $\bfP$-divisible group $\bfG$.

In more detail, we recall basic notions from \cite{ec3} in \Cref{ssec:recollectionofpdivs} in a language suited for this article. In \Cref{ssec:functorialityofTLS}, we show that $\LocSys_{(\bs)}$ is a functor from the category of oriented $\bfP$-divisible groups to that of symmetric monoidal ab-global categories, without saying anything about $\calQ$-stability. In \Cref{ssec:pcovings}, we define an inductive subcategory $\awayfromS \subset\pi\subset  \Spc_\ab^\gl$ of morphisms of $\pi$-finite spaces which are perpendicular to a set of primes $S$. In \Cref{ssec:characterisation_awayfromp}, we give a characterization of these morphisms in classical homotopy theory. In \Cref{ssec:temperedambidexterity}, we define an analogous class of morphisms of oriented $\bfP$-divisible groups that are sufficiently away from $S$, and show that $\LocSys_{(\bs)}$ defines a functor sending such morphisms to morphisms of symmetric monoidal $\awayfromS$-stable ab-global categories; see \Cref{thm:extendedfunctoriality}. Finally, in \Cref{ssec:stacks}, we show how all of this discussion formally extends to oriented $\bfP$-divisible groups over stacks using the formulation of stacks from \cite{reconstruction}.

\subsection{Recollections on \texorpdfstring{$\bfP$}{P}-divisible groups and tempered local systems}\label{ssec:recollectionofpdivs}

We start by recalling some definitions from \cite{ec3} in the form we will use them. It will be convenient to use geometric language, so we write
\[
\Aff \coloneqq \CAlg^\op
\]
for the category of affine stacks, defined (for the present purposes) as opposite to the category of $\bfE_\infty$ rings. Given  $\A \in \CAlg$, write $\Spec \A \in \Aff$ for the corresponding affine stack and $\Aff_\A = \Aff_{/\Spec\A} = \CAlg_\A^\op$. We will extend the constructions of this section to non-affine stacks in \cref{ssec:stacks}.

\begin{mydef}[{\cite[Df.3.5.1]{ec3}}]\label{def:pdivisible}
An \emph{affine $\bfP$-divisible group} $\bfG$ is a functor
\[
\bfG[\bs]\colon \Ab_\fin^\op\to\Aff
\]
satisfying the following conditions:
\begin{enumerate}
\item $\bfG$ sends biCartesian squares to Cartesian squares;
\item $\bfG$ sends injections to finite flat morphisms \cite[Df.5.2.3.1]{sag} of positive degree.
\end{enumerate}
\end{mydef}

The (affine) $\bfP$-divisible groups assemble into a full subcategory
\[
\pdivaff \subset \Fun(\Ab_\fin^\op,\Aff).
\]
Condition (1) in \cref{def:pdivisible} guarantees that the evaluation
\[
\pdivaff \to \Aff,\qquad \bfG \mapsto \bfG[0]
\]
is a Cartesian fibration. For a fixed $\einfty$ ring $\A$ we may identify
\[
\BT(\A) \coloneqq \{\Spec \A\}\times_{\Aff}\pdivaff \subset \Fun(\Ab_\fin^\op,\Aff_\A)
\]
as the category of \emph{$\bfP$-divisible groups over $\A$}, so that $\pdivaff$ is the coCartesian unstraightening of a functor
\[
\BT\colon \CAlg = \Aff^\op \to \Cat; \qquad \A \mapsto \BT(\A).
\]

\begin{rmk}
Let $\bfG'$ and $\bfG$ be $\bfP$-divisible group over $\einfty$ rings $\A'$ and $\A$. A morphism $\alpha\colon \bfG' \to \bfG$ of $\bfP$-divisible groups sits in a diagram
\begin{center}\begin{tikzcd}
\bfG'\ar[rr,"\tilde{\alpha}"]\ar[dr]\ar[rrrr,"\alpha",bend left]&&f^\ast \bfG\ar[rr]\ar[dl]&&\bfG\ar[d]\\
&\Spec \A' \ar[rrr,"f"]&&&\Spec \A
\end{tikzcd},\end{center}
where $f$ is isomorphic to $\alpha[0]\colon \bfG'[0] \to \bfG[0]$ and the rightmost square is Cartesian, that is,
\[
(f^\ast\bfG)[H] \simeq \Spec \A' \times_{\Spec \A}\bfG[H]
\]
for all $H \in \Ab_\fin^\op$. The morphism $\alpha$ is \emph{Cartesian} (with respect to the Cartesian fibration $\pdivaff\to\Aff$) if and only if the outer square is Cartesian, that is, if and only if $\tilde{\alpha}\colon \bfG' \to f^\ast \bfG$ is an isomorphism identifying $\bfG'$ as the base change of $\bfG$ along $f$.
\end{rmk}

\begin{notation}
Let $\bfG$ be a $\bfP$-divisible group over an $\einfty$ ring $\A$. Then $\bfG$ splits as a sum
\[
\bfG\simeq\bigoplus_p \bfG_{(p)}
\]
of \emph{$p$-divisible groups} for each prime $p$ \cite[Rmk.2.6.7]{ec3}. For a fixed prime $p$, write
\[
\bfG_p \coloneqq \Spec (\A_p^\wedge)\times_{\Spec \A} \bfG_{(p)}
\]
for the base change of the underlying $p$-divisible group to the $p$-completion of $\A$, and 
\[
\bfG_p^\circ\subset\bfG_p
\]
for its identity component, a formal group. See \cite[\textsection 2]{ec2} for details.
\end{notation}

\begin{mydef}
Recall that $\Glo_\ab\subset\spaces$ is the full subcategory of connected finite groupoids with abelian fundamental group. The \emph{shifted Pontryagin duality functor} is the composite
\[
\Ab_{\fin}^{\op} \xrightarrow{(\dual{\bs})} \Ab_{\fin} \xrightarrow{\bfB(\bs)} \Glo_{\ab},
\]
of Pontryagin duality $(\dual{\bs}) = \Hom(\bs,\bfQ/\bbZ)$ and the functor $\bfB(\bs)$ which sends an abelian group to its delooping.
\end{mydef}

\begin{mydef}[{\cite[Df.2.6.12]{ec3}}]\label{df:preorientations}
Let $\bfG$ be an affine $\bfP$-divisible group and set $\Spec\A = \bfG[0]$. A \emph{preorientation} of $\bfG$ is an extension of $\bfG$ through shifted Pontryagin duality:
\begin{center}\begin{tikzcd}
\Ab_\fin^\op\ar[r,"{\bfG[\bs]}"]\ar[d,"\bfB(\dual{\bs})"']&\Aff\\
\Glo_\ab\ar[ur,"\bfG(\bs)"',dashed]
\end{tikzcd}.\end{center}
This is said to be an \emph{orientation} if, for every prime $p$, the Bott map
\[
\omega_{\bfG_p^\circ} \to \Sigma^{-2}\A_p^\wedge
\]
associated with $\bfG_p^\circ$ is an isomorphism. See \cite[\textsection 4.3]{ec2} for details.
\end{mydef}

As with $\bfP$-divisible groups, the categories of (pre)oriented affine $\bfP$-divisible groups form full subcategories
\[
\pdivoraff \subset \pdivpreaff \subset \Fun(\Glo_\ab,\Aff),
\]
and evaluation on the terminal object $\ast \in \Glo_\ab$ defines Cartesian fibrations
\[
\pdivpreaff \to \Aff,\qquad \pdivoraff \to \Aff.
\]
For an $\bfE_\infty$ ring $\A$, we write
\[
\BT^{\ori}(\A) \subset \BT^{\pre}(\A) \subset \Fun(\Glo_\ab,\Aff_\A)
\]
for the fibers over $\Spec \A$.

\begin{notation}\label{notation:preorientednotation}
Let $\bfG$ be a preoriented $\bfP$-divisible group. In this case, we write
\begin{itemize}
\item $\bfG[\bs]\colon \Ab_\fin^\op\to\Aff$ for the underlying $\bfP$-divisible group of $\bfG$;
\item $\bfG(\bs)\colon \Glo_\ab \to \Aff$ for the functor defining $\bfG$;
\item $\Gamma\calO_\bfG^{(\bs)}\colon \Glo_\ab^\op\to\CAlg$ for the functor formally dual to $\bfG(\bs)$.
\end{itemize}
When $\bfG$ is a preoriented $\bfP$-divisible group over a fixed $\bfE_\infty$ ring $\A$, we write $\A_\bfG^{(\bs)} = \Gamma\calO_\bfG^{(\bs)}$, just as in \cite[Con.4.0.3]{ec3}.
\end{notation}

We now recall the construction of tempered local systems. Fix a functor $\bfG\colon \Glo_\ab \to \Aff$ and set $\Spec \A = \bfG[0]$.

\begin{mydef}[{\cite[Not.5.1.2]{ec3}}]
Given a global space $\sfX \in \abglobalspaces$, the composite
\[
\A_{\bfG,\sfX}\colon (\Glo_\ab)_{/\sfX}^\op \to \Glo_\ab^\op \xrightarrow{\A_\bfG^{(\bs)}} \CAlg
\]
defines a commutative algebra in $\Fun((\Glo_\ab)_{/\sfX}^\op,\Sp)$, equipped with the pointwise symmetric monoidal structure. We write
\[
\Mod_{\A_{\bfG,\sfX}} \coloneqq \Mod_{\A_{\bfG,\sfX}}(\Fun((\Glo_\ab)^\op_{/\sfX},\Sp))
\]
for the category of \emph{$\A_{\bfG,\sfX}$-modules}.
\end{mydef}

\begin{rmk}
Our notation differs from Lurie, who writes $\ul{\A}_\bfG^\sfX$ where we write $\A_{\bfG,\sfX}$; this is to free the notation $\ul{\A}_\bfG$ for use later as the $\pi$-ambidextrous spectrum associated with $\bfG$ when $\bfG$ is oriented.
\end{rmk}

From now on we suppose further that $\bfG$ is a preoriented $\bfP$-divisible group over $\A$.

\begin{mydef}[{\cite[Df.5.2.4]{ec3}}]\label{def:temperedlocalsystems}
Given a global space $\sfX \in \abglobalspaces$, a \emph{$\bfG$-tempered local system} on $\sfX$ is an $\A_{\bfG,\sfX}$-module $\fmodule$ which satisfies the following conditions:
\begin{enumerate}
\item Let $\alpha\colon T \to T'$ be a morphism in $(\Glo_\ab)_{/\sfX}$ with connected fibers. Then $\calF(\alpha)$ induces an equivalence
\[
\A_\bfG^{T'}\otimes_{\A_\bfG^T}\calF(T)\to\calF(T')
\]
of $\A_\bfG^{T'}$-modules.
\item Let $T \in (\Glo_\ab)_{/\sfX}$ and let $T_0\to T$ be a connected covering space. Define the relative augmentation ideal
\[
I_\bfG(T_0/T) = \ker\left(\pi_0 \A_\bfG^T \to \pi_0 \A_\bfG^{T_0}\right).
\]
Then the canonical map $\calF(T) \to \calF(T_0)^{\h \Aut(T_0/T)}$ exhibits $\calF(T_0)^{\h T_0/T}$ as the completion of $\calF(T)$ with respect to $I_\bfG(T_0/T)$.
\end{enumerate}
We write
\[
\LocSys_\bfG(\sfX)\subset\Mod_{\A_{\bfG,\sfX}}
\]
for the full subcategory of $\bfG$-tempered local systems on $\sfX$.
\end{mydef}

\subsection{Functoriality of tempered local systems}\label{ssec:functorialityofTLS}

Our goal in this subsection is to establish the following.

\begin{theorem}\label{thm:basicfunctoriality}
The construction of tempered local systems assembles into a functor
\[
\LocSys_{(\bs)}\colon (\pdivoraff)^\op\to\Cat(\Glo_\ab)^\otimes,\qquad \bfG \mapsto (\sfX \mapsto \LocSys_\bfG(\sfX))
\]
\end{theorem}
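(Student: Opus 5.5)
We will build the functor in two stages: first a functor landing in module categories, then a restriction to the full subcategories of tempered local systems. Consider the functor
\[
(\pdivoraff)^\op \times (\abglobalspaces)^\op \to \CAlg(\PrL), \qquad (\bfG,\sfX)\mapsto \Mod_{\A_{\bfG,\sfX}}.
\]
We construct it from the functor $\pdivoraff\to\Fun(\Glo_\ab,\Aff)$, which dualizes to $(\pdivoraff)^\op\to\Fun(\Glo_\ab^\op,\CAlg)$, $\bfG\mapsto\Gamma\calO_\bfG^{(\bs)}$. For fixed $\sfX$, restriction along $(\Glo_\ab)_{/\sfX}^\op\to\Glo_\ab^\op$ yields a commutative algebra $\A_{\bfG,\sfX}$ in $\Fun((\Glo_\ab)^\op_{/\sfX},\Sp)$, naturally in $(\bfG,\sfX)$ via the unstraightening of $\sfX\mapsto(\Glo_\ab)_{/\sfX}$. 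We then apply the functor $\Mod\colon \CAlg(\calE)\to\CAlg(\PrL)_{\calE/}$ for symmetric monoidal presentable $\calE$, with base change given by extension of scalars $(\bs)\otimes_{\A}\A'$.

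Naturality in $\sfX$ is handled by a single convention. For $f\colon\sfX\to\sfY$, restriction along $(\Glo_\ab)_{/\sfX}\to(\Glo_\ab)_{/\sfY}$ sends $\A_{\bfG,\sfY}$-modules to $\A_{\bfG,\sfX}$-modules, and this is strict since $\A_{\bfG,\sfX}$ is itself the restriction of $\A_{\bfG,\sfY}$. The cleanest packaging is the Grothendieck construction over $\Glo_\ab$, followed by right Kan extension along the Yoneda embedding. The cases of representable $\sfX$ determine everything, because $\Mod_{\A_{\bfG,\sfX}}\simeq\lim_{T\in(\Glo_\ab)_{/\sfX}}\Mod_{\A_\bfG^T}$-type descent holds for presheaf categories. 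We therefore define the functor on $\Glo_\ab^\op$ and extend it, which automatically gives limit preservation, i.e.\ an object of $\Cat(\Glo_\ab)^\otimes$.

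Next we check that the full subcategories $\LocSys_\bfG(\sfX)\subset\Mod_{\A_{\bfG,\sfX}}$ are preserved by every structure map and are symmetric monoidal subcategories. Preservation under restriction along $\sfX\to\sfY$ is immediate, because conditions (1) and (2) of \Cref{def:temperedlocalsystems} are stated objectwise over $(\Glo_\ab)_{/\sfX}$. Closure under tensor products and the presence of the unit are \cite[Pr.5.2.9 / Con.5.3]{ec3}-type results, and Lurie establishes tempered local systems as a symmetric monoidal localization. Since being tempered is a property, the subfunctor is determined once preservation is verified.

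The main obstacle is preservation under base change along a morphism $\alpha\colon\bfG'\to\bfG$ of oriented $\bfP$-divisible groups over $f\colon\Spec\A'\to\Spec\A$ that is not Cartesian. The functor is $\calF\mapsto\A'_{\bfG',\sfX}\otimes_{\A_{\bfG,\sfX}}\calF$, which we must show sends tempered to tempered. Condition (1) is formal, because extension of scalars composes: $\A'^{T'}\otimes_{\A'^T}(\A'^T\otimes_{\A^T}\calF(T))\simeq\A'^{T'}\otimes_{\A^{T'}}\calF(T')$. Condition (2) is the hard one, since it involves completions and homotopy fixed points. Our approach is to use Lurie's characterization that a module is tempered iff it lies in the essential image of the left adjoint from $\LocSys$ of representables, i.e.\ that tempered local systems are generated under colimits, for representable $\sfX=\bfB H$, by extension from $\A_\bfG^{\bfB H}$-modules along connected-fiber maps. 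Since extension of scalars preserves colimits, and sends these generators to the corresponding generators for $\bfG'$ by the computation for condition (1), preservation would follow. If this generation statement is not available in the needed form, we would fall back on reducing condition (2) to checking that the $I_\bfG(T_0/T)$-adic and $I_{\bfG'}(T_0/T)$-adic topologies on $\A'^T$-modules are compatible. The key point here is that the image of $I_\bfG$ generates an ideal with the same radical as $I_{\bfG'}$, up to finite flatness of $\bfG'[\dual{H}]$; we would verify this radical comparison via \cite[\textsection 2]{ec3}.
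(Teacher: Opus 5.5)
Your construction of the module functor, restriction in $\sfX$, and condition (1) match the paper. (One aside on limit preservation: the values on representables are $\Mod_{\A_{\bfG,T}}$, not $\Mod_{\A^T_\bfG}$; the paper cites \cite[Pr.5.1.9, Rmk.5.2.11]{ec3} for this.) The genuine gap is condition (2) of \Cref{def:temperedlocalsystems} for non-Cartesian $\alpha$. You correctly identify this as the crux, but you do not prove it.

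Your first route is not an argument. The generation statement is neither proved nor citable in the form you need. It would also require $\LocSys_\bfG(T)$ to be closed under colimits in $\Mod_{\A_{\bfG,T}}$. As stated, it is false. Suppose a prime $\ell$ is invertible in $\A$ and let $q\colon\ast\to\bfB C_\ell$. The summand of $q_\ast\bbone$ cut out by $1-\frac1\ell\sum_g g$ is a nonzero tempered local system with $\Hom(\bbone,-)=0$, so it is invisible to anything extended from $\A^{\bfB C_\ell}_\bfG$-modules. Enlarging the generators to objects like $q_!\bbone$ only moves the problem, since $\alpha^\ast$ does not commute with $q_!$ in general (\Cref{lem:cplims}).

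The key claim in your fallback is false. Take the following setup:
\begin{itemize}
\item $\A$ is $K(n)$-local with Quillen group $\bfG_0$, and $\bfG=\bfG'=\bfG_0\oplus\ul{\bfQ_p/\bfZ_p}$;
\item $\alpha=\id\oplus[p]$, a morphism over $\A$ of the kind used in \Cref{lem:cplims};
\item $T=\bfB C_p$ and $T_0=\ast$;
\item $R=\pi_0\A^{BC_p}$ and $I_0=\ker(R\to\pi_0\A)$.
\end{itemize}
Then $\pi_0\A^T_\bfG\cong\prod_{\lambda\in\bfZ/p}R\cong\pi_0\A^T_{\bfG'}$, and the map sends $(r_\lambda)$ to $(r_0)_\mu$. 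The ideal generated by $I_\bfG(T_0/T)$ is $\prod_\mu I_0$, whereas $I_{\bfG'}(T_0/T)=I_0\times\prod_{\mu\neq0}R$, which has a different radical. Likewise, $(\alpha^\ast\calF)(T_0)=\A^{T_0}_{\bfG'}\otimes_{\A^{T_0}_\bfG}\calF(T_0)$ differs from $\A^T_{\bfG'}\otimes_{\A^T_\bfG}\calF(T_0)$; for $\calF=\bbone$ these are $\A$ versus $\A^{\times p}$.

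Finally, you treat $\alpha$ over $\Spec\A'\to\Spec\A$ in one step. Completions and homotopy fixed points do not commute with arbitrary base change, so the Cartesian part needs \cite[Pr.6.2.1]{ec3}.

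The missing idea is where orientedness enters, and your argument never uses it. Finite flatness of $\bfG'[\dual{H}]$ over the base says nothing about the map $\bfG'[\dual{H}]\to\bfG[\dual{H}]$.

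The paper factors $\alpha$ as a Cartesian morphism followed by a morphism over a fixed $\A$. For the latter, \Cref{lem:orientedseparable} shows that $\alpha(T)\colon\bfG'(T)\to\bfG(T)$ is finite \'etale. Via \Cref{lm:reduction_to_monochromatic} one reduces to $K(n)$-local $\A$. There the orientations identify both identity components with the Quillen group and force $\alpha$ to be the identity on it, so $\alpha(T)$ is base changed from a map of \'etale quotients. Hence $\A^T_{\bfG'}$ is dualizable over $\A^T_\bfG$, and extension of scalars commutes with $(\bs)^{\h\Aut(T_0/T)}$ and with completion.

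The example above shows one more step is needed to land in $\bfG'$-tempered objects. The identification $\pi_0\A^T_{\bfG'}\otimes I_\bfG\simeq I_{\bfG'}$ in the proof of \Cref{prop:preservetemper} needs the same correction. The map $\bfG'(T_0)\to\bfG'(T)\times_{\bfG(T)}\bfG(T_0)$ is a closed immersion between stacks finite \'etale over $\bfG(T_0)$, hence an open and closed summand. So $(\alpha^\ast\calF)(T_0)$ is an $\Aut(T_0/T)$-stable summand of $\A^T_{\bfG'}\otimes_{\A^T_\bfG}\calF(T_0)$. Passing from completion at the extended ideal to $I_{\bfG'}$-completion discards exactly the complementary summand.
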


When restricted to the wide subcategory $\pdiv^{\ori}_{\aff,\cart}\subset\pdivoraff$ spanned by the Cartesian morphisms, \cref{thm:basicfunctoriality} was proved by Ben-Moshe \cite[\textsection4.1]{benmoshe_tempered}. Our theorem generalizes this by allowing non-Cartesian morphisms of oriented $\bfP$-divisible groups, such as non-invertible isogenies. The basic strategy is the same, making use of the following bit of general categorical input.

\begin{lemma}[Ben-Moshe]\label{lem:modfunctoriality}
The construction of $\Gamma\ul{\calO}_\bfG^{\sfX}$-modules refines to a functor
\[
\Fun(\Glo_\ab,\Aff)^\op \to \Fun((\abglobalspaces)^\op,\CAlg(\Cat)),\qquad \bfG \mapsto (\sfX \mapsto \Mod_{\Gamma\ul{\calO}_\bfG^\sfX}).
\]
\end{lemma}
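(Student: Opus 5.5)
The functor will be assembled from three standard ingredients: functoriality of diagram categories, passage to commutative algebras, and the functor $\Mod$ on commutative algebras in $\CAlg(\PrL)$. The first step produces, from $\bfG\colon \Glo_\ab\to\Aff$, the composite $\Gamma\ul{\calO}_\bfG\colon \Glo_\ab^\op\to\CAlg$. This is simply postcomposition with the equivalence $\Aff^\op\simeq\CAlg$, so $\bfG\mapsto \Gamma\ul{\calO}_\bfG$ is a functor
\[
\Fun(\Glo_\ab,\Aff)^\op\simeq\Fun(\Glo_\ab^\op,\CAlg).
\]

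For a global space $\sfX$, the algebra $\Gamma\ul{\calO}_\bfG^\sfX=\A_{\bfG,\sfX}$ is the restriction along $(\Glo_\ab)_{/\sfX}^\op\to\Glo_\ab^\op$. Commutative algebras in $\Fun((\Glo_\ab)^\op_{/\sfX},\Sp)$ with the pointwise structure are the same as functors $(\Glo_\ab)^\op_{/\sfX}\to\CAlg$. Straightening the right fibration $(\Glo_\ab)_{/\sfX}\to\Glo_\ab$ turns $\sfX\mapsto(\Glo_\ab)_{/\sfX}$ into a colimit-preserving functor $\abglobalspaces\to\Cat_{/\Glo_\ab}$. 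Restriction therefore gives a functor
\[
\Fun(\Glo_\ab^\op,\CAlg)\times(\abglobalspaces)^\op\to\CAlg\bigl(\PrL\bigr),\qquad (A,\sfX)\mapsto \bigl(\Fun((\Glo_\ab)_{/\sfX}^\op,\Sp),\,A|_{\sfX}\bigr)
\]
into the category of pairs consisting of a presentably symmetric monoidal category together with a commutative algebra in it. This is the cartesian unstraightening of $\CAlg(\bs)$ over $\CAlg(\PrL)$. The key input is that the pointwise tensor product on $\Fun(K^\op,\Sp)$ is contravariantly functorial in $K$ via symmetric monoidal restriction functors, and that restriction of algebras is compatible with this.

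The second step applies the functor $\Mod\colon(\mathcal{V},A)\mapsto\Mod_A(\mathcal{V})$. For commutative $A$ this lands in $\CAlg(\PrL)$, and it is functorial for maps $(\mathcal{V},A)\to(\mathcal{W},B)$ consisting of a symmetric monoidal left adjoint $F$ together with an algebra map $F(A)\to B$, via $M\mapsto B\otimes_{F A}F M$. This is the standard functor from \cite[\textsection4.8.3--4.8.5]{ha}. Restriction functors go the wrong way, so I use their left adjoints, left Kan extension $f_!$. For the map $(\Glo_\ab)_{/\sfY}\to(\Glo_\ab)_{/\sfX}$ induced by $\sfY\to\sfX$, left Kan extension along this map is symmetric monoidal for the pointwise structure; this holds because it is a right fibration, so $f_!$ is fiberwise a coproduct and the projection formula holds. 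The counit $f_!f^\ast A\to A$ supplies the algebra map. Thus base change $\A_{\bfG,\sfY}\otimes_{f_!\A_{\bfG,\sfX}|}f_!(\bs)$ is left adjoint to the naive restriction on modules. Since $\A_{\bfG,\sfY}=f^\ast\A_{\bfG,\sfX}$, that restriction is also symmetric monoidal. Dually, a natural transformation $\Gamma\ul{\calO}_{\bfG}\to\Gamma\ul{\calO}_{\bfG'}$ acts by extension of scalars. Passing to right adjoints where necessary, and combining the two variances into a single functor out of the product, gives the desired functor to $\Fun((\abglobalspaces)^\op,\CAlg(\Cat))$. Currying then gives the stated form.

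The main obstacle is the coherent mixed variance: covariance in the algebra through base change, and contravariance in $\sfX$ through restriction. Both must be packaged into one functor rather than merely objectwise. To handle this I will work in the $(\infty,2)$-categorical setting of the cartesian unstraightening $\CAlg(\PrL)^{\mathrm{alg}}\to\CAlg(\PrL)$, and take mates of the lax squares formed by $f^\ast$ and base change. The Beck--Chevalley type identification needed to make these mates invertible is immediate because $f^\ast$ of modules commutes with extension of scalars along pointwise algebra maps. Restriction preserves relative tensor products pointwise, since colimits in functor categories are computed pointwise, so the mate squares commute strictly.
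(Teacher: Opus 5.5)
Your second step contains a false claim that the rest of the argument depends on. Left Kan extension $f_!$ along $(\Glo_\ab)_{/\sfY}\to(\Glo_\ab)_{/\sfX}$ is \emph{not} symmetric monoidal for the pointwise structure. Because this map is a right fibration, you correctly get $(f_!M)(T)\simeq\colim_{\Map_{/\sfX}(T,\sfY)}M$. But then $(f_!\bbone)(T)\simeq\Sigma^\infty_+\Map_{/\sfX}(T,\sfY)$; for the fold map $\sfX\amalg\sfX\to\sfX$ this is $\bbone\oplus\bbone$. Likewise $f_!(M\otimes N)$ lacks the cross terms present in $f_!M\otimes f_!N$. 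The projection formula only makes $f_!$ linear over $\Fun((\Glo_\ab)_{/\sfX}^\op,\Sp)$, i.e.\ oplax monoidal.

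Several things follow. $f_!\A_{\bfG,\sfY}$ carries no algebra structure, so your base change formula has no meaning; as written it is not even well-typed. The actual left adjoint of restriction on modules is just $f_!$, with the module structure coming from the projection formula. It sends the unit $\A_{\bfG,\sfY}$ to $f_!\A_{\bfG,\sfY}\neq\A_{\bfG,\sfX}$, so it is not a morphism in $\CAlg(\PrL)$, and the $\Mod$-functoriality of \cite[\S4.8]{ha} does not apply to it. Passing to right adjoints can then at best recover $\sfX\mapsto\Mod_{\A_{\bfG,\sfX}}$ as a functor to $\mathrm{Pr}^{\mathrm{R}}$ or $\Cat$. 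That loses exactly the symmetric monoidal refinement that distinguishes this lemma from the $\Cat$-valued construction in \cite{benmoshe_tempered}.

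The detour is unnecessary, because restriction does not go the wrong way. For $f\colon\sfY\to\sfX$, the restriction $f^\ast\colon\Fun((\Glo_\ab)_{/\sfX}^\op,\Sp)\to\Fun((\Glo_\ab)_{/\sfY}^\op,\Sp)$ is itself a symmetric monoidal left adjoint, with right Kan extension as its right adjoint. Moreover $f^\ast\A_{\bfG,\sfX}\simeq\A_{\bfG,\sfY}$.

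So the functor from your first step, out of $\Fun(\Glo_\ab^\op,\CAlg)\times(\abglobalspaces)^\op$, already lands in pairs $(\mathcal V,A)$. Its morphisms are $(F,\,F(A)\to B)$ with $F$ a symmetric monoidal left adjoint; this is the cocartesian, not cartesian, unstraightening of $\CAlg(-)$. Postcomposing with $\Mod$ gives $M\mapsto\A_{\bfG,\sfY}\otimes_{f^\ast\A_{\bfG,\sfX}}f^\ast M\simeq f^\ast M$ in the $\sfX$-variable and extension of scalars in the $\bfG$-variable. Both come from a single functor out of the product with values in $\CAlg(\PrL)\subset\CAlg(\Cat)$, and currying gives the lemma. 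No mates or $(\infty,2)$-categorical bookkeeping are needed.

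With this correction, your argument is a self-contained version of what the paper obtains by citing Ben-Moshe's Construction 4.3 and omitting the forgetful functor $\CAlg(\Cat)\to\Cat$.
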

\begin{proof}
This construction is carried out in \cite[Con.4.3, Steps 1--3]{benmoshe_tempered}. To be precise, Ben-Moshe writes down a functor
\[
\Fun(\Glo_\ab,\Aff)^\op\times(\abglobalspaces)^{\op} \to \Cat,\qquad (\bfG,\sfX) \mapsto \Mod_{\Gamma\ul{\calO}_\bfG^\sfX}
\]
which does not incorporate the symmetric monoidal structure on $\Mod_{\Gamma\ul{\calO}_\bfG^\sfX}$. However, this is merely an artifact of the use of the forgetful functor $\CAlg(\Cat) \to \Cat$ in \cite[Con.4.3, Step 2]{benmoshe_tempered}, and by omitting this use one obtains the lemma as written.
\end{proof}

We will deduce \cref{thm:basicfunctoriality} from \cref{lem:modfunctoriality} by verifying that if $\alpha\colon \bfG'\to\bfG$ is a map of oriented $\bfP$-divisible groups, then $\alpha^\ast\colon \Mod_{\Gamma\ul{\calO}_{\bfG'}^{\sfX}} \to \Mod_{\Gamma\ul{\calO}_{\bfG}^{\sfX}}$ restricts to a functor between categories of tempered local systems. For Cartesian morphisms, this is exactly \cite[Pr.6.2.1(d)]{ec3}. The proof for general morphisms requires a little more care.

\begin{lemma}\label{lm:reduction_to_monochromatic}
Let $\A$ be a complex-periodic $\E_\infty$ ring locally of finite height, and consider a commutative diagram
	\begin{equation}\label{eq:maps_of_rings}\begin{tikzcd}
			&\A\ar[dl,"f"']\ar[dr,"g"]\\
			\R\ar[rr,"\alpha"]&&\sfS
	\end{tikzcd}\end{equation}
	of $\bfE_\infty$ rings. If
	\begin{enumerate}
		\item $f$ and $g$ are finite flat (in the sense of \cite[\textsection 5.2.3]{sag}, i.e.\ finite projective);
		\item $\alpha$ is finite \'etale after base change to $L_{K(p,n)}\A$ for all primes $p$ and heights $n$;
	\end{enumerate}
	then $\alpha$ is finite \'etale.
\end{lemma}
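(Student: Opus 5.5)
The plan is to reduce to a statement about the $\pi_0$-algebras and then check that statement pointwise over $\Spec\pi_0\A$, where the chromatic localizations of $\A$ are available.

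\emph{Step 1: reduction to $\pi_0$.} A finite flat $\A$-algebra is flat in the sense of \cite[\textsection 5.2.3]{sag}, so $\pi_\ast \R\simeq \pi_\ast \A\otimes_{\pi_0\A}\pi_0\R$ and $\pi_\ast\sfS\simeq\pi_\ast\A\otimes_{\pi_0\A}\pi_0\sfS$. Consequently
\[
\pi_\ast\R\otimes_{\pi_0\R}\pi_0\sfS\simeq\pi_\ast\A\otimes_{\pi_0\A}\pi_0\sfS\simeq\pi_\ast\sfS.
\]
Hence $\alpha$ is flat as soon as $\pi_0\R\to\pi_0\sfS$ is flat. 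In that case $\alpha$ is finite étale if and only if $\pi_0\alpha\colon\pi_0\R\to\pi_0\sfS$ is a finite étale map of ordinary commutative rings. Finiteness is automatic, since $\pi_0\sfS$ is a finitely generated projective $\pi_0\A$-module. It therefore remains to prove that $\pi_0\alpha$ is flat and unramified.

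\emph{Step 2: fibral criterion over $\pi_0\A$.} Both $\pi_0\R$ and $\pi_0\sfS$ are finite projective $\pi_0\A$-algebras, so the fibral flatness criterion (EGA IV, 11.3.10) applies. It reduces flatness of $\pi_0\alpha$ to flatness of the fiber maps
\[
\kappa(\frak q)\otimes_{\pi_0\A}\pi_0\R\to\kappa(\frak q)\otimes_{\pi_0\A}\pi_0\sfS
\]
for every prime $\frak q\subset\pi_0\A$. Unramifiedness can be checked on the same fibers by Nakayama, because $\Omega_{\pi_0\sfS/\pi_0\R}$ is a finitely generated module over the finite $\pi_0\A$-algebra $\pi_0\sfS$. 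So finite étaleness of $\pi_0\alpha$ is a fiberwise condition over $\Spec\pi_0\A$.

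\emph{Step 3: matching fibers with chromatic localizations.} Fix a prime $\frak q$ of residue characteristic $p$ (or $0$). Since $\A$ is complex-periodic and locally of finite height, there should be a height $n$ such that $\frak q$ lies in the support of $L_{K(p,n)}\A$. Concretely, $\frak q$ is the generic point of a height-$n$ stratum of the formal group: $v_0=p,\dots,v_{n-1}$ lie in $\frak q$ and $v_n\notin\frak q$. Then $\kappa(\frak q)$ receives a map from $\pi_0 L_{K(p,n)}\A$, or from a flat cover thereof obtained by $\frak q$-adic completion. Base-changing hypothesis (2) along $\pi_0\A\to\pi_0L_{K(p,n)}\A\to\kappa(\frak q)$ gives finite étaleness of the fiber map in Step 2. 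For this I use that, for the finite projective $\A$-modules $\R,\sfS$, the localization $L_{K(p,n)}$ agrees with $L_{K(p,n)}\A\otimes_\A(\bs)$ and again has flat homotopy. Together with Step 2 this gives that $\pi_0\alpha$ is finite étale, and Step 1 then concludes.

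The main obstacle is Step 3. One must verify that every prime $\frak q$ of $\pi_0\A$ is hit by some $\pi_0L_{K(p,n)}\A$, in a way compatible with fibers. This is where "locally of finite height" is essential: it guarantees the chromatic stratification of $\Spec\pi_0\A$ is exhausted by finitely many heights, so no prime escapes all $K(p,n)$-localizations. I would first try to prove it by the chromatic fracture square, showing that $L_{K(p,n)}$ for $0\le n\le h$ is jointly conservative on perfect $\A$-modules. Applied to the fiber modules $\A\otimes_{\pi_0\A}\kappa(\frak q)$ (suitably interpreted), this would rule out any prime invisible to all localizations. If that proves awkward, a fallback is to bypass fibers: express étaleness through the trace pairing $\sfS\to\Hom_\R(\sfS,\R)$ being an equivalence of perfect $\A$-modules, which can be detected after each $L_{K(p,n)}$. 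That route, however, needs the flatness of Step 2 established separately.
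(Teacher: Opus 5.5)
Your Steps 1 and 2 are sound. Flatness of $\R$ and $\sfS$ over $\A$ reduces finite \'etaleness of $\alpha$ to that of $\pi_0\alpha$. The fibral flatness criterion, together with Nakayama for $\Omega_{\pi_0\sfS/\pi_0\R}$, then reduces this to the fibres over $\Spec\pi_0\A$. This is a genuinely different route from the paper, which never looks at residue fields. The paper reduces to $\A$ local and $L_n$-local, inducts on $n$ via the $I_n$-adic fracture square, and shows that $L_{\sfS_{\geq0}/\R_{\geq0}}$ vanishes after $I_n$-localization (by induction) and after $I_n$-completion (by hypothesis (2)).

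Your reduction, however, puts the whole content of the lemma into Step 3, and there the key claim is asserted rather than proved. You need every prime $\mathfrak{q}\subset\pi_0\A$ of residue characteristic $p$ and height $n$ to lie in the image of $\Spec\pi_0L_{K(p,n)}\A\to\Spec\pi_0\A$, and this is not formal. $L_{K(p,n)}\A$ is the derived $I_n$-adic completion of the nonconnective ring $L_n\A_{(p)}$, and $\pi_0$ of such a completion is not the classical completion of $\pi_0$. Already for $n=1$ it is an extension of $\Hom(\bfZ/p^\infty,\pi_{-1}L_1\A_{(p)})$ by $\mathrm{Ext}(\bfZ/p^\infty,\pi_0L_1\A_{(p)})$. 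There is no evident ring map from it to $\pi_0\A/I_n$, let alone to $\kappa(\mathfrak{q})$. Your argument does go through when, e.g., $\pi_*\A$ is even and $\pi_0\A$ is Noetherian, since then $\pi_0$ of the completion is the classical completion; the lemma assumes nothing of the kind.

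Your proposed repair via joint conservativity does not close the gap. There is no $\A$-module ``$\A\otimes_{\pi_0\A}\kappa(\mathfrak{q})$'', and the natural substitute is a Koszul complex. If $\mathfrak{q}$ were invisible to every $L_{K(p,n)}$, you would obtain a finitely generated $J\subseteq\mathfrak{q}$ with $L_{K(p,n)}(\A_{\mathfrak{q}}/\!/J)\simeq0$ for all $(p,n)$. Conservativity then gives $\A_{\mathfrak{q}}/\!/J\simeq0$. For nonconnective $\E_\infty$ rings this does not contradict $J\neq(1)$, because Massey products through negative degrees can make $1$ a boundary.

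For example, over $R=\bfQ[x,f]_{(x,f)}$ let $C$ be the cdga freely generated by $z,a,b,c$ in degrees $-1,0,0,1$, with $dz=0$, $da=xz$, $db=fz$, $dc=af-xb-1$. Then $\pi_0C$ is the ring of $(x\partial_a+f\partial_b)$-constants in $R[a,b]/(af-xb-1)$, which is the local ring $R$. Yet $1$ becomes a boundary in $C/\!/(x,f)$, so $C/\!/(x,f)\simeq0$. Adjoining $\beta^{\pm1}$ in degree $2$ makes this example complex periodic.

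So a correct Step 3 needs an argument specific to $I_n$ and $K(n)$-localization, relating $\pi_0L_{K(p,n)}\A$ (or its connective cover) back to the $I_n$-adic geometry of $\pi_0\A$. This is exactly where the paper feeds in hypothesis (2), through its comparison of $(\A^\wedge_I)_{\geq0}$ with $(\A_{\geq0})^\wedge_I$. Your trace-pairing fallback does not avoid the issue either, since, as you note, it still needs the flatness from Step 2, which is again a fibrewise statement.
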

\begin{proof}
It suffices to prove that $\alpha$ is finite \'etale after localization at each prime ideal of $\pi_0 \A$, so we may suppose that $\A$ is local and therefore $L_n$-local for some fixed prime $p$ and $n < \infty$. Arguing by induction on $n$, we may assume that $\alpha$ is finite \'etale after base change to $L_{n-1}\A$. Let $I\subset \pi_0 \A$ denote the $n$-th Landweber ideal. Then the top chromatic fracture square of $\A$ may be identified with its $I$-adic fracture square:
\[
\begin{tikzcd}
\A\ar[r]\ar[d]&L_{K(n-1)}\A\ar[d]\\
L_{n-1}\A\ar[r]&L_{n-1}L_{K(n-1)}\A
\end{tikzcd}
\qquad = \qquad
\begin{tikzcd}
\A\ar[r]\ar[d]&\A_{I}^\wedge\ar[d]\\
L_{I}\A\ar[r]&L_{I}(\A_{I}^\wedge)
\end{tikzcd}.
\]

As $\R$ and $\sfS$ are finite flat over $\A$, the map $\alpha$ is base changed from connective covers, meaning that the natural square of $\E_\infty$ rings
    \[\begin{tikzcd}
        {\R_{\geq 0}}\ar[r]\ar[d]    &   {\sfS_{\geq 0}}\ar[d]  \\
        {\R}\ar[r]                   &   {\sfS}
    \end{tikzcd}\]
    is coCartesian. It therefore suffices to prove that $\R_{\geq 0}\rightarrow \sfS_{\geq 0}$ is finite \'etale. As $\R$ and $\sfS$ are finite flat $\A$-algebras, the map $\pi_0 \sfR \rightarrow \pi_0 \sfS$ is of finite presentation \cite[\textsection 1 Pr.1.4.7]{egafouri}. By \cite[Lm.B.1.3.3]{sag}, it therefore suffices to prove that the relative cotangent complex $L_{\sfS_{\geq 0}/\sfR_{\geq 0}}$ vanishes. To that end, it suffices to show that the localization $L_I(L_{\sfS_{\geq 0}/\sfR_{\geq 0}})$ and completion $(L_{\sfS_{\geq 0}/\sfR_{\geq 0}})_I^\wedge$ vanish.
    
    In the former case, as localization is exact, it commutes with connective covers and we may identify
\[
L_I(L_{\sfS_{\geq 0}/\sfR_{\geq 0}}) \simeq L_{L_I\sfS_{\geq 0}/L_I\sfR_{\geq0}} \simeq L_{(L_I\sfA\otimes_\sfA\sfS)_{\geq 0}/(L_I\sfA\otimes_\sfA\sfR)_{\geq 0}}.
\]
As finite \'etale morphisms are preserves by connective covers, this vanishes under the inductive assumption that $\sfR \to \sfS$ is finite \'etale after $I$-localization, i.e.\ after $L_{n-1}$-localization.

In the latter case, it suffices to prove that $F \otimes L_{\sfS_{\geq 0}/\sfR_{\geq 0}}\simeq 0$ for any $I$-torsion $\A_{\geq 0}$-module $F$. In general, if $L$ is any $\A_{\geq 0}$-module, then 
\[
((L_{\geq 0})_I^\wedge)_{\geq 0}\simeq (L_I^\wedge)_{\geq 0}.
\]
In particular, there is a natural map $(\A_I^\wedge)_{\geq 0} \to (\A_{\geq 0})_I^\wedge$ realizing the latter as an algebra over the former, and so
\[
F\otimes L \simeq F \otimes (\A_{\geq 0})_I^\wedge \otimes_{\A_{\geq 0}}L \simeq F \otimes (\A_{\geq 0})_I^\wedge\otimes_{(A_I^\wedge)_{\geq 0}}(\A_I^\wedge)_{\geq 0} \otimes_{\A_{\geq 0}} L,
\]
so to prove $F\otimes L\simeq 0$ it suffices to prove that $(\A_I^\wedge)_{\geq 0}\otimes_{\A_{\geq 0}}L\simeq 0$. As $\R$ is finite flat over $\A$ we may identify
\[
(\A_I^\wedge)_{\geq 0}\otimes_{\A_{\geq 0}}R_{\geq 0}\simeq (\A_I^\wedge\otimes_\A \R)_{\geq 0},
\]
and similarly for $\sfS$, and therefore
\[
(\A_I^\wedge)_{\geq 0}\otimes_{\A_{\geq 0}}L_{\sfS_{\geq 0}/\sfR_{\geq 0}}\simeq L_{(\A_I^\wedge\otimes_\A\sfS)_{\geq 0}/(\A_I^\wedge\otimes_\A\sfR)_{\geq 0}}.
\]
The assumption that $\alpha$ is finite \'etale after base change to $\A_I^\wedge$, i.e.\ after $K(n-1)$-localization, implies that this vanishes and the lemma follows.
\end{proof}

\begin{prop}\label{lem:orientedseparable}
Let $\alpha\colon \bfG' \to \bfG$ be a homomorphism of \emph{oriented} $\bfP$-divisible groups over an $\bfE_\infty$ ring $\A$. Then $\al(T)\colon \bfG'(T) \to \bfG(T)$ is finite \'etale for all $T \in \Glo_\ab$.
\end{prop}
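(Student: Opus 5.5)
We apply \Cref{lm:reduction_to_monochromatic} to the triangle
\[
\A\to \A_{\bfG}^{T}\xrightarrow{\al(T)^\ast}\A_{\bfG'}^{T},
\]
with $\R=\A_\bfG^T$ and $\sfS=\A_{\bfG'}^T$. For this we need three inputs: that $\A$ is complex-periodic and locally of finite height, that both structure maps are finite flat, and that $\al(T)$ is finite \'etale after base change to each $L_{K(p,n)}\A$.

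\textbf{Hypotheses on $\A$ and finite flatness.} Since $\bfG$ is oriented, $\A$ is complex periodic: the orientation identifies $\omega_{\bfG_p^\circ}$ with $\Sigma^{-2}\A_p^\wedge$, and Lurie's results in \cite{ec3} make $\A$ complex periodic. The finite-height condition is implicit in Lurie's setting, since oriented $\bfP$-divisible groups have $p$-completions of bounded height locally on $\Spec\A$. Now write $T=\bfB\dual H$ for a finite abelian group $H$. Then $\bfG(T)=\bfG[H]$ and $\bfG'(T)=\bfG'[H]$, and these are finite flat over $\Spec\A$ by the definition of a $\bfP$-divisible group (\Cref{def:pdivisible}(2), applied to the injection $0\to H$). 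This gives condition (1).

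\textbf{Monochromatic reduction.} Both $\bfG[H]$ and $\bfG'[H]$ split as products over primes dividing $|H|$. Base changing to $L_{K(p,n)}\A$, the prime-to-$p$ parts become finite \'etale. Indeed, $p$ is invertible $K(p,n)$-locally for $n\ge1$, except at the prime $p$ itself; more precisely, prime-to-$p$ torsion of a $\bfP$-divisible group over a $p$-complete base is \'etale. So $\al[H]$ is a map between finite \'etale schemes there, and any such map is finite \'etale. The same holds when $n=0$ (rationally), where everything is \'etale.

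It remains to treat the $p$-part over $B=L_{K(p,n)}\A$. Over $B$ there is the connected--\'etale sequence
\[
0\to\bfG_p^\circ\to\bfG_{(p)}\to\bfG_{(p)}^{\et}\to0,
\]
and the orientation identifies $\bfG_p^\circ$ with the Quillen formal group of $B$, which has height exactly $n$. The same holds for $\bfG'$, with the \emph{same} formal group, since this formal group is intrinsic to $B$. The key claim is then that any homomorphism $\al$ of oriented $\bfP$-divisible groups restricts to an \emph{isomorphism} on identity components. This is where the orientation is used essentially. The orientation pins down $\bfG_p^\circ\simeq\Spf(\A^{\bfC P^\infty})$ functorially, so the restriction of $\al$ is compatible with the identifications of both identity components with the Quillen formal group. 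It is therefore the identity.

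Granting this, on $H$-torsion $\al$ is a morphism of extensions that is an isomorphism on the connected part. Hence $\bfG'[H]_p\to\bfG[H]_p$ is a base change of the map on \'etale quotients, and a map of finite \'etale schemes is finite \'etale. This gives condition (2), and \Cref{lm:reduction_to_monochromatic} concludes.

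\textbf{Main obstacle.} The step I expect to be hardest is justifying that $\al$ is an isomorphism on identity components, together with the existence of the connected--\'etale sequence over the $K(n)$-local base. If the sequence does not exist globally on $L_{K(p,n)}\A$, I would instead pass to a faithfully flat extension where it splits, since finite \'etaleness descends. As an alternative, I would verify \'etaleness directly through vanishing of the cotangent complex, using that the differential of $\al$ on identity components is the identity on $\omega$ via the Bott map.
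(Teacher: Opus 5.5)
Your proposal is correct and follows the paper's proof essentially step for step. You reduce via \Cref{lm:reduction_to_monochromatic} to $K(p,n)$-local $\A$ and dispose of $n=0$ by \'etaleness. For $0<n<\infty$, you use that the orientations give embeddings of the Quillen $\bfP$-divisible group $\bfG^{\calQ}_{\A}$ into $\bfG'$ and $\bfG$, compatible with $\alpha$ and with \'etale cokernels, so that $\alpha(T)$ is a base change of a map between finite \'etale objects. The paper differs only in minor ways: it keeps the prime-to-$p$ part inside the single sequence $0\to\bfG^{\calQ}_{\A}\to\bfG\to\bfG^{\et}\to 0$ rather than splitting it off, and it settles the step you flag as the main obstacle by citing \cite[Pr.2.4.1, Pr.2.5.6]{ec3} and \cite[Pr.2.4.8, Th.4.6.16]{ec2}.

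One small inaccuracy: being oriented does not by itself make $\A$ complex periodic, since the orientation condition is vacuous at any prime with $\A_p^\wedge\simeq 0$ (e.g.\ rationally). The paper likewise invokes the lemma without checking that hypothesis, and the rational case is handled directly anyway.
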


\begin{proof}
By \Cref{lm:reduction_to_monochromatic}, it suffices to show that $\alpha(T)$ is finite \'etale after base change to $L_{K(p,n)}\A$ for each prime $p$ and height $n$. We may thus reduce to the case where $\A$ is $K(n)$-local for a fixed (implicit) prime $p$ and height $n$. If $n=0$, then $\A$ is rational. In this case, $\bfG'[H]$ and $\bfG[H]$ are themselves finite \'etale over $\A$, and therefore any map between them is finite \'etale. Suppose then that $\A$ is $K(n)$-local for some $0 < n < \infty$, so that the Quillen $\bfP$-divisible group $\bfG^\calQ_\A$ exists \cite[\textsection 4.6]{ec2}. We can reinterpret the fact that $\al$ is a map of oriented $\bfP$-divisible groups as follows: \cite[Pr.2.4.1]{ec3} provides the left square in a commutative diagram of $\bfP$-divisible groups
	\begin{center}\begin{tikzcd}
			0\ar[r]&\bfG^{\calQ}_{\A}\ar[r, "e'"]\ar[d, "="]&\bfG^{\prime}\ar[d,"\alpha"]\ar[r,dashed]&\bfG'{}^\et\ar[d]\ar[r]&0\\
			0\ar[r]&\bfG^\calQ_{\A}\ar[r, "e"]&\bfG\ar[r,dashed]&\bfG^\et\ar[r]&0,
	\end{tikzcd}\end{center}
    where the horizontal maps encode the orientations of $\bfG'$ and $\bfG$, respectively, and the commutativity of this square follows from the fact that $\al$ preserves orientations. By \cite[Pr.2.5.6]{ec3}, the maps $e$ and $e'$ are monomorphisms of $\bfP$-divisible groups, and therefore by \cite[Pr.2.4.8]{ec2} their cokernels $\G^{\prime\et}$ and $\G^\et$ both exist as $\bfP$-divisible groups, providing the given exact rows. In particular, the right square exists and is Cartesian. This, in turn, induces a Cartesian square when evaluated on $T\in \Glo_\ab$ of the form
    \begin{center}\begin{tikzcd}
	\bfG'(T)\ar[r]\ar[d, "{\al(T)}"]&\bfG^{\prime\et}(T)\ar[d]\\
	\bfG(T)\ar[r]&\bfG^\et(T),
	\end{tikzcd}\end{center}
    so it suffices to show that the right-hand vertical map is finite \'{e}tale. As $\bfG^\calQ_\A$ is the connected component of $\bfG$ and $\bfG'$ \cite[Th.4.6.16]{ec2}, it follows that the cokernels $\bfG^\et$ and $\bfG^{\prime\et}$ are \'{e}tale. In particular, both $\bfG^{\prime\et}(T)$ and $\bfG^\et(T)$ are finite \'{e}tale over $\Spec \A$, implying that any map between them is again finite \'etale.
\end{proof}

\begin{lemma}\label{prop:preservetemper}
Let $\alpha\colon \bfG' \to \bfG$ be a morphism of oriented $\bfP$-divisible groups, and let $\sfX$ be a global space. Then the restriction
\[
\alpha^\ast\colon \Mod_{\calO_{\bfG,\sfX}} \to \Mod_{\calO_{\bfG',\sfX}}
\]
preserves full subcategories of tempered local systems.
\end{lemma}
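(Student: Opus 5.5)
The plan is to check the two conditions of \Cref{def:temperedlocalsystems} directly for $\alpha^\ast\calF$. First I will reduce to a morphism over a fixed base. Factor $\alpha$ as $\bfG' \xrightarrow{\tilde\alpha} f^\ast\bfG \to \bfG$, a morphism over $\A'$ followed by a Cartesian morphism. For the Cartesian piece the statement is \cite[Pr.6.2.1(d)]{ec3}. Restriction along $\alpha$ is compatible with this factorization by the functoriality of \Cref{lem:modfunctoriality}. It therefore suffices to treat a homomorphism $\alpha\colon \bfG'\to\bfG$ of oriented $\bfP$-divisible groups over a fixed $\bfE_\infty$ ring $\A$.

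In that case, for $T\in(\Glo_\ab)_{/\sfX}$, the functor $\alpha^\ast$ sends $\calF$ to the module
\[
(\alpha^\ast\calF)(T)=\A_{\bfG'}^T\otimes_{\A_\bfG^T}\calF(T).
\]
By \Cref{lem:orientedseparable}, each map $\A_\bfG^T\to\A_{\bfG'}^T$ is finite \'etale, in particular finite flat, so $\A_{\bfG'}^T$ is a perfect $\A_\bfG^T$-module.

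Condition (1) should be purely formal. For $T\to T'$ with connected fibers, I will rewrite
\[
\A_{\bfG'}^{T'}\otimes_{\A_{\bfG'}^T}\bigl(\A_{\bfG'}^T\otimes_{\A_\bfG^T}\calF(T)\bigr)\simeq \A_{\bfG'}^{T'}\otimes_{\A_\bfG^{T'}}\bigl(\A_\bfG^{T'}\otimes_{\A_\bfG^T}\calF(T)\bigr).
\]
This uses only the commutativity of the square of rings induced by $\alpha$ at $T$ and $T'$. The claim then follows by applying $\A_{\bfG'}^{T'}\otimes_{\A_\bfG^{T'}}(\bs)$ to the equivalence expressing condition (1) for $\calF$.

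Condition (2) is where the work lies. Let $T_0\to T$ be a connected covering with group $\Aut(T_0/T)$, and write $B=\A_{\bfG'}^T$, which is perfect over $\A_\bfG^T$. Tensoring with $B$ commutes with limits, so
\[
B\otimes_{\A_\bfG^T}\calF(T_0)^{\h\Aut(T_0/T)}\simeq (\alpha^\ast\calF)(T_0)^{\h\Aut(T_0/T)},
\]
where I also use $B\otimes_{\A_\bfG^T}\A_\bfG^{T_0}\to\A_{\bfG'}^{T_0}$ to identify the left side with $(\alpha^\ast\calF)(T_0)$. That identification needs a check. I expect to obtain it from condition (1) together with finite \'etaleness, or more robustly from Lurie's alternative description of tempered local systems via restriction along the whole diagram. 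Again because $B$ is perfect, base change commutes with completion:
\[
B\otimes\calF(T)^\wedge_{I}\simeq (B\otimes\calF(T))^\wedge_{IB},
\]
where $I=I_\bfG(T_0/T)$. It then remains to show that $I\cdot\pi_0B$ and $I_{\bfG'}(T_0/T)$ have the same radical in $\pi_0 B$.

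This radical comparison is the main obstacle. One inclusion is clear: $V(I_{\bfG'})\subseteq V(I\,\pi_0B)$, because $\bfG'(T_0)\to\bfG'(T)$ factors through $\bfG'(T)\times_{\bfG(T)}\bfG(T_0)$. For the converse I would show that the finite \'etale comparison map
\[
\bfG'(T_0)\to\bfG'(T)\times_{\bfG(T)}\bfG(T_0)
\]
is surjective on points. To do this, I would reduce as in \Cref{lm:reduction_to_monochromatic} to the case where $\A$ is $K(n)$-local, or rational, which is trivial. In the $K(n)$-local case I would use the exact sequences $0\to\bfG^\calQ_\A\to\bfG^{(\prime)}\to\bfG^{(\prime)\et}\to 0$ from the proof of \Cref{lem:orientedseparable}, which share the connected part. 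This should reduce the question to étale $\bfP$-divisible groups, where it can be checked on geometric points as a statement about homomorphisms out of finite abelian groups.
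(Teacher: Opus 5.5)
Your reduction to a morphism over a fixed base and your verification of condition (1) agree with the paper. The gap is in condition (2). The identification $\A_{\bfG'}^T\otimes_{\A_\bfG^T}\A_\bfG^{T_0}\simeq\A_{\bfG'}^{T_0}$ fails, and so does the claimed equality of the radicals of $I\cdot\pi_0B$ and $I_{\bfG'}(T_0/T)$, whenever $\alpha$ has a nontrivial kernel. That is exactly the non-Cartesian case the lemma is about. For the universal cover $T_0=\ast\to T=\bfB H$, the fibre product $\bfG'(T)\times_{\bfG(T)}\bfG(T_0)$ is $\Hom(\dual H,\ker\alpha)$, whereas $\bfG'(T_0)$ is only its zero section.

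Concretely, take $\A=\bfQ$, $\alpha=[p]$ on $\ul{\bfQ_p/\bfZ_p}$ (a morphism of the type in \Cref{lem:cplims} with $\bfG_0=0$), and $T=\bfB C_p$. Then $\A^T_\bfG=\A^T_{\bfG'}=\bfQ^{p}$, the ring of functions on $\Hom(\dual{C_p},\bfQ_p/\bfZ_p)$. The map $\A^T_\bfG\to\A^T_{\bfG'}$ is $f\mapsto(f(0),\dots,f(0))$, so $I\cdot\pi_0B=0$ and indeed $\pi_0B\otimes_{\pi_0\A^T_\bfG}I=0$, while $I_{\bfG'}(T_0/T)\neq0$. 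The comparison map sends one point into $p$ points, and $B\otimes_{\A^T_\bfG}\calF(T_0)^{\h\Aut(T_0/T)}$ is $p$ copies of $(\alpha^\ast\calF)(T_0)^{\h\Aut(T_0/T)}$. So the surjectivity you plan to prove is false, and the rational case you call trivial is precisely where it fails.

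The paper's proof follows the same route and passes over the same point. It claims that flatness gives $\pi_0\A^T_{\bfG'}\otimes_{\pi_0\A^T_\bfG}I_\bfG(T_0/T)\cong I_{\bfG'}(T_0/T)$, which fails in this example. Flatness only identifies the left side with the kernel of $\pi_0B\to\pi_0(B\otimes_{\A^T_\bfG}\A^{T_0}_\bfG)$. So you located the delicate step correctly, but your proposed resolution does not work.

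What the argument actually needs is that $\bfG'(T_0)\to\bfG'(T)\times_{\bfG(T)}\bfG(T_0)$ is an open and closed immersion.
\begin{itemize}
\item It is a closed immersion: $\bfG'(T_0)\to\bfG'(T)$ is one by \Cref{lem:injexists}, since $T_0\to T$ corresponds to a surjection of Pontryagin duals, and the fibre product is closed in $\bfG'(T)$.
\item It is finite \'etale, as a map between finite \'etale $\bfG(T_0)$-schemes, by \Cref{lem:orientedseparable}.
\end{itemize}
Hence $B\otimes_{\A^T_\bfG}\A^{T_0}_\bfG\simeq\A^{T_0}_{\bfG'}\times C$, compatibly with the $\Aut(T_0/T)$-actions. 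Your base-change argument, which is fine as far as it goes, then yields
\[
(\alpha^\ast\calF)(T)^\wedge_{I\cdot\pi_0 B}\simeq(\alpha^\ast\calF)(T_0)^{\h\Aut(T_0/T)}\oplus\bigl(C\otimes_{\A^{T_0}_\bfG}\calF(T_0)\bigr)^{\h\Aut(T_0/T)}.
\]
Since $I\cdot\pi_0B\subseteq I_{\bfG'}(T_0/T)$, the $I_{\bfG'}(T_0/T)$-completion of $(\alpha^\ast\calF)(T)$ equals the $I_{\bfG'}(T_0/T)$-completion of the right-hand side.
\begin{itemize}
\item The first summand is already complete, being a limit of modules on which $I_{\bfG'}(T_0/T)$ acts by zero.
\item For the second, $I_{\bfG'}(T_0/T)$ generates the unit ideal of $\pi_0C$, because $\Spec C$ is disjoint from $\bfG'(T_0)$ inside $\bfG'(T)$. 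Completion commutes with limits, so this summand has zero completion.
\end{itemize}
This establishes condition (2).
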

\begin{proof}
If $\alpha$ is a Cartesian morphism, then this is shown in \cite[Pr.6.2.1]{ec3}. As morphisms of oriented $\P$-divisible groups factor into a Cartesian map followed by a map over a fixed $\E_\infty$ ring, we therefore reduce to the case where $\alpha$ is a morphism of oriented $\bfP$-divisible groups over a fixed $\einfty$ ring $\A$. We must show that if $\fmodule\in \LocSys_\bfG(\sfX)$, then $\alpha^\ast\fmodule \in \LocSys_{\bfG'}(\sfX)$ satisfies the two conditions of \cref{def:temperedlocalsystems}. By definition, 
\[
\alpha^\ast \calF \in \Mod_{\A_{\bfG',\sfX}}(\Fun((\Glo_\ab)^\op_{/\sfX},\spectra))
\]
is defined by
\[
(\alpha^\ast\calF)(T) = \A_{\bfG'}^T \otimes_{\A_\bfG^T}\fmodule(T)
\]
for $T \in (\Glo_\ab)^\op_{/\sfX}$. The two conditions are now verified as follows.

(1)~~Suppose $T' \to T$ is a morphism in $(\Glo_\ab)^\op_{/\sfX}$ with connected fibers, then as $\fmodule$ satisfies (1) it follows that 
\begin{align*}
\A_{\bfG'}^{T'}\otimes_{\A_{\bfG'}^T}\alpha^\ast\fmodule(T)&\simeq \A_{\bfG}^{T'}\otimes_{\A_{\bfG'}^T}\A_{\bfG'}^T\otimes_{\A_\bfG^T}\fmodule(T)\\
&\simeq \A_{\bfG'}^{T'}\otimes_{\A_\bfG^{T'}}\A_\bfG^{T'}\otimes_{\A_\bfG^T}\fmodule(T)\\
&\simeq \A_{\bfG'}^{T'}\otimes_{\A_\bfG^{T'}}\fmodule(T') \simeq \alpha^\ast\fmodule(T'),
\end{align*}
and so $\alpha^\ast\fmodule$ satisfies (1).

(2)~~By \cref{lem:orientedseparable}, the morphism $\A_{\bfG}^T \to \A_{\bfG'}^T$ is finite \'etale. In particular, $\A_{\bfG'}^T$ is dualizable as an $\A_{\bfG}^T$-module, implying that 
\[
\A_{\bfG'}^{T}\otimes_{\A_{\bfG}^{T}}(\bs)\colon \Mod_{\A_{\bfG}^T} \to \Mod_{\A_{\bfG}^{T'}}
\]
preserves limits. Since $\A_{\bfG}^T \to \A_{\bfG'}^T$ is flat the relative augmentation ideal $I_{\bfG}(T_0/T) \subset \pi_0 \A_{\bfG}^T$ associated with a covering map $T_0 \to T$ satisfies 
\[
\pi_0 \A_{\bfG'}^T\otimes_{\pi_0 \A_{\bfG}^T} I_{\bfG}(T_0/T)\simeq I_{\bfG'}(T_0/T).
\]
Combining these two observations, we find that if $\fmodule$ is a $\bfG$-tempered local system, then
\[
\A_{\bfG'}^T\otimes_{\A_{\bfG}^T}\fmodule(T) \to (\A_{\bfG'}^T\otimes_{\A_\bfG^T}\fmodule(T_0))^{\h \Aut(T_0/T)} \simeq \A_{\bfG'}^T\otimes_{\A_{\bfG}^T}\fmodule(T_0)^{\h \Aut(T_0/T)}
\]
exhibits the target as the $I_{\bfG'}(T_0/T)$-completion of the source.
\end{proof}

We can now give the following.

\begin{proof}[Proof of \cref{thm:basicfunctoriality}]
By \cref{lem:modfunctoriality}, we have a functor
\[
\Fun(\Glo_\ab,\Aff)^\op \to \Fun((\abglobalspaces)^\op,\CAlg(\Cat)),\qquad \bfG \mapsto \left(\sfX \mapsto \Mod_{\calO_{\bfG,\sfX}}\right).
\]
By \cref{prop:preservetemper}, if $\bfG$ is $\bfP$-divisible group then we may restrict along the subcategories
\[
\LocSys_\bfG(\sfX)  \subset \Mod_{\calO_{\bfG,\sfX}}
\]
to obtain a functor
\begin{align*}
(\pdivoraff)^\op &\to \Fun((\abglobalspaces)^\op,\CAlg(\Cat)),\qquad \bfG \mapsto \left(\sfX \mapsto  \LocSys_\bfG(\sfX)\right).
\end{align*}
By \cite[Pr.5.1.9, Rmk.5.2.11]{ec3}, this lands in the full subcategory 
\[
\Cat(\Glo_\ab)^\otimes\subset \Fun((\abglobalspaces)^\op,\CAlg(\Cat))
\]
of limit preserving functors.
\end{proof}

This proves the basic functoriality of $\LocSys_{(\bs)}$ as a functor from oriented $\bfP$-divisible groups into symmetric monoidal ab-global categories.

\subsection{Relatively \texorpdfstring{$\pi$}{pi}-finite morphisms away from a set of primes}\label{ssec:pcovings}

Given a homomorphism $\alpha\colon \bfG'\to\bfG$ of oriented $\bfP$-divisible groups, the functor 
\[
\alpha^\ast\colon\LocSys_\bfG\to\LocSys_{\bfG'}
\]
of \Cref{thm:basicfunctoriality} between $\ab$-global categories generally does \emph{not} preserve parametrized colimits indexed by a map of relatively $\pi$-finite spaces. Informally, if $\alpha_{(p)}\colon \bfG'_{(p)} \to \bfG_{(p)}$ is not an isomorphism, then the induced map on tempered local systems will generally fail to commute with colimits along maps involving interesting equivariance at the prime $p$. As a consequence, the resulting map $\A_{\bfG}^{(\bs)} \to \A_{\bfG'}^{(\bs)}$ will also fail to commute with transfers along such maps. In this subsection, we isolate a class of maps of orbispaces that are ``away'' from a set of primes that will allow us to properly formulate and resolve this.

\begin{mydef}
A \emph{colattice} is a torsion abelian group $\Lambda$ with the property that, for all positive integers $n$, the map $n\colon \Lambda \to \Lambda$ is a surjection with finite kernel. In this case, we write
\[
\dual{\Lambda} \coloneqq \Hom(\Lambda,\bfQ/\bfZ)
\]
for the \emph{lattice} dual to $\Lambda$.
\end{mydef}

\begin{rmk}
If $\Lambda$ is a colattice, then there exists a (noncanonical) equivalence
\[
\Lambda\simeq\bfQ_{p_1}^{n_1}/\bfZ_{p_1}^{n_1}\oplus\cdots\oplus\bfQ_{p_k}^{n_k}/\bfZ_{p_k}^{n_k}
\]
for some finite list of primes $p_1,\ldots,p_k$ and nonnegative integers $n_1,\ldots,n_k$. In this case,
\[
\dual{\Lambda} \simeq \bfZ_{p_1}^{n_1}\oplus\cdots\oplus\bfZ_{p_k}^{n_k}.
\]
\end{rmk}

Given a $\pi$-finite space $F$, we will abuse notation and write $F = \bfR(F)$ for the image of $F$ under the restricted Yoneda embedding $\bfR\colon \Spc\to \Spc^{\gl}_{\ab}$ of \Cref{df:rightadjoint}. In particular, in this section, we will at times make the implicit identification
\[
\bfB H = BH.
\]

\begin{mydef}\label{def:formalloopspace}
Let $\Lambda$ be a colattice. The \emph{formal loop space functor with respect to $\Lambda$} is the unique colimit preserving functor
\[
\calL^\Lambda\colon \abglobalspaces\to\abglobalspaces
\]
extending the assignment
\begin{equation}\label{eq:formaloops_onGlo}
\calL^\Lambda(T) \coloneqq \map(B\dual{\Lambda},T) \simeq \underset{H \subset \Lambda\text{ finite}}{\colim} \map(B\dual{H},T)\in \Spc_{\pi}\subset \abglobalspaces
\end{equation}
for $T \in \Glo_\ab$. This construction is evidently natural in $\Lambda$. We also abbreviate 
\[
 \calL_p\coloneqq\calL^{\bfQ_p/\bfZ_p}.
\]
\end{mydef}

\begin{remark}\label{rmk:formalloops_commuteswithlimits}
\cref{def:formalloopspace} agrees with the definition given by Lurie in \cite[Con.3.4.3]{ec3}, i.e.\
\[
\calL^\Lambda(\sfX) \simeq \underset{H \subset \Lambda\text{ finite}}{\colim}\left(\ul{\mathrm{Map}}(\bfB \dual{H}, \sfX)\right),
\]
where $\ul{\mathrm{Map}}$ is the internal mapping object in $\abglobalspaces$. To see this, observe that Lurie's definition commutes with colimits by \cite[Rmk.3.4.6]{ec3}, and is therefore determined by its values on $\Glo_\ab$, where it agrees with \cref{def:formalloopspace} by \cite[Pr.3.4.7]{ec3}. In particular, it is easily seen from this latter construction that $\calL^\Lambda(\bs)$ preserves finite limits.
\end{remark}

\begin{ex}\label{ex:pifiniteformalloopspace}
Let $F$ be a $\pi$-finite space. Then the space $\map(B\dual{\Lambda},F)$ is again $\pi$-finite and by \cite[Pr.3.4.7]{ec3}, the natural map
\[
\calL^\Lambda(F) \to \map(B\dual{\Lambda},F)
\]
is an equivalence of global spaces. In particular, $\calL^\Lambda(F)$ is again a $\pi$-finite space.
\end{ex}

\begin{mydef}
The functor $\calL^\Lambda$ comes equipped with two natural transformations
\[
\sfX\xrightarrow{c} \calL^\Lambda(\sfX) \xrightarrow{\epsilon} \sfX.
\]
induced by restriction along the unique homomorphisms $0 \leftarrow \Lambda \leftarrow 0$ of colattices. Clearly, the composite $\epsilon\circ c\colon \sfX\to \calL^{\Lambda}(\sfX) \to \sfX$ is the identity.
\end{mydef}

\begin{mydef}\label{def:pcover}
Let $S$ be a set of prime numbers. A relatively $\pi$-finite map $f\colon \sfY\to\sfX$ of global spaces is \emph{away from $S$} if the square
\begin{center}\begin{tikzcd}
\calL_p \sfX \ar[r,"\calL_p f"]\ar[d]&\calL_p \sfY\ar[d]\\
\sfX\ar[r,"f"]&\sfY
\end{tikzcd}\end{center}
is Cartesian for all $p \in S$.
\end{mydef}

Fix a set $S$ of prime numbers for the rest of this subsection.

\begin{ex}\label{ex:pipcover}
Let $\alpha\colon F \to F'$ be a map of $\pi$-finite spaces. Then \Cref{ex:pifiniteformalloopspace} implies that $\alpha$ is away from $S$ if and only if the square
\begin{center}\begin{tikzcd}
\map(B\bfZ_p,F)\ar[r]\ar[d]&\map(B\bfZ_p,F')\ar[d]\\
F\ar[r]&F'
\end{tikzcd}\end{center}
of ordinary spaces is Cartesian for all $p\in S$. 
\end{ex}

Our next goal is to prove that the class of morphisms away from $S$ is generated by an inductible subcategory. This will make use of the following.

\begin{lemma}\label{lem:pcoverbasic}
Let $f\colon \sfY \to \sfX$ be a map of global spaces.
\begin{enumerate}
\item If $f$ is of the form $f = (f_i) \colon \coprod_{i\in I} \sfY_i \to \sfX$ for a collection of maps $f_i\colon \sfY_i \to \sfX$, or of the form $f = (f_i)\colon \coprod_{i\in I}\sfY_i \to \coprod_{i\in I}\sfX_i$ for a collection of maps $f_i\colon \sfY_i \to \sfX_i$, then $f$ is away from $S$ if and only if each $f_i$ is away from $S$.
\item Suppose
\begin{center}\begin{tikzcd}
\sfY'\ar[r]\ar[d,"f'"]&\sfY\ar[d,"f"]\\
\sfX'\ar[r]&\sfX
\end{tikzcd}\end{center}
is a Cartesian square in $\Spc^\gl_\ab$.
\begin{enumerate}
\item If $f$ is away from $S$, then $f'$ is away from $S$.
\item If $f'$ is away from $S$ and $\calL_p\sfX' \to \calL_p\sfX$ is an effective epimorphism for all $p \in S$, then $f$ is away from $S$.
\end{enumerate}
\item Suppose given another map $g\colon \sfZ \to \sfY$ of global spaces.
\begin{enumerate}
\item If $g$ is away from $S$, then $f$ is away from $S$ if and only if $g\circ f$ is away from $S$.
\item If $f$ is away from $S$ an effective epimorphism, then $g$ is away from $S$ if and only if $g\circ f$ is away from $S$.
\end{enumerate}
\item If $f$ is away from $S$, then the diagonal $\sfY\to\sfY\times_\sfX\sfY$ is away from $S$.
\item If $f$ is away from $S$, then $\calL^\Lambda f$ is away from $S$ for any colattice $\Lambda$.
\end{enumerate}
\end{lemma}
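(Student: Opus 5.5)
Throughout, I would regard ``$f$ is away from $S$'' as the statement that the naturality square for the counit $\epsilon\colon\calL_p\Rightarrow\id$ at $f$ is Cartesian, i.e.\ that the comparison map $\calL_p\sfY\to\sfX'\times\dots$, more precisely $\calL_p\sfY\to\calL_p\sfX\times_\sfX\sfY$, is an equivalence for each $p\in S$. Relative $\pi$-finiteness is a property of the local class $\bar\pi$, which is closed under base change, composition, coproducts and diagonals, so in each part I would dispatch it separately and concentrate on the Cartesian condition. The two structural inputs are the following.
\begin{enumerate}
\item By \cref{rmk:formalloops_commuteswithlimits}, $\calL_p$ preserves colimits and finite limits. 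In particular it preserves effective epimorphisms, being the geometric realization of their Čech nerves.
\item $\abglobalspaces$ is an $\infty$-topos. Hence coproducts are disjoint and universal, and base change along an effective epimorphism is conservative.
\end{enumerate}

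For (1), $\calL_p$ commutes with coproducts. The comparison map for $\coprod_i\sfY_i\to\sfX$ is the coproduct of the comparison maps $\calL_p\sfY_i\to\calL_p\sfX\times_\sfX\sfY_i$, using universality of coproducts. By disjointness, such a coproduct of maps is an equivalence if and only if each summand is. The second form reduces to the first by composing with the coproduct inclusions, using universality once more.

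For (2)(a), I would paste squares. Since $\calL_p$ preserves pullbacks, $\calL_p\sfY'\simeq\calL_p\sfX'\times_{\calL_p\sfX}\calL_p\sfY$. Using that $f$ is away from $S$, this is $\calL_p\sfX'\times_{\calL_p\sfX}(\calL_p\sfX\times_\sfX\sfY)\simeq\calL_p\sfX'\times_\sfX\sfY\simeq\calL_p\sfX'\times_{\sfX'}\sfY'$. One then checks that this composite equivalence is the comparison map for $f'$.

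For (2)(b), the same computation shows that the comparison map for $f'$ is the base change of the comparison map for $f$ along $\calL_p\sfX'\to\calL_p\sfX$. Since that map is an effective epimorphism, conservativity of base change gives the claim.

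Part (3) is the pasting lemma applied to the stacked squares $\calL_p\sfZ\to\calL_p\sfY\to\calL_p\sfX$ over $\sfZ\to\sfY\to\sfX$. When the square for the second map is Cartesian, the square for the first map is Cartesian if and only if the outer square is; this is (a), read with the indexing of the composite arranged appropriately. For (b), the outer square and the first square are Cartesian, and the top map $\calL_p\sfZ\to\calL_p\sfY$ is an effective epimorphism by input (1). The comparison map for the second map pulls back along it to an equivalence, so it is an equivalence by descent, exactly as in (2)(b).

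For (4), the projection $\sfY\times_\sfX\sfY\to\sfY$ is a base change of $f$, so it is away from $S$ by (2)(a). Its composite with the diagonal is $\id_\sfY$, which is trivially away from $S$. The cancellation form of (3)(a) then shows that the diagonal is away from $S$.

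For (5), I expect the main obstacle to be producing a natural equivalence $\calL^\Lambda\calL_p\simeq\calL_p\calL^\Lambda$ compatible with $\epsilon$. I would first get this on $\Glo_\ab$, where both sides are $\map(B\dual\Lambda\times B\bfZ_p,T)$ by \cref{def:formalloopspace}, naturally in the colattice maps. Both functors preserve colimits, so the equivalence extends to all of $\abglobalspaces$. Compatibility with $\epsilon$ then follows because $\epsilon$ is induced by restriction along $0\to\bfQ_p/\bfZ_p$, which commutes with the $\Lambda$-variable. Given this, applying $\calL^\Lambda$, which preserves pullbacks, to the Cartesian square for $f$ yields the Cartesian square for $\calL^\Lambda f$. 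Finally, relative $\pi$-finiteness of $\calL^\Lambda f$ follows from \cref{ex:pifiniteformalloopspace} together with base change.
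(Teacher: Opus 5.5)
Your proposal follows the paper's proof part by part:

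\begin{itemize}
\item (1) from $\calL_p$ preserving coproducts together with disjointness;
\item (2) from the computation that the paper phrases as a cube with Cartesian front and back faces, plus descent along $\calL_p\sfX'\to\calL_p\sfX$;
\item (3) from the pasting law, with the composite correctly reindexed;
\item (4) by cancelling the projection $\sfY\times_\sfX\sfY\to\sfY$ against the identity;
\item (5) from $\calL_p\calL^\Lambda\simeq\calL^\Lambda\calL_p$ and the fact that $\calL^\Lambda$ preserves finite limits.
\end{itemize}

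For the Cartesian condition, parts (1), (2), (3)(a), (4) and (5) are fine as you wrote them.

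The step that does not work as written is (3)(b). Write $h\colon\sfZ\to\sfY$ for the first map and $k\colon\sfY\to\sfX$ for the second. The comparison map for $k$ is $\calL_p\sfY\to\calL_p\sfX\times_\sfX\sfY$. It is a map over $\sfY$ and over $\calL_p\sfX$. However, $\calL_p\sfY$ is its source, not its base, so it cannot be pulled back along $\calL_p h\colon\calL_p\sfZ\to\calL_p\sfY$.

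The correct move is to base change along $h$ itself. Because the squares for $h$ and for $k\circ h$ are Cartesian, the pullback of the comparison map along $h$ is the equivalence $\calL_p\sfY\times_\sfY\sfZ\simeq\calL_p\sfZ\simeq\calL_p\sfX\times_\sfX\sfZ$. Since $h$ is an effective epimorphism by hypothesis, descent gives the claim. Your input (1), that $\calL_p$ preserves effective epimorphisms, is therefore not needed here; the hypothesis is used on $h$ directly.

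A smaller point, which the paper's proof also passes over since it only checks the Cartesian condition: your one-line disposal of relative $\pi$-finiteness is too quick.

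\begin{itemize}
\item In the first form of (1) it fails for infinite $I$; for example, $\coprod_{\mathbf{N}}\ast\to\ast$ is not relatively $\pi$-finite.
\item In (2)(b) and (3)(b) it does not follow from closure under base change, composition, coproducts and diagonals; a descent argument is needed.
\end{itemize}

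For (2)(b), you must first observe that $\sfX'\to\sfX$ is an effective epimorphism. This holds because the composite $\calL_p\sfX'\to\calL_p\sfX\to\sfX$ is one (here $\epsilon$ has the section $c$) and it factors through $\sfX'$. Consequently every map from a representable to $\sfX$ lifts to $\sfX'$. This argument uses $S\neq\emptyset$.
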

\begin{proof}
(1)~~This holds as $\calL_p$ preserves coproducts and coproducts are disjoint in the presheaf category $\abglobalspaces$.

(2)~~For $p \in S$, consider the cube
\begin{center}\begin{tikzcd}
\calL_p\sfY'\ar[rr]\ar[dr]\ar[dd]&&\calL_p\sfY\ar[dr]\ar[dd]\\
&\sfY'\ar[rr]\ar[dd]&&\sfY\ar[dd]\\
\calL_p\sfX'\ar[rr]\ar[dr]&&\calL_p\sfX\ar[dr]\\
&\sfX'\ar[rr]&&{\sfX.}
\end{tikzcd}\end{center}
The front and back faces are Cartesian. It follows that if the right face is Cartesian, then the left face is Cartesian, and that the converse holds provided $\calL_p\sfX' \to \calL_p\sfX$ is an effective epimorphism as needed.

(3)~~The proof is analogous to (2), using the pasting law for pullbacks.

(4)~~Consider the diagram
\begin{center}\begin{tikzcd}
\sfY\ar[r,"\Delta"]\ar[dr,equals]&\sfY\times_\sfX\sfY\ar[r]\ar[d]&\sfY\ar[d]\\
&\sfY\ar[r]&\sfX
\end{tikzcd}.\end{center}
As $\sfY \to \sfX$ is away from $S$, so is $\sfY\times_\sfX\sfY \to \sfY$ by (1a). As the composite $\sfY \to \sfY\times_\sfX\sfY\to\sfY$ is the identity, it is clearly away from $S$. Therefore, $\sfY \to \sfY\times_\sfX\sfY$ is away from $S$ by (3a).

(5)~~This holds as $\calL_p\calL^\Lambda\simeq \calL^\Lambda\calL_p$ and $\calL^\Lambda$ preserves finite limits by \Cref{rmk:formalloops_commuteswithlimits}.
\end{proof}

\begin{prop}\label{lem:pcoverlocal}
Let $f\colon \sfY \to \sfX$ be a relatively $\pi$-finite map of global spaces. Then $f$ is away from $S$ if and only if for every $T \in \Glo_\ab$ and Cartesian square
\begin{center}\begin{tikzcd}
F\ar[r]\ar[d]&T\ar[d]\\
\sfY\ar[r,"f"]&\sfX
\end{tikzcd}\end{center}
in $\Spc^\gl_\ab$, the map $F \to T$ is away from $S$.
\end{prop}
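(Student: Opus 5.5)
The forward direction is immediate from \Cref{lem:pcoverbasic}(2a): if $f$ is away from $S$, then so is every base change of $f$, in particular each $F \to T$.

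For the converse, suppose every base change $F_T \to T$ of $f$ along a map $T \to \sfX$ with $T \in \Glo_\ab$ is away from $S$. Write $\sfX$ as the canonical colimit $\sfX \simeq \colim_{T \in (\Glo_\ab)_{/\sfX}} T$ and set
\[
  p\colon \coprod_{T \to \sfX} T \to \sfX .
\]
Then $p$ is an effective epimorphism in the presheaf topos $\abglobalspaces$. Its base change
\[
  f'\colon \coprod_{T\to\sfX} F_T \to \coprod_{T\to \sfX} T
\]
is away from $S$ by \Cref{lem:pcoverbasic}(1). So by \Cref{lem:pcoverbasic}(2b), it suffices to show that $\calL_\ell p\colon \calL_\ell\bigl(\coprod T\bigr) \to \calL_\ell \sfX$ is an effective epimorphism for each $\ell \in S$. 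Here I write $\ell$ for the prime, to avoid a clash with the map $p$.

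To check this, note that $\calL_\ell$ is colimit preserving by definition (\Cref{def:formalloopspace}), so
\[
  \calL_\ell \sfX \simeq \colim_{T} \calL_\ell T \quad\text{and}\quad \calL_\ell\Bigl(\coprod T\Bigr) \simeq \coprod_T \calL_\ell T .
\]
The map $\calL_\ell p$ is therefore the canonical map from the coproduct of the terms of a colimit diagram to its colimit. Such a map is always an effective epimorphism in an $\infty$-topos: a colimit is a geometric realization of the coproducts appearing in the bar construction, and the map from the $0$-th stage of a geometric realization is an effective epimorphism. More concretely, since effective epimorphisms in presheaves are detected levelwise on $\pi_0$, the required surjectivity holds objectwise because every point of a colimit of spaces lifts to some term.

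The main point to watch is that \Cref{lem:pcoverbasic}(2b) requires effective epimorphy after applying $\calL_\ell$, not merely of $p$ itself. This is handled by the colimit preservation of $\calL_\ell$. It would fail if one used a cover that is not a coproduct of terms of a diagram whose colimit $\calL_\ell$ preserves, so using the canonical colimit decomposition of $\sfX$ is the essential choice.

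Relative $\pi$-finiteness plays no role beyond ensuring that both sides are meaningful; note that each $F_T$ is $\pi$-finite.
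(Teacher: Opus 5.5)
Your proof is correct and is essentially the paper's argument: the same tautological cover of $\sfX$ by representables and the same appeal to \Cref{lem:pcoverbasic}(1) and (2b). The one difference is how you check that $\calL_\ell$ of the cover is an effective epimorphism: you use that $\calL_\ell$ preserves colimits, whereas the paper lifts each point of $\calL^\Lambda\sfX(T)$ directly through a representative $T\times B\dual{H}\to\sfX$. Your closing caveat is overcautious, though harmless: since $\calL_\ell$ preserves colimits, your own geometric-realization argument applied to a \v{C}ech nerve shows that it preserves every effective epimorphism, so the choice of cover is not essential.
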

\begin{proof}
We claim that if $\sfX$ is a global space and $\Lambda$ is a colattice, then there exists an effective $\coprod_{i\in I} T_i \to \sfX$ with $T_i \in \Glo_\ab$ for which $\coprod_{i\in I} \calL^\Lambda T_i \to \calL^\Lambda \sfX$ is again an effective epimorphism. The proposition then follows from \cref{lem:pcoverbasic}(1,2).

In fact, such a cover is given by the tautological effective epimorphism $\coprod_{i\in I} T_i \to \sfX$ indexed by $I = \coprod_{T\in \Glo_\ab}\pi_0 \sfX(T)$. To see this, fix $T \in \Glo_\ab$ and $x \in \calL^\Lambda \sfX(T)$. Then $x$ is represented by a map $f_x\colon T \times \BH \to \sfX$ for some finite subgroup $H \subset \Lambda$, and this in the image of $\calL^\Lambda f_x \colon \calL^\Lambda(T\times \BH) \to \calL^\Lambda \sfX$.
\end{proof}

The previous result shows that the class of maps that are away from $S$ is generated locally by those maps of $\pi$-finite spaces that are away from $S$. Denote this subcategory of $\abglobalspaces$ by $\awayfromS$.

\begin{cor}\label{cor:qstpreinductible}
The wide subcategory $\awayfromS\subset\abglobalspaces$ forms an inductible subcategory. Moreover, the local class $\overline{\awayfromS}$ generated by $\awayfromS$ (see \cref{def:local_class_generated}) is exactly the wide subcategory spanned by the relatively $\pi$-finite maps which are away from $S$.
\end{cor}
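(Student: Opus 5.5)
We have to check the four axioms of \Cref{df:preinductable} for $\awayfromS$, the full subcategory of $\pi$-finite spaces (viewed in $\abglobalspaces$ via $\bfR$) with morphisms the maps away from $S$. We also have to identify the local class it generates. The plan is to reduce everything to \Cref{lem:pcoverbasic} and \Cref{lem:pcoverlocal}, using that $\pi\subset\abglobalspaces$ is already known to be inductible.

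\textbf{Axiom (1).} For $T=\bfB H\in\Glo_\ab$, the object $\bfR(BH)$ is the representable, so $T$ lies in $\pi$. It remains to see that identities are away from $S$, which is trivial.

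\textbf{Axiom (4).} A map in $\awayfromS$ with target in $T$ is in particular a map in $\pi$. Truncatedness is therefore inherited from the inductibility of $\pi$.

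\textbf{Axiom (3).} If $F\to F'$ is away from $S$, then the diagonal $F\to F\times_{F'}F$ is away from $S$ by \Cref{lem:pcoverbasic}(4). Its source and target are $\pi$-finite, since $\bfR$ preserves limits and $\pi$-finite spaces are closed under finite limits. So the diagonal lies in $\awayfromS$.

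\textbf{Axiom (2) and the local class.} Both reduce to the same locality statement. Let $f\colon X\to Y$ with $Y$ a $\pi$-finite space. If $f$ is in $\awayfromS$, then each base change $f\times_Y A\to A$ along $A\to Y$ with $A\in\Glo_\ab$ is away from $S$ by \Cref{lem:pcoverbasic}(2a). Each such base change also lies in $\pi$ because $\pi$ is inductible.

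Conversely, suppose all these base changes lie in $\awayfromS$. First, $f\in\pi$ by inductibility of $\pi$. Then $f$ is relatively $\pi$-finite, and \Cref{lem:pcoverlocal} applies: $f$ is away from $S$ exactly when all its base changes to objects of $\Glo_\ab$ are.

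For the local class, let $f\colon\sfY\to\sfX$ be arbitrary. It lies in $\overline{\awayfromS}$ iff each base change $F\to T$ with $T\in\Glo_\ab$ lies in $\awayfromS$. That says $F\to T$ is a map of $\pi$-finite objects, i.e.\ $f$ is relatively $\pi$-finite, and that it is away from $S$. By \Cref{lem:pcoverlocal}, this is equivalent to $f$ being relatively $\pi$-finite and away from $S$.

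\textbf{Subtleties.} The only step needing care is closure under composition, i.e.\ that $\awayfromS$ is a subcategory at all. This follows from \Cref{lem:pcoverbasic}(3a).

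A second point: $\Glo_\ab$ objects $BH$ with $H$ abelian are $\pi$-finite spaces. Under $\bfR$, they agree with the representables $\bfB H$, as noted before \Cref{def:formalloopspace}. This justifies treating $T\subset\awayfromS$ as objects.

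I expect no real obstacle. The substantive input is \Cref{lem:pcoverlocal}, which is already proven, so the remaining work is bookkeeping of the axioms.
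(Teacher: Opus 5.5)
Your proposal is correct and follows the paper's approach. The paper states the corollary as immediate from \Cref{lem:pcoverbasic} and \Cref{lem:pcoverlocal}, implicitly using that $\pi$ itself is inductible, and you have supplied the axiom-by-axiom bookkeeping, including closure under composition via \Cref{lem:pcoverbasic}(3a).
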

\begin{proof}
Immediate from \cref{lem:pcoverbasic} and \cref{lem:pcoverlocal}.
\end{proof}

\subsection{Characterization of morphisms away from a prime}\label{ssec:characterisation_awayfromp}
Our goal for this subsection is to give a more explicit description of the relatively $\pi$-finite maps that are away from $S$. By \cref{lem:pcoverlocal}, to understand the local structure of such maps, it suffices to understand when a map of the form $F \to \BH$, with $H$ a finite abelian group and $F$ a $\pi$-finite space, is away from $S$. This reduces to a problem in ordinary homotopy theory by \cref{ex:pipcover}. By \cref{lem:pcoverbasic}(1), we may as well assume that $F$ is connected. As a map is away from $S$ if and only if it is away from $p$ for all $p\in S$, we may focus on the case where $S = \{p\}$. Finally, as $\BH \to \BH_{(p)}$ is easily seen to be away from $p$, by \cref{lem:pcoverbasic}(3a) we may as well assume that $H$ is a finite $p$-group. Having made these reductions our analysis now starts with the following.

\begin{lemma}\label{lem:trivialpcoverptorsion}
	Let $f\colon G\rightarrow H$ be a homomorphism of finite groups, and suppose that the induced map $f\colon \BG\rightarrow \BH$ is away from $p$. Then $f$ induces a bijection between the elements in $G$ and $H$ of $p$-power order.
\end{lemma}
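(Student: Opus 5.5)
By \cref{ex:pipcover}, the hypothesis that $f\colon \BG\to\BH$ is away from $p$ means the square
\[
\begin{tikzcd}
\map(B\bfZ_p,\BG)\ar[r]\ar[d]&\map(B\bfZ_p,\BH)\ar[d]\\
\BG\ar[r]&\BH
\end{tikzcd}
\]
is Cartesian. We plan to compute both mapping spaces explicitly and read the claimed bijection off from the fibers of the vertical maps. The vertical maps are evaluation at the basepoint, so it is enough to compare homotopy fibers over the basepoint.

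First we identify the mapping spaces. Since $\BG$ is a $1$-type, $\map(B\bfZ_p,\BG)$ is the groupoid of continuous homomorphisms $\bfZ_p\to G$ up to conjugation. Via the colimit description in \cref{def:formalloopspace}, this is $\colim_k \map(B\bfZ/p^k,\BG)$. A homomorphism $\bfZ_p\to G$ (equivalently, a compatible map out of $\bfZ/p^k$ for large $k$) is the same as an element of $G$ of $p$-power order. Hence
\[
\map(B\bfZ_p,\BG)\simeq \Bigl(\coprod_{g\in G_{p}} \ast\Bigr)//G,
\]
where $G_p\subset G$ is the set of elements of $p$-power order and $G$ acts by conjugation. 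Evaluation at the basepoint is the projection $G_p//G\to \ast//G=\BG$.

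The homotopy fiber of this projection over the basepoint is the discrete set $G_p$, and likewise the fiber on the right is $H_p$. The map on fibers induced by the square is $g\mapsto f(g)$. The square is Cartesian exactly when the induced map on homotopy fibers over every point of $\BG$ is an equivalence. Since $\BG$ is connected, it suffices to check this over the basepoint, which amounts to $f\colon G_p\to H_p$ being a bijection. This gives the lemma, and in fact the converse too.

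The main obstacle is the first step: making precise that $\calL_p$, or equivalently $\map(B\bfZ_p,-)$ as interpreted through \cref{ex:pifiniteformalloopspace} as the colimit over $\bfZ/p^k$, applied to $\BG$ really yields the action groupoid $G_p//G$. We will handle this by observing that $\map(B\bfZ/p^k,\BG)\simeq \Hom(\bfZ/p^k,G)//G$. The transition maps are induced by the surjections $\bfZ/p^{k+1}\to\bfZ/p^k$, so they are injective on objects, and the filtered colimit is the union $G_p//G$ because every element of $G_p$ has order dividing some $p^k$. Naturality in $f$ follows by the same argument.
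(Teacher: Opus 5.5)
Your proposal is correct and follows the same route as the paper, which likewise takes vertical fibers in the Cartesian square of \cref{ex:pipcover} and identifies them with $\Hom(\bfZ_p,G)$ and $\Hom(\bfZ_p,H)$, i.e.\ with the elements of $p$-power order. Your explicit computation of $\map(B\bfZ_p,\BG)$ as the action groupoid $G_p//G$ via the colimit over $\bfZ/p^k$ spells out a step the paper leaves implicit. Your remark that the argument also gives the converse is correct as well.
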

\begin{proof}
	Given a finite group $G$, the elements of $p$-power order in $G$ can be identified with the set $\Hom(\bfZ_p,G)$. If $f$ is away from $p$, then taking vertical fibers in the Cartesian square
	\begin{center}\begin{tikzcd}
			\Map(B\bfZ_p,BG)\ar[r]\ar[d]&\Map(B\bfZ_p,BH)\ar[d]\\
			BG\ar[r]&BH
	\end{tikzcd}\end{center}
	furnishes an isomorphism $\Hom(\bfZ_p,G)\cong \Hom(\bfZ_p,H)$, and the lemma follows.
\end{proof}

\begin{lemma}\label{lem:pcoverabeliangroup}
Let $f\colon G\rightarrow H$ be a homomorphism of finite groups. Suppose that $H$ is an abelian $p$-group and that the induced map $\BG \to \BH$ is away from $p$. Then $f$ is the projection in a splitting
	\[
	G\cong K \times H
	\]
	with $p\nmid |K|$.
\end{lemma}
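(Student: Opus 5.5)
We want to show: if $f\colon G\to H$ is a homomorphism of finite groups with $H$ an abelian $p$-group and $\BG\to\BH$ away from $p$, then $G\cong K\times H$ with $f$ the projection and $p\nmid|K|$. The plan is to work with the Cartesian square of \cref{ex:pipcover} and read off both the fiber and the component data.

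First, I apply \cref{lem:trivialpcoverptorsion}. It says $f$ restricts to a bijection between the elements of $p$-power order in $G$ and in $H$. Since $H$ is a $p$-group, every element of $H$ has $p$-power order, so $f$ is surjective. Set $K=\ker f$. If $K$ contained an element $k\neq 1$ of order $p$, then $k$ and $1$ would be two $p$-power-order elements with the same image, contradicting injectivity. Hence $p\nmid |K|$.

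Next, I show $G\cong K\times H$ compatibly with $f$, using the full Cartesian square rather than only $\pi_0$ of the fibers. Let $P\subset G$ be the set of elements of $p$-power order. By the bijection, $f|_P\colon P\to H$ is a bijection. I then check that $P$ is a subgroup and that $f|_P$ is an isomorphism. Take a Sylow $p$-subgroup $Q\subset G$. It lies in $P$ and maps injectively to $H$, because $K\cap Q=1$. Its order is $|G|_p=|H|$, since $|K|$ is prime to $p$. So $f|_Q$ is an isomorphism, and $|Q|=|H|=|P|$ forces $Q=P$. In particular $P=Q$ is a normal subgroup (it is the unique Sylow $p$-subgroup) and gives a section of $f$. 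Therefore $G\cong K\rtimes Q$ with $Q\cong H$.

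It remains to see that the conjugation action of $Q$ on $K$ is trivial, which upgrades the semidirect product to a direct product. Here the equivariance in the Cartesian square enters. The components of $\Map(B\bfZ_p,BG)$ are conjugacy classes of $p$-power-order elements $g$, with automorphism group the centralizer $C_G(g)$. The square being Cartesian means the fiber map over $BG\to BH$ is an equivalence, so $C_G(g)\to C_H(f(g))=H$ is surjective with kernel $K$ on each component. In particular $K\subset C_G(g)$ for every $g\in Q$. So $Q$ centralizes $K$, and $G\cong K\times Q\cong K\times H$ with $f$ the projection.

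The step needing the most care is this last one: correctly identifying the homotopy fibers, which compare $C_G(g)$ with $G$ and $C_H(f(g))$ with $H$. The lemma genuinely requires this, because the bijection on $p$-elements alone only gives the semidirect product. Concretely, I take the fiber of $\Map(B\bfZ_p,BG)_{[g]}\to BG$, which is $G/C_G(g)$, i.e. the conjugacy class of $g$. Cartesianness makes it biject with the conjugacy class of $f(g)$ in $H$, which is a single point. So every $g\in Q$ is central in $G$, which yields the triviality of the action directly.
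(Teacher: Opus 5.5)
Your proof is correct and follows the paper's route: the bijection on $p$-power-order elements from \cref{lem:trivialpcoverptorsion}, then centrality of those elements read off from $\Map(B\bfZ_p,BG)\simeq\coprod_{[g]}BC_G(g)$ via the Cartesian square, with your Sylow argument spelling out the group theory that the paper leaves implicit. One correction to your closing commentary: the final step is not actually needed, because $K$ and $Q$ are both normal with $K\cap Q=1$ and therefore already commute, and in any case injectivity on $p$-power elements alone gives centrality, since $f(yxy^{-1})=f(x)$ when $H$ is abelian.
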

\begin{proof}
As $f$ is away from $p$, then by \cref{lem:trivialpcoverptorsion}, it restricts to a bijection between elements of $p$-power order. We claim that all elements of $p$-power order in $G$ are central. As $H$ is an abelian $p$-group, this would imply the claimed splitting $G \cong K \times H$. To prove this claim, we identify
	\[
	\Map(B\bfZ_p,BG)\simeq \Hom(\bfZ_p,G)_{\h G}\simeq \coprod_{g\in \Hom(\bfZ_p,G)/G}BC_G(g),
	\]
	where $G$ acts on $\Hom(\bfZ_p,G)$ by conjugation and $C_G(g) = \{x\in G : gx = xg\}\subset G$ is the centralizer of $g$. Thus if $f$ is away from $p$, then the square
	\begin{center}\begin{tikzcd}
			\coprod_{g\in \Hom(\bfZ_p,G)/G}BC_G(g)\ar[r]\ar[d]&{H\times BH \simeq \Map(\Z_p, BH)}\ar[d]\\
			BG\ar[r]&BH
		\end{tikzcd}\end{center}
is Cartesian. This is only possible if the left vertical map induces isomorphisms $C_G(g)\cong G$ for all $g\in G$ of $p$-power order, implying that $g$ is central as claimed.
\end{proof}

This lemma admits the following generalization.

\begin{prop}\label{prop:pifinawayfromp}
	Let $F$ be a pointed connected $\pi$-finite space and $H$ be a finite abelian $p$-group. Given a map $f\colon F\rightarrow \BH$, the following are equivalent:
	\begin{enumerate}
		\item $f$ is away from $p$
		\item All elements in $\pi_1 F$ of $p$-power order act trivially on $\pi_n F$ for all $n \geq 1$;
		\item $f$ is the projection in a splitting $F\simeq f^{-1}(e)\times BH$, and the homotopy groups of $f^{-1}(e)$ have order coprime to $p$.
	\end{enumerate}
\end{prop}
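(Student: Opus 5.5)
We prove the implications in the order (1)$\Rightarrow$(2)$\Rightarrow$(3)$\Rightarrow$(1), using throughout the reformulation of \Cref{ex:pipcover}. For a $\pi$-finite space $X$ we have
\[
\map(B\bfZ_p,X)\simeq \colim_k \map(B\bfZ/p^k,X).
\]
Hence, for connected $X$, the space $\map(B\bfZ_p,X)$ decomposes as a coproduct over conjugacy classes of $p$-power order elements $g\in\pi_1 X$. The component indexed by $g$ is the homotopy fixed points $(\tilde X)^{\h\langle g\rangle}$, suitably interpreted as the space of lifts of $g$ to a loop up to homotopy. Concretely, the evaluation map from this component to $X$ has fiber over the basepoint the space of loops representing $g$ together with homotopy-fixed-point data.

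\textbf{(3)$\Rightarrow$(1).} Write $F\simeq K\times BH$ with $K$ a connected $\pi$-finite space whose homotopy groups are prime to $p$. Then
\[
\map(B\bfZ_p,F)\simeq \map(B\bfZ_p,K)\times\map(B\bfZ_p,BH).
\]
Since $B\bfZ_p\to\ast$ is a $\bfZ[1/p]$-homology equivalence (indeed $B\bfZ_p$ has trivial prime-to-$p$ homology) and $K$ is a nilpotent-free-of-$p$ space, we get $\map(B\bfZ_p,K)\simeq K$. The last point is where care is needed: $\pi_1 K$ may be non-nilpotent. We would instead argue inductively up the Postnikov tower of $K$. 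Maps $B\bfZ_p\to K$ correspond to $p$-power order elements of $\pi_1K$, of which only the identity exists, and the higher obstruction and fiber groups $H^*(B\bfZ_p;\pi_nK)$ vanish in positive degrees because $|\pi_nK|$ is prime to $p$. The resulting square is then visibly Cartesian.

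\textbf{(1)$\Rightarrow$(2).} Taking fibers over the basepoint in the Cartesian square gives an equivalence between the fiber of $\map(B\bfZ_p,F)\to F$ and $\Map(B\bfZ_p,BH)$'s fiber $H$ (discrete). For a $p$-power order element $g\in\pi_1F$, the corresponding part of the fiber is the space of pointed maps $B\bfZ_p\to F$ sending the generator to $g$. Being discrete forces $H^{n}(B\bfZ_p;\pi_nF)$-type contributions to vanish, specifically $H^0$ and the loop contributions. Equivalently, the map of this component to $F$ is a covering with the correct fiber. Mirroring \Cref{lem:pcoverabeliangroup}, the component of $\map(B\bfZ_p,F)$ at $g$ has $\pi_n$ given by the $g$-fixed points (up to a $\varprojlim$ over $k$, harmless for finite groups) of $\pi_nF$ for $n\ge2$, and by the centralizer of $g$ for $n=1$. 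The Cartesian square forces these to map isomorphically onto $\pi_nF$, which is exactly the statement that $g$ acts trivially on every $\pi_nF$, including by conjugation on $\pi_1$. The main obstacle is making this homotopy-group computation of the components rigorous. We would first try the fibration $\map_\ast(B\bfZ_p,F)\to\map(B\bfZ_p,F)\to F$ together with a Postnikov induction, reducing each stage to \Cref{lem:pcoverabeliangroup} and to group cohomology of the pro-$p$ group $\bfZ_p$ with coefficients in a finite module.

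\textbf{(2)$\Rightarrow$(3).} By \Cref{lem:trivialpcoverptorsion}, applied after passing to $\pi_1$ (the square for $F$ maps to the one for $B\pi_1F$, and the fiber comparison on $\pi_0$ still applies), $f$ restricts to a bijection on $p$-power order elements. Since these are central by (2), we obtain $\pi_1F\cong K_1\times H$ with $p\nmid|K_1|$, as in \Cref{lem:pcoverabeliangroup}. Next, $H\subset\pi_1F$ acts trivially on all $\pi_nF$. Since $f^{-1}(e)$ has $\pi_1=K_1$ and $\pi_n=\pi_nF$, it remains to produce the splitting and to show that $\pi_nF$ is prime to $p$ for $n\ge2$. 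For the splitting, the section $BH\to F$ exists because $H\subset\pi_1F$ is central and acts trivially on all higher homotopy, so the $k$-invariant obstructions are sign-free. We would argue via the fiber sequence $f^{-1}(e)\to F\to BH$ with trivial monodromy, which is classified by a map $BH\to B\Aut(f^{-1}(e))$ that is null on the relevant pieces. Primality of $\pi_nF$ for $n\ge 2$ does not follow from (2) alone unless (1) is used. We would therefore expect to route (3) through (1) as well, with (1)+(2)$\Rightarrow$(3), noting that a $p$-torsion element of $\pi_nF$ with trivial $H$-action contributes extra components to $\map(B\bfZ_p,F)$, contradicting the Cartesian square.
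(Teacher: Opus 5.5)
\textbf{The cycle of implications never closes.} Your three steps are really (1)$\Rightarrow$(2), (1)$\Rightarrow$(3) and (3)$\Rightarrow$(1). The first line of your ``(2)$\Rightarrow$(3)'' already invokes \Cref{lem:trivialpcoverptorsion}, which needs (1), and you then use (1) again for primality. So nothing leaves (2).

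No argument can repair this, because (2) implies neither (1) nor (3) as stated.
\begin{itemize}
\item Take $F=\ast$ and $H=\bfZ/p$. Then (2) holds vacuously. But the pullback of $\map(B\bfZ_p,BH)\simeq H\times BH$ along $\ast\to BH$ is the discrete space $H\not\simeq\ast$, so $f$ is not away from $p$. The paper's own example ($e\to G$ is away from $p$ iff $p\nmid|G|$) agrees.
\item Take $F=K(\bfZ/p,2)$ and $H=0$. Again (2) holds vacuously, but $\pi_1\map(B\bfZ_p,F)\cong\Hom(\bfZ_p,\bfZ/p)\neq0=\pi_1F$, so (1) and (3) fail.
\end{itemize}
Your suspicion that primality of $\pi_{\geq2}F$ does not follow from (2) was correct. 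The same is true of identifying the $p$-part of $\pi_1F$ with $H$ via $f$, about which (2) says nothing.

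The paper's proof has exactly this defect. Its ``(2)$\Rightarrow$(3)'' takes $\pi_1F\cong K\times H$ with $H$ the $p$-power-order elements, and explicitly adds ``suppose moreover that $f^{-1}(e)$ \dots\ has homotopy groups of order coprime to $p$''. What both the paper and your plan actually prove is (1)$\Leftrightarrow$(3)$\Rightarrow$(2), which is all \Cref{cor:pcovcharacterize} uses. Alternatively, (2) can be strengthened to: $f_*$ restricts to a bijection on $p$-power-order elements, these act trivially on every $\pi_nF$, and $\pi_nF$ has order prime to $p$ for $n\geq2$. You should state such a correction explicitly rather than leave (2)$\Rightarrow$(1) silently unaddressed.

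\textbf{The true implications.} Here your route is the paper's: Postnikov obstruction theory for (3)$\Rightarrow$(1), and centrality of $p$-elements followed by killing $k$-invariants for the splitting. Your (3)$\Rightarrow$(1) is fine.

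The heart of (1)$\Rightarrow$(2), however, is not yet an argument, and the heuristic you give is wrong. Because $\bfZ_p$ has cohomological dimension one, for $n\geq2$ the group $\pi_n$ of the component of $\map(B\bfZ_p,F)$ at $g$ is an extension of $H^0(\bfZ_p;\pi_nF)$ by $H^1(\bfZ_p;\pi_{n+1}F)$. It is not just the fixed points, and the $H^1$ term is not a harmless limit artifact. Its vanishing is precisely what forces $p$-primality, since for trivial action $H^1(\bfZ_p;M)\cong\Hom(\bfZ_p,M)$. It also shows that $p$-torsion in $\pi_nF$ produces homotopy in degree $n-1$ of the discrete space $\Map_\ast(B\bfZ_p,F)$, not ``extra components''.

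The paper makes this rigorous by a \emph{downward} Postnikov induction:
\begin{itemize}
\item Assume $F_{\le n}\to BH$ is away from $p$, so $\Map_\ast(B\bfZ_p,F_{\le n})\simeq\Hom(\bfZ_p,H)$ is discrete.
\item Map $B\bfZ_p$ into the top Postnikov square $F_{\le n}\simeq F_{\le n-1}\times_{B^{n+1}_G\pi_nF}BG$. This yields $H^0(\bfZ_p;\pi_nF)=\pi_nF$ and $H^1(\bfZ_p;\pi_nF)=0$.
\item From these, deduce that $F_{\le n-1}\to BH$ is again away from $p$.
\end{itemize}
Condition (1) does not formally pass to truncations. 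It is this induction that supplies ``$F_{\le1}\to BH$ away from $p$'', the input that \Cref{lem:trivialpcoverptorsion} and \Cref{lem:pcoverabeliangroup} need.

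Likewise, your claim that the classifying map $BH\to B\Aut(f^{-1}(e))$ is ``null on the relevant pieces'' needs a reason. The paper's reason is as follows. At each stage the $k$-invariant is adjoint to a map $BH\to\map(f^{-1}(e)_{\le n-1},B^{n+1}_K\pi_nF)$. The target is $\pi$-finite with homotopy groups prime to $p$, so the map is constant, and the $k$-invariant factors through the projection.
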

\begin{proof}
	(1)$\Rightarrow$(2). In general, a map $F \to \BH$ is away from $p$ if and only if
	\begin{center}\begin{tikzcd}
			\Map(B\bfZ_p,F)\ar[r]\ar[d]&\Map(B\bfZ_p,BH)\ar[d]\\
			F\ar[r]&BH
	\end{tikzcd}\end{center}
is Cartesian. Under the assumption that $F$ is connected, this holds if and only if the induced map on vertical fibers furnishes an equivalence
	\[
	\Map_\ast(B\bfZ_p,F)\simeq \Hom(\bfZ_p,H).
\]
In particular, if this holds, then $\Map_\ast(B\bfZ_p,F)$ is $0$-truncated.

Now suppose that $f$ is away from $p$. We will induct down on $n$ to prove that the following hold for all $n\geq 2$:
	\begin{enumerate}
		\item[(a${}_n$)] $F_{\leq n-1}\rightarrow BH$ is away from $p$;
		\item[(b${}_n$)] $\pi_n F$ has order coprime to $p$;
		\item[(c${}_n$)] All elements of $p$-power order in $\pi_1 F$ act trivially on $\pi_n F$.
	\end{enumerate}
Item (a${}_2$) combines with \cref{lem:pcoverabeliangroup} to establish (2) in the case $n = 1$. Combined with (c${}_n$) for all other $n$, this establishes (2).

The base for the downwards induction is provided by some $n$ for which $F\simeq F_{\leq n-1}$, where all three conditions are trivially satisfied. For the inductive step, we fix some $n\geq 2$ and suppose that (a${}_{n+1}$--c${}_{n+1}$) hold. To prove that (a${}_n$--c${}_n$) hold, we analyze the top Postnikov square of $F_{\leq n}$. Given a group $G$ acting on an abelian group $M$, define
	\[
	B^n_G M = (B^n M)_{\h G}.
	\]
Abbreviate $G = \pi_1 F$. Then the top Postnikov square of $F_{\leq n}$ takes the form
	\begin{center}\begin{tikzcd}
			F_{\leq n}\ar[r]\ar[d]&BG\ar[d]\\
			F_{\leq n-1}\ar[r]&B_{G}^{n+1}\pi_n F
\end{tikzcd}.\end{center}
Mapping $B\Z_p$ into this provides a Cartesian square
	\begin{equation}\label{eq:homzppostnikov}\begin{tikzcd}
			\Hom(\bfZ_p,H)\ar[r]\ar[d,"\simeq"]&\Hom(\bfZ_p,G)\ar[d,"\simeq"]\\
			\Map_\ast(B\bfZ_p,F_{\leq n})\ar[r]\ar[d]&\Map_\ast(B\bfZ_p,BG)\ar[d]\\
			\Map_\ast(B\bfZ_p,F_{\leq n-1})\ar[r]&\Map_\ast(B\bfZ_p,B_G^{n+1}\pi_n F)
		\end{tikzcd},\end{equation}
where the top left equivalence uses the identification from the beginning of the proof. Note that $\Map_\ast(B\bfZ_p,F_{\leq n-1})$ is $(n-1)$-truncated and that the top row of this square consists of discrete spaces. As $n \geq 2$, it follows that $\Map_\ast(B\bfZ_p,B^{n+1}_G\pi_n F)$ is $(n-1)$-truncated. We will use this to show that all $p$-power torsion in $G$ acts trivially on $\pi_n F$.

For each map $f\colon \bfZ_p \to G$, we may form the fiber sequence
\begin{center}\begin{tikzcd}
\Map_{\ast//BG}(B\bfZ_p,B_G^{n+1}\pi_n F)\ar[r]\ar[d]&\Map_\ast(B\bfZ_p,B_G^{n+1}\pi_n F)\ar[d]\\
\{f\}\ar[r]&\Map_\ast(B\bfZ_p,BG)
\end{tikzcd}.\end{center}
Using that the bottom corner is discrete, we find that the top left corner is again $(n-1)$-truncated. Now, note there is an adjunction
	\[
	\Map_{\ast//BG}(B\bfZ_p,B_G^{n+1}\pi_n F)\simeq \Map_{\ast//B\bfZ_p}(B\bfZ_p,B_{\bfZ_p}^{n+1}\pi_n F)
	\]
	and that the latter space definitionally fits into a Cartesian square
	\begin{center}\begin{tikzcd}
			\Map_{\ast//B\bfZ_p}(B\bfZ_p,B_{\bfZ_p}^{n+1}\pi_n F)\ar[r]\ar[d]&\Map_{/B\bfZ_p}(B\bfZ_p,B_{\bfZ_p}^{n+1}\pi_n F)\ar[d]\\
			\ast\ar[r]&B^{n+1}\pi_n F
		\end{tikzcd}.\end{center}
    As $\Map_{\ast//\bfZ_p}(B\bfZ_p,B_{\bfZ_p}^{n+1}\pi_n F)$ is $(n-1)$-truncated, and so we conclude from the long exact sequence that
    \[
    \pi_{n+1} \Map_{/B\bfZ_p}(B\bfZ_p,B_{\bfZ_p}^{n+1}\pi_n F)\cong \pi_{n+1} B^{n+1} \pi_n F \cong \pi_n F
    \]
    and
    \[
    \pi_n \Map_{/B\bfZ_p}(B\bfZ_p,B_{\bfZ_p}^{n+1}\pi_n F) \cong 0. 
    \]
    If $G$ is a group acting on $M$ then
	\[
	\pi_k \Map_{/BG}(BG,B_G^n M) \cong H^{n-k}(G;M)
	\]
	is isomorphic to the group cohomology of $G$ with coefficients in $M$. This allows us to reinterpret the first isomorphism above as
	\[
	\pi_n F\cong \pi_{n+1} \Map_{/B\bfZ_p}(B\bfZ_p,B_{\bfZ_p}^{n+1}\pi_n) \cong H^0(\bfZ_p;\pi_n F) \cong (\pi_n F)^{\bfZ_p}.
	\]
Thus we have shown that for any map $f\colon \bfZ_p \to G$, we have $(\pi_n F)^{\bfZ_p} \cong \pi_n F$. In other words, all elements in $G$ of order a power of $p$ act trivially on $\pi_n F$. This establishes (c${}_n$).

Having established that the induced action of $\bfZ_p$ on $\pi_n F$ is trivial, we may identify
	\[
	\Hom(\bfZ_p,\pi_n F) = H^1(\bfZ_p;\pi_n F) \cong \pi_n \Map_{/B\bfZ_p}(B\bfZ_p,B_{\bfZ_p}^{n+1}\pi_n F) \cong 0.
	\]
This is only possible if $\pi_n F$ has order coprime to $p$, establishing (b$_n$).

As $\pi_n F$ has order coprime to $p$ and every element of $p$-power order in $G$ acts trivially on $\pi_n F$, we find that the Postnikov truncation $B_G^{n+1}\pi_n F \to BG$ induces an equivalence
	\[
	\Map_\ast(B\bfZ_p,B_G^{n+1}\pi_n F) \rightarrow\Hom(\bfZ_p,G).
	\]
Combined with \cref{eq:homzppostnikov} and the assumption that $F_{\leq n} \to \BH$ is a away from $p$, we deduce that 
\[
\Map_\ast(B\bfZ_p,F_{\leq n-1}) \simeq \Map_\ast(B\bfZ_p,F_{\leq n}) \simeq \Hom(\bfZ_p,H),
\]
implying that $F_{\leq n-1} \to H$ is again away from $p$, establishing (a$_n$).

	(2)$\Rightarrow$(3). Suppose that all elements of $p$-power order in $\pi_1 F$ act trivially on $\pi_n F$ for $n\geq 1$. In particular, $\pi_1 F \cong K \times H$ with $H\subset \pi_1 F$ the subset of elements of $p$-power order, and $H$ acts trivially on the higher homotopy groups of $F$. Suppose moreover, that $f^{-1}(e)$ is a connected space with homotopy groups of order coprime to $p$.  We induct on $n$ to prove that $F_{\leq n} \cong f^{-1}(e)_{\leq n}\times BH$ for all $n \geq 1$, having just established the base case $n=1$. For the inductive step, consider the top Postnikov square of $F_{\leq n}$, which is of the form
	\begin{center}\begin{tikzcd}
			F_{\leq n}\ar[r]\ar[d]&BK\times BH\ar[d]\\
			f^{-1}(e)_{\leq n-1}\times BH\ar[r]&B_{K\times H}^{n+1}\pi_n F
		\end{tikzcd}.\end{center}
	As $H$ acts trivially on $\pi_n F$, this may be further refines to a Cartesian square
	\begin{center}\begin{tikzcd}
			F_{\leq n}\ar[r]\ar[d]&BK\ar[d]\\
			f^{-1}(e)_{\leq n-1} \times BH\ar[r,"k"]&B_K^{n+1}\pi_n F
		\end{tikzcd}.\end{center}
		
Our assumptions imply that $\Map(\phi^{-1}(e)_{\leq n-1},B_K^{n+1}\pi_n F)$ is a $\pi$-finite space with homotopy groups coprime to $p$ at all basepoints. As $H$ is a $p$-group, basic obstruction theory implies that the diagonal map
\begin{align*}
\map(\phi^{-1}(e)_{\leq n-1},B_K^{n+1}\pi_n F) &\to \map(\phi^{-1}(e)_{\leq n-1},B^{n+1}_K\pi_n F)^{BH}\\
&\simeq \map(\phi^{-1}(e)_{\leq n-1}\times BH,B^{n+1}_K \pi_n F)
\end{align*}
is an equivalence. In other words, the $k$-invariant $ f^{-1}(e)_{\leq n-1} \times BH \to B_K^{n+1}\pi_n F$ factors through the projection $f^{-1}(e)_{\leq n-1} \times BH \to f^{-1}(e)_{\leq n-1}$, implying that
	\[
	F_{\leq n}\simeq \phi^{-1}(e)_{\leq n}\times BH
	\]
	as claimed.
	
	(3)$\Rightarrow$(1). Set $F' = \phi^{-1}(e)$. We must prove that if $F'$ has homotopy groups coprime to $p$, then the projection $F' \times BH \to BH$ is away from $p$. As maps away from $p$ are stable under base change, it suffices to prove that $F' \to \ast$ is away from $p$, i.e.\ that
\[
\Map(B\bfZ_p,F')\simeq F'.
\]
This follows by basic obstruction theory using the Postnikov tower of $F'$ as above.
\end{proof}

Given an abelian group $H$, we denote by $H_{(S)}
\subset H$ the subgroup spanned by all elements of $H$ whose order is only divisible by $S$.

\begin{cor}\label{cor:pcovcharacterize}
Let $F$ be a $\pi$-finite space and $H$ be a finite abelian group. Then a map $f\colon F \to \BH$ is away from $S$ if and only if there exists a decomposition
\[
F\simeq \coprod_{i\in I}F_i \times \BH_{(S)},
\]
where $I$ is a finite set, $F_i$ is a connected $\pi$-finite space with homotopy groups coprime to $p$ for $p\in S$, and $f$ is a sum of maps of the form
\[
f_i\times \id_{\BH_{(S)}} \colon F_i \times \BH_{(S)} \to \BH[1/S]\times \BH_{(S)}\simeq \BH.\qedhere
\]
\end{cor}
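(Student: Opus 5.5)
We reduce to the prime-by-prime statement of \Cref{prop:pifinawayfromp}. By \Cref{lem:pcoverbasic}(1), $f$ is away from $S$ if and only if its restriction to each connected component $F_j$ of $F$ is, so we may assume $F$ is connected and choose a basepoint. Write $H \simeq H[1/S]\times H_{(S)}$ and $H_{(S)}\simeq\prod_{p\in S}H_{(p)}$; only finitely many factors are nontrivial. The map $\BH \to \BH_{(p)}$ is away from $p$: its fiber is $B(H/H_{(p)})$, which has order prime to $p$, so (3)$\Rightarrow$(1) of \Cref{prop:pifinawayfromp} applies after base change. Hence, by \Cref{lem:pcoverbasic}(3a), $f$ is away from $p$ if and only if the composite $f_p\colon F\to\BH_{(p)}$ is. Moreover, $f$ is away from $S$ if and only if it is away from every $p\in S$.

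For the forward direction, suppose $f$ is away from $S$. For each $p\in S$, \Cref{prop:pifinawayfromp} applied to $f_p$ shows two things. First, $p$-power elements of $\pi_1F$ act trivially on all higher homotopy groups and map bijectively onto $H_{(p)}$. Second, $f_p$ splits as $F\simeq f_p^{-1}(e)\times\BH_{(p)}$ with $f_p^{-1}(e)$ having homotopy groups prime to $p$.

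Iterating over the finitely many relevant $p\in S$ gives a splitting $F\simeq F'\times\BH_{(S)}$. To see this, note that the fiber $f_p^{-1}(e)$ is still away from each $q\neq p$ in $S$, since it is a base change of $f$ composed with the $p$-prime part. Inducting on the number of primes, we peel off one factor at a time. This yields a connected $F'$ whose homotopy groups are prime to every $p\in S$, and $f=(f|_{F'})\times\id$. Here $f|_{F'}$ lands in $\BH[1/S]$: any map from $F'$ to $\BH_{(S)}$ is nullhomotopic, because $\pi_1F'$ has order prime to $S$.

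For the converse, suppose $f=f'\times\id$. Then for each $p\in S$, the composite to $\BH_{(p)}$ is the projection $(F'\times\BH_{(S)\setminus p})\times\BH_{(p)}\to\BH_{(p)}$. The first factor $F'\times\BH_{(S)\setminus p}$ has homotopy groups prime to $p$, so (3)$\Rightarrow$(1) applies and $f$ is away from $p$.

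The main obstacle is making the iterated splitting coherent: we must check that the splittings for different primes are compatible, so that they can be combined into one product decomposition with $\BH_{(S)}$. I would handle this by performing the induction on the fiber of $F\to\BH_{(S)}$ directly, using the base-change stability in \Cref{lem:pcoverbasic}(2a), rather than combining separate splittings.
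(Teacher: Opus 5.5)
Your proposal is correct and follows the paper's own route. The paper reduces exactly as you do — via \Cref{lem:pcoverbasic}(1),(3a) and the fact that $\BH\to\BH_{(p)}$ is away from $p$ — to a connected $F$ over a $p$-group, invokes \Cref{prop:pifinawayfromp} prime by prime, and leaves the assembly over $p\in S$ implicit. The coherence issue you flag does not arise, since each peeling step splits a single map that is a base change of $f$. The step that does need one more line is that the $\BH[1/S]$-component of $f$ on $F'\times\BH_{(S)}$ factors through $F'$; this holds because $\pi_0\Map(F'\times\BH_{(S)},\BH[1/S])\cong\Hom(\pi_1F'\times H_{(S)},H[1/S])$ and $\Hom(H_{(S)},H[1/S])=0$.
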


\cref{lem:pcoverabeliangroup} also admits the following globalization.

\begin{prop}\label{prop:pcovgroup}
	Let $f\colon K\rightarrow G$ be a homomorphism of finite groups. Then the induced map $f\colon BK\rightarrow BG$ is away from $p$ if and only if the following conditions are satisfied:
	\begin{enumerate}
		\item $f(K)$ contains all elements in $G$ of order a power of $p$;
		\item Given $x,y\in K$ with $x$ of $p$-power order, if $f(xy) = f(yx)$ then $xy=yx$;
		\item $p\nmid |\ker f|$.
	\end{enumerate}
\end{prop}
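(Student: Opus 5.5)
By \cref{ex:pipcover}, the map $BK\to BG$ is away from $p$ if and only if the square with $\Map(B\bfZ_p,BK)\to\Map(B\bfZ_p,BG)$ over $BK\to BG$ is Cartesian. I will use the identification from the proof of \cref{lem:pcoverabeliangroup}:
\[
\Map(B\bfZ_p,BG)\simeq \Hom(\bfZ_p,G)_{\h G}\simeq \coprod_{[g]}BC_G(g),
\]
where $g$ runs over conjugacy classes of elements of $p$-power order. So the square is a square of $1$-types, and I will test Cartesianness fiberwise. The fiber over the basepoint of $BK$ of the left vertical map is the discrete set $P_K=\Hom(\bfZ_p,K)$ of $p$-power order elements of $K$. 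The fiber of the pullback $BK\times_{BG}\Map(B\bfZ_p,BG)\to BK$ is the homotopy fiber of $(P_G)_{\h K}\to BK$, where $K$ acts on $P_G$ through $f$ by conjugation, and this fiber is simply $P_G$. Hence the square is Cartesian if and only if the $K$-equivariant comparison map $P_K\to P_G$, $x\mapsto f(x)$, is a bijection.

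The plan is then to show that bijectivity of $P_K\to P_G$ is equivalent to conditions (1)--(3), with the proposition reduced to this elementary group-theoretic statement. Here $P_K$ and $P_G$ are the sets of $p$-power order elements of $K$ and $G$.

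\textbf{Necessity.} Surjectivity of $P_K\to P_G$ gives (1), since any preimage of a $p$-power order element can be replaced by its $p$-part, which still maps onto it. For (3), if $p\mid|\ker f|$ then $\ker f$ contains an element of order $p$ distinct from $e$ with the same image, contradicting injectivity. For (2), take $x$ of $p$-power order with $f(xy)=f(yx)$. Then $f(yxy^{-1})=f(x)$, and since $yxy^{-1}$ and $x$ are both in $P_K$, injectivity gives $yxy^{-1}=x$.

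\textbf{Sufficiency.} Surjectivity of $P_K\to P_G$ follows from (1) by taking $p$-parts as above. For injectivity, suppose $x,x'\in P_K$ with $f(x)=f(x')$. Then $x^{-1}x'\in\ker f$. Applying (2) with $y=x^{-1}x'$, we have $f(xy)=f(x')=f(x)=f(yx)$, so (2) gives $xy=yx$. Thus $x$ and $x'=xy$ commute, and $y=x^{-1}x'$ is a product of commuting elements of $p$-power order, so $y$ has $p$-power order. By (3), $y=e$, so $x=x'$.

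The main point needing care is the first step: the homotopy fiber computation identifying Cartesianness with bijectivity of $P_K\to P_G$. I will justify it using that a map of spaces over $BK$ is an equivalence iff it is on fibers over the basepoint (since $BK$ is connected), together with the standard identification of the fiber of $X_{\h K}\to BK$ with $X$.
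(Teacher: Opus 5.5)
Your proposal is correct, but it takes a different and more economical route than the paper.

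\textbf{Your route.} You use that $BK$ is connected, so the square of \cref{ex:pipcover} is Cartesian if and only if the induced map on fibers over the basepoint, $\Map_\ast(B\bfZ_p,BK)\to\Map_\ast(B\bfZ_p,BG)$, is an equivalence. That map is just $P_K\to P_G$, $x\mapsto f(x)$. This upgrades \cref{lem:trivialpcoverptorsion} to an equivalence. The paper records essentially the same observation for connected sources in the proof of \cref{prop:pifinawayfromp}, but does not exploit it here. The proposition then reduces to the elementary fact that bijectivity of $f$ on $p$-power order elements is equivalent to (1)--(3), and you verify both directions correctly.

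\textbf{The paper's route.} For necessity, it obtains (1) and (3) from \cref{lem:trivialpcoverptorsion}. It obtains (2) from the Cartesian square of centralizers, i.e.\ $C_K(x)=f^{-1}(C_G(f(x)))$. For sufficiency, it uses the locality of \cref{lem:pcoverlocal} to reduce to base changes $F\to BH$ along maps $BH\to BG$ with $H$ an abelian $p$-group. It identifies the components of $F$ with $B(K\times_G H)$ and then invokes the Postnikov/obstruction-theoretic characterization \cref{prop:pifinawayfromp}. Conditions (1)--(3) are used there to show that $K\times_G H\to H$ is surjective, with kernel $\ker f$ of order prime to $p$, and that its $p$-torsion is central.

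\textbf{What each buys.} Your argument avoids both the local reduction and the obstruction theory behind \cref{prop:pifinawayfromp}. It also makes transparent that (1)--(3) simply unpack ``$f$ is bijective on $p$-power order elements.'' The paper's route instead exhibits the local product structure $F\simeq F'\times BH$ of the base changes. That is the form of the condition used later, in \cref{cor:pcovcharacterize} and the proof of \cref{prop:commutesqlims}.

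One small aside: in deriving (1) from surjectivity, the passage to $p$-parts is unnecessary. Surjectivity of $P_K\to P_G$ already gives $P_G\subset f(K)$. The $p$-part argument is only needed in the sufficiency direction.
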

\begin{proof}
	First, suppose $f$ is away from $p$. Then \Cref{lem:trivialpcoverptorsion} immediately implies (1) and (3). For (2), note that in general
	\[
	\Map(B\bfZ_p,BG)\simeq \Hom(\bfZ_p,G)_{\h G}\simeq \coprod_{x\in \Hom(\bfZ_p,G)/G}BC_G(x).
	\]
	It follows that if $x\in K$ is $p$-torsion, then the square
	\begin{center}\begin{tikzcd}
			\coprod_{y\in K : (\exists g\in G) (gf(y)g^{-1} = f(x))}BC_K(y)\ar[d]\ar[r]&BC_G(f(x))\ar[d]\\
			BK\ar[r,"f"]&BG
	\end{tikzcd}\end{center}
	is Cartesian. In particular, the square
	\begin{center}\begin{tikzcd}
			C_K(x)\ar[r]\ar[d]&C_G(f(x))\ar[d]\\
			K\ar[r,"f"]&G
	\end{tikzcd}\end{center}
	is Cartesian, i.e.\ if $x\in K$ is $p$-torsion then $C_K(x) = f^{-1}(C_G(f(x)))$. This is another way of stating (2).
	
    Suppose conversely that $f$ satisfies (1--3). Fix a map $q\colon H\rightarrow G$ with $H$ a finite abelian $p$-group, and consider the Cartesian square
	\begin{center}\begin{tikzcd}
			F\ar[r]\ar[d]&BH\ar[d,"q"]\\
			BK\ar[r,"f"]&BG
		\end{tikzcd}.\end{center}
	Following the discussion before \Cref{lem:trivialpcoverptorsion}, it suffices to prove that the map $F\to BH$ is away from $p$. By \cref{prop:pifinawayfromp}, this is equivalent to it being a projection $F'\times BH \to BH$ where $F'$ has homotopy groups coprime to $p$. By the double coset formula, the path components of $F$ are of the form $BL_g$ for some $g\in G$, where
	\[
	L_g = \{(k,h) \in K \times H : f(k) = g q(h) g^{-1}\}.
	\]
After possibly replacing $q$ with a conjugate map we may as well consider the case $g = e$, so that
	\[
	L_g = K \times_G H = \{(k,h) \in K \times H : f(k) = q(h)\}.
	\]
	Condition (1) ensures that $K\times_G H \rightarrow H$ is a surjection. The kernel of this map is isomorphic to the kernel of $f$, i.e.\ there is an exact sequence
	\[
	1\rightarrow \ker f \rightarrow K\times_G H \rightarrow H \rightarrow 1.
	\]
	Condition (3) says that $\ker f$ has order coprime to $p$, and so it suffices to prove that all $p$-torsion in $K\times_G H$ is central. Indeed, suppose that $(k,h) \in K\times_G H$ has order a power of $p$, and let $(k',h') \in K \times_G H$ be arbitrary. As $(k,h)$ has order a power of $p$, the same is true of $k$. As $H$ is abelian, we can identify
	\[
	f(kk') = f(k)f(k') = q(h)q(h') = q(hh') =  q(h'h) = q(h')q(h) = f(k')f(k) = f(k'k).
	\]
	Condition (2) then implies that $kk' = k'k$, and thus $(k,h)$ is central as claimed.
\end{proof}

\begin{cor}
	Let $f\colon K \rightarrow G$ be a homomorphism of finite groups. Then the induced map $f\colon BK\rightarrow BG$ is away from $S$ if and only if $K \to \image(f)$ and $\image(f)\to G$ are away from $S$.
	\qed
\end{cor}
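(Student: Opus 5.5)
The plan is to use \cref{lem:pcoverbasic}(3) applied to the factorization $BK \to B\image(f) \to BG$, together with the group-theoretic criterion of \cref{prop:pcovgroup}. Since being away from $S$ means being away from $p$ for each $p\in S$, we may assume $S=\{p\}$ throughout.

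The ``if'' direction is immediate. If $B\image(f)\to BG$ and $BK\to B\image(f)$ are both away from $p$, then the composite $BK\to BG$ is away from $p$ by \cref{lem:pcoverbasic}(3a).

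For the ``only if'' direction, assume $f$ is away from $p$, so $f$ satisfies conditions (1)--(3) of \cref{prop:pcovgroup}. We then verify (1)--(3) separately for the two maps.

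First take the surjection $K\to \image(f)$. Condition (1) holds trivially by surjectivity. Conditions (2) and (3) only involve $K$, $\ker f$, and commutation relations of images. Since the commutator test $f(xy)=f(yx)$ is the same whether read in $\image(f)$ or in $G$, both conditions are inherited directly from $f$.

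Next take the inclusion $\image(f)\subset G$. Condition (3) holds because an injection has trivial kernel. Condition (1) holds because $f(K)$ already contains every element of $p$-power order in $G$.

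The one step needing care is condition (2) for the inclusion: a commutation relation $ab=ba$ with $a,b\in\image(f)$ and $a$ of $p$-power order. For an injective map this relation is just equality in $G$, so (2) holds automatically.

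A minor point arises when checking (2) for the surjection, which asks for $x\in K$ of $p$-power order. The hypothesis is already phrased in terms of such $x\in K$, so no lifting of $p$-power elements is needed. Alternatively, once the surjection is known to be away from $p$, one can use \cref{lem:pcoverbasic}(3b), since $BK\to B\image(f)$ is an effective epimorphism; however, the direct check above suffices.
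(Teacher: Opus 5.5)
Your argument is correct and is essentially the paper's: the corollary is left as immediate from \cref{prop:pcovgroup}, because conditions (2) and (3) for $f$ are literally those for $K\to\image(f)$, condition (1) for $f$ is literally that for $\image(f)\subset G$, and all remaining conditions are automatic, which is exactly your check (your use of \cref{lem:pcoverbasic}(3a) for the ``if'' direction is an equally good substitute). One correction to your optional closing remark: $\bfR(BK)\to\bfR(B\image(f))$ need not be an effective epimorphism in $\abglobalspaces$ (for the quotient $Q_8\to C_2\times C_2$, the identity of $C_2\times C_2$ does not lift to a homomorphism into $Q_8$, so the map is not surjective on $\pi_0$ at $\bfB(C_2\times C_2)$), so appealing to \cref{lem:pcoverbasic}(3b) there would require rerunning its descent argument in $\Spc$ via \cref{ex:pipcover}; your direct check makes this unnecessary.
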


\begin{ex}
The map of classifying spaces associated with
	\begin{enumerate}
		\item The inclusion $C_3\subset\Sigma_3$ is away from $3$;
		\item The surjection $D_{30}\rightarrow D_6$ is away from $3$;
		\item The maps $C_2\rightarrow\Sigma_3$ and $\Sigma_3\rightarrow C_2$ are not away from $2$;
        \item The maps $e \to G$ and $G\to e$ are away from $p$ if and only if $p\nmid |G|$.
	\end{enumerate}
    
\end{ex}

\subsection{Parametrized exactness of tempered local systems}\label{ssec:temperedambidexterity}

Lurie's \emph{tempered ambidexterity theorem} \cite[Th.7.2.10]{ec3} can be reformulated in the language of parametrized higher category theory as follows.

\begin{theorem}[Lurie]\label{thm:temperedambidexterity}
Let $\bfG$ be an oriented $\bfP$-divisible group over an $\einfty$ ring. Then $\LocSys_\bfG \in \Cat(\Glo_\ab)^\otimes$ is a symmetric monoidal $\pi$-stable $\ab$-global category, i.e.\ lifts to
\[
\LocSys_\bfG \in \Cat(\Glo_\ab)_{\pi\textup{-st}}^\otimes.
\]
\end{theorem}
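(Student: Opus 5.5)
By \cref{thm:basicfunctoriality}, $\LocSys_\bfG$ is already a symmetric monoidal $\ab$-global category. The plan is therefore to check the conditions of \cref{def:qstable}, \cref{def:Q_cocomplete} and \cref{def:qlexsymmetricmonoidalcat} for $\calQ=\pi$, translating each into a statement from \cite{ec3}.

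\textbf{Fiberwise stability.} First I would show that each $\LocSys_\bfG(\sfX)$ is stable and that every restriction $f^\ast$ is exact. Both tempered conditions in \cref{def:temperedlocalsystems} are closed under finite limits and colimits inside the stable category $\Mod_{\A_{\bfG,\sfX}}$. Completion and base change are exact, and $(\bs)^{\h\Aut(T_0/T)}$ for finite groups is exact. Lurie also records that $\LocSys_\bfG(\sfX)$ is a stable subcategory, closed under limits and colimits in the relevant sense (\cite[\textsection5]{ec3}). The restriction functors are exact because they are computed by precomposition.

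\textbf{Adjoints and base change.} Next I would treat $\pi$-colimits and $\pi$-limits. For any map $f\colon\sfX\to\sfY$, Lurie constructs a right adjoint $f_\ast$ of $f^\ast$, the tempered pushforward. For $f$ relatively $\pi$-finite he also constructs a left adjoint $f_!$ (\cite[\textsection7.1--7.2]{ec3}). The Beck--Chevalley condition for $f_!$ along pullback squares in $\abglobalspaces$ is the base change statement in \cite[\textsection7]{ec3}. If it is not stated in exactly this form, I would reduce to $\sfY=T\in\Glo_\ab$ using the limit-preservation of $\LocSys_\bfG$ in $\sfX$ together with \cref{lem:pcoverlocal}-type locality. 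For $T\in\Glo_\ab$ the claim can then be checked on values, using that $\LocSys$ satisfies descent.

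The projection formula $f_!(f^\ast C\otimes D)\simeq C\otimes f_!D$ follows in the same way. It holds locally on the target by \cite[\textsection7]{ec3}, and since $\otimes$ preserves colimits in each variable it globalizes by descent.

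\textbf{Norm maps.} The key input is \cite[Th.7.2.10]{ec3}: for every relatively $\pi$-finite map $f$, the local system pushforwards satisfy ambidexterity, meaning the Hopkins--Lurie norm $\mathrm{Nm}_f\colon f_!\to f_\ast$ is an equivalence.

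I expect the main obstacle to be comparing norm maps rather than any new mathematics. The norm in \cref{def:Q_cocomplete}(5) is defined as in \cite[Con.3.3]{CLLSpans}, parametrized over $\Glo_\ab$ and defined only for truncated maps in $\bar\pi$. Lurie's norm is defined in \cite[\textsection7.1]{ec3} using the same inductive Hopkins--Lurie recipe via diagonals. I would argue that both are built by the identical induction on truncation level: the diagonal of $f$ is again in $\bar\pi$, and for $(-2)$- and $(-1)$-truncated maps the norms agree tautologically. Since the inductive step uses only adjunction data and base change, which coincide, the two norms agree.

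Granting this comparison, Lurie's theorem gives $\pi$-semiadditivity. Combined with the fiberwise stability above, this shows $\LocSys_\bfG\in\Cat(\Glo_\ab)^\otimes_{\pi\textup{-st}}$.
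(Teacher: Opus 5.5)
Your proposal follows essentially the same route as the paper. The paper's proof just assembles Lurie's results: fiberwise stability \cite[Pr.5.2.12]{ec3}, $\pi$-cocompleteness (Pr.7.2.2), $\pi$-semiadditivity (Th.7.2.10), the projection formula (Th.7.3.1), and the remark that $\otimes$ commutes with fiberwise finite colimits, which you use but should state explicitly. Your fallback reductions and your comparison of Lurie's Hopkins--Lurie norm with the norm of \cite[Con.3.3]{CLLSpans} go beyond what the paper writes but are consistent with it.
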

\begin{proof}
By \cite[Pr.5.2.12.]{ec3}, $\LocSys_\bfG$ is fiberwise stable. By Pr.7.2.2 of \emph{op.~cit.}, $\LocSys_{\bfG}$ is $\pi$-cocomplete, while Th.7.2.10 is the statement that it is $\pi$-semiadditive. The tensor product clearly commutes with fiberwise finite colimits, and the projection formula holds by Th.7.3.1 of \emph{op.~cit.}
\end{proof}

\begin{remark}
We note that Lurie's work even shows that $\LocSys_\bfG$ is a \emph{presentably} symmetric monoidal $\pi$-stable $\ab$-global category, i.e.\ it admits all parametrized limits and colimits and the tensor product commutes with parametrized colimits in each variable. We will not make essential use of this.
\end{remark}

In general, given a morphism $\alpha\colon \bfG' \to \bfG$ of oriented $\bfP$-divisible groups, the induced functor $\alpha^\ast\colon \LocSys_{\bfG'} \to \LocSys_{\bfG}$ of $\ab$-global categories does \emph{not} preserve all $\pi$-limits, and therefore \cref{thm:basicfunctoriality} does \emph{not} combine with \cref{thm:temperedambidexterity} to provide a lift of $\LocSys_{(\bs)}$ to a functor valued in $\Cat(\Glo_\ab)^\otimes_{\pi\hyp\st}$.  Before stating the main theorem of this section, we introduce some notation.

\begin{mydef}\label{df:Cartesian_away_from_S}
Let $S$ be a set of prime numbers. A map $\alpha\colon \bfG'\to\bfG$ of $\bfP$-divisible groups is \emph{Cartesian away from $S$} if for all primes $p\notin S$, the morphism $\alpha_{(p)}\colon \bfG'_{(p)}\to\bfG_{(p)}$ of $p$-divisible groups is Cartesian. Write
\[
\pdiv^{\ori}_{S\nmid,\aff}\subset\pdiv^{\ori}_\aff
\]
for the wide subcategory spanned by the morphisms which are Cartesian away from $S$.
\end{mydef}

\begin{rmk}
A Cartesian morphism of $\bfP$-divisible groups is by definition Cartesian away from any set $S$ of prime numbers. In particular, the composite
\[
\pdiv^{\ori}_{S\nmid,\aff}\subset\pdiv^{\ori}_\aff\to\Aff
\]
is a Cartesian fibration. For an $\E_\infty$ ring $\A$, write $\BT^{\ori}_{S\nmid}(\A)$ for the fiber over $\A$. Then
\[
\BT^{\ori}_{S\nmid}(\A)\subset\BT^{\ori}(\A)
\]
is the wide subcategory spanned by those morphisms $\alpha\colon \bfG'\to\bfG$ of $\bfP$-divisible groups over $\A$ for which $\alpha_{(p)}$ is an isomorphism for all $p\notin S$.
\end{rmk}

Recall that $\awayfromS$ denotes the inductible subcategory of $\abglobalspaces$ spanned by maps of $\pi$-finite spaces that are away from the set $S$ of primes. Our goal in this section is to establish the following.

\begin{theorem}\label{thm:extendedfunctoriality}
The construction of tempered local systems assembles into a functor
\[
(\pdiv_{S\nmid,\aff}^\ori)^\op\to\Cat(\Glo_\ab)^\otimes_{\awayfromS\hyp\st},\qquad \bfG\mapsto\LocSys_\bfG.
\]
\end{theorem}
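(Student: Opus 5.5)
Since $\Cat(\Glo_\ab)^\otimes_{\awayfromS\hyp\st}$ is a (non-full) subcategory of $\Cat(\Glo_\ab)^\otimes$ defined by a property on objects and a property on morphisms, the plan is to start from the functor of \cref{thm:basicfunctoriality} and verify both properties. On objects: by \cref{thm:temperedambidexterity}, $\LocSys_\bfG$ is symmetric monoidal $\pi$-stable, and since $\overline{\awayfromS}\subset\bar\pi$ (by \cref{cor:qstpreinductible}), restricting the adjunctions, base change, norm equivalences and projection formulas to maps locally in $\awayfromS$ shows $\LocSys_\bfG$ is symmetric monoidal $\awayfromS$-stable. On morphisms: for $\alpha\colon\bfG'\to\bfG$ Cartesian away from $S$, we must show $\alpha^\ast\colon\LocSys_\bfG\to\LocSys_{\bfG'}$ preserves $\awayfromS$-colimits. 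By \cref{lem:QexactFunctor} it is equivalent to check $\awayfromS$-limits, which is convenient since $\alpha^\ast$ is given by base change and $f_\ast$ is computed by limits. Fiberwise exactness is clear, as $\alpha^\ast$ is a module base change.

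I would first factor $\alpha$ as a Cartesian morphism followed by a morphism over a fixed $\A$. For Cartesian morphisms, \cite[Pr.6.2.1]{ec3}/Ben-Moshe give compatibility with all $\pi$-(co)limits, so we may assume $\bfG$, $\bfG'$ live over $\A$ with $\alpha_{(p)}$ an isomorphism for $p\notin S$. Since the Beck--Chevalley condition for $f_!$ is local on the target (base change and descent for $\LocSys$ as a limit preserving functor), \cref{lem:pcoverlocal} and \cref{cor:pcovcharacterize} reduce us to maps $f\colon F_i\times \BH_{(S)}\to T=\BH$, with $F_i$ connected, $\pi$-finite, and having homotopy groups prime to $S$. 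Such an $f$ factors as a product of $F_i\to\BH[1/S]$ with the identity on $\BH_{(S)}$, and we may reduce further to a map $F\to\BH[1/S]$ of $\pi$-finite spaces with all homotopy of order prime to $S$, base changed along a projection.

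The key step is to show that for such $f$, the map $\alpha^\ast f_!\to f_!\alpha^\ast$ is an equivalence. I would use Lurie's description of $f_!$ for tempered local systems through formal loop spaces / character theory: $f_!$ (equivalently $f_\ast$, by ambidexterity) of a tempered local system on $F\times T$ depends only on the values on $\bfG$ through $\A_\bfG^{(\bs)}$ evaluated on $T$, $F\times T$, and on covers built from groups of order prime to $S$. Concretely, I would check the comparison after base change to $L_{K(p,n)}\A$ for each prime $p$, since everything is finite flat and \cref{lm:reduction_to_monochromatic}-style descent applies. For $p\notin S$, $\alpha_{(p)}$ is an isomorphism, and the $p$-local tempered theory only sees $\bfG_{(p)}$ on $\BH$-type inputs, so $\alpha$ induces equivalences on all relevant rings. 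For $p\in S$, the map $F\to\BH[1/S]$ is $p$-adically an equivalence onto a product with $\pi$-finite spaces of order prime to $p$; after $K(p,n)$-localization these are $p$-adically discrete (finite étale phenomena), so $f_!$ is a finite product / retract computed identically for $\bfG$ and $\bfG'$, using \cref{lem:orientedseparable} to ensure the base change of modules commutes with the relevant limits.

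I expect this last step to be the main obstacle: making rigorous that $f_!$ for maps away from $S$ is insensitive to the $S$-primary part of $\bfG$, compatibly with the Beck--Chevalley map. My first approach would be to use the $\calL_p$-Cartesian square in \cref{def:pcover} together with Lurie's tempered character isomorphism, which identifies $K(p)$-local $\A_\bfG^{\sfX}$ with Borel data on $\calL_p\sfX$. The Cartesianness then shows that the $p$-adic part of $f$ behaves like base change of a $p'$-finite map, where transfers are invertible averaging maps that commute with any ring map. Finally, naturality is automatic, since we are only restricting a functor that already exists to subcategories.
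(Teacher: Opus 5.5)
Your outer structure matches the paper's. Objects are handled by \cref{thm:temperedambidexterity}, and you pass to limits via \cref{lem:QexactFunctor}. You factor $\alpha$ as a Cartesian morphism (\cref{prop:Cartesianpilimits}) followed by a morphism over a fixed $\A$. You then reduce, by locality and \cref{cor:pcovcharacterize}, to $q=q'\times\id_T\colon F\times T\to T'\times T$ with $F,T'$ prime to $S$ and $T=\BH_{(S)}$.

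The gap is the key step. You leave it open, and the chromatic sketch you give rests on false claims. After $K(p,n)$-localization with $p\notin S$, it is not true that $\alpha$ induces equivalences on all relevant rings. The limit computing $(q_\ast\calF)(T'')$ involves $\A_\bfG^{U}$ for orbits $U$ with nontrivial $S$-part. For $\ell\in S$ with $\ell\neq p$, the \'etale $\ell$-divisible group $\bfG_{(\ell)}$ still contributes to these rings, and $\alpha_{(\ell)}$ need not be an isomorphism (take $\bfG_0\oplus\ul{\Lambda}'\to\bfG_0\oplus\ul{\Lambda}$ as in \cref{lem:cplims}). For the same reason the two sides are not ``computed identically'' when $p\in S$. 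Splitting by the residue prime $p$ is the wrong decomposition: what matters is the prime-to-$S$ versus $S$-primary part of the orbits. You also switch from $q_\ast$ to $f_!$ at exactly the point where the pointwise limit formula for $q_\ast$ is what makes a computation possible.

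The missing idea is that no localization or character theory is needed.
\begin{itemize}
\item Using $\Glo_\ab\simeq\Glo_\ab^{S\nmid}\times\Glo_\ab^{S}$, any $T''\to T'\times T$ splits as $T''[1/S]\times T''_{(S)}\to T'\times T$, and $T''\times_{T'\times T}(F\times T)\simeq(T''[1/S]\times_{T'}F)\times T''_{(S)}$.
\item Hence in the limit $(q_\ast\calF)(T'')=\lim\calF(U)$ one may cofinally restrict to $U=T'''\times T''_{(S)}$ with $T'''$ prime to $S$. The $S$-part is a fixed factor throughout the diagram.
\item We have $\A_\bfG^{T'''\times T''_{(S)}}\simeq\A_\bfG^{T'''}\otimes_\A\A_\bfG^{T''_{(S)}}$, and $\A_\bfG^{T'''}\simeq\A_{\bfG'}^{T'''}$ because $\alpha_{(p)}$ is an isomorphism for $p\notin S$.
\item So $(\alpha^\ast q_\ast\calF)(T'')$ and $(q_\ast\alpha^\ast\calF)(T'')$ are base changes of the same limit along the single map $\A_\bfG^{T''_{(S)}}\to\A_{\bfG'}^{T''_{(S)}}$, taken outside and inside the limit respectively.
\item That map is finite \'etale by \cref{lem:orientedseparable}, hence dualizable, so base change along it commutes with the limit and the comparison is an equivalence.
\end{itemize}
This is the content of \cref{prop:commutesqlims}.
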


Before the proof, it will be convenient to introduce some auxiliary definitions.

\begin{notation}\label{notation:restrictionadjunction}
Let $\bfG$ be an oriented $\bfP$-divisible group and $p\colon \sfY \to \sfX$ be a map of global spaces. Then the restriction
\[
p^\ast\colon \LocSys_{\bfG}(\sfX) \to \LocSys_{\bfG}(\sfY)
\]
preserves all limits and colimits, and therefore sits in a string of adjunctions denoted as
\[
p_! \dashv p^\ast \dashv p_\ast;
\]
see \cite[Not.7.0.1]{ec3}.
\end{notation}

\begin{mydef}
Let $\alpha\colon \bfG'\to\bfG$ be a morphism of oriented $\bfP$-divisible groups and $q\colon \sfY\to\sfX$ be a map of global spaces. Say that \emph{$\alpha^\ast$ preserves $q$-limits} if the square
	\begin{center}\begin{tikzcd}
			\LocSys_{\bfG}(\sfX)\ar[r,"q^\ast"]\ar[d,"\alpha^\ast"]&\LocSys_{\bfG}(\sfY)\ar[d,"\alpha^\ast"]\\
			\LocSys_{\bfG'}(\sfX)\ar[r,"q^\ast"]&\LocSys_{\bfG'}(\sfY)
	\end{tikzcd}\end{center}
is right adjointable in the sense that the natural mate transformation 
	\[
	\alpha^\ast q_\ast \rightarrow q_\ast \alpha^\ast
	\]
	is an equivalence.
\end{mydef}

If $\calQ\subset\abglobalspaces$ is an inductible category consisting of $\pi$-finite morphisms and $\alpha\colon\bfG'\to\bfG$ is a morphism of $\bfP$-divisible groups, then by definition
\[
\alpha^\ast\colon\LocSys_{\bfG'}\to\LocSys_\bfG
\]
is a map of $\calQ$-stable $\ab$-global categories if and only if $\alpha^\ast$ preserves $q$-limits for all $q\in \calQ$. We start by identifying those morphisms of $\bfP$-divisible groups that always satisfy this.

\begin{prop}\label{prop:Cartesianpilimits}
Let $\alpha\colon \bfG' \to \bfG$ be a Cartesian morphism of oriented $\bfP$-divisible groups. Then $\alpha^*$ preserves all $\pi$-limits.
\end{prop}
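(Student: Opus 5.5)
Let $\alpha\colon \bfG'\to\bfG$ be Cartesian, lying over $f\colon \Spec\A'\to\Spec\A$, so that $\bfG'\simeq f^\ast\bfG$. We must show that for every relatively $\pi$-finite map $q\colon \sfY\to\sfX$ the mate $\alpha^\ast q_\ast\to q_\ast\alpha^\ast$ is an equivalence. The plan is to reduce to a local statement and then deduce it from a colimit preservation statement plus tempered ambidexterity, rather than computing $q_\ast$ directly.

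First reduce to a representable base. Both $\LocSys_\bfG$ and $\LocSys_{\bfG'}$ are limit preserving $\ab$-global categories, and $q_\ast$ satisfies base change along arbitrary maps $T\to\sfX$. For the left adjoints this is part of $\pi$-cocompleteness; for the right adjoints it follows by passing to right adjoints of the Beck--Chevalley squares, using that $q_!\simeq q_\ast$ on truncated maps. Equivalences in $\LocSys_{\bfG'}(\sfX)\simeq\lim_{T\to\sfX}\LocSys_{\bfG'}(T)$ are detected after restriction to $T\in\Glo_\ab$. So it suffices to treat $q\colon F\to T$ with $T\in\Glo_\ab$ and $F\to T$ relatively $\pi$-finite. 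By \Cref{df:preinductable}(4) and the global space analogue of the inductive definition of norms, we may moreover assume $q$ is truncated.

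Next, use semiadditivity to trade limits for colimits. By \Cref{thm:temperedambidexterity}, both $\LocSys_\bfG$ and $\LocSys_{\bfG'}$ are $\pi$-semiadditive. By \Cref{lem:QexactFunctor}, it is enough to show that $\alpha^\ast$ preserves $\pi$-colimits, i.e.\ that the mate $q_!\alpha^\ast\to\alpha^\ast q_!$ is an equivalence. This direction is formal: $\alpha^\ast$ is base change $\A'\otimes_\A(\bs)$ of module categories followed by the tempered localization. The functor $q_!$ on tempered local systems is a localization of the left Kan extension of modules along $(\Glo_\ab)_{/F}\to(\Glo_\ab)_{/T}$, and base change of rings commutes with these colimit constructions. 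More precisely, \cite[Pr.6.2.1]{ec3} identifies the extension-of-scalars functor on tempered local systems as a left adjoint to restriction of scalars, and $q^\ast$ commutes with restriction of scalars. Taking left adjoints of the commuting square of right adjoints $q^\ast\alpha_\ast\simeq\alpha_\ast q^\ast$ then gives $q_!\alpha^\ast\simeq\alpha^\ast q_!$, and one checks that this is the mate.

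The main obstacle is the last identification. We must verify that restriction of scalars $\alpha_\ast$ sends $\bfG'$-tempered local systems to $\bfG$-tempered local systems, and that it commutes with $q^\ast$, so that $\alpha^\ast$ is genuinely left adjoint on the tempered subcategories. I expect this to follow from Lurie's base change results for the Cartesian case. If that is unavailable, the fallback is to prove the statement directly for $q_\ast$ on $\pi$-finite $F\to T$ via the explicit limit formula. One would then use that $\A_{\bfG'}^{T}\simeq\A'\otimes_\A\A_\bfG^T$ is finite flat over $\A'$, so base change commutes with the relevant finite limits indexed by $\pi$-finite spaces after tempering.
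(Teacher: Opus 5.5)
Your proposal is correct and is essentially the paper's argument. The paper cites Ben-Moshe's result that base change along a Cartesian morphism is a \emph{parametrized} left adjoint on tempered local systems, which is exactly your restriction-of-scalars right adjoint $\alpha_\ast$ commuting with every $q^\ast$. It concludes that $\alpha^\ast$ preserves $\pi$-colimits, and then applies \Cref{lem:QexactFunctor} using the $\pi$-semiadditivity from \Cref{thm:temperedambidexterity}. Your preliminary reduction to a representable base and truncated $q$ is unnecessary, since \Cref{lem:QexactFunctor} is already a global statement about functors of $\ab$-global categories.
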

\begin{proof}
As proved by \cite[Pr.4.6]{benmoshe_tempered}, base change along a Cartesian morphism of $\bfP$-divisible groups induces a parametrized left adjoint 
\[
\alpha^*\colon \LocSys_{\bfG}\to \LocSys_{\bfG'}
\]
on tempered local systems, and therefore preserves all parametrized \emph{colimits}. Combining this with \cref{lem:QexactFunctor} proves the proposition above.
\end{proof}

As maps in $\pdivoraff$ decompose into a Cartesian morphism followed by a morphism over a fixed $\E_\infty$ ring, we have thus reduced to considering morphisms over a given fixed $\E_\infty$ ring.

\begin{prop}\label{prop:commutesqlims}
Let $\alpha\colon \bfG'\to\bfG$ be a morphism of $\bfP$-divisible groups over an $\bfE_\infty$ ring $\A$, and suppose that $\alpha_{(p)}\colon \bfG'_{(p)}\to\bfG_{(p)}$ is an isomorphism for all $p\notin S$. Then $\alpha^\ast$ commutes with $q$-limits for all $q\in \awayfromS$.
\end{prop}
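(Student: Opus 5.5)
The plan is to reduce, by locality of the Beck--Chevalley condition, to maps $q\colon F\to T$ with $T\in\Glo_\ab$ and $F$ a $\pi$-finite space away from $S$. Then use \Cref{cor:pcovcharacterize} to split $q$ into a ``$p$-primary part'' along which $\alpha$ is an isomorphism and a ``prime-to-$S$'' part that is handled by Lurie's tempered ambidexterity.

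\textbf{Step 1: reduction to a $\pi$-finite source over an orbit.} Both $\LocSys_{\bfG}$ and $\LocSys_{\bfG'}$ are $\pi$-stable by \Cref{thm:temperedambidexterity}, so base change for $q_\ast$ holds in each. The condition that $\alpha^\ast q_\ast\to q_\ast\alpha^\ast$ be an equivalence can be checked after restricting along all $T\to\sfX$ with $T\in\Glo_\ab$, because restrictions are jointly conservative, since $\LocSys(\sfX)=\lim_{T\to\sfX}\LocSys(T)$. By \Cref{cor:qstpreinductible} the base changed map $F\to T$ lies in $\awayfromS$. Since $\pi$-semiadditivity identifies $q_\ast\simeq q_!$, it suffices to show that $\alpha^\ast$ commutes with $q_!$. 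Coproduct decompositions in $F$ are harmless.

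\textbf{Step 2: structural decomposition.} Write $T=\bfB H$ with $H=H_{(S)}\times H[1/S]$. By \Cref{cor:pcovcharacterize} we may assume $F=F'\times \bfB H_{(S)}$, with $F'$ connected and having homotopy groups of order prime to $S$, and $q=f'\times\id$ for some $f'\colon F'\to \bfB H[1/S]$. Then $q$ is the base change of $f'$ along the projection $\bfB H\to \bfB H[1/S]$. Base change in both $\LocSys$'s, together with the compatibility of $\alpha^\ast$ with restriction, reduces the claim to the map $f'\colon F'\to \bfB H[1/S]$. This map is a map of $\pi$-finite spaces all of whose homotopy groups are prime to $S$.

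\textbf{Step 3: the prime-to-$S$ case.} Write $N$ for the product of the orders of all homotopy groups involved, so every prime dividing $N$ lies outside $S$. The plan is to show that $\LocSys_{\bfG}$, restricted to $\pi$-finite spaces with homotopy prime to $S$, depends only on the summand $\bigoplus_{p\notin S}\bfG_{(p)}$. On $\bfB A$ with $|A|$ prime to $S$, $\bfG(\bfB A)=\bfG[\dual A]$ only involves $\bfG_{(p)}$ for $p\mid |A|$. The values $\A_\bfG^{T'}$ entering the computation of $f'_!$ on the relevant orbits therefore agree with $\A_{\bfG'}^{T'}$ via $\alpha$. More concretely, $f'_!$ of a local system is computed by Lurie's formula in terms of the values on orbits $T'\to F'$ and the tempered condition, through \cite[\textsection7]{ec3}. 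For $\alpha^\ast=\A_{\bfG'}^{(\bs)}\otimes_{\A_\bfG^{(\bs)}}(\bs)$ (see \Cref{prop:preservetemper}), the comparison map on each orbit over $F'$ or $\bfB H[1/S]$ is base change along an isomorphism $\A_\bfG^{T'}\simeq\A_{\bfG'}^{T'}$. Hence the mate is an equivalence.

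\textbf{Main obstacle.} The key point is the legitimacy of computing $f'_!$ using only orbits whose groups have order prime to $S$. Objects of $\LocSys(F')$ are modules over all orbits $\bfB A\to F'$, including those with $S$-torsion $A$ mapping trivially. The first thing to try is to combine the connected-fiber condition (1) of \Cref{def:temperedlocalsystems} with finite étaleness from \Cref{lem:orientedseparable}. With these, $\alpha^\ast$ commutes with the relevant limits over $\Aut$-homotopy fixed points and completions, as in the proof of \Cref{prop:preservetemper}. It remains to check that the transfers along $f'$ only involve prime-to-$S$ orbits, using Lurie's description of $f'_\ast$ via $\calL$-formal loops and the fact that $\calL_p F'\simeq F'$ for $p\in S$.
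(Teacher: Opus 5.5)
You reduce to $\sfX = T$ representable in Step 1 and use the splitting of $q$ from \Cref{cor:pcovcharacterize} in Step 2, as the paper does. The next move does not work, though. Base change along $\mathrm{pr}\colon \bfB H\to\bfB H[1/S]$ identifies the mate $\alpha^\ast q_\ast\to q_\ast\alpha^\ast$ with $\mathrm{pr}^\ast$ of the mate for $f'$ only on objects of the form $\mathrm{pr}^\ast\calF$ with $\calF\in\LocSys_\bfG(F')$. A general tempered local system on $F'\times\bfB H_{(S)}$ is not of this form. What it carries beyond such objects is exactly the $S$-primary equivariance, which is where $\alpha$ fails to be an isomorphism. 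Adjointability along one map does not propagate to its base changes; this is why \Cref{def:qstable} quantifies over all of $\bar{\calQ}$. Your Step 1 runs in the valid direction, from all local pieces to the global map, but Step 2 runs in the invalid one.

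Step 3 is also false as stated, even for $f'$ itself. The value $(f'_\ast\calF)(T'')$ is a limit over all orbits over $T''\times_{\bfB H[1/S]}F'$, and $T''$ may have $S$-torsion, e.g.\ $\bfB C_p\to\ast\to\bfB H[1/S]$ with $p\in S$. There $\A_\bfG^{T''}\to\A_{\bfG'}^{T''}$ is not an isomorphism. The equivalence $\calL_pF'\simeq F'$ does not exclude such orbits. So "only prime-to-$S$ orbits enter" cannot be proved, and your argument never actually handles the $S$-part.

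The missing step is how the paper finishes. Take $T$ to be the $S$-part and $T'$ the prime-to-$S$ part. Every orbit $T''\to T'\times T$ splits as $T''[1/S]\times T''_{(S)}$. The indexing category for $(q_\ast\calF)(T'')$ then splits as a product whose $S$-factor has a terminal object. Hence the limit runs over prime-to-$S$ orbits $T'''$ with values $\calF(T'''\times T''_{(S)})$. Now use $\A_\bfG^{T''}\simeq\A_\bfG^{T''[1/S]}\otimes_\A\A_\bfG^{T''_{(S)}}$, together with the fact that $\alpha$ is an isomorphism on the first factor. The mate becomes
\[
\A_{\bfG'}^{T''_{(S)}}\otimes_{\A_\bfG^{T''_{(S)}}}\lim_{T'''}\calF(T'''\times T''_{(S)})\longrightarrow\lim_{T'''}\A_{\bfG'}^{T''_{(S)}}\otimes_{\A_\bfG^{T''_{(S)}}}\calF(T'''\times T''_{(S)}).
\]
This map is an equivalence because $\A_\bfG^{T''_{(S)}}\to\A_{\bfG'}^{T''_{(S)}}$ is finite \'etale by \Cref{lem:orientedseparable}, hence dualizable. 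So finite \'etaleness is the crux of the proof, not merely an aid for the completion condition as in \Cref{prop:preservetemper}. Finally, passing to $q_!$ in Step 1 is unnecessary and unhelpful: the explicit right Kan extension formula drives the computation, and it is available for $q_\ast$.
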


\begin{proof}
Let $q\colon \sfY\to\sfX$ be a relatively $\pi$-finite map of orbispaces away from $S$, and consider the square
\begin{equation}\label{eq:alphaqadj}
\begin{tikzcd}
\LocSys_{\bfG}(\sfX)\ar[r,"q^\ast"]\ar[d,"\alpha^\ast"]&\LocSys_\bfG(\sfY)\ar[d,"\alpha^\ast"]\\
\LocSys_{\bfG'}(\sfX)\ar[r, "q^\ast"]&\LocSys_{\bfG'}(\sfY).
\end{tikzcd}.\end{equation}
Note that
\[
\sfX\simeq\underset{T\in (\Glo_\ab)_{/\sfX}}{\colim}T,\qquad \sfY\simeq\underset{T\in (\Glo_\ab)_{/\sfX}}{\colim}(\sfY\times_\sfX T).
\]
As $\LocSys_\bfG$ and $\LocSys_{\bfG'}$ are $\ab$-global categories, it follows that (\ref{eq:alphaqadj}) is a limit of the corresponding squares for the maps $\sfY\times_\sfX T \to T$. As adjointable squares are closed under limits \cite[Cor.4.7.4.18]{ha}, we therefore reduce to the case where $\sfX = T \in \Glo_\ab$. By \cref{cor:pcovcharacterize}, we therefore reduce to the case where $q$ is of the form
\[
(q'_i,\id_T)\colon \coprod_{i\in I}F_i\times T \to T' \times T,
\]
where
\begin{enumerate}
\item $I$ is a finite set;
\item Each $F_i$ is a connected $\pi$-finite space and $T,T'\in \Glo_\ab$;
\item The orders of $\pi_\ast F_i$ and $\pi_1 T'$ are coprime to the order of $\pi_1 T$;
\item If a prime $p$ divides the order of $\pi_\ast F_i$ or $\pi_1 T'$, then $p\notin S$.
\end{enumerate}
This map factors as the composite
\[
\nabla\circ (q'_i,\id_T)\colon \coprod_{i\in I}F_i\times T \to  \coprod_{i\in I} T'\times T \to T' \times T.
\]
As $\alpha^\ast$ is additive, it preserves $\nabla$-limits. We may therefore reduce to the case where $I$ is a singleton, and abbreviate $F = F_i$ and $q' = q'_i$. In other words, we have reduced to verifying that the square 
\begin{equation}
\begin{tikzcd}
\LocSys_{\bfG}(T'\times T)\ar[r,"q^\ast"]\ar[d,"\alpha^\ast"]&\LocSys_\bfG(F\times T)\ar[d,"\alpha^\ast"]\\
\LocSys_{\bfG'}(T'\times T)\ar[r,"q^\ast"]&\LocSys_{\bfG'}(F\times T).
\end{tikzcd}\end{equation}
is right adjointable, where $q = q'\times \id_T\colon F \times T \to T'\times T$ and these spaces satisfy (1--4) above. We will show this by a direct and straightforward---if notationally heavy---computation of the comparison map $\alpha^\ast q_\ast \to q_\ast\alpha^\ast$.

Write $\Glo_\ab^{S\nmid}$ and $\Glo_\ab^S$ for the full subcategories of $\Glo_\ab$ spanned by those $T$ for which the prime factors of $\pi_1 T$ lie either out of or in $S$, and observe that
\[
\Glo_\ab^{S\nmid}\times\Glo_\ab^{S} \to \Glo_\ab,\qquad (T',T)\mapsto T'\times T
\]
is an equivalence. In particular, suppose given a map $T'' \to T'\times T$ in $\Glo_\ab$. Then this map splits canonically as
\[
T''[1/S]\times T''_{(S)} \to T' \times T,
\]
where $T''[1/S] \in \Glo_\ab^{S\nmid}$ and $T''_{(S)} \in \Glo_\ab^S$, at which point
\[
T'' \times_{(T'\times T)}(F\times T)\simeq (T''[1/S] \times_{T'}F) \times T''_{(S)}
\]
and
\[
(\Glo_{\ab})_{/T''\times_{(T'\times T)}(F\times T)} \simeq (\Glo_\ab^{S\nmid})_{/T''[1/S]\times_{T'}F}\times (\Glo_\ab^{S})_{/T''_{(S)}}.
\]
In particular, the functor
\begin{align*}
(\Glo_\ab^{S\nmid})_{/T''[1/S]\times_{T'}F} &\to (\Glo_\ab)_{/T''\times_{(T'\times T)}(F\times T)},\\
 (f\colon T''' \to T''[1/S]\times_{T'}F) &\mapsto (f\times T''_{(S)}\colon T'''\times T''_{(S)} \to T''[1/S]\times_{T'}F \times T''_{(S)})
\end{align*}
is cofinal.

Now fix $\fmodule \in \LocSys_\bfG(F\times T)$. We may describe the map
\[
\alpha^\ast q_\ast \fmodule \to q_\ast \alpha^\ast\fmodule
\]
as follows. Fix a map $T'' \to T'\times T$. We then have, with the above notation,
\begin{align*}
(\alpha^\ast q_\ast\fmodule)(T'') &= \A_{\bfG'}^{T''}\otimes_{\A_\bfG^{T''}}\lim_{T'''\in (\Glo_\ab)_{/T''\times_{T'\times T}F\times T}}\calF(T''')\\
&\simeq \A_{\bfG'}^{T''}\otimes_{\A_{\bfG}^{T''}} \lim_{T''' \in (\Glo_\ab^{S\nmid})_{/T''[1/S]\times_{T'}F}}\fmodule(T'''\times T''_{(S)}).
\end{align*}
By the definition of a $\bfP$-divisible group, we have
\[
\A_{\bfG}^{T''}\simeq \A_{\bfG}^{T''[1/S]}\otimes_\A \A_\bfG^{T''_{(S)}},\qquad 
\A_{\bfG'}^{T''}\simeq \A_{\bfG'}^{T''[1/S]}\otimes_\A \A_{\bfG'}^{T''_{(S)}}.
\]
By assumption, the map $\A_{\bfG}^{T''[1/S]} \to \A_{\bfG'}^{T''[1/S]}$ is an isomorphism. It follows that
\[
(\alpha^\ast q_\ast\fmodule)(T'')\simeq \A_{\bfG'}^{T''_{(S)}}\otimes_{\A_{\bfG}^{T''_{(S)}}}\lim_{T''' \in (\Glo_\ab^S)_{/T''[1/S]\times_{T'}F}}\fmodule(T'''\times T''_{(S)}).
\]
Similarly, we compute
\begin{align*}
(q_\ast\alpha^\ast\fmodule)(T'') &= \lim_{T'''\in (\Glo_\ab)_{/T''\times_{(T'\times T)}(F\times T)}}\A_{\bfG'}^{T'''}\otimes_{\A_{\bfG}^{T'''}}\fmodule(T''')\\
&\simeq \lim_{T''' \in (\Glo_\ab^S)_{/T''[1/S]\times_{T'}F}}\A_{\bfG'}^{T'''\times T''_{(S)}}\otimes_{\A_{\bfG'}^{T'''\times T''_{(S)}}} \fmodule(T'''\times T''_{(S)})\\
&\simeq \lim_{T''' \in (\Glo_\ab^S)_{/T''[1/S]\times_{T'}F}}\A_{\bfG'}^{T''_{(S)}}\otimes_{\A_{\bfG'}^{T''_{(S)}}} \fmodule(T'''\times T''_{(S)}),
\end{align*}
and so $(\alpha^\ast q_\ast \fmodule)(T'') \to (q_\ast \alpha^\ast \fmodule)(T'')$ is the comparison map
\begin{align*}
\A_{\bfG'}^{T''_{(S)}}\otimes_{\A_{\bfG}^{T''_{(S)}}}& \lim_{T''' \in (\Glo_\ab^S)_{/T''[1/S]\times_{T'}F}}\fmodule(T'''\times T''_{(S)}) \\
\to
&\lim_{T''' \in (\Glo_\ab^S)_{/T''[1/S]\times_{T'}F}}\A_{\bfG}^{T''_{(S)}}\otimes_{\A_{\bfG'}^{T''_{(S)}}} \fmodule(T'''\times T''_{(S)}).
\end{align*}
This is an equivalence as $\A_{\bfG}^{T''_{(S)}} \to \A_{\bfG'}^{T''_{(S)}}$ is finite \'etale and therefore dualizable by \cref{lem:orientedseparable}.
\end{proof}

Putting everything together, we have the following.

\begin{proof}[Proof of \cref{thm:extendedfunctoriality}]
We must show that if $\alpha\colon \bfG'\to\bfG$ is a morphism of oriented $\bfP$-divisible groups which is Cartesian away from $S$, then $\alpha^\ast$ commutes with $q$-limits for all $q\in \awayfromS$. The morphism $\alpha$ factors as a composite 
\[
\alpha = \alpha'\circ\alpha''\colon \bfG'\to\bfG''\to\bfG,
\]
where $\alpha'\colon \bfG''\to\bfG$ is a Cartesian morphism and $\alpha''\colon\bfG'\to\bfG''$ is a morphism of oriented $\bfP$-divisible groups over a fixed base for which $\alpha''_{(p)}$ is an isomorphism for all $p \notin S$. Both $\alpha^{\prime\ast}$ and $\alpha^{\prime\prime\ast}$ preserves $q$-limits by \cref{prop:commutesqlims} and \cref{prop:Cartesianpilimits} respectively, and therefore so too does $\alpha^\ast$.
\end{proof}

A certain converse to \cref{thm:extendedfunctoriality} also holds over each fixed $\bfE_\infty$ ring.

\begin{lemma}\label{lem:cplims}
Let $\bfG_0$ be an oriented $\bfP$-divisible group over an $\bfE_\infty$ ring $\A$. Let $\tilde{\alpha}\colon \Lambda'\to\Lambda$ be a map of colattices, and consider the induced map $\alpha\colon \bfG_0 \oplus \ul{\Lambda}' \to \bfG_0 \oplus \ul{\Lambda}$ of oriented $\bfP$-divisible groups over $\A$. Write $q\colon \B C_p \to \ast$ for the projection. If $\alpha^\ast$ preserves $q$-limits, then $\alpha_{(p)}$ is an isomorphism.
\end{lemma}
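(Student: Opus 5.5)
The idea is to test the assumed right adjointability on the monoidal unit. By \cref{thm:temperedambidexterity} the unit $\bbone \in \LocSys_{\bfG}(\B C_p)$ exists, so this yields a concrete ring map, which I will then read off algebro-geometrically as $\alpha[\dual{C_p}]\colon \bfG'[p]\to\bfG[p]$. The argument has three steps.

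\textbf{Step 1 (the mate on the unit).} Write $\bfG' = \bfG_0\oplus\ul{\Lambda}'$ and $\bfG = \bfG_0\oplus\ul{\Lambda}$, and let $q\colon \B C_p\to\ast$ be the projection. I will evaluate the mate $\alpha^\ast q_\ast \to q_\ast\alpha^\ast$ on the unit $\bbone_{\bfG}\in\LocSys_{\bfG}(\B C_p)$; I have not checked this identification carefully.
\begin{itemize}
\item Since $\alpha$ lives over the fixed ring $\A$, the functor $\alpha^\ast$ on $\LocSys_{\bfG}(\ast)\simeq\Mod_\A$ is the identity. This uses the formula $(\alpha^\ast\fmodule)(T)=\A_{\bfG'}^T\otimes_{\A_{\bfG}^T}\fmodule(T)$ from the proof of \cref{prop:preservetemper}.
\item The functor $\alpha^\ast$ is symmetric monoidal, so $\alpha^\ast\bbone_{\bfG}\simeq\bbone_{\bfG'}$.
\item By Lurie's description of pushforward of the unit along $q$ (equivalently, the computation of global sections as in the proof of \cref{thm:generaldecategorification}), $q_\ast\bbone_{\bfG}\simeq\A_{\bfG}^{\B C_p}$ as an $\A$-module.
\end{itemize}
Consequently the mate becomes the map $\A_{\bfG}^{\B C_p}\to\A_{\bfG'}^{\B C_p}$ induced by $\alpha$, which is the map $\calO(\bfG[\dual{C_p}])\to\calO(\bfG'[\dual{C_p}])$. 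The main obstacle is checking that this mate really is the restriction ring map. The key input is that the unit map of $q^\ast\dashv q_\ast$ is compatible with $\alpha^\ast$. I expect this to be a formal consequence of the unit being pulled back from $\ast$, together with the adjunction $\Map(\bbone,q_\ast\bbone)\simeq\Map(q^\ast\bbone,\bbone)$.

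\textbf{Step 2 (reduce to the constant parts).} By hypothesis, the map $\bfG'[p]\to\bfG[p]$ is therefore an isomorphism of affine schemes over $\Spec\A$. This map is $\id\times\tilde\alpha$ from $\bfG_0[p]\times\Lambda'[p]$ to $\bfG_0[p]\times\Lambda[p]$, where the second factors are constant finite schemes. On functions it is
\[
\calO(\bfG_0[p])\otimes_\A \A^{\Lambda[p]}\to\calO(\bfG_0[p])\otimes_\A \A^{\Lambda'[p]} .
\]
If $\A=0$ the statement is vacuous, so assume $\A\neq0$. Then $\calO(\bfG_0[p])$ is finite flat over $\A$ of positive degree, and hence faithfully flat. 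Faithfully flat descent then shows that $\A^{\Lambda[p]}\to\A^{\Lambda'[p]}$ is an isomorphism. Since $\A\neq 0$, this forces the map of finite sets $\tilde\alpha\colon\Lambda'[p]\to\Lambda[p]$ to be a bijection. To see this, compare ranks and use the idempotents of $\pi_0$ indexed by the points.

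\textbf{Step 3 (lattice algebra).} Now pass to $p$-primary parts $\Lambda'_{(p)}\simeq(\bfQ_p/\bfZ_p)^{n'}$ and $\Lambda_{(p)}\simeq(\bfQ_p/\bfZ_p)^{n}$.
\begin{itemize}
\item Injectivity of $\tilde\alpha$ on $p$-torsion means $\ker\tilde\alpha_{(p)}$ has no $p$-torsion, so $\tilde\alpha_{(p)}$ is injective.
\item Counting $p$-torsion gives $n=n'$.
\item The image of $\tilde\alpha_{(p)}$ is then a divisible subgroup of corank $n$ in $(\bfQ_p/\bfZ_p)^n$, hence everything, so $\tilde\alpha_{(p)}$ is also surjective.
\end{itemize}
Hence $\tilde\alpha_{(p)}$ is an isomorphism, and so $\alpha_{(p)}=\id_{\bfG_{0,(p)}}\oplus\tilde\alpha_{(p)}$ is an isomorphism.
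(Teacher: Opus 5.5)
Your proposal is correct and follows the paper's proof. The paper likewise evaluates the Beck--Chevalley map on the unit at the terminal object to obtain $\A_{\bfG'}^{T}\otimes_{\A_\bfG^{T}}\A_\bfG^{T'}\to\A_{\bfG'}^{T'}$, deduces that $\bfG'[\dual{C_p}]\to\bfG[\dual{C_p}]$ is an isomorphism, and reduces this to $\Lambda'[p]\cong\Lambda[p]$; your Steps 2 and 3 supply details the paper leaves implicit, namely removing the $\bfG_0$ factor, the idempotent argument for finite sets, and the colattice algebra.
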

\begin{proof}
In general, suppose given a map $q\colon T' \to T$ in $\Glo_\ab$. The Beck--Chevalley map
\[
\alpha^\ast q_\ast \to q_\ast \alpha^\ast
\]
of functors $\LocSys_\bfG(T') \to \LocSys_{\bfG'}(T)$, after evaluating on the monoidal unit and evaluating on the terminal object of $(\Glo_\ab)_{/T}$, is the natural map
\[
\A_{\bfG'}^T \otimes_{\A_{\bfG}^T}\A_{\bfG}^{T'} \to \A_{\bfG'}^{T'}.
\]
Therefore, if $\alpha$ preserves $q$-limits, then the square
\begin{center}\begin{tikzcd}
\bfG'(T')\ar[r]\ar[d]&\bfG(T')\ar[d]\\
\bfG'(T)\ar[r]&\bfG(T)
\end{tikzcd}\end{center}
is Cartesian. By choosing a basepoint of $T'$, we may write $q$ in the form $\BH' \to \BH$ for a group homomorphism $H' \to H$. If $\alpha$ is of the form $\bfG_0\oplus \ul{\Lambda'}\to\bfG_0 \oplus \ul{\Lambda}$ for a map $\tilde{\alpha}\colon \Lambda' \to \Lambda$ of colattices and $\A$ is nonzero, then this square is Cartesian if and only if the square
\begin{center}\begin{tikzcd}
\Hom(\dual{H}',\Lambda')\ar[r]\ar[d]&\Hom(\dual{H}',\Lambda)\ar[d]\\
\Hom(\dual{H},\Lambda')\ar[r]&\Hom(\dual{H},\Lambda)
\end{tikzcd}\end{center}
of sets is Cartesian. In the case where $H' \to H$ is of the form $C_p \to e$, this holds if and only if the induced map $\Lambda'[p]\to\Lambda[p]$ on $p$-torsion is an isomorphism, which in turn holds if and only if $\tilde{\alpha}_{(p)}\colon \Lambda'_{(p)}\to\Lambda_{(p)}$ and therefore $\alpha_{(p)}\colon \bfG'_{(p)}\to\bfG_{(p)}$ is an isomorphism.
\end{proof}

\begin{prop}\label{prop:pconverse}
Let $\alpha\colon\bfG'\to\bfG$ be a morphism of oriented $\bfP$-divisible groups over an $\bfE_\infty$ ring $\A$, and suppose that $\alpha^\ast\colon \LocSys_\bfG\to\LocSys_{\bfG'}$ is a map of $\awayfromS$-stable $\ab$-global categories. Then $\alpha_{(p)}$ is an isomorphism for all $p\notin S$.
\end{prop}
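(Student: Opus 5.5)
The plan is to extract from the hypothesis a family of Beck--Chevalley equivalences indexed by the maps $q_n\colon \bfB C_{p^n}\to\ast$ for a fixed prime $p\notin S$. Evaluated on the monoidal unit, these equivalences force $\alpha$ to be an isomorphism on all $p$-power torsion. This is the argument of \cref{lem:cplims} with its splitting hypothesis removed: we test against all $C_{p^n}$ rather than only $C_p$.

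First, fix $p\notin S$ and $n\geq 1$, and check that $q_n\colon \bfB C_{p^n}\to \ast$ lies in $\awayfromS$. By \cref{ex:pipcover}, we must verify for each $\ell\in S$ that $\map(B\bfZ_\ell,BC_{p^n})\simeq BC_{p^n}$. This holds because $\Hom(\bfZ_\ell,C_{p^n})=0$ and the centralizer of the trivial element is everything; equivalently, we can cite the example following \cref{prop:pcovgroup} that $G\to e$ is away from $\ell$ when $\ell\nmid|G|$. Since $\alpha^\ast$ is a map of $\awayfromS$-stable $\ab$-global categories, it preserves $q_n$-limits, so the mate $\alpha^\ast (q_n)_\ast\to (q_n)_\ast\alpha^\ast$ is an equivalence of functors $\LocSys_\bfG(\bfB C_{p^n})\to\LocSys_{\bfG'}(\ast)$.

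Second, evaluate this equivalence on the monoidal unit and on the terminal object of $(\Glo_\ab)_{/\ast}$. As in the first half of the proof of \cref{lem:cplims}, which uses no splitting hypothesis, the result is the natural map
\[
\A_{\bfG'}^{\ast}\otimes_{\A_{\bfG}^{\ast}}\A_{\bfG}^{\bfB C_{p^n}}\longrightarrow \A_{\bfG'}^{\bfB C_{p^n}}.
\]
Here $\A_{\bfG}^\ast=\A_{\bfG'}^\ast=\A$, because both groups live over the fixed ring $\A$. This map is therefore an equivalence $\A\otimes_\A\Gamma\calO_\bfG^{\bfB C_{p^n}}\simeq\Gamma\calO_{\bfG'}^{\bfB C_{p^n}}$ induced by $\alpha$. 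By the definition of a preorientation, $\bfG(\bfB C_{p^n})\simeq \bfG[\widehat{C_{p^n}}]\simeq\bfG[\bfZ/p^n]$, so we conclude that $\alpha[\bfZ/p^n]\colon \bfG'[\bfZ/p^n]\to\bfG[\bfZ/p^n]$ is an isomorphism for every $n$.

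Finally, the $p$-divisible group $\bfG_{(p)}$ is determined by its values on finite abelian $p$-groups, and by condition (1) of \cref{def:pdivisible} each such value is a fiber product of values on cyclic groups $\bfZ/p^n$. Hence $\alpha_{(p)}$ is an isomorphism on all finite abelian $p$-groups, so it is an isomorphism. I expect the only delicate step to be the second: correctly identifying the Beck--Chevalley map on the unit with the base-change map of rings. Since this was already done in \cref{lem:cplims}, I would reuse that computation rather than redo it.
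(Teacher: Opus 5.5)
Your argument is correct, and it takes a different route from the paper's.

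\textbf{The paper's route.} It uses that base change along Cartesian morphisms preserves $\pi$-limits (\cref{prop:Cartesianpilimits}) to pass to local and then $K(n)$-local coefficients. Then, via the Quillen $\bfP$-divisible group and \cite[Pr.2.7.15]{ec3}, it passes to a faithfully flat extension over which $\alpha$ has the split form $\bfG_0\oplus\ul{\Lambda}'\to\bfG_0\oplus\ul{\Lambda}$. Only then does it apply \cref{lem:cplims} with the single test map $\bfB C_p\to\ast$. In that split case, an isomorphism on the $p$-torsion of colattices visibly gives an isomorphism of $p$-primary parts.

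\textbf{Your route.} You observe that the first half of \cref{lem:cplims} (identifying the Beck--Chevalley map on the unit with $\A_{\bfG'}^T\otimes_{\A_\bfG^T}\A_\bfG^{T'}\to\A_{\bfG'}^{T'}$) holds over any base. You make up for the missing splitting by testing against all the maps $\bfB C_{p^n}\to\ast$. This gives that $\alpha[\bfZ/p^n]$ is an isomorphism for every $n$ directly over $\A$, and the fiber-product condition on $\bfG$ then finishes the proof. This removes every chromatic and descent reduction, so it is the more elementary proof.

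\textbf{What the paper's route buys.} It shows in passing a slightly finer fact: preserving $q$-limits for the single map $\bfB C_p\to\ast$ already forces $\alpha_{(p)}$ to be an isomorphism. To recover that from your argument, you would need an extra d\'evissage showing that a map of $p$-divisible groups which is an isomorphism on $\bfG[\bfZ/p]$ is an isomorphism. Over a general $\E_\infty$ ring this is a torsor argument, not a formality. For the proposition as stated, all of $\awayfromS$ is available, so your version is enough.
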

\begin{proof}
For any map $\A \to \B$ of $\bfE_\infty$ rings, we have a commutative diagram
\begin{center}\begin{tikzcd}
\LocSys_\bfG\ar[d,"\alpha^\ast"]\ar[r]&\LocSys_{f^\ast\bfG}\ar[d,"\alpha^\ast"]\\
\LocSys_{\bfG'}\ar[r]&\LocSys_{f^\ast\bfG'}
\end{tikzcd}\end{center}
of $\ab$-global categories in which the horizontal functors preserve all $\pi$-limits by \cref{prop:Cartesianpilimits}. The horizontal functors are conservative as one allows $\B$ to vary over the localizations $\A_\frakp$ for prime ideals $\frakp\subset\pi_0 \A$, so we may suppose without loss of generality that $\pi_0 \A$ is local, so that $\bfG$ and $\bfG'$ have constant height. As $\A$ supports an oriented $\bfP$-divisible group, it follows that $\A$ is $L_n$-local for some $n < \infty$ \cite[Cor.2.5.7]{ec3}. Therefore, the horizontal functors are conservative after allowing $\B$ to vary over the $K(i)$-localizations of $\A$ for $i\leq n$, so we may reduce to the case where $\A$ is $K(n)$-local for some $n$. In particular, $\A$ admits a Quillen $\bfP$-divisible group by \cite[\textsection 4.6]{ec2}. By applying \cite[Pr.2.7.15]{ec3} with $\bfG_0$ the Quillen $\bfP$-divisible group of $\A$, we may, after possibly passing to a further faithfully flat extension, reduce to the case where $\alpha$ is of the form $\bfG_0 \oplus \ul{\Lambda}'\to \bfG_0 \oplus \ul{\Lambda}$ for a map $\tilde{\alpha}\colon \Lambda'\to\Lambda$ of colattices. As $\bfB C_p \to \ast$ lies in $\awayfromS$ if and only if $p\notin S$, the proposition therefore follows from \cref{lem:cplims}.
\end{proof}

\subsection{Extension to stacks}\label{ssec:stacks}

For applications, such as to elliptic cohomology, one wants to be able to work with oriented $\bfP$-divisible groups defined over non-affine base stacks. We now explain how the constructions of this section extend to non-affine base stacks. This is essentially a formal consequence of the following.

\begin{prop}\label{prop:flatdescent}
The functors
\[
\BT,\BT^\pre,\BT^\ori_{S\nmid}\colon \CAlg \to \Cat,
\]
as well as
\begin{align*}
\LocSys_{(\bs)}\colon &(\pdiv_{S\nmid,\aff}^{\ori})^\op \to \Cat(\Glo_\ab)^\otimes_{\awayfromS\hyp\st},
\end{align*}
all satisfy flat descent.
\end{prop}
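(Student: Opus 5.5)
Our plan is to reduce everything to two inputs: flat descent for $\bfE_\infty$ rings and finite flat modules, and Lurie's descent results for tempered local systems. We first treat $\BT$. A $\bfP$-divisible group over $\A$ is a functor $\Ab_\fin^\op\to\Aff_\A$ whose values are finite flat $\A$-schemes, subject to two conditions: biCartesian squares go to Cartesian squares, and injections go to finite flat maps of positive degree. Finite flat (affine) $\A$-schemes satisfy flat descent, since $\CAlg_\A$ restricted to finite flat algebras satisfies faithfully flat descent by \cite[\textsection D]{sag}. Both conditions can be checked after a faithfully flat base change, because fiber products commute with base change and being finite flat of positive degree is flat local. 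Hence $\BT$, viewed as a full subcategory of $\Fun(\Ab_\fin^\op,\CAlg^{\mathrm{ff}}_{(\bs)}{}^\op)$, satisfies flat descent.

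For $\BT^\pre$ we argue the same way. A preorientation is a functor $\Glo_\ab\to\Aff_\A$ extending $\bfG$. For a general $T\in\Glo_\ab$, $\bfG(T)$ need not be finite flat, but for Lurie's preoriented groups $\bfG(T)$ is determined by right Kan extension from the shifted Pontryagin dual of $\Ab_\fin^\op$ (it is a fiber product of the $\bfG[H]$). So the data is equivalently a lift of functors on $\Ab_\fin^\op$ together with a compatibility. Descent for $\BT^\pre$ then follows from descent for functor categories into the finite flat part, which is a limit of the previous case.

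Being an orientation is a condition, namely that the Bott map $\omega_{\bfG_p^\circ}\to\Sigma^{-2}\A_p^\wedge$ is an equivalence. It is compatible with base change (by \cite[\textsection4.3]{ec2}) and can be detected after faithfully flat base change, since $p$-completion commutes with flat base change on the relevant connective pieces. The morphisms in $\BT^\ori_{S\nmid}$ are those $\alpha$ with $\alpha_{(p)}$ an isomorphism for $p\notin S$. This is also flat-local, because the $p$-primary splitting is compatible with base change and equivalences descend.

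For $\LocSys_{(\bs)}$, the claim is that for a faithfully flat $\A\to\B$ with Čech nerve $\B^\bullet$, the map $\LocSys_\bfG(\sfX)\to\lim\LocSys_{\bfG_{\B^\bullet}}(\sfX)$ is an equivalence for each $\sfX$, naturally in $\sfX$ and compatibly with monoidal structures. The limit is then taken in $\Cat(\Glo_\ab)^\otimes_{\awayfromS\hyp\st}$. By \cref{prop:limits_of_stable}, such limits are computed underlying in $\Cat(\Glo_\ab)$, hence pointwise in $\Cat$, and the transition functors are base changes along Cartesian morphisms, which are $\pi$-exact by \cref{prop:Cartesianpilimits}. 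It therefore suffices to show pointwise descent. Since $\LocSys_\bfG$ is limit preserving in $\sfX$, we reduce to $\sfX=T\in\Glo_\ab$. There $\LocSys_\bfG(T)$ is identified by \cite[Th.5.3.?/Pr.5.2.? ]{ec3} with $\Mod_{\A_\bfG^T}$; Lurie's equivalence between tempered local systems on an orbit and modules over $\A_\bfG^T$ is what we would invoke. Since $\A_\bfG^T\to\A_\bfG^T\otimes_\A\B$ is faithfully flat, flat descent for module categories (\cite[Cor.D.6.3.3]{sag}) finishes the argument.

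We expect the main obstacle to be this last identification and its naturality in base change. We must ensure that the identification $\LocSys_\bfG(T)\simeq\Mod_{\A_\bfG^T}$ intertwines the functors $f^\ast$ along Cartesian morphisms with $\B\otimes_\A(\bs)$, using the explicit formula $(\alpha^\ast\calF)(T)=\A_{\bfG'}^T\otimes_{\A_\bfG^T}\calF(T)$ from the proof of \cref{prop:preservetemper}. If the orbitwise identification proves awkward, we would instead check descent directly on $\Mod_{\A_{\bfG,\sfX}}$, which is objectwise a module category over a diagram of rings. We would then verify that the two tempered conditions of \cref{def:temperedlocalsystems} descend. Condition (1) descends by faithfully flat detection of equivalences. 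Condition (2) is completion-theoretic, and $I$-adic completion commutes with flat base change for finitely generated $I$.
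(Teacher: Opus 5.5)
Your treatment of $\BT$, $\BT^\pre$ and $\BT^\ori_{S\nmid}$ matches the paper, which only says the argument is analogous to \cite[Pr.3.2.2]{ec2}. There is one inaccuracy. Shifted Pontryagin duality $\Ab_\fin^\op\to\Glo_\ab$ is essentially surjective but not fully faithful, so $\bfG(\bfB H)=\bfG[\dual{H}]$ and a preorientation is extra structure, not something recovered by Kan extension. Descent still holds, since $\BT^\pre(\A)$ is a fiber product of $\BT(\A)$ with functor categories into finite flat $\A$-schemes. Your reduction for $\LocSys$ to pointwise descent via \cref{prop:limits_of_stable}, and then to $\sfX=T\in\Glo_\ab$, is also fine.

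The genuine gap is the step you flagged: $\LocSys_\bfG(T)\simeq\Mod_{\A_\bfG^T}$ is false for general $T$. It holds for $T=\ast$, where every map $\bfB K\to\ast$ has connected fibers, but fails as soon as $T$ has nontrivial coverings. For instance, take $\A$ rational and $\bfG=0$, which is oriented since $\A_p^\wedge=0$ makes the Bott condition vacuous. Then $\A_\bfG^{T}\simeq\A$ for every $T$, and all augmentation ideals in \cref{def:temperedlocalsystems} vanish. One checks directly that, for $H\neq 0$, any $\A$-module $M$ with $H$-action gives a tempered local system $(\bfB K\xrightarrow{f}\bfB H)\mapsto M^{\h K}$ on $\bfB H$. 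The action is carried by the automorphism group $H$ of the object $\ast\to\bfB H$ of $(\Glo_\ab)_{/\bfB H}$. The regular representation $\A[H]$ and the trivial representation $\A$ both have value $\A$ at $\id_{\bfB H}$ but differ at $\ast\to\bfB H$. So evaluation at $T$ is not an equivalence: tempered local systems on an orbit carry descent data along coverings, which is exactly what condition (2) encodes.

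Your fallback is to descend in $\Mod_{\A_{\bfG,\sfX}}$ and then show that temperedness descends. That category does satisfy flat descent as a category of sections of module categories, using that base change along Cartesian morphisms is computed pointwise (\cref{prop:preservetemper}). This has the right shape, and your argument for condition (1) works.

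The justification for condition (2) does not. First, $I$-adic completion does not commute with flat base change even for principal $I$: for $\bfZ\to\bfQ$ and $I=(p)$ one has $\bfQ\otimes\bfZ_p\neq 0=\bfQ^\wedge_p$. More to the point, $(\bs)^{\h\Aut(T_0/T)}$ does not commute with $\B\otimes_\A(\bs)$ for non-finite $\B$ on modules not yet known to be tempered. So (2) cannot be checked after base change.

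What does work is to compute the descended module pointwise as the totalization $\calF(T)\simeq\Tot\calF^\bullet(T)$ of the descent datum $\calF^\bullet$ of tempered local systems over the \v{C}ech nerve. Condition (2) is stable under such limits, for three reasons:
\begin{enumerate}
\item homotopy fixed points commute with $\Tot$;
\item $I$-complete modules are closed under limits;
\item $I$ is finitely generated, so reduction modulo $I$ is tensoring with a perfect Koszul complex and commutes with $\Tot$. This keeps the cofiber of $\calF(T)\to\calF(T_0)^{\h\Aut(T_0/T)}$ $I$-adically trivial.
\end{enumerate}
The paper sidesteps all of this by citing Lurie's flat descent for tempered local systems \cite[Pr.6.2.6]{ec3} and combining it with \cref{prop:limits_of_stable}.
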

\begin{proof}
For $\BT$, $\BT^\pre$, and $\BT^\ori$, the proof is analogous to \cite[Pr.3.2.2]{ec2}, and we omit the details. For $\bfG \mapsto \LocSys_{\bfG}$, this is \cite[Pr.6.2.6]{ec3}, combined with the fact that the forgetful functor $\Cat(\Glo_\ab)^\otimes_{S\nmid\pi\hyp\st} \to \Fun((\abglobalspaces)^\op,\Cat)$ preserves limits by \cref{prop:limits_of_stable}.
\end{proof}

In \cite[\textsection 2.1, \textsection 3.1-3.2]{reconstruction}, we introduced a category $\Stk$ of \emph{fpqc stacks} characterized by the following properties:

\begin{enumerate}
\item There is a fully faithful embedding $\Aff \subset \Stk$;
\item Restriction along $\Aff\subset\Stk$ induces an equivalence between limit preserving functors out of $\Stk^\op$ and functors out of $\CAlg$ that satisfy flat descent.
\end{enumerate}

\begin{rmk}
By \cite[Pr.3.2.2.2]{reconstruction}, there is a functor $h_{(\bs)}\colon \SpDMnc \to \Stk$ from the category of spectral Deligne--Mumford stacks with the property that any construction on $\SpDMnc$ which satisfies flat descent factors uniquely through $h_{(\bs)}$. In particular, in what follows, the reader could instead take ``stack'' to mean ``nonconnective spectral Deligne--Mumford stack'', provided ``limit preserving functor'' is interpreted to mean ``\'etale sheaf''.
\end{rmk}

\cref{prop:flatdescent} now formally implies the existence of unique limit preserving functors
\[
\BT,\BT^\pre,\BT^\ori_{S\nmid}\colon \Stk^\op \to \Cat
\]
naturally satisfying 
\[
\BT(\Spec \A) = \BT(\A).
\]
The Cartesian unstraightenings of these functors define categories
\[
\pdivstk,\qquad \pdivprestk,\qquad \pdivorstk_{S\nmid}
\]
of $\bfP$-divisible groups and (pre)oriented $\bfP$-divisible groups over arbitrary base stacks.

\begin{rmk}
The categories of ((pre)oriented) $\bfP$-divisible groups over a stack could also be defined directly. A $\bfP$-divisible group over a stack is a functor $\bfG[\bs]\colon \Ab_\fin^\op \to \Stk$ satisfying the same conditions as \cref{def:pdivisible}; a preorientation on this is an extension through $B\dual{(\bs)}\colon \Ab_\fin^\op\to\Glo_\ab$. Finally, a preorientation is an orientation if it restricts to an orientation over every map $\Spec \A \to \bfG[0]$.
\end{rmk}

\begin{ex}\label{ex:universalorientedec}
    An important example of a $\bfP$-divisible group over a nonaffine stack comes from the torsion object associated with the \emph{universal oriented elliptic curve} $\sfE^\ori$ over the \emph{moduli stack of oriented elliptic curves} $\M_\Ell^\ori$. We will revisit this in \Cref{ex:ambidextroustmf} and \cite{geometricnorms}.
\end{ex}

The existence of the category $\pdivorstk_{S\nmid}$ allows us to state the following.

\begin{theorem}\label{thm:stackyfunctoriality}
Let $S$ be a set of prime numbers. Then there is a unique functor
\[
\LocSys_{(\bs)}\colon (\pdiv^{\ori}_{S\nmid})^\op \to \Cat(\Glo_\ab)^\otimes_{\awayfromS\hyp\st}
\]
extending \cref{thm:extendedfunctoriality} whose restriction to the wide subcategory of Cartesian morphisms preserves limits.
\end{theorem}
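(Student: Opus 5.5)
The plan is to obtain the functor on $\pdiv^{\ori}_{S\nmid}$ by a fibrewise right Kan extension along the inclusion of affine objects. Recall that $\pdiv^{\ori}_{S\nmid}$ is the Cartesian unstraightening of $\BT^\ori_{S\nmid}\colon\Stk^\op\to\Cat$, which is the unique limit-preserving extension of $\BT^\ori_{S\nmid}\colon\CAlg\to\Cat$ supplied by \cref{prop:flatdescent} and the universal property of $\Stk$. An object is therefore a pair $(\sfM,\bfG)$ with $\bfG\in\BT^\ori_{S\nmid}(\sfM)\simeq\lim_{\Spec\A\to\sfM}\BT^\ori_{S\nmid}(\A)$. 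I would define
\[
\LocSys_\bfG \coloneqq \lim_{(\Spec\A\to\sfM)\in\Aff_{/\sfM}}\LocSys_{\bfG|_{\Spec\A}},
\]
with the limit taken in $\Cat(\Glo_\ab)^\otimes_{\awayfromS\hyp\st}$. This limit exists, and is computed underlying, by \cref{prop:limits_of_stable} together with the fact that the forgetful functor from symmetric monoidal objects creates limits.

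To make this functorial, I would proceed as follows.

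\begin{enumerate}
\item Regard $\LocSys_{(\bs)}$ of \cref{thm:extendedfunctoriality} as a functor on the full subcategory $(\pdiv^{\ori}_{S\nmid,\aff})^\op\subset(\pdiv^{\ori}_{S\nmid})^\op$. This is the full subcategory of objects lying over affines, and it is full because the Cartesian unstraightening restricted to $\Aff\subset\Stk$ recovers the affine one.
\item Take the right Kan extension along this inclusion. For $(\sfM,\bfG)$, the comma category $(\pdiv^{\ori}_{S\nmid,\aff})_{/(\sfM,\bfG)}$, consisting of affine objects mapping to $\bfG$, contains the Cartesian arrows $\bfG|_{\Spec\A}\to\bfG$ as a subcategory equivalent to $\Aff_{/\sfM}$.
\item Show that this subcategory is initial in the indexing category, so that the Kan extension agrees with the limit displayed above. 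A map $\bfG'\to\bfG$ from an affine object factors uniquely, up to contractible choice, as a fibrewise map $\bfG'\to f^\ast\bfG$ followed by the Cartesian arrow over $f\colon\Spec\A'\to\sfM$. Hence the relevant slice categories have terminal objects and are contractible, and Quillen's Theorem A gives initiality.
\end{enumerate}

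Two properties then need to be checked.

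\begin{itemize}
\item \emph{Extension.} The Kan extension restricts to the original functor on affines because the inclusion is fully faithful.
\item \emph{Limits along Cartesian morphisms.} Restricted to Cartesian morphisms, the functor is $\sfM\mapsto\lim_{\Aff_{/\sfM}}$ of a functor that satisfies flat descent (\cref{prop:flatdescent}). By the universal property of $\Stk$, this is exactly the unique limit-preserving extension $\Stk^\op\to\Cat(\Glo_\ab)^\otimes_{\awayfromS\hyp\st}$. Hence it preserves limits, in the sense that the Cartesian subcategory straightens to that extension.
\end{itemize}

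Uniqueness follows the same pattern. Any functor $F$ with these two properties must send $(\sfM,\bfG)$ to $\lim_{\Aff_{/\sfM}}F(\bfG|_{\Spec\A})$, since $\sfM\simeq\colim_{\Aff_{/\sfM}}\Spec\A$ in $\Stk$ and the Cartesian part is limit preserving. So $F$ is right Kan extended from affines. The space of such extensions of a fixed functor is contractible, because restriction is an equivalence onto right-Kan-extended functors.

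The main obstacle is step 3, together with the claim that the Cartesian part of the Kan extension is honestly the descent extension. Concretely, one must show that every morphism in $\pdiv^{\ori}_{S\nmid}$ from an affine object factors as described, with contractible choices. For morphisms between non-affine objects, one must also show that the induced map of limits is the correct one and lands in $\awayfromS$-exact functors. The latter is automatic, because the limit is formed in $\Cat(\Glo_\ab)^\otimes_{\awayfromS\hyp\st}$ and the transition maps are Cartesian. For the factorization, I would invoke that $\pdiv^{\ori}_{S\nmid}\to\Stk$ is a Cartesian fibration (as noted in the remark following \cref{df:Cartesian_away_from_S}, extended via unstraightening), which yields the required factorization system.
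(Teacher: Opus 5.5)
Your proposal is correct and is essentially the paper's argument. The paper deduces the theorem from the flat descent of \cref{prop:flatdescent} by citing the general extension result \cite[Pr.2.1.2.10]{reconstruction}. Your right Kan extension along the affine objects, reduced via the Cartesian arrows $\bfG|_{\Spec\A}\to\bfG$ to a limit over $\Aff_{/\sfM}$ and characterized by limit-preservation on Cartesian morphisms, is an unpacking of exactly that result.

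Two points need tidying. First, size: limits and colimits indexed by the large category $\Aff_{/\sfM}$, including in your uniqueness step, should be replaced by a small affine presentation of $\sfM$ together with descent, or handled in a larger universe. Second, the Cartesian-limit step is cleanest if you fix $\bfG$ over $\sfM$ and use that $(\Spec\A\to\sfM)\mapsto\LocSys_{\bfG|_{\Spec\A}}$ is a sheaf on $\sfM$, hence extends to a limit-preserving functor on $\Stk_{/\sfM}^\op$.
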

\begin{proof}
Given \cref{prop:flatdescent}, this follows from \cite[Pr.2.1.2.10]{reconstruction}.
\end{proof}

\section{Equivariant homotopy theory of tempered cohomology}\label{sec:thetemperedglobalspectrum}

By applying the decategorification procedure of \cref{sec:parametrized} to the $\pi$-stable categories of tempered local systems of \Cref{sec:temperedlocalsystems}, we may associate to every oriented $\bfP$-divisible group a $\pi$-ambidextrous global spectrum representing its associated tempered cohomology theory; see \Cref{thm:temperedglobalspectrum}. Our goal in this section is to describe this process in more detail and to study the resulting global spectra.

Firstly, in \Cref{ssec:piambi_tempered_cohomology}, we put two and two together and refine Lurie's tempered cohomology theories to $\pi$-ambidextrous global $\E_\infty$ rings, as well as give a handful of canonical examples. In \Cref{ssec:sheaf_ofG}, we show how the functor $\O_\bfG$ of \Cref{main:pisheaf} may be viewed as a quasi-coherent sheaf of $\pi$-ambidextrous global $\E_\infty$ rings on the underlying stack $\sfM$. In \Cref{ssec:temperedHOMOLOGY}, we define \emph{tempered homology} and prove a general base change theorem. The sheafy considerations of the previous subsection allow us to define geometric objects $\bfG(\sfX)$ associated with certain ($\pi$-compact) global spaces $\sfX$ by way of a relative spectrum construction. This then leads to a geometric incarnation of geometric fixed points $\Phi^H \bfG$, which we define in \Cref{ssec:geometric_FP}. We then show that $\Phi^H \bfG = \emptyset$ if $H$ is nonabelian. Finally, in \Cref{ssec:abeliangeofixed}, we give a moduli interpretation of $\Phi^H \bfG$ for abelian $H$ as a \emph{stack of injections}, and prove that this stack satisfies computable blueshift properties.

\subsection{Tempered cohomology as a \texorpdfstring{$\pi$}{pi}-ambidextrous spectrum}\label{ssec:piambi_tempered_cohomology}

In \cite{ec3}, Lurie studies the following construction.

\begin{mydef}[{\cite[\textsection4]{ec3}}]
Let $\bfG$ be a preoriented $\bfP$-divisible group over a $\bfE_\infty$ ring $\A$. The \emph{tempered cohomology theory} associated with $\bfG$ is the unique limit preserving functor
\[
\A_\bfG^{(\bs)}\colon (\abglobalspaces)^\op\to\CAlg
\]
extending the preoriented $\bfP$-divisible group $\A_\bfG^{(\bs)}\colon \Glo_\ab^\op\to\CAlg_\A \to \CAlg$.
\end{mydef}

Taking limits and using the universal property of the category of stacks $\Stk$, this construction extended to preoriented $\bfP$-divisible groups over arbitrary base stacks in the sense of \cref{ssec:stacks}, leading to a definition of the tempered cohomology theory 
\[
\Gamma\calO_\bfG^{(\bs)}\colon (\abglobalspaces)^\op\to\CAlg
\]

associated with a preoriented $\bfP$-divisible group $\bfG$ over an arbitrary base stack $\sfM$. We will consider refinements of this construction taking values in categories of sheaves on $\sfM$ below.

The tempered cohomology theory associated with a preoriented $\bfP$-divisible group $\bfG$ is a \emph{naïve} global equivariant cohomology theory: it does not come equipped with any kind of transfers. For example, for a fixed group $G$, its restriction to the category of $G$-spaces need not be represented by a genuine $G$-spectrum. Our work in this paper allows us to precisely interpret Lurie's ambidexterity results as asserting that the tempered cohomology theory associated with an \emph{oriented} $\bfP$-divisible group carries significantly more structure.

\begin{theorem}\label{thm:temperedglobalspectrum}
Let $\bfG$ be an oriented $\bfP$-divisible group. Then the tempered cohomology theory
\[
\Gamma\calO_\bfG^{(\bs)}\colon (\abglobalspaces)^\op\to\CAlg
\]
is represented by a $\pi$-ambidextrous global spectrum
\[
\Gamma\ul{\calO}_\bfG \in \CAlg(\spectra^\gl_\pi).
\]
For any set of primes $S$, this assignment refines to a functor
\[
(\pdiv^{\ori}_{S\nmid})^\op \to \CAlg(\spectra^\gl_\awayfromS),\qquad \bfG \mapsto \Gamma\ul{\calO}_\bfG
\]
whose restriction to the wide subcategory of Cartesian morphisms preserves limits.
\end{theorem}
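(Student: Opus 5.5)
The plan is to obtain the functor as a composite of two results already established: the stacky functoriality of tempered local systems, \cref{thm:stackyfunctoriality}, and the decategorification functor, \cref{thm:generaldecategorification}. Concretely, I would consider
\[
(\pdiv^{\ori}_{S\nmid})^\op \xrightarrow{\LocSys_{(\bs)}} \Cat(\Glo_\ab)^\otimes_{\awayfromS\hyp\st} \xrightarrow{\End_{(\bs)}(\bbone)} \CAlg(\spectra^\gl_{\awayfromS}).
\]
I would then define $\Gamma\ul{\calO}_\bfG$ to be the image of $\bfG$. For $S=\emptyset$ the inductible subcategory $\awayfromS$ is all of $\pi$, since every relatively $\pi$-finite map is vacuously away from the empty set. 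Moreover, every morphism is Cartesian away from $\emptyset$ exactly when it is Cartesian. So the first claim, the refinement to $\CAlg(\spectra^\gl_\pi)$, is the special case $S=\emptyset$ applied to a single object.

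The key step is identifying the underlying naive theory. By \cref{thm:generaldecategorification}, the value on $\sfX$ is $\End_{\LocSys_\bfG(\sfX)}(\bbone)$. For $\bfG$ affine over $\A$ and $T\in\Glo_\ab$, the unit of $\LocSys_\bfG(T)$ is the module $\A_{\bfG,T}$. Its endomorphisms are computed by the limit over $(\Glo_\ab)_{/T}$, which has terminal object $T$, so they are $\A_\bfG^T$. This identification is compatible with restrictions, since restrictions of units are units. Both $\sfX\mapsto\End(\bbone)$ and $\A_\bfG^{(\bs)}$ are limit preserving: the former by step (6) of the proof of \cref{thm:generaldecategorification}, the latter by definition. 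Since they agree on $\Glo_\ab$, they agree everywhere.

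The ring structures also match. The $\E_\infty$ structure on $\End(\bbone)$ restricted to $(\abglobalspaces)^\op$ is the one making restrictions ring maps, and on representables it is the ring $\A_\bfG^T$, because the unit $\A_{\bfG,T}$ is a commutative algebra and endomorphisms of the unit recover it. For a general stack $\sfM$, both sides are limits over $\Aff_{/\sfM}$ along Cartesian morphisms. $\LocSys$ sends these limits to limits by \cref{thm:stackyfunctoriality}, and $\End_{(\bs)}(\bbone)$ preserves limits by \cref{thm:generaldecategorification}. The tempered cohomology over $\sfM$ was defined by the same limit, so the identification descends.

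Limit preservation on the Cartesian subcategory then follows because it is a composite of a functor that preserves such limits with a limit preserving functor. The main obstacle is the coherent identification of the ring-valued naive functor $(\abglobalspaces)^\op\to\CAlg$ with Lurie's $\A_\bfG^{(\bs)}$, rather than merely pointwise. I would handle it by constructing a natural map $\A_\bfG^{(\bs)}\to \End_{\LocSys_\bfG(\bs)}(\bbone)$ of functors into $\CAlg$ on $\Glo_\ab^\op$, using the action of $\A_{\bfG,T}$ on itself, naturally in $\bfG$ via \cref{lem:modfunctoriality}. I would then check that it is an equivalence on representables and right Kan extend.
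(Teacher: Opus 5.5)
Your proposal is correct and is the paper's proof: the paper composes \cref{thm:stackyfunctoriality} with the decategorification functor of \cref{thm:generaldecategorification}, and identifies the value at $T\in\Glo_\ab$ with $\Gamma\calO_\bfG^T$ simply ``by construction'' (citing \cite[Ex.7.1.5]{ec3}). Your extra steps just make explicit what the paper leaves implicit: taking $S=\emptyset$ recovers $\pi$ and the Cartesian morphisms, and you identify the naive $\CAlg$-valued theory coherently by building a natural map on $\Glo_\ab^\op$ and then right Kan extending.
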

\begin{proof}
\cref{thm:stackyfunctoriality} provides, for any set of primes $S$, a functor
\[
\LocSys_{(\bs)}\colon (\pdiv^{\ori}_{S\nmid})^\op \to \Cat(\Glo_\ab)^\otimes_{\awayfromS\textup{-st}}
\]
whose restriction to the wide subcategory of Cartesian morphisms preserves limits, and \cref{thm:generaldecategorification} provides a limit preserving functor
\[
\Gamma\colon \Cat(\Glo_\ab)^\otimes_{\awayfromS\textup{-st}} \to \CAlg(\Sp^\gl_\awayfromS).
\]
By construction, we may identify 
\[
(\Gamma(\LocSys_{\bfG}))^T \simeq \Gamma\calO_\bfG^T
\]
for any $T \in \Glo_\ab$, see also \cite[Ex.7.1.5]{ec3}, and so the composite $\Gamma\circ \LocSys$ provides the desired refinement.
\end{proof}

We can interpret \cref{thm:temperedglobalspectrum} as saying that the choice of an oriented $\bfP$-divisible group $\bfG$ over a stack $\sfM$ provides a lift of the global sections $\Gamma(\calO_\sfM) \in \CAlg$ of $\sfM$ from an $\bfE_\infty$ ring to a $\pi$-ambidextrous $\bfE_\infty$ ring $\Gamma(\ul{\calO}_\bfG) \in \CAlg(\spectra^\gl_\pi)$. 

Let us now expand upon the examples from the introduction.

\begin{ex}
Let $\A$ be a complex-periodic $K(n)$-local $\bfE_\infty$ ring. The Quillen $\bfP$-divisible group $\bfG_\A^\calQ$ over $\A$ is oriented, providing a lift of $\A$ to a $\pi$-ambidextrous global $\bfE_\infty$ ring
\[
\ul{\A} \coloneqq \ul{\A}_{\bfG_\A^\calQ} \in \CAlg(\Sp^\gl_\pi).
\]
By \cite[Th.4.2.5]{ec3}, this theory is Borel in the sense that the Atiyah--Segal comparison map
\[
\ul{\A}^{\sfX} \to \A^{|\sfX|}
\]
is an equivalence for any global space $\sfX$, where $|\sfX| = \sfX(\ast)$ is the underlying space of $\sfX$. The $\pi$-ambidextrous global structure of $\ul{\A}$ therefore encodes the association, to any map $f\colon Y \to X$ of spaces with $\pi$-finite fibers, the transfer $f_!\colon \A^{Y} \to \A^{X}$ constructed originally by Hopkins and Lurie \cite{ambi}.
\end{ex}

\begin{ex}\label{ex:complexktheoryexample}
In \cite[Th.6.5.1]{ec2}, Lurie proves that complex $K$-theory arises naturally from the multiplicative $\bfP$-divisible group $\mu_{\bfP^\infty}$. The multiplicative $\bfP$-divisible group $\mu_{\bfP^\infty}$ is canonically oriented over $\KU$, and in \cite[\textsection 3.6]{ec3} Lurie explains how the associated tempered cohomology theory may be identified as equivariant complex $K$-theory. Applying \cref{thm:temperedglobalspectrum}, it follows that complex $K$-theory refines to a $\pi$-ambidextrous global $\E_\infty$ ring
\[
\gKU \coloneqq \ul{\KU}_{\mu_{\bfP^\infty}} \in \CAlg(\Sp^\gl_\pi).
\]
For any $\pi$-finite space $F$, the truncation $F \to \tau_{\leq 1}F$ induces an equivalence
\[
\gKU^{\tau_{\leq 1}F} \simeq \gKU^{F}.
\]
This implies that the values of $\gKU$ on the $\pi$-finite spaces are determined by its values on the finite groups, which are determined by the underlying global spectrum of $\gKU$. Our construction of $\gKU$ as a $\pi$-global spectrum refines this by introducing the additional structure of \emph{deflations}, or transfers along surjections. If $f\colon K \to H$ is an arbitrary homomorphism of finite groups, then using character theory, one can show that the transfer
\[
f_! \colon \gKU^{\BK} \to \gKU^{\BH},
\]
on $\pi_0$, may be identified as the induction map
\[
\mathbf{C}[H]\otimes_{\mathbf{C}[K]}(\bs)\colon RU(K) \to RU(H)
\]
on complex representation rings; this is shown for injective homomorphisms in \cite[Pr.7.6.7]{ec3}, and the same proof applies in general. In other words, the fact that equivariant $K$-theory lifts to a $\pi$-ambidextrous spectrum is a categorification of the fact that representation rings admit transfer maps along arbitrary group homomorphisms.
\end{ex}

Our extension of tempered cohomology to oriented $\bfP$-divisible groups over non-affine base stacks allows us to extend this example to real topological $K$-theory.

\begin{ex}\label{ex:realktheoryexample}
     Let $\sfM_\Tori^\ori$ be the moduli stack of \emph{oriented tori}; in \cite[\textsection A]{geometricnorms}, we will show this can be written as the quotient $\Spec \KU /C_2$, where $C_2$ acts on $\KU$ by the usual complex conjugation action. The universal oriented torus $\G_m$ over $\sfM_\Tori^\ori$ has associated oriented $\bfP$-divisible group $\mu_{\bfP^\infty}$. Applying \Cref{thm:temperedglobalspectrum}, we obtain a $\pi$-ambidextrous global $\E_\infty$ ring spectrum
     \[\gKO \coloneqq \Ga(\ul{\O}_{\mu_{\bfP^\infty}/\sfM_\Tori^\ori}) \in \CAlg(\Sp^\gl_\pi).\]
     The oriented torus $\G_m$ over $\KU$ defines a map $\gKO \to \gKU$ of $\pi$-ambidextrous global spectra, which by construction and the above identification as a quotient stack yields an equivalence
     \[
     \gKO\simeq \gKU^{\h C_2}
     \]
of $\pi$-ambidextrous global $\E_\infty$ rings.
\end{ex}

The same story goes through for equivariant elliptic cohomology theories constructed out of the \emph{Katz--Mazur group scheme}.

\begin{ex}\label{ex:tatektheory}
Let $\T_\KM$ be the \emph{oriented Katz--Mazur $\P$-divisible group} over $\Spec \KU[q^\pm]$ defined in \cite[Th.4.1]{globaltate}; here $q$ lives in degree $0$. Applying \Cref{thm:temperedglobalspectrum} now produces a $\pi$-ambidextrous global $\E_\infty$ ring
\[\gKU_\KM \coloneqq \Ga(\ul{\O}_{\T_\KM}) \in \CAlg(\Sp^\gl_\pi),\]
a $\pi$-ambidextrous analogue of \emph{quasi-elliptic cohomology} \cite{huanquasielliptic}; by \cite[Th.4.1]{globaltate}, these theories have the same values on $\bfB H$ for finite abelian $H$ after taking homotopy groups. Apply \Cref{thm:temperedglobalspectrum} to the $\bfP$-divisible group associated with the \emph{oriented Tate curve} $\T$ over $\KU\llpar q\rrpar$ of \cite[Th.5.1]{globaltate} yields the $\pi$-ambidextrous global $\E_\infty$ ring
\[\gKU_\Tate \coloneqq \Ga(\ul{\O}_{\T[\bfP^\infty]}) \in \CAlg(\Sp^\gl_\pi).\]
Copying the argument of Theorem 7.14 of \emph{loc.\ cit.\ }shows that the natural map
\[\KU\llpar q\rrpar \otimes_{\KU[q^\pm]} \gKU_\KM \xrightarrow{\simeq} \gKU_\Tate\]
is an equivalence. The true advantage of $\gKU_\Tate$ comes from its extension to compact Lie groups; see Definition 7.5 of \emph{loc.\ cit}. Keep in mind though, the natural map
\[\KU[q^\pm] \otimes_{\KU} \gKU \to \gKU_\KM\]
is not an equivalence in any sense; only when evaluated at the trivial group does it become an equivalence, see Warning 7.6 of \emph{loc.\ cit}. One can also further take $C_2$-quotients by the natural complex-conjugation action on $\KU[q^\pm]$ and $\KU\llpar q \rrpar$, which lead to \emph{real} notions of these aforementioned $\pi$-ambidextrous global $\E_\infty$ ring $\gKO_\KM$ and $\gKO_\Tate$, see Section 6 of \emph{loc.\ cit}. 
\end{ex}

Finally, we have the following example obtained from the universal oriented elliptic curve.

\begin{ex}\label{ex:ambidextroustmf}
     Let $\calE$ be the universal oriented elliptic curve over the moduli stack of oriented elliptic curves $\M_\Ell^\ori$, as defined in \Cref{ex:universalorientedec}. By \cite[Con.2.9.6]{ec3} (also see \cite[Pr.6.1]{elltempcomp}), the torsion subgroup $\calE[\P^\infty]\subset\calE$ defines an oriented $\P$-divisible group over $\M_\Ell^\ori$. Applying \Cref{thm:temperedglobalspectrum}, we obtain a $\pi$-ambidextrous global $\E_\infty$ ring
     \[
     \gTMF \coloneqq \Ga\ul{\calO}_{\calE[\P^\infty]} \in \CAlg(\Sp^\gl_\pi)
     \]
refining the underlying spectrum $\TMF$ of topological modular forms. By \cite{elltempcomp}, the underlying naïve global cohomology theory on $\abglobalspaces$ associated with $\gTMF$ agrees with that underlying the equivariant topological modular forms constructed by Gepner--Meier \cite{davidandlennart}. It seems reasonable to expect that $\gTMF$ has the same underlying $\ab$-global spectrum as the global spectrum of topological modular forms constructed in \cite{gepner2024global2ringsgenuinerefinements}, but the constructions are substantially different and we do not yet have such a comparison.
\end{ex}

\begin{ex}
    By definition, the map $\T \colon \Spec \KU\llpar q \rrpar / C_2 \to \M_\Ell^\ori$ defining the oriented Tate curve induces a Cartesian map between the oriented $\bfP$-divisible groups associated with $\T$ and $\calE$. In particular, we have a \emph{$\pi$-ambidextrous global $q$-expansion map}
    \[\gTMF \to \gKO\llpar q \rrpar\]
    in $\CAlg(\Sp_\pi^\gl)$, compatible with all transfers along relatively $\pi$-finite morphisms of global spaces.
\end{ex}

Let us now expand upon the Adams operations example of \Cref{intro_ex:adams_on_KU}.

\begin{ex}\label{ex:AO_on_KO}
    Fix an integer $\ell$ and write $\M_{\Tori,\Sph[1/\ell]}^\ori = \M_\Tori^\ori \times \Spec \Sph[1/\ell]$. Recall that a torus over a stack $\sfM$ is a relatively affine and flat group scheme $\bfG$ over $\sfM$ which is fpqc-locally isomorphism to the multiplicative group scheme $\bfG_m$. From the definition of $\M_\Tori^\ori$ as the moduli stack of oriented tori, we have
    \[\Map_{\Stk}(\M_{\Tori,\Sph[1/\ell]}^\ori, \M_\Tori^\ori ) \simeq \Tori^\ori(\M_{\Tori,\Sph[1/\ell]}^\ori).\]
    The pair $(\G_m,e)$ of the torus $\G_m$ over $\M_{\Tori,\Sph[1/\ell]}^\ori$ and its tautological orientation $e$ gives the open immersion $\M_{\Tori,\Sph[1/\ell]}^\ori \to \M_{\Tori}^\ori$ by inverting $\ell$. The pair $(\G_m, [\ell](e))$, where $[\ell]$ is multiplication by $\ell$ on $\G_m$, hence also on its formal group $\widehat{\G}_m$, corresponds to a map of stacks
    \[\psi^\ell \colon \M_{\Tori,\Sph[1/\ell]}^\ori \to \M_{\Tori}^\ori\]
    which we call the \emph{$\ell$th Adams operation} on $\M_{\Tori,\Sph[1/\ell]}^\ori$. This name is justified as by Snaith's theorem \cite[\textsection 6.5]{ec2}, on the cover $\Spec\KU[1/\ell]$ this agrees with the map $[\ell]\colon \Sigma^\infty_+ BU(1)[\beta^{-1}] \to \Sigma^\infty_+BU(1)[\ell^{-1},\beta^{-1}]$ induced by multiplication by $\ell$ on the abelian group $BU(1)$, which is a model for the Adams operation $\psi^\ell\colon \KU \to \KU[1/\ell]$.

    By construction, the pair
    \begin{equation}\label{eq:map_inducing_AOonKO}(\psi^\ell, [\ell]) \colon (\sfM_{\Tori,\Sph[1/\ell]}^\ori, \mu_{\bfP^\infty}) \to (\sfM_{\Tori}^\ori, \mu_{\bfP^\infty}),\end{equation}
    where $\mu_{\bfP^\infty} \subset \G_m$ is the $\bfP$-torsion of the universal oriented torus $\G_m$, is a map of oriented $\bfP$-divisible groups. Indeed, this is equivalent to
    \[[\ell] \colon (\mu_{\bfP^\infty}, e) \to (\mu_{\bfP^\infty}, [\ell](e)) \simeq (\psi^\ell)^\ast(\mu_{\bfP^\infty}, e)\]
    being a map of oriented $\bfP$-divisible groups over $\sfM_{\Tori,\Sph[1/\ell]}^\ori$, which follows from the definition of $\psi^\ell$. In particular, by applying \Cref{thm:temperedglobalspectrum} to (\ref{eq:map_inducing_AOonKO}) we obtain maps of $\awayfromS$-ambidextrous global $\E_\infty$ rings
    \[\psi^\ell \colon \gKO_{\awayfromS} \to \gKO_{\awayfromS}[1/\ell]\]
    for all sets of primes $S$ containing the divisors of $\ell$ refining the usual Adams operations on $\KO$.
\end{ex}

Similar arguments used in \Cref{ex:AO_on_KO} also apply to $\M_\Ell^\ori$ and topological modular forms.

\begin{ex}\label{ex:AO_on_TMF}
    Writing $\M_{\Ell,\Sph[1/\ell]}^\ori = \M_{\Ell}^\ori \times \Spec \Sph[1/\ell]$, we also have
    \[\Map_{\Stk}(\M_{\Ell,\Sph[1/\ell]}^\ori, \M_{\Ell}^\ori) \simeq \Ell^\ori(\M_{\Ell,\Sph[1/\ell]}^\ori).\]
    The twist $(\calE, [\ell](e))$ of the universal orientation of $\calE$ over $\M_{\Ell,\Sph[1/\ell]}^\ori$ by $[\ell]$ corresponds to a map of stacks
    \[\psi^\ell \colon \M_{\Ell,\Sph[1/\ell]}^\ori \to \M_{\Ell}^\ori,\]
    from which we obtain a map of oriented $\bfP$-divisible groups
    \[(\psi^\ell, [\ell]) \colon (\M_{\Ell,\Sph[1/\ell]}^\ori, \calE[\bfP^\infty]) \to (\M_{\Ell}^\ori, \calE[\bfP^\infty]).\]
    Applying \Cref{thm:temperedglobalspectrum} to this map yields a map of $\awayfromS$-ambidextrous global $\E_\infty$ rings
    \[\psi^\ell \colon \gTMF_{\awayfromS} \to \gTMF_{\awayfromS}[1/\ell]\]
    for each $S$ containing all divisors of $\ell$.
\end{ex}

\begin{remark}
We do not directly compare the Adams operation $\psi^\ell\colon \TMF\to\TMF[1/\ell]$ considered in \cref{ex:AO_on_TMF} with those considered in \cite{heckeontmf,luriestheorem}, although we expect that they agree. As the two constructions agree in $\h\Sp$ on all even periodic affine covers of $\M_\Ell^\ori$, they are computationally indistinguishable from the perspective of the descent spectral sequence, as used in \cite{adamsontmf}. In particular, either construction could be used for applications such as in \cite{heighttwojat3,v232_periodic_families}.
\end{remark}

More examples of maps of oriented $\bfP$-divisible groups and their associated maps of ambidextrous global spectra will appear in \cite{geometricnorms}. However, in what follows, we will primarily be concerned with properties of the $\pi$-global theories associated with a single oriented $\bfP$-divisible group, and so will only make use of naturality in Cartesian morphisms, i.e.\ of the functor
$
(\pdiv^{\ori}_{\cart})^\op \to \CAlg(\Sp^\gl_\pi).
$

\subsection{The structure sheaf of an oriented \texorpdfstring{$\bfP$}{P}-divisible group}\label{ssec:sheaf_ofG}

Fix a stack $\sfM$ and an oriented $\bfP$-divisible group $\bfG$ on $\sfM$. \cref{thm:temperedglobalspectrum} provides from this data a $\pi$-ambidextrous ring $\Gamma\ul{\calO}_\bfG \in \CAlg(\Sp^\gl_\pi)$ refining the tempered cohomology theory associated with $\bfG$. Our goal in this section is describe the sense in which a ``sheaf $\ul{\calO}_\bfG$ on $\sfM$'' exists before passage to global sections. We first recall some basic notions regarding sheaves.

\begin{mydef}
Let $\dcat$ be a complete $\infty$-category. A \emph{sheaf on $\sfM$ with values in $\dcat$} is a functor
\[
\Aff_{/\sfM}^\op\to\dcat
\]
which satisfies flat descent. These assemble into the full subcategory
\[
\Shv(\sfM;\dcat)\subset\Fun(\Aff_{/\sfM}^\op,\dcat).
\]
\end{mydef}

\begin{rmk}
By \cite[Pr.2.1.2.10]{reconstruction}, Kan extension determines an equivalence between sheaves on $\sfM$ and limit preserving presheaves on $\Stk_{/\sfM}^\op$, i.e.\ identifies
\[
\Shv(\sfM;\dcat)\subset\Fun(\Stk_{/\sfM}^\op,\dcat)
\]
with the full subcategory of limit preserving functors. We will freely translate between these two identifications of $\Shv(\sfM;\dcat)$.
\end{rmk}

\begin{ex}\label{ex:structuresheaf}
The \emph{structure sheaf} of a stack $\sfM$ is the sheaf of $\bfE_\infty$ rings given by the forgetful functor
\[
\calO_\sfM\colon \Aff_{/\sfM}^\op\to\CAlg,\qquad \calO_\sfM(\Spec\A\to\sfM) = \A.
\]
Its extension to stacks over $\sfM$ is, by construction, the functor 
\[
\Gamma(\calO_{(\bs)})\colon \Stk_{/\sfM}^\op\to\CAlg
\]
sending a stack over $\sfM$ to its ring of global sections.
\end{ex}

\begin{ex}
The composite
\[
\Aff_{/\sfM}^\op \to \CAlg\to \CAlg(\PrLst),\qquad (\Spec\A\to\sfM) \mapsto \Mod_\A
\]
postcomposing the structure sheaf of $\sfM$ with $\Mod_{(\bs)}\colon \CAlg\to\CAlg(\PrLst)$ satisfies flat descent by \cite[Cor.D.6.3.3]{sag}, and therefore determines a sheaf on $\sfM$ with values in $\CAlg(\PrLst)$. Its extension to stacks over $\sfM$ is, by construction, the functor
\[
\QCoh(\bs)\colon \Stk_{/\sfM}^\op\to\CAlg(\PrLst)
\]
sending a stack over $\sfM$ to its category of quasi-coherent sheaves; see \cite[Df.3.1.2.2]{reconstruction}.
\end{ex}

By construction, the structure sheaf $\calO_\sfM$ lifts to an object of 
\[
\Shv(\sfM;\CAlg)\subset\Fun(\Aff_{/\sfM}^\op,\CAlg)\simeq\CAlg(\Fun(\Aff_{/\sfM}^\op,\Sp)),
\]
where $\Fun(\Aff_{/\sfM}^\op,\Sp)$ is equipped with its pointwise symmetric monoidal structure. For size reasons, $\Shv(\Aff_{/\sfM}^\op,\Sp)$ may fail to be a localization of $\Fun(\Aff_{/\sfM}^\op,\Sp)$, and so may not inherit the structure of a symmetric monoidal category. For our purposes, this issue may be sidestepped by considering $\Shv(\Aff_{/\sfM},\Sp)$ as the underlying category of the full suboperad $\Shv(\sfM,\Sp)^\otimes\subset\Fun(\Aff_{/\sfM}^\op,\Sp)^\otimes$ it generates. With this convention, we have
\[
\Shv(\sfM;\CAlg)\simeq\CAlg(\Shv(\sfM;\Sp)),
\]
and may identify
\[
\Mod_{\calO_\sfM}(\Shv(\sfM;\Sp)) = \Mod_{\calO_\sfM}(\Fun(\Aff_{/\sfM}^\op,\Sp))\times_{\Fun(\Aff_{/\sfM}^\op,\Sp)}\Shv(\sfM;\Sp)
\]
as the full subcategory of $\Mod_{\calO_\sfM}(\Fun(\Aff_{/\sfM}^\op,\Sp))$ spanned by those $\calO_\sfM$-modules whose underlying presheaf satisfies flat descent.

\begin{lemma}\label{lem:qcohassheaves}
There is a fully faithful and colimit preserving functor
\[
\QCoh(\sfM) \subset \Mod_{\calO_\sfM}(\Shv(\sfM;\Sp))
\]
realizing $\QCoh(\sfM)$ as the full subcategory of $\calO_\sfM$-modules $\fmodule$ with the further property that, for any morphism
\begin{center}\begin{tikzcd}
\Spec\B\ar[rr,"f"]\ar[dr,"p"']&&\Spec\A\ar[dl,"q"]\\
&\sfM
\end{tikzcd}\end{center}
in $\Aff_{/\sfM}$, the base change map $\B\otimes_\A \fmodule(q) \to \fmodule(p)$ is an equivalence.
\end{lemma}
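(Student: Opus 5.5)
The plan is to write $\QCoh(\sfM)$ as a limit over affines and compare it directly with presheaves of modules. Recall from the previous example that $\QCoh(\sfM)\simeq\lim_{\Spec\A\in\Aff_{/\sfM}}\Mod_\A$. An object of this limit is a compatible family $\{\fmodule_q\in\Mod_\A\}_{q\colon\Spec\A\to\sfM}$ together with equivalences $\B\otimes_\A\fmodule_q\simeq\fmodule_p$ for each $f\colon\Spec\B\to\Spec\A$ over $\sfM$.

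To define the functor, I would use the lax limit (equivalently, the category of sections of the Cartesian unstraightening of $\Spec\A\mapsto\Mod_\A$ over $\Aff_{/\sfM}$). This lax limit is naturally identified with $\Mod_{\calO_\sfM}(\Fun(\Aff_{/\sfM}^\op,\Sp))$: a section assigns to each $q$ an $\A$-module, and to each $f$ a map $\fmodule_q\to f_\ast\fmodule_p$ of $\A$-modules. By adjunction this is the same as a map $\B\otimes_\A\fmodule_q\to\fmodule_p$. For this identification I would cite the standard description of modules over a diagram of rings, in the pointwise symmetric monoidal structure, as sections of the module fibration (the relevant statement is in \cite[\textsection4.8]{ha}).

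The strict limit is the full subcategory of sections that send every morphism to a coCartesian edge. Concretely, these are exactly the modules for which the base change maps $\B\otimes_\A\fmodule(q)\to\fmodule(p)$ are equivalences. This gives a fully faithful embedding of $\QCoh(\sfM)$ into $\Mod_{\calO_\sfM}(\Fun(\Aff_{/\sfM}^\op,\Sp))$ with the stated essential image.

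Next I need to check that such presheaves satisfy flat descent, so that the embedding lands in $\Mod_{\calO_\sfM}(\Shv(\sfM;\Sp))$. For any $\Spec\A\to\sfM$, the restriction of $\fmodule$ to $\Aff_{/\Spec\A}$ is $\B\mapsto\B\otimes_\A\fmodule(q)$, and this is an fpqc sheaf by flat descent for modules \cite[Cor.D.6.3.3]{sag}. I expect this descent check to be the only substantive point; the rest is bookkeeping.

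Finally, colimit preservation. Colimits in $\QCoh(\sfM)$ are computed pointwise, since base change functors preserve colimits and the limit of categories is taken in $\PrL$. The base change condition is also closed under colimits, so colimits of quasi-coherent objects computed in presheaves of modules are again quasi-coherent, hence sheaves. It follows that the inclusion preserves colimits, since these pointwise colimits are colimits in the full subcategory of sheaves.
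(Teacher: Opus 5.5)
Your proposal is correct and follows essentially the paper's route: identify $\calO_\sfM$-modules in $\Fun(\Aff_{/\sfM}^\op,\Sp)$ with sections of the module fibration, cut out $\QCoh(\sfM)$ as the sections sending every morphism to a base-change edge, and verify the sheaf condition via the $\Tot$ computation coming from flat descent for modules. Two small differences: for the modules-as-sections identification the paper cites \cite[Th.5.10]{denissilluca} rather than \cite[\textsection4.8]{ha}, and your pointwise argument for colimit preservation is also correct and fills in a point the paper's proof leaves implicit.
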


\begin{proof}
Write $\Mod$ for the total space of the Cartesian fibration associated with the functor $\QCoh\colon\Aff\to\CAlg(\PrLst)$. By \cite[Th.5.10]{denissilluca}, $\Mod_{\calO_\sfM}(\Fun(\Aff_{/\sfM}^\op,\Sp))$ may be identified with the category of sections of $\Mod$ over $\Aff_{/\sfM}$:
\[
\Mod_{\calO_\sfM}(\Fun(\Aff_{/\sfM}^\op,\Sp))\simeq \left\{\begin{tikzcd}
&\Mod\ar[d]\\
\Aff_{/\sfM}\ar[r]\ar[ur,dashed]&\Aff
\end{tikzcd}\right\}.
\]
On the other hand, by definition $\QCoh(\sfM)$ is the limit of the functor 
\[
\QCoh(\bs)\colon \Aff_{/\sfM}^\op \to \CAlg(\PrLst),
\]
which may be identified with the full subcategory of \emph{Cartesian} sections. This realizes $\QCoh(\sfM)\subset\Mod_{\calO_\sfM}(\Fun(\Aff_{/\sfM}^\op,\Sp))$ as the full subcategory of $\calO_\sfM$-modules satisfying the given base change condition. If $\fmodule \in \QCoh(\sfM)$ and $\Spec(\B)\to\Spec(\A)$ is a faithfully flat cover over $\sfM$, then quasicoherence implies that
\[
\Tot\fmodule(\Spec(\B^{\otimes_\sfA \bullet+1}))\simeq \Tot \left(\B^{\otimes_\sfA\bullet+1}\otimes_\sfA\fmodule(\Spec\sfA)\right)\simeq\fmodule(\Spec\sfA),
\]
guaranteeing that $\fmodule$ is a sheaf as claimed.
\end{proof}

We now specialize to tempered cohomology.

\begin{prop}\label{prop:temperedsheaf}
The oriented $\bfP$-divisible group $\bfG$ over $\sfM$ refines the structure sheaf of $\sfM$ to a sheaf $\ul{\calO}_\bfG \in \Shv(\sfM; \CAlg(\Sp_\pi^\gl))$ of $\pi$-ambidextrous global $\E_\infty$ rings.
\end{prop}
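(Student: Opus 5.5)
The plan is to build $\ul{\calO}_\bfG$ by restricting the functor of \cref{thm:temperedglobalspectrum} along the base-change diagram of $\bfG$, and to get flat descent from the limit-preservation clause of that theorem. Write $\pdiv^{\ori}_{\cart}\subset\pdivorstk$ for the wide subcategory of Cartesian morphisms. First I would construct a functor
\[
\Aff_{/\sfM}\to \pdiv^{\ori}_{\cart},\qquad (\Spec\A\xrightarrow{q}\sfM)\mapsto q^\ast\bfG .
\]
Since $\pdivorstk\to\Stk$ is the Cartesian unstraightening of $\BT^\ori$, this is just the composite $\Aff_{/\sfM}\to\Stk_{/\sfM}\to\pdiv^{\ori}_{\cart}$. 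The second functor is the choice of Cartesian lifts with target $\bfG$, i.e. the equivalence $(\pdiv^{\ori}_{\cart})_{/\bfG}\simeq\Stk_{/\sfM}$, followed by the forgetful functor.

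Taking $S=\emptyset$ in \cref{thm:temperedglobalspectrum} and restricting to Cartesian morphisms gives a functor $(\pdiv^{\ori}_{\cart})^\op\to\CAlg(\Sp^\gl_\pi)$ which preserves limits. Composing, I define
\[
\ul{\calO}_\bfG\colon \Aff_{/\sfM}^\op\to\CAlg(\Sp^\gl_\pi),\qquad (\Spec\A\xrightarrow{q}\sfM)\mapsto\Gamma\ul{\calO}_{q^\ast\bfG}.
\]
This lifts the structure sheaf $\calO_\sfM$. The underlying spectrum of $\Gamma\ul{\calO}_{q^\ast\bfG}$ is its value on $\ast\in\abglobalspaces$. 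That value is the tempered cohomology $\A^{\ast}_{q^\ast\bfG}=(q^\ast\bfG)(\ast)$, i.e. the ring of functions on $\bfG[0]\times_\sfM\Spec\A=\Spec\A$, which is $\A$. This identification is natural in $q$ because the restriction of the composite to $\ast$ is $\Gamma\calO_{(\bs)}$ applied to the base stacks.

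It remains to show that $\ul{\calO}_\bfG$ satisfies flat descent. By the equivalence between flat sheaves and limit-preserving functors on $\Stk_{/\sfM}^\op$ (\cite[Pr.2.1.2.10]{reconstruction}), it suffices to work with the extended functor $\Stk_{/\sfM}^\op\to\CAlg(\Sp^\gl_\pi)$, $\sfN\mapsto\Gamma\ul{\calO}_{\bfG|_\sfN}$. This functor preserves limits for two reasons. First, $\Stk_{/\sfM}\to\pdiv^{\ori}_{\cart}$ preserves colimits: colimits in $(\pdiv^{\ori}_{\cart})_{/\bfG}\simeq\Stk_{/\sfM}$ are computed on base stacks, because $\BT^\ori\colon\Stk^\op\to\Cat$ preserves limits by construction (\cref{prop:flatdescent} together with the extension to stacks). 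Second, \cref{thm:temperedglobalspectrum} sends these colimits to limits.

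Concretely, for a flat hypercover $\Spec\B^\bullet\to\Spec\A$ over $\sfM$, the colimit of the $q^\ast\bfG$ over the cover is the base change to $\Spec\A$. The restriction of \cref{thm:temperedglobalspectrum} to Cartesian morphisms then turns this colimit into a limit $\Gamma\ul{\calO}\simeq\Tot\,\Gamma\ul{\calO}_{(\bs)}$.

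The main obstacle is making this colimit/limit statement precise. Two points need checking:
\begin{itemize}
\item \cref{thm:temperedglobalspectrum} preserves limits only after restriction to Cartesian morphisms, so I have to verify that the relevant colimit diagrams in $\pdiv^{\ori}_{\cart}$ are exactly the ones coming from $\Stk$.
\item Limits in $\CAlg(\Sp^\gl_\pi)$ are computed pointwise, which follows from \cref{lem:Qcom_localization}, so the descent can be checked objectwise in $\sfX$.
\end{itemize}
If there is trouble, I would fall back on checking descent directly for each fixed $\sfX$ by tracing through the proof of \cref{thm:generaldecategorification}. There $\End(\bbone)(\sfX)=\End_{\LocSys_{\bfG}(\sfX)}(\bbone)$, and \cref{prop:flatdescent} gives $\LocSys_{q^\ast\bfG}\simeq\lim\LocSys_{(\bs)}$ over the cover, so endomorphisms of the unit form the corresponding limit.
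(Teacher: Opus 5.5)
Your proposal is correct and matches the paper's proof, which simply takes the composite $\Stk_{/\sfM}^\op\simeq(\pdiv^\ori_\cart)_{/\bfG}^\op\to(\pdiv^\ori_\cart)^\op\to\CAlg(\Sp^\gl_\pi)$ supplied by \cref{thm:temperedglobalspectrum}. Your flat-descent argument, via limit preservation on Cartesian morphisms and the slice/right-fibration description of colimits, just makes explicit what the paper leaves implicit. Your choice $S=\emptyset$ is the right one, since it gives $\pdiv^{\ori}_{S\nmid}=\pdiv^\ori_\cart$ and $\awayfromS=\pi$; the paper's ``set of all primes'' at that point is a slip.
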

\begin{proof}
If $S$ is the set of all primes, then $\pdiv^{\ori}_{S\nmid}\simeq\pdiv^{\ori}_{\cart}\subset\pdiv^\ori$ is the subcategory of $\bfP$-divisible groups and Cartesian morphisms. In particular, there is an equivalence
\[
\Stk_{/\sfM}\simeq(\pdiv^\ori_\cart)_{/\bfG}.
\]
Now the composite
\[
\Stk_{/\sfM}^\op\simeq(\pdiv^\ori_\cart)_{/\bfG}^\op \to (\pdiv^\ori_\cart)^\op \xrightarrow{\Gamma\ul{\calO}^{(\bs)}}\CAlg(\Sp^\gl_\pi)
\]
is the desired sheaf of $\pi$-ambidextrous global $\E_\infty$ rings on $\sfM$.
\end{proof}

Recall from \cref{prop:qcommutativeglobalringsaslaxfunctors} that a $\pi$-commutative global $\E_\infty$ ring in a symmetric monoidal category $\ccat$ is equivalent to a lax symmetric monoidal functor $\Span_\pi(\abglobalspaces)\to\ccat$ whose restriction to the backward maps preserves limits. This definition makes sense more generally when $\ccat$ is allowed to be an arbitrary operad. By adjunction, we may identify
\begin{align*}
\Fun(\Aff_{/\sfM}^\op,\CAlg(\Sp^\gl_\pi))&\simeq\CAlg^\gl_\pi(\Fun(\Aff_{/\sfM}^\op,\Sp^\gl_\pi)) \\
&\subset \Fun^{\otimes\textup{-lax}}(\Span_\pi(\abglobalspaces),\Mod_{\calO_\sfM}(\Fun(\Aff_{/\sfM}^\op,\Sp))).
\end{align*}
As $\Shv(\sfM;\Sp)\subset\Fun(\Aff_{/\sfM}^\op,\Sp)$ is closed under limits this restricts to an equivalence
\[
\Shv(\sfM;\CAlg(\Sp^\gl_\pi))\simeq\CAlg^\gl_\pi(\Shv(\sfM;\Sp)) \subset\Fun^{\otimes\textup{-lax}}(\Span_\pi(\abglobalspaces),\Mod_{\calO_\sfM}(\Shv(\sfM;\Sp))).
\]

\begin{notation}
Given a $\bfP$-divisible group $\bfG$ over a stack $\sfM$, we write
\[
\ul{\calO}_\bfG \in \CAlg^\gl_\pi(\Mod_{\calO_\sfM}(\Shv(\sfM;\Sp))) \subset \Fun^{\otimes\textup{-lax}}(\Span_\pi(\abglobalspaces),\Mod_{\calO_\sfM}(\Shv(\sfM;\Sp))).
\]
for the sheaf of $\pi$-commutative global $\calO_\sfM$-algebras lifting $\calO_\bfG$ via \cref{prop:temperedsheaf} and \cref{prop:lifts_to_modules}. 
\end{notation}

\subsection{Tempered (co)homology and base change}\label{ssec:temperedHOMOLOGY}

Let $\bfG$ be an oriented $\bfP$-divisible group over a stack $\sfM$. Given $\sfX \in \abglobalspaces$, one may evaluate the $\pi$-commutative ring $\ul{\calO}_\bfG$ on $\sfX$ to obtain a sheaf of $\calO_\sfM$-algebras
\[
\ul{\calO}_\bfG^\sfX \in \CAlg(\Mod_{\calO_\sfM}(\Shv(\sfM;\Sp))),\qquad \ul{\calO}_\bfG^\sfX(f\colon \Spec\A\to\sfM) = \A^\sfX_{f^\ast\bfG}.
\]
Our goal in this subsection is to explain how Lurie's base change theorem \cite[Th.4.7.1]{ec3} identifies conditions under which this sheaf is quasi-coherent, and to use this to define well-behaved \emph{tempered homology} theories.

\begin{mydef}
A global space $\sfX$ is \emph{$\pi$-compact} if it lies in the full subcategory of $\abglobalspaces$ generated by the $\pi$-finite spaces under finite colimits and retracts.
\end{mydef}

\begin{ex}
Let $G$ be a finite group and $X$ be a compact $G$-space. Then the global space $X//G$ associated with $X$ is in the full subcategory of $\abglobalspaces$ generated by $\bfB H$ for $H \subset G$ under finite colimits and retracts. In particular, $X//G$ is $\pi$-compact.
\end{ex}

\begin{prop}\label{pr:quasicoherent_on_picompactspaces}
Let $\sfX$ be a $\pi$-compact global space. Then $\ul{\calO}_\bfG^\sfX$ is a quasi-coherent $\calO_\sfM$-module.
\end{prop}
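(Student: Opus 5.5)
By \cref{lem:qcohassheaves}, quasi-coherence of the $\calO_\sfM$-module $\ul{\calO}_\bfG^\sfX$ means the following: for every map $f\colon \Spec\B\to\Spec\A$ over $\sfM$, the base change map
\[
\B\otimes_\A \A^\sfX_{\bfG_\A}\to \B^\sfX_{f^\ast\bfG_\A}
\]
is an equivalence, where $\bfG_\A$ denotes the restriction of $\bfG$ to $\Spec\A$. Here $f^\ast\bfG_\A\simeq\bfG_\B$, since $\ul{\calO}_\bfG$ is built from Cartesian morphisms. Sheafiness is already part of \cref{prop:temperedsheaf}. So the problem reduces to an affine statement: for fixed $f\colon\A\to\B$ and an oriented $\bfP$-divisible group $\bfG$ over $\A$, the class of global spaces $\sfX$ for which the comparison map above is an equivalence contains all $\pi$-compact $\sfX$.

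The plan is to argue that this class contains the $\pi$-finite spaces and is closed under finite colimits and retracts. Closure is formal. The functor $\sfX\mapsto \A^\sfX_\bfG$ sends colimits in $\abglobalspaces$ to limits, and $\B\otimes_\A(\bs)$ is exact, so it commutes with finite limits of $\A$-modules. The comparison map is natural in $\sfX$, so equivalences are preserved under finite colimits (pushouts and the initial object) and under retracts.

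The key input is the generators, namely $\pi$-finite spaces $F$ viewed as global spaces via $\bfR$. Here I would invoke Lurie's tempered base change theorem \cite[Th.4.7.1]{ec3}, which states exactly that $\B\otimes_\A\A_\bfG^{F}\to \B_{f^\ast\bfG}^{F}$ is an equivalence for $\pi$-finite $F$ (in fact for $\pi$-finite global spaces). One must check that the hypotheses match: orientedness is needed, and it holds. If one prefers to stay closer to the paper, an alternative derivation uses ambidexterity. By \cref{thm:temperedambidexterity}, $F\to\ast$ lies in $\pi$, and the tempered-ambidexterity identification $\pi_\ast\bbone\simeq\pi_!\bbone$ writes $\A^F_\bfG$ as a colimit over $(\Glo_\ab)_{/F}$ of the rings $\A^T_\bfG$, as in the proof of \cref{prop:homology_colim}. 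Each $\A^T_\bfG$ for $T\in\Glo_\ab$ base changes correctly because $\bfG(T)$ is a pullback of the finite flat $\bfG[\dual H]$. Tensor products commute with colimits, and the base change functor $\LocSys_{\bfG}\to\LocSys_{f^\ast\bfG}$ preserves $\pi$-colimits by \cref{prop:Cartesianpilimits}, so the colimit formula is compatible with base change.

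The main obstacle is this generator step. The cohomology $\A^F_\bfG$ is a limit over $(\Glo_\ab)_{/F}$, which is typically infinite, so naive base change fails. The whole point is to convert it into a colimit via ambidexterity (or to cite Lurie's theorem), and to verify that the transfers used are compatible with the Cartesian base change. Once that is settled, the reduction to affines and the closure under finite colimits and retracts are routine.
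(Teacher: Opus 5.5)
Your proposal is correct and follows the paper's proof. You reduce to the affine comparison map $\B\otimes_\A\A^\sfX_\bfG\to\B^\sfX_{f^\ast\bfG}$, observe that the class of $\sfX$ for which it is an equivalence is closed under finite colimits and retracts, and handle the $\pi$-finite generators by Lurie's base change theorem \cite[Th.4.7.1]{ec3}; the ambidexterity-based alternative is unnecessary, and if you did take that route, the Beck--Chevalley equivalence of \cref{prop:Cartesianpilimits} evaluated on the unit is already the comparison map, so the colimit formula would be superfluous.
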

\begin{proof}
By working locally on $\sfX$, it suffices to prove that if $f\colon \A\to\B$ is a map of $\bfE_\infty$ rings and $\bfG$ is an oriented $\bfP$-divisible group over $\A$, then the comparison map
\[
\B\otimes_\A\A_\bfG^\sfX \to \B_\bfG^\sfX
\]
is an equivalence. The class of $\sfX$ for which this map is an equivalence is closed under finite colimits and retracts, so we may reduce to the case where $\sfX = F$ is a $\pi$-finite space, which is exactly Lurie's base change theorem \cite[Th.4.7.1]{ec3}.
\end{proof}

Thus, a $\pi$-compact space $\sfX$ determines 
\[
\ul{\calO}_\bfG^\sfX \in \CAlg(\QCoh(\sfM)) \subset \CAlg(\Mod_{\O_{\sfM}}(\Shv(\sfM; \Sp))).
\]
By \cite[Th.2.2.3.1]{reconstruction}, taking relative spectra determines an equivalence
\[
\CAlg(\QCoh(\sfM))^\op\simeq\Stk_{/\sfM}^\aff,
\]
where $\Stk_{/\sfM}^\aff\subset\Stk_{/\sfM}$ is the full subcategory spanned by the affine morphisms. \cref{pr:quasicoherent_on_picompactspaces} therefore gives access to the following geometric objects.

\begin{mydef}\label{def:picompactstack}
Let $\sfX$ be a $\pi$-compact global space. We write
\[
\bfG(\sfX) \in \Stk_{/\sfM}
\]
for the relatively affine stack over $\sfM$ associated with $\ul{\calO}_\bfG^\sfX \in \CAlg(\QCoh(\sfM)) \simeq (\Stk^\aff_{/\sfM})^\op$.
\end{mydef}

\begin{ex}
As $T$ varies over $\Glo_\ab\subset\abglobalspaces$, the stacks $\bfG(T)$ over $\sfM$ are exactly those stacks which constitute the oriented $\bfP$-divisible group $\bfG$, i.e.\ \cref{def:picompactstack} is compatible with \cref{notation:preorientednotation} under the inclusion $\Aff \subset \Stk$.
\end{ex}

These stacks are the natural recipients for equivariant homology theories associated with oriented $\bfP$-divisible groups.

\begin{construction}\label{constr:temperedhomology}
Given a $\pi$-finite space $F$, we construct a lax symmetric monoidal and colimit preserving \emph{$F$-global $\bfG$-tempered homology} functor
\[
\calO_\bfG^F[\bs]\colon (\abglobalspaces)_{/F} \to \QCoh(\bfG(F))
\]
as follows. By \cref{def:homology}, associated with the $\pi$-ambidextrous $\bfE_\infty$ ring $\ul{\calO}_\bfG$ is a lax symmetric monoidal homology functor
\[
\calO_\bfG^F[\bs]\colon (\abglobalspaces)_{/F} \to \Mod_{\calO_\bfG^F}(\Mod_{\calO_\sfM}(\Shv(\sfM;\Sp))),
\]
satisfying $\calO_\bfG^F[T]\simeq \calO_\bfG^T$ for any map $T \to F$ with $T \in \Glo_\ab$. In particular, we have $\calO_{\bfG}^F[T]\in \Mod_{\calO_\bfG^F}(\QCoh(\sfM))$. By \cref{prop:homology_colim} this functor preserves colimits, and therefore by \cref{lem:qcohassheaves} and the above observation lands in the full subcategory
\[
\QCoh(\bfG(F))\simeq\Mod_{\calO_\bfG^F}(\QCoh(\sfM))\subset \Mod_{\calO_\bfG^F}(\Mod_{\calO_\sfM}(\Shv(\sfM;\Sp)))
\]
of quasi-coherent sheaves on $\bfG(F)$.
\end{construction}

The name \emph{$F$-global homology} is in analogy to $G$-global homotopy theory for a finite group $G$, as $G$-global spaces, in the sense of \cite{lenz}, are precisely objects of the slice $(\Spc_{\gl})_{/\BG}$.

\begin{ex}
Let $K \to F$ be a map of $\pi$-finite spaces. Restriction defines a map $\calO_\bfG^F \to \calO_\bfG^K$ in $\CAlg(\QCoh(\sfM))$. This promotes $\calO_\bfG^K$ to an object of $\Mod_{\calO_\bfG^F}(\QCoh(\sfM))\simeq\QCoh(\bfG(F))$, and by construction we have
\[
\calO_\bfG^F[K]\simeq\calO_\bfG^K \in \QCoh(\bfG(F)).
\]
Functoriality of $\calO_\bfG^F[\bs]$ associates to any map $f\colon K \to K'$ of $\pi$-finite spaces over $F$ the $\calO_\bfG^F$-linear transfer map $f_!\colon \calO_\bfG^K \to \calO_\bfG^{K'}$. When $K' = F$, this linearity is exactly \emph{Frobenius reciprocity}.
\end{ex}

\begin{rmk}\label{rmk:0semiaffine}
By instead applying \cref{def:homology} to the $\pi$-ambidextrous spectrum $\Gamma(\ul{\calO}_\bfG)$ of global sections of $\ul{\calO}_\bfG$, one obtains a lax symmetric monoidal $F$-global homology functor
\[
\Gamma(\ul{\calO}_\bfG^F) = \mathsf{D}\Gamma(\ul{\calO}_\bfG) \colon (\abglobalspaces)_{/F} \to \Mod_{\Gamma(\calO_\bfG^F)}.
\]
This satisfies $\Gamma(\ul{\calO}_\bfG^F)[T]\simeq \Gamma(\ul{\calO}_\bfG^F[T])$ for $T \in (\Glo_\ab)_{/F}$ by construction, and so there is a natural coassembly map
\[
\Gamma(\ul{\calO}_\bfG^F)[\sfX] \to \Gamma(\ul{\calO}_\bfG^F[\sfX])
\]
for $\sfX \in (\abglobalspaces)_{/F}$ which is an equivalence if $\Gamma\colon \QCoh(\sfM) \to \Sp$ preserves colimits, i.e.\ if $\sfM$ is \emph{$0$-semiaffine} in the sense of \cite[Df.2.2.1.5]{reconstruction}; see \cite[\textsection4.2.3 \& 4.4]{reconstruction} for examples. In particular, this holds for the primary examples $\sfM = \sfM_\Tori^\ori$ and $\sfM = \sfM_\Ell^\ori$ of \Cref{ex:realktheoryexample,ex:ambidextroustmf}.
\end{rmk}

\begin{ex}\label{ex:gequivhomology}
Let $H$ be a finite group. From the underlying $H$-spectrum object $\ul{\calO}_{\bfG,H}$ in $\QCoh(\sfM)$ of $\ul{\calO}_{\bfG}$, as defined in \cref{restricted_G-spectrum}, we may extract a $H$-equivariant homology theory
\[
\spaces_H \to \Mod_{\calO_\sfM}(\Shv(\sfM;\Sp)),\qquad X \mapsto (\ul{\calO}_{\bfG,H} \otimes \Sigma^\infty X)^H.
\]
This is lax symmetric monoidal and sends the point to $\calO_{\bfG}^{\bfB H}$, and therefore lifts to a functor
\[
\calO_{\bfG,H}[\bs//H]\colon \spaces_H \to \QCoh(\bfG(\bfB H)).
\]
By inspection, this is equivalent to the restriction of \cref{constr:temperedhomology} along the functor
\[
\spaces_H \to (\abglobalspaces)_{/\bfB H}
\]
sending an orbit $H/K$ to the map $\bfB K \to \bfB H$.
As $\QCoh(\bfG(\bfB H))$ is pointed, this also factors through points $H$-spaces to define a lax symmetric monoidal functor
\[
\widetilde{\calO}_{\bfG,H}[\bs//H]\colon \spaces_{H,\ast} \to \QCoh(\bfG(\bfB H)).
\]
\end{ex}

As originally hinted at by Lurie in \cite[Rmk.3.14]{lurieecsurvey}, tempered homology is in many ways better behaved than tempered cohomology when working at the level of sheaves. For example, it satisfies the following general base change theorem.

\begin{theorem}\label{thm:homologybasechange}
Let $f\colon \sfN \to \sfM$ be a morphism of stacks, $\bfG$ be an oriented $\bfP$-divisible group over $\sfM$ and $F$ be a $\pi$-finite space. We will, slightly abusively, still write $f$ for the morphism $ f^\ast \bfG(F) \to \bfG(F)$ induced by $f$. Then there is a natural equivalence
\[
f^\ast\calO_\bfG^F[\sfX] \to \calO_{f^\ast\bfG}^F[\sfX]
\]
in $\QCoh(f^\ast\bfG(F))$ for any $\sfX \in (\abglobalspaces)_{/F}$.
\end{theorem}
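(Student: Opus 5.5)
Both sides are colimit preserving in $\sfX \in (\abglobalspaces)_{/F}$, so the plan is to reduce the statement to representables and then invoke Lurie's base change theorem.

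First, I construct the comparison map. Pullback along $f$ defines a Cartesian morphism of oriented $\bfP$-divisible groups $f^\ast\bfG \to \bfG$, and \cref{thm:temperedglobalspectrum} gives a map of $\pi$-ambidextrous rings compatible with all transfers. Passing to sheaves via \cref{prop:temperedsheaf}, this is a map of sheaves of $\pi$-ambidextrous $\calO$-algebras. Base change then yields, naturally in $\sfX$, a comparison
\[
f^\ast\calO_\bfG^F[\sfX] \to \calO_{f^\ast\bfG}^F[\sfX],
\]
defined first on $\sfX = T \in (\Glo_\ab)_{/F}$ as the map $f^\ast\calO_\bfG^T \to \calO_{f^\ast\bfG}^T$ and then extended by left Kan extension. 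Compatibility with transfers along maps in $\pi_{/F}$ is exactly what lets it extend along the functoriality of the homology construction in \cref{def:homology}.

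Second, I reduce to representables. The functor $f^\ast\colon \QCoh(\bfG(F)) \to \QCoh(f^\ast\bfG(F))$ preserves colimits. By \cref{constr:temperedhomology}, which relies on \cref{prop:homology_colim}, both $\calO_\bfG^F[\bs]$ and $\calO_{f^\ast\bfG}^F[\bs]$ are colimit preserving. It therefore suffices to treat $\sfX = T \to F$ with $T \in \Glo_\ab$. Since colimit-preserving functors out of presheaves are left Kan extended from $(\Glo_\ab)_{/F}$, naturality on that subcategory suffices. Here $\calO_\bfG^F[T]\simeq \calO_\bfG^T$, regarded as an $\calO_\bfG^F$-module through restriction.

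Third, I prove the representable case. Pullback along $\bfG(F)\times_\sfM \sfN \simeq (f^\ast\bfG)(F) \to \bfG(F)$ of a quasi-coherent $\calO_\bfG^F$-module $\calN$ is computed by $\calO_{f^\ast\bfG}^F\otimes_{f^\ast\calO_\bfG^F} f^\ast\calN$. Here $f^\ast$ on $\QCoh(\sfM)\to\QCoh(\sfN)$ is computed affine-locally by $\B\otimes_\A(\bs)$. Because $F$ and $T$ are $\pi$-finite, \cref{pr:quasicoherent_on_picompactspaces}, that is Lurie's base change theorem \cite[Th.4.7.1]{ec3}, shows that the quasi-coherent sheaf $\calO_{f^\ast\bfG}^T$ on $\sfN$ is $f^\ast\calO_\bfG^T$, and likewise for $F$. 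Hence
\[
\calO_{f^\ast\bfG}^F\otimes_{f^\ast\calO_\bfG^F}f^\ast\calO_\bfG^T \simeq f^\ast\calO_\bfG^T \simeq \calO_{f^\ast\bfG}^T.
\]
This can be checked after restricting to affines $\Spec\B\to\Spec\A$ over $\sfN\to\sfM$, where it is exactly $\B\otimes_\A\A^T_\bfG\simeq\B^T_{\bfG}$.

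The main obstacle is the first step: making the comparison map coherent, so that it is natural in $\sfX$ with respect to transfers and not merely restrictions. I would handle this by working with the Cartesian-morphism functoriality of \cref{thm:temperedglobalspectrum} before taking homology, rather than building the map by hand on transfers.
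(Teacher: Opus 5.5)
Your proposal is correct and is essentially the paper's proof. Both sides are colimit preserving in $\sfX$, so one reduces to $T\in(\Glo_\ab)_{/F}$, where the chain $f^\ast\calO_\bfG^F[T]\simeq f^\ast\calO_\bfG^T\simeq\calO_{f^\ast\bfG}^T\simeq\calO_{f^\ast\bfG}^F[T]$ is Lurie's base change theorem as packaged in \cref{pr:quasicoherent_on_picompactspaces}; your extra care in building the comparison map from the Cartesian functoriality of $\ul{\calO}_\bfG$, so that it is natural with respect to transfers and not just restrictions, makes explicit what the paper leaves as ``by construction''.
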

\begin{proof}
As both sides preserve colimits in $\sfX$, it suffices to construct this natural equivalence in the case where $\sfX = T \in (\Glo_\ab)_{/F}$. Here we have
\[
f^\ast\calO_\bfG^F[T]\simeq f^\ast\calO_\bfG^T\simeq\calO_{f^\ast\bfG}^T\simeq\calO_{f^\ast\bfG}^F[T]
\]
by construction.
\end{proof}

\subsection{Geometric fixed points}\label{ssec:geometric_FP}

Fix an oriented $\bfP$-divisible group $\bfG$ on a stack $\sfM$. Following \cref{restricted_G-spectrum}, for each finite group $H$ we may restrict $\ul{\calO}_{\bfG}$ to a sheaf
\[
\ul{\calO}_{\bfG,H} \in \CAlg_H(\Mod_{\calO_\sfM}(\Shv(\sfM;\Sp)))
\]
of $H$-equivariant $\bfE_\infty$ rings on $\sfM$. In particular, we may form the $H$-geometric fixed points
\begin{equation}\label{eq:geometricfp_asqcoh}
\ul{\calO}_{\bfG}^{\Phi H} \in \CAlg_{\calO_\bfG^{\bfB H}}(\Mod_{\calO_\sfM}(\Shv(\sfM;\Sp))) \simeq \CAlg(\QCoh(\bfG(\bfB H)))
\end{equation}
of $\ul{\calO}_\bfG$. Explicitly, if we write $\widetilde{E}\calP$ for the $H$-space with contractible $K$-fixed points for all proper subgroups $K\subset H$ and empty $H$-fixed points, then as $\widetilde{E}\calP \in \spaces_{H,\ast}$ is idempotent under the smash product and $\widetilde{\ul{\calO}}_\bfG^{\bfB H}[\bs//H]$ is lax symmetric monoidal (see \Cref{ex:gequivhomology}), we may form
\[\ul{\calO}_\bfG^{\Phi H} = \widetilde{\ul{\calO}}_\bfG^{\bfB H}[\widetilde{E} \calP//H] \in \CAlg(\QCoh(\bfG(\bfB H))). \]
Write
\[
\Phi^H \bfG \in \Stk^{\aff}_{/\bfG(\bfB H)}
\]
for the associated \emph{geometric fixed point stack} of $\bfG$, defined as the relative spectrum of (\ref{eq:geometricfp_asqcoh}).

\begin{remark}\label{rmk:eulerclass_and_geo_fixed_points}
Recall that associated with any $H$-representation $V$ is a \emph{representation sphere} $S^V \in \Spc_{H,\ast}$, obtained as the one-point compactification of $V$. The \emph{Euler class} of $V$ is the pointed map $a_V\colon S^0 \to S^V$ induced by the inclusion $0\subset V$. Letting $\ol{\rho}_H$ denote the reduced regular representation of $H$, there is an equivalence
\[
\widetilde{E} \calP \simeq \colim (S^{\ol{\rho}_H}\xrightarrow{a_{\ol{\rho}_H}} S^{\cdot 2\ol{\rho}_H} \xrightarrow{\cdot a_{\ol{\rho}_H}} S^{3\ol{\rho}_H} \xrightarrow{\cdot a_{\ol{\rho}_H}} \dots ) \simeq S^{\infty \ol{\rho}_H},
\]
see for example \cite[Pr.2.7]{derivedindandres}. In particular, $\widetilde{E}\calP \simeq \spherespectrum_H[a_{\ol{\rho}_H}^{-1}]$ in $\Sp_H$.
\end{remark}

\begin{remark}
The fact that $\Phi^H\bfG$ is defined as a stack over $\sfM$ allows one to pull back the oriented $\bfP$-divisible group $\bfG$ to $\Phi^H \bfG$, and so enhance the structure sheaf of $\Phi^H\bfG$ itself to a sheaf of $\pi$-commutative global $\calO_{\Phi^H\bfG}$-algebras. In particular, if $\sfM = \Spec(\A)$ is affine, then this refines the $\bfE_\infty$ ring $\ul{\A}^{\Phi H}_\bfG$ to a $\pi$-commutative global ring
\[
\ul{\A_\bfG^{\Phi H}} \in \CAlg_{\ul{\A}}(\Sp_\pi^\gl)
\]
under $\ul{\A}_\bfG$, satisfying
\[
(\ul{\A_\bfG^{\Phi H}})^F[\sfX]\simeq \A_\bfG^{\Phi H}\otimes_\A \A_\bfG^F[\sfX]
\]
for any $\pi$-finite space $F$ and $\sfX \in (\abglobalspaces)_{/\sfX}$. For example taking $\ul{A}_\bfG = \gKU$ to be the global spectrum of equivariant $K$-theory, we may identify
\[
\gKU^{\Phi C_n}\simeq KU[1/n](\zeta_n)
\]
where $\zeta_n$ is a primitive $n$th root of unity \cite[\textsection 7.7]{tomDieck1979transformation}, and thus
\[
(\ul{\gKU^{\Phi C_n}})^{\Phi C_n}\simeq \gKU^{\Phi C_n}\otimes_{\KU} \gKU^{\Phi C_n}\simeq  \KU[1/n](\zeta_n)^{\times |\Aut(C_n)|};
\]
note that this differs from both $\gKU^{\Phi C_n^2}$ and $\gKU^{\Phi C_{n^2}}$ for $n \neq 0$.
\end{remark}

Our goal in this subsection and the next is to identify the geometric fixed point stacks $\Phi^H\bfG$ more concretely, and to derive some consequences. We begin by considering in this subsection the case where $H$ is nonabelian.

\begin{prop}\label{pr:nonabelian_vanishing}
Let $H$ be a nonabelian group. Then $\Phi^H \bfG\simeq\emptyset$.
\end{prop}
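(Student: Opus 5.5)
We want to show that the quasi-coherent $\calO_\bfG^{\bfB H}$-algebra $\ul{\calO}_\bfG^{\Phi H}$ vanishes. Since it is an $\E_\infty$ algebra, it suffices to show that its unit is zero, and it is enough to check this after pulling back along affine maps $\Spec \A\to\sfM$. By the base change theorem \Cref{thm:homologybasechange} (applied with $F=\bfB H$ and $\sfX=\widetilde{E}\calP//H$, using reduced homology), the construction of $\ul{\calO}_\bfG^{\Phi H}$ commutes with such pullbacks. We may therefore assume $\sfM=\Spec\A$ is affine, and moreover, by flat descent, that $\pi_0\A$ is local, or even that $\A$ is $K(n)$-local as in the proof of \Cref{prop:pconverse}. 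However, the cleanest route avoids further reduction and argues via Euler classes.

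By \Cref{rmk:eulerclass_and_geo_fixed_points}, $\widetilde{E}\calP\simeq S^{\infty\ol{\rho}_H}$. Hence $\ul{\calO}^{\Phi H}_\bfG$ is obtained from $\calO^{\bfB H}_\bfG$ by inverting the Euler class of $\ol{\rho}_H$, or equivalently by inverting the image of $\widetilde{E}\calP$ in the $H$-spectrum $\ul{\calO}_{\bfG,H}$. So it suffices to produce a proper subgroup $K\subsetneq H$ and show that the unit of $\ul{\calO}_{\bfG}^{\Phi H}$ factors through something built from the orbit $H/K$, which is killed by $\widetilde{E}\calP$.

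Concretely, I would use the standard criterion: geometric fixed points $E^{\Phi H}$ of an $H$-ring vanish as soon as the unit $1\in\pi_0E^H$ lies in the image of the sum of transfers $\bigoplus_{K\subsetneq H}\tr_K^H\colon \pi_0E^K\to\pi_0E^H$. This holds because $\widetilde{E}\calP\otimes H/K_+\simeq0$ for proper $K$, and the transfer factors through $H/K_+$. The task is thus to express $1\in\pi_0\A_\bfG^{\bfB H}$ as a sum of transfers from proper subgroups. Here the $\pi$-ambidexterity is used. Tempered cohomology only sees abelian subgroups: by Lurie's description of $\A_\bfG^{\bfB H}$ as a limit over $(\Glo_\ab)_{/\bfB H}$, which is the defining limit-preservation of the cohomology theory, the restriction maps to abelian subgroups are jointly "detecting". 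More usefully, the generalized character-theoretic/induction statement should say that $\bigoplus_{A\subset H \text{ abelian}}\tr_A^H$ is surjective rationally-locally, in the sense of a tempered analogue of Artin induction. Since $H$ is nonabelian, every abelian $A$ is proper, which gives the claim.

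The main obstacle is establishing this Artin-type surjectivity of transfers from abelian subgroups onto $\pi_0\A_\bfG^{\bfB H}$ integrally and locally on $\sfM$. My first attempt would be to work locally on $\Spec\pi_0\A$ and use Nakayama: the image of $\sum\tr_A^H$ is an ideal $J$, so it suffices to show $J$ is not contained in any prime $\frakp$. After localizing and completing at $\frakp$, I would pass to the $K(n)$-local case with a splitting $\bfG\simeq\bfG_0\oplus\ul\Lambda$ as in \Cref{prop:pconverse}. There, tempered local systems are controlled by the formal loop space $\calL^\Lambda$, and Lurie's character isomorphism identifies $\A_\bfG^{\bfB H}$, after a faithfully flat extension, with functions on $\Hom(\dual\Lambda,H)/H$ (with the $\bfG_0$-part Borel). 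Commuting tuples generate abelian subgroups, so each component is hit by the transfer from the abelian subgroup it factors through. The composite $\res\circ\tr$ computes a nonzero-divisor sum, which shows the ideal is the unit ideal.
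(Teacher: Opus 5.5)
\textbf{There is a genuine gap: the integral ``Artin induction'' statement you plan to prove is false.} Your reductions match the paper's: base change, then affine $\sfM$, then $\pi_0\A$ local. Your sufficient criterion is also correct: if $1$ is a sum of transfers from proper subgroups, then $\Phi^H$ vanishes. The problem is the claim in your last paragraph, that the ideal $J\subset\pi_0\A_\bfG^{\bfB H}$ generated by transfers from abelian subgroups is the unit ideal locally on $\Spec\pi_0\A$. Away from the rational case this fails, so the argument cannot be completed along those lines.

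To see this, pull back $\bfB A\to\bfB H$ along $\ast\to\bfB H$ to get the finite set $H/A$. The span calculus then gives $\res^H_e\tr^H_A(y)=[H:A]\,\res^A_e(y)$ on $\pi_0$. So the augmentation $\pi_0\A_\bfG^{\bfB H}\to\pi_0\A$ carries $J$ into the ideal generated by the indices $[H:A]$. If $H$ is a nonabelian $p$-group, every such index is divisible by $p$. Hence $J$ lies in a proper (maximal) ideal whenever $p$ is not a unit in $\pi_0\A$.

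A concrete counterexample is $\gKU$ with $H=D_8$. Every element of $J\subset RU(D_8)$ has even virtual dimension, so $1\notin J$, and this persists after localizing at $(2)$. Yet $\gKU^{\Phi D_8}=0$.

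The same obstruction appears in your $K(n)$-local picture. When $\bfG=\bfG_0\oplus\ul\Lambda$ with $\bfG_0\neq 0$, the factors of the character decomposition are Borel cohomology rings $\A^{BC_H(\alpha)}$, not copies of $\A$. The element produced by $\res\circ\tr$ is an index divisible by $p$, which is not a unit in a $p$-complete ring. In any case, being a nonzero-divisor would not give the unit ideal. Vanishing of $\Phi^H$ is a nilpotence phenomenon, strictly weaker than $1$ being a finite sum of transfers from proper subgroups.

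\textbf{The missing idea is a reduction to the rational case.} Your Artin-type statement does hold rationally, by \cite[Th.7.6.3, Rmk.7.6.4]{ec3}. The paper's argument runs as follows.
\begin{enumerate}
\item After reducing to $\pi_0\A$ local, $\A$ is $L_n$-local because it carries an oriented $\bfP$-divisible group. Hence the $\E_\infty$ $\A$-algebra $\ul{\A}_\bfG^{\Phi H}$ is $L_n$-local too.
\item By Hahn's theorem \cite{hahn2022bousfield}, an $L_n$-local $\E_\infty$ ring vanishes if and only if its rationalization does.
\item Base change then reduces to rational $\A$. There every class in $\pi_0^H$ is a sum of transfers from abelian subgroups, which are all proper, so $1\mapsto 0$ in $\Phi^H$.
\end{enumerate}

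You can salvage your approach along the same lines. Use the rational surjectivity only to produce a nonzero integer $N\in J$ (clear denominators using $\pi_0(\A\otimes\bfQ)_\bfG^{\bfB H}\cong\pi_0\A_\bfG^{\bfB H}\otimes\bfQ$). Then $N=0$ in $\pi_0\ul{\A}_\bfG^{\Phi H}$, so its rationalization vanishes, and Hahn's theorem finishes the proof.
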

\begin{proof}
By \cref{thm:homologybasechange}, the formation of geometric fixed point stacks is compatible with base change, i.e.\
\[
\Phi^Hf^\ast\bfG\simeq f^\ast\Phi^H\bfG
\]
for any map $f\colon \sfN\to\sfM$ of stacks. We may therefore work locally on $\sfM$ to reduce to the case where $\bfG$ is an oriented $\bfP$-divisible group over a $\bfE_\infty$ ring $\A$ for which $\pi_0 \A$ is local. As $\bfG$ is oriented, it follows that $\A$ is $L_n$-local for some implicit prime $p$ and some finite height $n < \infty$ and therefore so is the $\A$-algebra $\ul{\A}_\bfG^{\Phi H}$. By \cite{hahn2022bousfield}, we find that $\ul{\A}_\bfG^{\Phi H}\simeq 0$ if and only if $\bfQ \otimes \ul{\A}_\bfG^{\Phi G}\simeq 0$. Again, applying \cref{thm:homologybasechange}, we therefore reduce to the case where $\A$ is rational. By \cite[Th.7.6.3, Rmk.7.6.4]{ec3}, every element of $\pi_0^H \ul{\A}_{\bfG}$ is equal to a sum of classes transfered from abelian subgroups of $H$. As geometric fixed points annihilate the image of the transfer, it follows that $1 \in \pi_0^H\ul{\A}_{\bfG}$ is annihilated by the map $\ul{\A}_{\bfG}^{\bfB H} \to \ul{\A}_{\bfG}^{\Phi H}$, and therefore $\ul{\A}_{\bfG}^{\Phi H}\simeq 0$ as claimed.
\end{proof}

\begin{cor}\label{cor:tomdieckesque}
For any finite group $H$, there is a natural comparison map
\[
\calO_\bfG^{\bfB H} \to \prod_{\substack{(L) \leq H \\ L \text{ abelian}}} (\calO_\bfG^{\Phi L})^{\h W_HL}
\]
in $\CAlg(\QCoh(\sfM))$ which is an equivalence if the order of $H$ is invertible in $\sfM$. Here, the product is indexed over the conjugacy classes of abelian subgroups of $H$.
\end{cor}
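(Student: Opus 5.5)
The comparison map comes from the standard geometric fixed points splitting for genuine $H$-equivariant ring spectra, applied to the sheaf $\ul{\calO}_{\bfG,H}$ of $H$-equivariant $\bfE_\infty$ rings on $\sfM$ from \cref{restricted_G-spectrum}.

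For each subgroup $L\leq H$ there is a composite
\[
\calO_\bfG^{\bfB H}\xrightarrow{\res} \calO_\bfG^{\bfB L}\to \calO_\bfG^{\Phi L},
\]
where the second map is the $\bfE_\infty$ map $\widetilde{\calO}_\bfG^{\bfB L}[S^0//L]\to \widetilde{\calO}_\bfG^{\bfB L}[\widetilde{E}\calP_L//L]$ induced by $S^0\to \widetilde{E}\calP_L$. This composite is naturally invariant under conjugation by the normalizer $N_HL$. Conjugation by $L$ acts trivially, so it factors through $(\calO_\bfG^{\Phi L})^{\h W_HL}$. Taking the product over conjugacy classes gives the map. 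We then discard nonabelian $L$, which is harmless because $\calO_\bfG^{\Phi L}\simeq 0$ by \cref{pr:nonabelian_vanishing}. Each factor is quasi-coherent: the $\Phi L$ term is a colimit of quasi-coherent $\calO_\bfG^{\bfB L}$-modules, and $\QCoh(\sfM)$ is closed under the finite homotopy fixed points involved. So the map lives in $\CAlg(\QCoh(\sfM))$. Naturality in Cartesian morphisms of $\bfG$ follows from \cref{thm:temperedglobalspectrum}.

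For the equivalence when $|H|$ is invertible, both sides commute with base change along maps $\Spec\A\to\sfM$. The left side does by \cref{pr:quasicoherent_on_picompactspaces}. On the right, $\Phi^L$ commutes with base change by \cref{thm:homologybasechange}, and $(\bs)^{\h W_HL}$ is a finite limit after inverting $|W_HL|$, so it also commutes with base change. We may therefore assume $\sfM=\Spec\A$ with $|H|\in\pi_0\A^\times$. Then $\ul{\A}_{\bfG,H}$ is a genuine $H$-spectrum in $\Mod_\A$ with $|H|$ inverted. We invoke the classical splitting: after inverting $|H|$, every $H$-spectrum $E$ has
\[
E^H\simeq \prod_{(L)\leq H}(\Phi^L E)^{\h W_HL},
\]
for instance via the idempotent splitting of the rationalized-at-$|H|$ Burnside ring $A(H)[1/|H|]$ into idempotents $e_L$, with $e_L E^H\simeq (\Phi^LE)^{hW_HL}$. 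We must check that the map we built agrees with this splitting map. This holds because both are the product of the $N_HL$-equivariant maps $E^H\to E^L\to\Phi^LE$. Finally, the terms with $L$ nonabelian vanish by \cref{pr:nonabelian_vanishing}.

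The main obstacle is supplying the splitting theorem with $|H|$ inverted, rather than rationally, in this generality. Our plan is to use the idempotents in $\pi_0$ of the Burnside ring with $|H|$ inverted, which act on any $H$-spectrum. Then for each $L$ we identify $e_L E^H$ with $(e_L E)^{N_HL}$ and then with $(\Phi^LE)^{hW_HL}$. This uses that $\widetilde{E}\calP_L$ smashing agrees with $e_L$ on $L$-fixed points after inverting $|L|$, and that $|W_HL|$ is invertible so that the homotopy fixed points are computed by transfer-averaging. If this becomes delicate, there is a fallback: reduce, via the conservativity argument of \cref{pr:nonabelian_vanishing}, to the rational case and the $K(n)$-local cases.
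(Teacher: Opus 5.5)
Your proposal is correct and is essentially the paper's argument: the paper also applies the standard splitting $\Sp^H\to\prod_{(L)\leq H}\Fun(BW_HL,\Sp)$, which is an equivalence away from $|H|$, to $X=\ul{\calO}_{\bfG,H}$ and discards the nonabelian $L$ via \cref{pr:nonabelian_vanishing}, though it simply cites the splitting where you sketch it through Burnside ring idempotents. Your reduction to an affine base is harmless but unnecessary, since the splitting applies directly to the $H$-spectrum object $\ul{\calO}_{\bfG,H}$ of sheaves on $\sfM$; also, $\QCoh(\sfM)$ is not closed in sheaves under $(\bs)^{\h W_HL}$ unless $|W_HL|$ is invertible, so for general $H$ the homotopy fixed points on the right should be formed in $\CAlg(\QCoh(\sfM))$ itself.
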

\begin{proof}
For any subgroup $L\subset H$ and $H$-spectrum $X$, the $L$-geometric fixed point $\Phi^L X$ spectrum carries an action of the Weyl group $W_H L$, and this refines to a functor
\[
\Phi^L\colon \Sp^H \to \Fun(BW_HL,\Sp).
\]
By allowing $L$ to vary over the conjugacy classes of subgroups of $H$, we obtain a functor
\[
\Phi\colon \Sp^H \to \prod_{(L)\leq H}\Fun(BW_HL,\Sp)
\]
which is known to be an equivalence away from the order of $H$. This provides, for any $H$-spectrum object $X$ a natural map
\[
X^H \to \prod_{(L)\leq H} (\Phi^L X)^{\h W_H L}
\]
which is an equivalence when $|H|$ acts invertible on $X$. The corollary follows by applying this to $X = \ul{\calO}_{\bfG,H}$ and observing that summands corresponding to nonabelian subgroups vanish by \cref{pr:nonabelian_vanishing}.
\end{proof}

\begin{ex}
If $H$ is a finite abelian group, then the action of $W_HL \simeq H/L$ on $\calO_\bfG^{\Phi L}$ is rationally trivial, and so \cref{cor:tomdieckesque} provides a map
\[
\calO_\bfG^{\bfB H}\to \prod_{L\subset H}\calO_\bfG^{\Phi L}
\]
which is an equivalence after inverting the order of $H$. Geometrically, this provides a map
\[
\coprod_{L\subset H}\Phi^L \bfG \to \bfG(\bfB H)
\]
which is an equivalence after base change to $\spherespectrum[1/|H|]$. We will identify the stacks $\Phi^L \bfG$ in geometric terms in \cref{ssec:abeliangeofixed} below; compare, for example, the corresponding splittings of equivariant $\TMF$ discussed in \cite[\textsection 4.2]{tmfwls}.
\end{ex}

\subsection{Abelian geometric fixed points}\label{ssec:abeliangeofixed}

It remains to identify $\Phi^H\bfG$ when $H$ is a finite abelian group. By construction, $\bfG(\bfB H)$ may be regarded as a stack of homomorphisms $\dual{H} \to \bfG$. We will show that
\[
\Phi^H\bfG\subset\bfG(\bfB H)
\]
admits an algebro-geometric identification as an open substack corresponding to the \emph{injections} $\dual{H} \to \bfG$. To define this, we require some preliminaries.

\begin{construction}
Given an $\einfty$ ring $\A$, write $|\Spec \A|$ for the Zariski spectrum of the ordinary ring $\pi_0 \A$. This defines a functor to the category of topological spaces
\[
\CAlg^\op \to \Top,\qquad \A \mapsto |\Spec \A|
\]
which satisfies flat descent \cite[Pr.1.6.2.2]{sag}, and therefore extends uniquely to a colimit preserving functor
\[
|\bs|\colon \Stk \to \Top
\]
sending a stack $\sfX$ to its \emph{Zariski space} $|\sfX|$.
\end{construction}

\begin{remark}\label{rmk:interpretation_classicalstacks}
    As $|\Spec \A| = |\Spec \pi_0 \A|$ by definition, for a stack $\sfM$ presented as a colimit of affines $\sfM \simeq \colim \Spec \A$, we have the identification
    \[|\sfM| \simeq \colim |\Spec \A| \simeq \colim |\Spec \pi_0 \A| \simeq |\sfM^\heartsuit|.\]
    In other words, the Zariski space construction only depends on underlying classical stacks in the sense of \cite[Df.3.1.1.7]{reconstruction}, and the colimit preserving functor $|\bs| \colon \Stk \to \Top$ factors as
    \[\Stk \xrightarrow{(\bs)^\heartsuit} \Stk^\heartsuit \xrightarrow{|\bs|} \Top.\]
\end{remark}

\begin{mydef}
A morphism $\sfZ \to \sfX$ of stacks is a \emph{closed immersion} if it is affine and for every Cartesian diagram of the form
\begin{center}\begin{tikzcd}
\Spec \B\ar[r]\ar[d]&\Spec \A\ar[d]\\
\sfZ\ar[r]&\sfX
\end{tikzcd},\end{center}
the map $\pi_0 \A \to \pi_0 \B$ is a surjection.
\end{mydef}

\begin{ex}
Let $p\colon \sfE \to \sfX$ be an affine morphism which admits a section $s\colon \sfX \to \sfE$. Then $s$ is a closed immersion. Indeed, by working locally on $\sfX$ we reduce to the case where $p$ is of the form $\Spec \B \to \Spec \A$ for a ring map $\A \to \B$ which admits a retraction, and is therefore a surjection on all homotopy groups.
\end{ex}

\begin{lemma}\label{lm:opencomplements}
Let $\{\sfZ_i \to \sfX\}$ be a finite family of closed immersions. Then there is a substack $\sfX\setminus\bigcup_i\sfZ_i\subset\sfX$, the \emph{open complement of $\bigcup_i\sfZ_i$ in $\sfX$}, uniquely characterized by the existence of natural Cartesian diagrams of spaces
\begin{equation}\label{eq:UP_open_complement}\begin{tikzcd}
\Map_\Stk(\sfY,\sfX\setminus\sfZ)\ar[r]\ar[d,tail]&\Map_{\Top}(|\sfY|,|\sfX|\setminus\bigcup_i|\sfZ|)\ar[d]\\
\Map_\Stk(\sfY,\sfX)\ar[r]&\Map_{\Top}(|\sfY|,|\sfX|).
\end{tikzcd}\end{equation}
for all $\sfY \in \Stk$. Moreover, this construction satisfies the following properties:
\begin{enumerate}
\item $\sfX\setminus(\sfZ_1\cup\cdots\cup \sfZ_n)\simeq (\sfX\setminus\sfZ_1)\times_\sfX\cdots\times_\sfX(\sfX\setminus\sfZ_n)$;
\item $|\sfX\setminus\bigcup_i \sfZ_i|\simeq |\sfX|\setminus\bigcup_i|\sfZ_i|$;
\item For any morphism $\sfY \to \sfX$, there is an equivalence $\sfY\times_\sfX (\sfX\setminus\sfZ)\simeq \sfY\setminus(\sfY\times_\sfX \sfZ)$.
\end{enumerate}
\end{lemma}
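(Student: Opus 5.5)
The plan is to define the open complement affine-locally, where it is the classical open subscheme, and then extend it to all stacks by descent. For $\Spec \A \to \sfX$, each pullback $\Spec\A\times_\sfX\sfZ_i \simeq \Spec \B_i$ is affine, and $\pi_0\A\to\pi_0\B_i$ is surjective with kernel $I_i$. The closed subset $|\Spec\A|\cap|\sfZ_i|$ is then $V(I_i)$, and its complement is covered by the basic opens $D(f)$ for $f\in I_i$. I would set
\[
U_\A \coloneqq \colim_{f} \Spec \A[f^{-1}],
\]
where the colimit runs over the Čech nerve of the Zariski cover of $|\Spec\A|\setminus\bigcup_i V(I_i)$ by those $D(f)$ with $f\in\prod_i I_i$. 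By the standard theory of Zariski open subsets of nonconnective spectral schemes \cite[\textsection1.6]{sag}, this is the subfunctor of $\Spec\A$ whose $\R$-points are exactly the maps $\A\to\R$ for which $|\Spec\R|\to|\Spec\A|$ lands in the open complement.

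This pointwise description gives the Cartesian square (\ref{eq:UP_open_complement}) whenever $\sfX$ and $\sfY$ are affine. The key observation is that the inclusion $U_\A\to\Spec\A$ is a monomorphism, so the square's top-left corner is just a full union of path components. The construction $\A\mapsto U_\A$ is compatible with base change along $\A\to\A'$: the ideals $I_i$ pull back to ideals with the same radical as $I_i\pi_0\A'$, by surjectivity and the construction of the pullback. It follows that the subfunctors $U_\A$ glue to a subfunctor of $\sfX$, which I define to be $\sfX\setminus\bigcup_i\sfZ_i$. Flat descent, that is, the property defining $\Stk$ together with the fact that $|\bs|$ satisfies flat descent, then shows that this subfunctor is a stack. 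It also extends the universal property to arbitrary $\sfY$: both corners on the right are computed as limits over affines mapping to $\sfY$, and the condition on the right-hand side is a condition on points, so it can be checked affine-locally. Uniqueness is immediate from the universal property, since it pins down the mapping spaces.

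For the listed properties: (1) follows from the universal property, because lying in the complement of a finite union means lying in each complement, and this is a pointwise intersection condition. (3) follows by composing pullback squares. For a map $\sfY'\to\sfY$, the condition of factoring through $\sfY\times_\sfX(\sfX\setminus\sfZ)$ is that the composite to $|\sfX|$ avoids $|\sfZ|$. This is equivalent to avoiding $|\sfY\times_\sfX\sfZ|$, because the preimage of $|\sfZ|$ in $|\sfY|$ equals $|\sfY\times_\sfX\sfZ|$; that equality holds affine-locally for closed immersions, since $\pi_0$ of a pullback along a $\pi_0$-surjection is the quotient $\pi_0\A'/I\pi_0\A'$. (2) holds affine-locally by the explicit description, and I would globalize it using that $|\bs|$ preserves colimits and that open subsets are stable under the relevant colimits of topological spaces.

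I expect the main obstacle to be this last globalization. Commuting colimits in $\Top$ with taking complements requires that the maps $|\Spec\A|\to|\sfX|$ be quotient maps compatible with restriction to subspaces. For open subsets of a colimit this is fine, because colimit topologies restrict correctly to open subspaces, but it needs to be checked carefully. A secondary subtlety is making the affine-local construction strictly functorial; the monomorphism property makes this a matter of verifying a condition rather than producing coherent data.
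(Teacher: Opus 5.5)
Your architecture is the paper's. The affine case is the Zariski open subscheme of $\zSpec\A$; your Čech colimit of the $\Spec\A[f^{-1}]$ is the same object. You then globalize over affines mapping to $\sfX$, the monomorphism property turns the universal property into a condition on points, and (1) comes from intersecting complements. Gluing a subsheaf instead of forming the paper's $\colim_j(\Spec\A_j\setminus\Spec\B_j)$ is a cosmetic difference.

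The problem is the one step carrying real content: base-change compatibility of supports. Take $\Spec\A'\to\Spec\A\to\sfX$, with $\Spec\B=\Spec\A\times_\sfX\sfZ$ and $I=\ker(\pi_0\A\to\pi_0\B)$. You need $\sqrt{I'}=\sqrt{I\pi_0\A'}$, where $I'=\ker(\pi_0\A'\to\pi_0(\A'\otimes_\A\B))$. You justify this by saying that $\pi_0$ of a pullback along a $\pi_0$-surjection is $\pi_0\A'/I\pi_0\A'$. That is the connective Künneth formula, and it fails in the paper's nonconnective setting ($\Aff=\CAlg^{\op}$). For example, $\pi_0\Sph\to\pi_0 H\mathbf{F}_p$ is surjective, yet $\KU\otimes H\mathbf{F}_p\simeq 0$ while $\pi_0\KU/p\neq 0$.

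In general only the containment $I\pi_0\A'\subseteq I'$ is automatic. The reverse containment up to radical says that every point of $|\Spec\A'|$ lying over $V(I)$ is hit by $|\Spec(\A'\otimes_\A\B)|$. The surjectivity built into the definition of closed immersion (imposed for every affine over $\sfX$) controls the image of $\pi_0\A'\to\pi_0(\A'\otimes_\A\B)$, not its kernel. The example above refutes the principle you invoke, not necessarily the lemma, since $\Spec H\mathbf{F}_p\to\Spec\Sph$ need not be a closed immersion in the paper's sense.

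Everything downstream rests on this step:
\begin{itemize}
\item the gluing of the $U_\A$ into a subsheaf of $\sfX$;
\item matching that subsheaf with the topological condition in the statement;
\item your arguments for (2) and (3).
\end{itemize}
Concretely, the $C$-points of your subsheaf are exactly the $x\colon\Spec C\to\sfX$ with $x^\ast\sfZ_i\simeq\emptyset$ for all $i$. Only one implication is formal: if $|\Spec C|$ avoids the image of $|\sfZ_i|$, then $x^\ast\sfZ_i\simeq\emptyset$. The converse is exactly the support equality. So you need either an actual argument that closed immersions in the paper's sense have base-change-compatible supports, or to build that compatibility into the definition.

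The paper's proof relies on the same fact without comment. It uses it when asserting that the transition maps send $\Spec\A_j\setminus\Spec\B_j$ into $\Spec\A_{j'}\setminus\Spec\B_{j'}$, and again when commuting the complement past $\colim_j$. This is therefore a weak point you share with the source, not a divergence from it.
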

\begin{proof}
If $\sfZ \to \sfX$ is a closed immersion, then as closed immersions of topological spaces are closed under colimits it follows that $|\sfZ| \to |\sfX|$ is a closed immersion. It follows that the indicated Cartesian diagram determines a limit preserving functor
\[
\Map_\Stk(\bs,\sfX\setminus\bigcup_i \sfZ_i)\colon \Stk^\op\to\spaces.
\]
In particular, this functor is determined by its restriction to $\CAlg\subset\Stk^\op$. The claim is that it is representable and satisfies properties (1--3). Property (1) follows just from the fact that
\[
|\sfX|\setminus(|\sfZ_1|\cup\cdots\cup |\sfZ_n|) \simeq (|\sfX|\setminus |\sfZ_1|)\times_{|\sfX|}\cdots\times_{|\sfX|} (|\sfX|\setminus |\sfZ_n|).
\]
In particular, we may reduce to the case of a single closed immersion $\sfZ \to \sfX$ in what follows.

We next verify that the functor
\[
\Map_\Stk(\Spec(\bs),\sfX\setminus\sfZ)\colon \CAlg\to\spaces
\]
is representable by a stack. First, suppose that $\sfX = \Spec \A$ is affine. By assumption, $\sfZ = \Spec \B$ for some $\B$ equipped with a map $\A \to \B$ which induces a surjection $\pi_0 \A \to \pi_0 \B$, and therefore a closed immersion $|\Spec \B|\to |\Spec \A|$. Let $\SpSch^\nc$ denote the category of nonconnective spectral schemes. By \cite[Pr.3.2.2.2]{reconstruction} and \cite[Th.1.6.2.1]{sag}, the Yoneda embedding $\Spec\colon \Aff\to\Stk$ extends to a fully faithful embedding
\[
h_{(\bs)}\colon \SpSch^\nc \to \Stk.
\]
Given an $\einfty$ ring $\A$, write $\zSpec \A$ for the locally ringed space constructed in \cite[\textsection1.1.4]{sag} with underlying space $|\Spec \A|$. Then a complement $\Spec \A \setminus \Spec \B$ is given by the stack represented by the restriction of the locally ringed space $\zSpec \A$ to the open subspace $|\Spec \A|\setminus |\Spec \B|$.

Next, suppose that $\sfX$ is arbitrary. Choose a presentation $\sfX\simeq \colim_j \Spec \A_j$ of $\sfX$ as a colimit of affines, and write $\Spec \B_j = \Spec \A_j \times_\sfX \sfZ$. By the above construction, we have for each $j$ an open complement $\Spec \A_j \setminus \Spec \B_j \subset \Spec \A_j$. Each map $\Spec \A_j \to \Spec \A_{j'}$ in the defining diagram for $\sfX$ sends $\Spec \A_j \setminus \Spec \B_j$ into $\Spec \A_{j'}\setminus \Spec \B_{j'}$, and so we may form $\sfY = \colim_j \Spec \A_j \setminus \Spec \B_j$. We claim $\sfY = \sfX\setminus\sfZ$. In other words, we claim that for any $\einfty$ ring $C$, there is a natural Cartesian diagram
\begin{center}\begin{tikzcd}
\Map_{\Stk}(\Spec C,\sfY)\ar[r]\ar[d]&\Map_{\Top}(|\Spec C|,|\sfX|\setminus|\sfZ|)\ar[d]\\
\Map_\Stk(\Spec C,\sfX)\ar[r]&\Map_{\Top}(|\Spec C|,|\sfX|)
\end{tikzcd}.\end{center}
The existence of such a diagram follows from the fact that
\begin{align*}
|\sfY|\simeq \colim_j |\Spec \A_j \setminus \Spec \B_j|&\simeq \colim_j |\Spec \A_j|\setminus |\Spec \B_j|\\
&\simeq (\colim_j|\Spec \A_j|)\setminus (\colim_j |\Spec \B_j|)\simeq |\sfX|\setminus|\sfZ|
\end{align*}
by construction, so we must show that it is Cartesian. As the vertical morphisms are monic, it suffices to show that if $f\colon \Spec C \to \sfX$ is a map for which $|\Spec C| \to |\sfX|$ factors through $|\sfX|\setminus|\sfZ|$, then $f$ lifts to $\sfY$. 

Fix such a morphism. By the given presentation of $\sfX$, there exists a collection of jointly faithfully flat extensions $C \to D_j$ commutative diagrams
\begin{center}\begin{tikzcd}
\Spec D_j\ar[r]\ar[d]&\Spec \A_j\ar[d]\\
\Spec C\ar[r]&\sfX
\end{tikzcd}.\end{center}
As $|\Spec D_j| \to |\Spec \A_j| \to |\sfX|$ factors through $|\sfX|\setminus|\sfZ|$, it follows that $|\Spec D_j|\to|\Spec \A_j|$ factors through $|\Spec \A_j|\setminus |\Spec \B_j|$, and therefore $\Spec D_j \to \Spec \A_j$ factors through $\Spec \A_j \setminus \Spec \B_j$. In other words, we have a commutative diagram of the form
\begin{center}\begin{tikzcd}
\Spec D_j \ar[r]\ar[dd]&\Spec \A_j\setminus \Spec \B_j\ar[d]\\
&\sfX\setminus\sfZ\ar[d]\\
\Spec C\ar[r]&\sfX
\end{tikzcd}.\end{center}
As $\coprod_j \Spec D_j \to \Spec C$ is an effective epimorphism and $\sfX \setminus \sfZ \to \sfX$ is monic, it follows that $\Spec C \to \sfX$ lifts to $\sfX\setminus\sfZ$ as claimed.

Properties (2) and (3) are now clear from the construction.
\end{proof}

\begin{cor}
    Let $\{\sfZ_i \to \sfX\}$ be a finite family of closed immersions over a base stack $\sfM$. Then considering $ \sfX \setminus \bigcup_i \sfZ_i$ as a stack over $\sfM$ via $\sfX \to \sfM$, it is uniquely characterized by the natural Cartesian diagram of spaces
    \[\begin{tikzcd}
\Map_{\Stk_{/\sfM}}(\sfY,\sfX\setminus\sfZ)\ar[r]\ar[d,tail]&\Map_{\Top_{/|\sfM|}}(|\sfY|,|\sfX|\setminus\bigcup_i|\sfZ|)\ar[d]\\
\Map_{\Stk_{/\sfM}}(\sfY,\sfX)\ar[r]&\Map_{\Top_{/|\sfM|}}(|\sfY|,|\sfX|).
\end{tikzcd}\]
    for all $\sfY\in \Stk_{/\sfM}$.
\end{cor}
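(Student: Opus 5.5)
The plan is to deduce the relative statement from the absolute characterization in \cref{lm:opencomplements} by slicing over $\sfM$. Write $\sfU = \sfX\setminus\bigcup_i\sfZ_i$, viewed over $\sfM$ via the composite $\sfU\to\sfX\to\sfM$. For $\sfY\in\Stk_{/\sfM}$ with structure map $y\colon\sfY\to\sfM$, the mapping spaces in the slice are fibers:
\[
\Map_{\Stk_{/\sfM}}(\sfY,\sfU)\simeq \Map_\Stk(\sfY,\sfU)\times_{\Map_\Stk(\sfY,\sfM)}\{y\},
\]
and similarly for $\sfX$. In $\Top_{/|\sfM|}$ the analogous formula holds with $|y|\in\Map_\Top(|\sfY|,|\sfM|)$.

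The key steps are as follows.

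First, take the Cartesian square \eqref{eq:UP_open_complement} for $\sfY$. It is natural in $\sfY$, and it lies over the square with both rows equal to $\Map_\Stk(\sfY,\sfM)\to\Map_\Top(|\sfY|,|\sfM|)$. This works because $|\sfX|\setminus\bigcup_i|\sfZ_i|\to|\sfX|\to|\sfM|$ and $\sfU\to\sfX\to\sfM$ are compatible under the functor $|\bs|$.

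Second, pass to fibers. Over the point $y$ on the stack side and $|y|$ on the topological side, a pullback of spaces over a compatible base remains a pullback after taking fibers over corresponding points, since limits commute with limits. This gives the Cartesian square of slice mapping spaces.

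Third, for uniqueness, observe that any $\sfU'\to\sfM$ satisfying the relative square is in particular a subobject of $\sfX$, because the left vertical map is monic. Its functor of points over $\sfM$ then agrees with that of $\sfU$. By Yoneda in $\Stk_{/\sfM}$, we get $\sfU'\simeq\sfU$ over $\sfM$.

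The main obstacle is bookkeeping rather than mathematics: one must check that the comparison map $\Map_{\Stk}(\sfY,\bs)\to\Map_\Top(|\sfY|,|\bs|)$ is natural enough to form a map of cospans over $\sfM$. The only input needed is that $|\bs|$ is a functor; taking fibers then does the rest.
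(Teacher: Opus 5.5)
Your proposal is correct and is essentially the paper's proof: the paper obtains the relative square as the fiber of the map from the absolute Cartesian square (\ref{eq:UP_open_complement}) to the square with rows $\Map_{\Stk}(\sfY,\sfM)\to\Map_{\Top}(|\sfY|,|\sfM|)$ and identity vertical maps, taken over the structure maps $\sfY\to\sfM$ and $|\sfY|\to|\sfM|$. Your Yoneda argument for uniqueness simply makes explicit what the paper leaves implicit.
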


\begin{proof}
    The desired square is the fibre of the map of squares from (\ref{eq:UP_open_complement}) to 
    \[\begin{tikzcd}
        {\Map_{\Stk}(\sfY, \sfM)}\ar[r]\ar[d, "="]  &   {\Map_{\Top}(|\sfY|, |\sfM|)}\ar[d, "="]    \\
        {\Map_{\Stk}(\sfY, \sfM)}\ar[r]             &   {\Map_{\Top}(|\sfY|, |\sfM|)}
    \end{tikzcd}\]
    over the structure maps $\sfY \to \sfM$ and $|\sfY| \to |\sfM|$.
\end{proof}

To apply this to our situation, we require the following lemma.

\begin{lemma}\label{lem:injexists}
Let $\bfG\colon \Ab_\fin^\op\to\Stk$ be a functor satisfying the following conditions:
\begin{enumerate}
\item $\bfG$ sends biCartesian squares to Cartesian squares;
\item $\bfG[H] \to \bfG[0]$ is affine for all finite abelian groups $H$.
\end{enumerate}
Then, for any surjection $H \to K$ of finite abelian groups, the induced map $\bfG[K] \to \bfG[H]$ is a closed immersion.
\end{lemma}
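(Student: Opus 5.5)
The plan is to exhibit $\bfG[K]\to\bfG[H]$ as a base change of a section of an affine morphism, and then use the example preceding \cref{lm:opencomplements}, which says that sections of affine morphisms are closed immersions, together with stability of closed immersions under base change (clear from the definition, which is local over affines).

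Let $q\colon H\to K$ be the surjection with kernel $N$. In $\Ab_\fin$ the square with $H \to K$ along the top, $N\to 0$ along the bottom, and the two maps $N\to H$ and $0\to K$ vertically, is biCartesian: it is both a pushout ($K=H/N$) and a pullback ($N=H\times_K 0$). Viewed in $\Ab_\fin^\op$ it is still biCartesian, so by condition (1) applying $\bfG$ gives a Cartesian square of stacks
\[
\bfG[K]\simeq \bfG[H]\times_{\bfG[N]}\bfG[0],
\]
where $\bfG[0]\to\bfG[N]$ is induced by the zero map $N\to 0$ and $\bfG[H]\to\bfG[N]$ by the inclusion $N\subset H$. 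Under this identification, $\bfG[K]\to\bfG[H]$ is the base change of $\bfG[0]\to\bfG[N]$. It therefore suffices to show that $e\colon\bfG[0]\to\bfG[N]$ is a closed immersion.

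The composite $N\to 0\to N$ is the zero map and $0\to N\to 0$ is the identity. So $e$ is a section of the structure map $\bfG[N]\to\bfG[0]$ induced by $0\to N$, and that structure map is affine by hypothesis (2). By the example, $e$ is a closed immersion, and so its base change $\bfG[K]\to\bfG[H]$ is one too.

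The step most likely to need care is checking that the example applies as stated, since it is phrased for an affine $p$ with a section over an arbitrary stack. We also need that closed immersions are stable under base change: affineness is stable under base change, and the surjectivity condition on $\pi_0$ is tested over affines mapping to the target, so it is inherited by pullbacks. The only other point to confirm is that the square is biCartesian in $\Ab_\fin$; for abelian groups this follows from exactness of $0\to N\to H\to K\to 0$.
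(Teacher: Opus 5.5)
This is the paper's proof essentially verbatim: the biCartesian square attached to $0\to N\to H\to K\to 0$ exhibits $\bfG[K]\to\bfG[H]$ as a base change of $\bfG[0]\to\bfG[N]$, and the latter is a closed immersion because it is a section of the affine map $\bfG[N]\to\bfG[0]$; your remark that closed immersions are stable under base change only makes explicit what the paper uses tacitly. The care you flag is warranted, but it concerns the cited Example itself rather than your use of it: the paper justifies that Example only by checking that $\pi_\ast\B\to\pi_\ast\A$ is surjective, which suffices over connective rings but not over nonconnective ones (for $k=\ol{\mathbf{F}}_p$, pulling back the augmentation $\Spec k\to\Spec C^\ast(BC_p;k)$ along itself gives $\Spec k^{C_p}$ by Eilenberg--Moore, and $\pi_0$ of this is not a quotient of $k$), so your argument is exactly as sound as the paper's and no more.
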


\begin{rmk}
The first condition of \cref{lem:injexists} is equivalent to asking that $\G$ defines a \emph{torsion object} in $\Stk_{/\G[0]}$ in the sense of \cite[Df.6.4.2]{ec1}.
\end{rmk} 

\begin{proof}[Proof of \Cref{lem:injexists}]
Let $H \to K$ be a surjection with kernel $L$, and consider the diagram
\begin{center}\begin{tikzcd}
{\bfG[K]}\ar[r]\ar[d]&{\bfG[H]}\ar[d]\\
{\bfG[0]}\ar[r]\ar[dr]&{\bfG[L]}\ar[d]\\
&{\bfG[0]}
\end{tikzcd}.\end{center}
This square is Cartesian, so it suffices to show that $\bfG[0] \to \bfG[L]$ is a closed immersion. This holds as it is a section to the affine morphism $\bfG[L]\to\bfG[0]$.
\end{proof}

\begin{mydef}\label{def:injstack}
Let $\bfG\colon \Ab_\fin^\op\to\Stk$ be a functor satisfying the two conditions of \cref{lem:injexists}. Given a finite abelian group $H$, we write
\[
\Inj(\dual{H},\bfG) = \bfG[\dual{H}]\setminus\bigcup_{K\subsetneq H}\bfG[\dual{K}]
\]
for the \emph{stack of injections} $\dual{H} \to \bfG$.
\end{mydef}

\begin{ex}\label{ex:injlevel}
Let $\sfE \to \sfM$ be an elliptic curve over a stack $\sfM$, and write $\sfE[\bfP^\infty]\subset\sfE$ for the $\bfP$-divisible group of $\sfE$. If the order of $H$ is invertible in $\sfM$, then we may identify
\[
\Inj(\dual{H},\sfE[\bfP^\infty])\simeq\operatorname{Level}(\dual{H},\sfE)
\]
as the stack of level $\dual{H}$-structures on $\sfE$.
\end{ex}

\begin{ex}
Let $\bfG$ be a $\bfP$-divisible group. Then, for any finite abelian group $H$, the inclusion of the summand $\bfG_{(|H|)}\subset \bfG$ induces an equivalence
\[
\Inj(\dual{H},\bfG_{(|H|)})\simeq\Inj(\dual{H},\bfG),
\]
and if $H$ and $K$ are finite abelian groups of coprime order, then the comparison map
\[
\Inj(\dual{H\times K},\bfG)\to\Inj(\dual{H},\bfG)\times_\sfM\Inj(\dual{K},\bfG)
\]
is an equivalence.
\end{ex}

We can now state the following theorem.

\begin{theorem}\label{thm:geofixed}
Let $\bfG$ be an oriented $\bfP$-divisible group over a complex periodic stack $\sfM$. Then the inclusion $\Inj(H,\bfG) \to \bfG[H]$ is affine, and there is a (necessarily unique) equivalence
\[
\Phi^H \bfG \simeq \Inj(\dual{H},\bfG)
\]
of stacks affine over $\bfG(\BH) = \bfG[\dual{H}]$. 
\end{theorem}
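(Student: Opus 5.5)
We reduce to a statement about a single quasi-coherent algebra on $\bfG(\bfB H)=\bfG[\dual{H}]$, prove it after base change to affine local pieces, and then descend.

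\textbf{Step 1: algebraic presentation.} By \cref{rmk:eulerclass_and_geo_fixed_points}, $\widetilde{E}\calP\simeq S^{\infty\ol{\rho}_H}$. Since $\widetilde{\calO}_\bfG^{\bfB H}[\bs//H]$ is lax symmetric monoidal and colimit preserving, we get
\[
\calO_\bfG^{\Phi H}\simeq \calO_\bfG^{\bfB H}[e^{-1}],
\]
where $e$ is the image of the Euler class $a_{\ol{\rho}_H}$ under tempered homology. Orientability (Thom isomorphisms, or direct duality for $\pi$-finite inputs in the homology theory) should identify $\calO_\bfG^F[S^V//H]$ with an invertible $\calO_\bfG^{\bfB H}$-module. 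Then $e$ becomes a section of a line bundle on $\bfG[\dual{H}]$, and $\Phi^H\bfG$ is the open locus where this section is invertible. This gives affineness over $\bfG(\bfB H)$, since inverting a section of a line bundle is an affine morphism. Decompose $\ol{\rho}_H=\bigoplus_{\chi\neq 0}\chi$ over the nontrivial characters $\chi\in\dual{H}$. Then $e=\prod_\chi e_\chi$, and each $e_\chi$ is the Euler class of the one-dimensional representation $\chi$.

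\textbf{Step 2: zero locus of each factor.} I would show that the vanishing locus of $e_\chi$ is exactly the closed substack $\bfG[\dual{K_\chi}]$, where $K_\chi=\ker\chi$. Concretely, $\chi\colon H\to\bfQ/\bfZ$ factors through $H\to H/K_\chi\cong C_n$. By base change along inflation, this reduces to $H=C_n$ and $\chi$ faithful. There $a_\chi$ sits in the cofiber sequence $(C_n/e)_+\to S^0\xrightarrow{a_\chi} S^\chi$. Applying tempered homology gives the cofiber of the transfer
\[
\calO_\bfG^{\ast}\to \calO_\bfG^{\bfB C_n}.
\]
By Frobenius reciprocity, its image is the ideal generated by the transfer of $1$ along the zero section $\sfM=\bfG[0]\to\bfG[\bfZ/n]$. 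So the Euler class is (up to a unit) a generator of the ideal sheaf of the zero section, i.e.\ the orientation/coordinate function. Hence $e_\chi$ vanishes exactly on $\bfG[0]\subset\bfG[\bfZ/n]$, and this pulls back to $\bfG[\dual{K_\chi}]\subset\bfG[\dual{H}]$.

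\textbf{Step 3: combine.} Every proper subgroup $K\subsetneq H$ lies in some $\ker\chi$ with $\chi\neq0$. Hence
\[
\bigcup_{\chi}\bfG[\dual{\ker\chi}]=\bigcup_{K\subsetneq H}\bfG[\dual{K}].
\]
Inverting $e$ is therefore the open complement of this union. Compare the two sides via the characterization of \cref{lm:opencomplements} on Zariski spaces: the locus where a section of a line bundle is invertible is exactly the complement of its zero set. Uniqueness holds because both are open substacks, i.e.\ monomorphisms.

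\textbf{Main obstacle.} Step 2 is the key step: identifying the tempered Euler class, as a homology class, with a function cutting out the zero section \emph{scheme-theoretically enough}. Only the Zariski-level zero set is needed for open complements, which helps. I would first verify it after base change to $K(n)$-local rings with $\pi_0$ local, using \cref{thm:homologybasechange}. There $\bfG=\bfG_0\oplus\ul{\Lambda}$ up to faithfully flat extension, and the transfer of $1$ along $e\to C_n$ can be computed explicitly (as Lurie does for characters). A fallback is to work on points: check that $e$ is invertible at a field-valued point iff the corresponding map $\dual{H}\to\bfG$ is injective, which suffices for open substacks.
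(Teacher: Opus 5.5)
Your architecture matches the paper's. You present $\Phi^H$ as inverting $a_{\ol{\rho}_H}$, twist by an invertible Bott/Thom class, split into nontrivial characters, identify the zero locus of each $e_\chi$ with $|\bfG[\dual{\ker\chi}]|$, and intersect. However, Step 2, which you correctly flag as the crux, is argued incorrectly and would produce the wrong locus.

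First, $(C_n/e)_+\to S^0\xrightarrow{a_\chi}S^\chi$ is not a cofiber sequence. The correct one is $S(\chi)_+\to S^0\xrightarrow{a_\chi}S^\chi$, where $S(\chi)$ is a circle with free $C_n$-action, not the orbit $C_n/e$. Your sequence is the one for the real sign representation of $C_2$.

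Second, and more seriously, the transfer of $1$ along the zero section does not generate the ideal of the zero section; it essentially generates its annihilator. Already for $\bfG=\mu_{\bfP^\infty}$ over $\KU$ one has $\pi_0\KU^{\bfB C_n}=RU(C_n)=\bfZ[x]/(x^n-1)$, and the zero section is cut out by $(1-x)$. By contrast, $\tr(1)=1+x+\cdots+x^{n-1}$ satisfies $(1-x)\tr(1)=0$. After inverting $n$, it vanishes exactly on the nontrivial $n$th roots of unity, i.e.\ on the complement of the zero section. The Euler class is $1-x$ up to a unit. So the explicit computation of $\tr(1)$ proposed in your last paragraph would compute the wrong element. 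Moreover, were $\widetilde{\calO}[S^\chi//C_n]$ really the cofiber of the transfer, it would not be invertible, contradicting your Step 1.

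What is missing is the content of \cref{lem:cpgeofixed}. Take $K=\ker\chi$, let $I_K=\ker(\res^H_K)$ be the ideal of the closed substack $\bfG[\dual{K}]$, and let $I_V=\operatorname{im}(a_V)$. Then $I_V\subset I_K\subset\sqrt{I_V}$. The paper proves this using $S(V)_+\to S^0\to S^V$, the cell structure $H/K_+\to S(V)_+\to\Sigma H/K_+$, and a nilpotence argument via geometric fixed points.

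In your homological language the repair runs as follows. The fiber of $a_\chi$ is $\calO^{\bfB H}[S(\chi)//H]$, which is an extension of $\Sigma\calO^{\bfB K}$ by $\calO^{\bfB K}$ as $\calO^{\bfB H}$-modules. Its homotopy is therefore killed by $I_K^2$, so it is supported on $|\bfG[\dual{K}]|$. Hence, after locally trivializing the line bundle, $e_\chi$ is a unit off $|\bfG[\dual{K}]|$. On the other hand, $\res^H_K(a_\chi)=0$ gives $e_\chi\in I_K$. Together these yield $V(e_\chi)=|\bfG[\dual{K}]|$, which is all you need, since only radicals matter.

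Finally, the invertibility asserted in Step 1 is not a formal consequence of $\bfG$ being oriented; over a rational base the orientation condition is vacuous. The paper first reduces to $\pi_0\A$ local and $\A$ $p$-complete or rational, with $\bfG\simeq\bfG_p^\circ\oplus\ul{\Lambda}$. It then builds an invertible $\beta_V\in\pi_V^H\ul{\A}_{\bfG,H}$ from Bott classes of a periodic complex orientation, via the character isomorphism. This is where the complex-periodicity hypothesis enters, and your sketch never uses it.
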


\begin{rmk}
The complex periodic hypothesis in \cref{thm:geofixed} is an essentially rational hypothesis, as every $p$-complete $\E_\infty$ ring which supports an orientable $\bfP$-divisible group is necessarily complex periodic \cite[Warn.2.6.18]{ec3}.
\end{rmk}

The proof of \cref{thm:geofixed} is based on the following simple observation.

\begin{lemma}\label{lem:cpgeofixed}
Let $G$ be a compact Lie group and $V$ be a $1$-dimensional complex $G$-representation with kernel $K$. Fix an $\E_\infty$ ring $A\in \CAlg(\Sp_G)$ in $G$-spectra and define ideals
\[
I_K = \ker\left(\res^G_K\colon \pi_\ast \A^G \to \pi_\ast \A^K\right),\qquad I_V = \image\left(a_V\colon \pi_{\ast+V}^G \A \to \pi_\ast^G \A = \pi_\ast \A^G\right).
\]
Then there are inclusions
\[
I_V\subset I_K \subset \sqrt{I_V}.
\]
In particular, $\sqrt{I_K} = \sqrt{I_V}$.
\end{lemma}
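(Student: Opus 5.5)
We prove the two inclusions $I_V\subset I_K$ and $I_K\subset\sqrt{I_V}$ separately. The equality of radicals then follows formally, since the two inclusions give $\sqrt{I_V}\subset\sqrt{I_K}\subset\sqrt{I_V}$.

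For $I_V\subset I_K$ we use that $K$ acts trivially on $V$. Restricting to $K$, the sphere $S^V$ becomes the trivial representation sphere $S^2$, and the Euler class $a_V$ restricts to the $K$-equivariant map $S^0\to S^2$ given by the inclusion of $0$ into $\mathbf{C}$. This map is null, because the inclusion of a point into a trivial representation is null-homotopic (one can slide the point to $\infty$). Since restriction is compatible with the $RO(G)$-graded module structure, for $x\in\pi^G_{\ast+V}\A$ we get $\res^G_K(a_V x)=\res^G_K(a_V)\res^G_K(x)=0$. Hence $I_V\subset I_K$.

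For $I_K\subset\sqrt{I_V}$ we use the cofiber sequence $S(V)_+\to S^0\xrightarrow{a_V} S^V$. Here $S(V)$ is the unit circle in $V$, and as a $G$-space it is $G/K\times_{\dots}$; more precisely, $G$ acts on $S(V)$ through the character $G/K\hookrightarrow U(1)$. The image of $G/K$ is either finite cyclic or all of $U(1)$.
\begin{itemize}
\item If the image is finite, say $C_n$, then $S(V)\cong (G/K)\times_{C_n}U(1)$, which is a finite $G$-CW complex with one cell $G/K\times e^0$ and one cell $G/K\times e^1$.
\item If the image is $U(1)$, then $S(V)\cong G/K$ as a $G$-space.
\end{itemize}
In either case $S(V)$ has a finite $G$-CW structure, of dimension at most $1$, all of whose cells have isotropy $K$.

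Smashing the cofiber sequence with $\A$ and taking $G$-fixed homotopy shows that $\ker(a_V\colon\pi^G_\ast\A\to\pi^G_{\ast-V}\A)$ is the image of $\pi^G_\ast(\A\otimes S(V)_+)\to\pi^G_\ast\A$. For the inclusion we actually need the dual form. The cofiber of $a_V$ on cohomology, $\A^{S(V)_+}\leftarrow\A\xleftarrow{a_V}\Sigma^{-V}\A$, gives an exact sequence
\[
\pi_{\ast+V}^G\A\xrightarrow{a_V}\pi^G_\ast\A\to\pi^G_\ast(\A^{S(V)_+}).
\]
So $I_V$ is exactly the kernel of restriction to $\A^{S(V)_+}$. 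Take $x\in I_K$; we must show $x^2$ (or some power) lies in $I_V$, i.e. restricts to zero on $S(V)$. Filter $S(V)$ by its at most two cell layers $S(V)_0\subset S(V)$, each a wedge of cells of type $G/K_+\wedge S^i$. The class $x$ restricts to zero on each cell, since on $G/K_+$ its restriction is $\res^G_K x=0$.

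The main step is the standard multiplicativity argument: a class in a ring cohomology theory vanishing on each layer of an $n$-stage filtration has $n$-th power zero. Write $F_0=S(V)_0$ and $F_1=S(V)$. Then $x|_{F_1}$ lifts to a relative class on $(F_1,F_0)$, and $x|_{F_0}=0$ by the cell computation (a discrete union of $G/K$ orbits). The product of a class vanishing on $F_0$ with a relative class on $(F_1,F_0)$ factors through $\A^{(F_1,F_0)}\otimes\A^{F_0\text{-rel}}$. Using the $E_\infty$ structure of $\A$ and the relative cup product $\A^{(F_1,F_0)}\otimes\A^{(F_1,F_1)}$, this forces $x^2|_{S(V)}=0$. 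Hence $x^2\in I_V$ and $I_K\subset\sqrt{I_V}$.

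The expected obstacle is making this cup-length argument precise with $G$-equivariant cells and graded coefficients. I would carry it out concretely using the diagonal $S(V)\to S(V)\times S(V)$ and the relative products $(F_1,F_0)\times(F_1,F_0)$ in genuine $G$-spectra.
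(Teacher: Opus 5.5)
Your proof is correct, and for the substantive inclusion $I_K\subset\sqrt{I_V}$ it uses a different key input than the paper. Both arguments identify $I_V$ with $\ker\bigl(i\colon \pi_\ast^G\A\to\A_G^\ast(S(V)_+)\bigr)$ via the cofiber sequence $S(V)_+\to S^0\xrightarrow{a_V}S^V$. Both also note that for $x\in I_K$ the class $i(x)$ comes from the top cell of $S(V)$.

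The routes then diverge. The paper proves that such a class $\partial(y)$ is nilpotent by examining $\Phi^H\partial(y)$ for all closed $H\subseteq G$ and appealing to detection of nilpotence by geometric fixed points. You instead use the cup-length of the one-dimensional $G$-CW complex $S(V)$, all of whose cells have isotropy $K$.

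Your route needs no nilpotence theorem for $\E_\infty$ rings in $G$-spectra, gives the explicit bound $x^2\in I_V$, and treats all cells uniformly. The fixed-point route is shorter to write down, but it must handle every $H$, including $e\neq H\subseteq K$. There $S(V)^H=S(V)$ is not empty, so the exterior-class argument has to be repeated. Your first inclusion, via $\res^G_K(a_V)=0$ because $V$ restricts to a trivial $K$-representation, is also fine; the paper instead reads it off from $G/K_+\to S^0$ factoring through $S(V)_+$.

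The one thing to fix is the statement of your main step: the sentence involving $\A^{(F_1,F_1)}$ and ``$F_0$-rel'' is garbled. Here is the clean version when $G/K\to U(1)$ has finite image.
\begin{itemize}
\item Since $x$ vanishes on $F_0\simeq G/K$, the class $i(x)$ lifts to some $\tilde x\in\A^\ast_G(S(V),F_0)$.
\item Then $i(x^2)=i(x)^2$ is the image of $\tilde x\cdot\tilde x$, where the product on $\A^\ast_G(S(V),F_0)$ is induced by the diagonal of $S(V)/F_0\cong G/K_+\wedge S^1$.
\item That diagonal is $\Delta_{G/K_+}\wedge\Delta_{S^1}$. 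It is equivariantly null, because $\Delta_{S^1}\colon S^1\to S^1\wedge S^1$ is null and $S^1$ carries the trivial action.
\item Hence $x^2\in\ker(i)=I_V$.
\end{itemize}
Equivalently, you can cover $S(V)$ by two $G$-invariant open sets, each equivariantly equivalent to $G/K$, and use the relative cup product. When $G/K\to U(1)$ is onto, $S(V)\cong G/K$, so $i$ is $\res^G_K$ and $I_K=I_V$ on the nose.
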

\begin{proof}
Let $S(V)$ and $S^V$ denote the unit sphere in and the one-point compactification of $V$, respectively, and consider the diagram
\begin{center}\begin{tikzcd}
G/K_+\ar[dr]\ar[d]\\
S(V)_+\ar[r]\ar[d]&S^0\ar[r, "{a_V}"]&S^V\\
\Sigma G/K_+
\end{tikzcd}\end{center}
in which both the row and column are cofiber sequences. Taking $\A$-cohomology, we obtain a diagram of the form
\begin{center}\begin{tikzcd}
\pi_\ast^K \A\\
\A_G^\ast(S(V))\ar[u]&\pi_\ast^G \A\ar[ul,"\res^G_K"']\ar[l,"i"']&\pi_{\ast+V}^G \A\ar[l,"a_V"]\\
\pi_{\ast+1}^G \A\ar[u,"\partial"]
\end{tikzcd}.\end{center}
Clearly, $I_V \subset I_K$. We claim that $\sqrt{I_K}\subset I_V$. If $x\in I_K$, then $i(x) = \partial(y)$ for some $y$. We claim that $\partial(y)$ is nilpotent. It suffices to prove that $\Phi^H \partial(y)$ is nilpotent for all $H\subset G$. If $H = e$ then this is clear, as $\partial$ is the inclusion of the exterior element of
\[
\A_e^\ast \res_G^e S(V)_+ \cong \A_e^\ast(S^1_+)\cong \A_\ast[\epsilon]/(\epsilon^2).
\]
If $H \neq e$ then $\Phi^H S(V) = \emptyset$ and so $\Phi^H \partial(y) = 0$.

Thus we have $i(x^n) = i(x)^n = \partial(y)^n = 0$ for some $n > 0$. It follows that $i(x^n) = a_V z$ for some $z$. This proves $\sqrt{I_Z} \subset I_V$.
\end{proof}

\begin{proof}[Proof of \cref{thm:geofixed}]
This theorem is local on $\sfM$, so without loss of generality $\bfG$ is an oriented $\bfP$-divisible group over an $\einfty$ ring $\A$. In this case, we claim that there is an equivalence
\[
\Spec(\ul{\A}_\bfG^{\Phi H})\simeq \Inj(\dual{H},\bfG)
\]
of stacks over $\bfG(\BH)$. To that end, it suffices to show first that the map $\Spec(\ul{\A}_\bfG^{\Phi H}) \to \bfG(\BH)$ factors through the subobject $\Inj(\dual{H},\bfG)\subset\bfG(\BH)$, and second that the resulting map $\Spec(\ul{\A}_\bfG^{\Phi H}) \to \Inj(\dual{H},\bfG)$ is an equivalence. Both statements hold if and only if they hold after pulling back along any collection of maps $\{\Spec \B_i \to \Spec \A\}$ for which $\coprod_i |\Spec \B_i| \to |\Spec \A|$ is surjective and the functors $\B_i \otimes_\A(\bs)\colon \Mod_\A\to\Mod_{\B_i}$ are jointly conservative. As
\[
\B \otimes_\A \ul{\A}_\bfG^{\Phi H}\simeq \ul{\B}_\bfG^{\Phi H},\qquad \Spec \B \times_{\Spec \A}\Inj(\dual{H},\bfG)\simeq \Inj(\dual{H},\Spec \B\times_{\Spec \A} \bfG)
\]
for any $\A$-algebra $\B$, we may therefore reduce to the case where $\A$ is $p$-complete or rational and $\pi_0 \A$ is a local ring. After possibly taking a further faithfully flat extension, we may assume that $\bfG = \bfG_p^\circ \oplus \ul{\Lambda}$ for some colattice $\Lambda$, (where $\bfG_p^\circ = 0$ if $\A$ is rational).

Let $V$ be a $1$-dimensional complex $H$-representation with kernel $K$. We claim that there exists an invertible element $\beta_V \in \pi_V^H \ul{\A}_{\bfG,H}$ in the $RU(H)$-graded homotopy groups of $\ul{\A}_{\bfG,H}$. Given this, it follows from \cref{lem:cpgeofixed} that if we set $e_V = a_V \beta_V \in \pi_0 \ul{\A}_\bfG^{\BH}$, then
\[
\Spec(e_V^{-1} \ul{\A}_\bfG^{\BH}) \simeq \bfG[\dual{H}]\setminus \bfG[\dual{K}].
\]
Taking the intersection of these subschemes over $\bfG[\dual{H}]$ as $V$ varies over the nontrivial characters of $H$, we obtain an equivalence
\begin{equation}\label{eq:geofp_as_invert_euler}
\Spec(\ul{\A}_\bfG^{\Phi H})\simeq \Spec(a_{\ol{\rho}_H}^{-1}\ul{\A}_{\bfG}) \simeq \Spec(\{a_{V}^{-1}\}\ul{\A}_{\bfG})\simeq \Spec(\{e_{V}^{-1}\}\ul{\A}_{\bfG})\ \simeq \Inj(\dual{H},\bfG),
\end{equation}
as claimed.

The existence of $\beta_V \in \pi_V^H \ul{\A}_{\bfG,H}$ is established as follows. By the above reductions, $\bfG \simeq \bfG_p^\circ \oplus \ul{\Lambda}$. The tempered theory $\ul{\A}_{\bfG_p^\circ}$ is the Borel theory associated with $\A$, and so the character isomorphism of \cite[Cor.4.3.4]{ec3} takes the form
\[
\pi_V^H \ul{\A}_{\bfG,H} \cong \prod_{\alpha\colon \dual{\Lambda} \to H} \A^0(S^{V^{\image(\alpha)}}_{\h H}).
\]
By assumption, $\A$ is complex periodic. As $\pi_0 \A$ is a local ring, it follows that we may choose a periodic complex orientation of $\A$ yielding, for every complex vector bundle $\xi$ over a space, a natural Bott class $\beta_\xi \in \A^0 \Th(\xi)$. In particular, there are Bott classes $\beta_{V^{\image(H)}} \in \A^0(S^{V^{\image(\alpha)}}_{\h H})$ for each map $\alpha\colon \dual{\Lambda} \to H$ which together yield a class $\beta_V \in \pi_V^H \ul{\A}_{\bfG,H}$. As the construction of $\beta_V$ is natural in $H$, we find that multiplication by $\beta_V$ defines a map
\[
\beta_V\colon \Sigma^{-V}\ul{\A}_{\bfG,H} \to \ul{\A}_{\bfG,H}
\]
which is an isomorphism on integer-graded homotopy groups $\pi_\ast^K$ for all $K \subset H$, and is therefore an equivalence of $H$-spectra. Thus $\beta_V$ is a unit in the $RU(H)$-graded homotopy groups of $\ul{\A}_{\bfG,H}$ as needed.
\end{proof}

The following is an immediate consequence of \Cref{thm:geofixed}.

\begin{cor}
    Let $\bfG$ be an oriented $\bfP$-divisible group over a stack $\sfM$. Then, for any finite abelian group $H$, the natural map
    \[\Inj(\dual{H}, \bfG) \to \sfM\]
    is affine.
\end{cor}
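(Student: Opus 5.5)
The corollary follows by factoring $\Inj(\dual{H},\bfG) \to \sfM$ as
\[
\Inj(\dual{H},\bfG) \longrightarrow \bfG[\dual{H}] = \bfG(\bfB H) \longrightarrow \sfM
\]
and checking that each map is affine. Affine morphisms of stacks are closed under composition, as one sees by working locally on the target.

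The second map is affine by construction. Since $\bfB H$ is $\pi$-compact, \cref{def:picompactstack} defines $\bfG(\bfB H)$ as the relative spectrum of the quasi-coherent algebra $\calO_\bfG^{\bfB H} \in \CAlg(\QCoh(\sfM))$ provided by \cref{pr:quasicoherent_on_picompactspaces}. Alternatively, \cref{def:pdivisible} requires $\bfG[\dual{H}] \to \bfG[0] = \sfM$ to be finite flat, hence affine, locally on $\sfM$.

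For the first map, \cref{thm:geofixed} asserts that the inclusion $\Inj(\dual{H},\bfG) \to \bfG[\dual{H}]$ is affine, being equivalent to the structure map $\Phi^H\bfG \to \bfG(\bfB H)$. That map is affine by definition, since $\Phi^H\bfG$ is the relative spectrum of $\calO_\bfG^{\Phi H} \in \CAlg(\QCoh(\bfG(\bfB H)))$. The main obstacle is that \cref{thm:geofixed} carries a complex periodicity hypothesis on $\sfM$, which the corollary drops.

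To remove the hypothesis, I would work locally. Affineness of a map to $\sfM$ can be checked after base change along all $\Spec \A \to \sfM$: the relative-spectrum description $\Stk^{\aff}_{/\sfM} \simeq \CAlg(\QCoh(\sfM))^{\op}$ reduces it to descent of quasi-coherent algebras. Property (3) of \cref{lm:opencomplements} shows that formation of $\Inj$ commutes with base change, so we may assume $\sfM = \Spec \A$.

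Next I would mirror the reductions in the proof of \cref{thm:geofixed}. The open complement is determined Zariski-locally, so affineness can be tested on a fpqc cover of $\Spec\A$. Passing to localizations at primes and then to a cover by $p$-completions together with the rationalization, each resulting ring is either $p$-complete, and hence complex periodic by the preceding remark on \cite[Warn.2.6.18]{ec3}, or rational. Over a rational local ring I would pass further to a faithfully flat extension on which $\bfG \simeq \ul{\Lambda}$ is constant. Then $\bfG[\dual{H}]$ is a finite disjoint union of copies of $\Spec \A$, and $\Inj$ is a union of some of those components, which is affine. Alternatively, I would pass to an extension that is complex periodic and apply \cref{thm:geofixed} directly.

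In the remaining complex periodic cases \cref{thm:geofixed} applies, and affineness descends back along the cover. Composing the two affine maps then gives the corollary.
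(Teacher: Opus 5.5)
Your main line is the paper's proof. The paper deduces the corollary in one line from the equivalence $\Inj(\dual{H},\bfG)\simeq\Phi^H\bfG$ of \cref{thm:geofixed}, using that $\Phi^H\bfG\to\bfG(\bfB H)\to\sfM$ is a composite of relative spectra. You are also right that \cref{thm:geofixed} assumes $\sfM$ complex periodic while the corollary does not; the paper's one-line proof passes over this.

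However, your patch for the hypothesis has a genuine gap. Affineness does descend along fpqc covers, but the families you use are not fpqc covers.
\begin{enumerate}
\item The localizations at all primes form an infinite family along which flat descent fails, so that step would at least need a separate spreading-out argument.
\item More seriously, $\A\to\A^\wedge_p$ is not flat in general, and this already happens for the non-complex-periodic rings you need to treat. For instance, $\A=\KU^\wedge_p\times_{\KU^\wedge_p\otimes\bfQ}\bfQ$ has $\pi_0\A=\bfZ_{(p)}$, carries the oriented $\mu_{\bfP^\infty}$, and is not complex periodic. It has $\pi_{2k-1}\A\cong\bfQ_p/\bfZ_p$ for $k\neq 0$, while $\A^\wedge_p\simeq\KU^\wedge_p$ has no odd homotopy.
\item Your second rational option fails for the same reason: a nonzero flat algebra over $\bfQ$ is concentrated in degree $0$, so it cannot be complex periodic.
\end{enumerate}
The pair $\{\A^\wedge_p,\A[1/p]\}$ is only Zariski-surjective and jointly conservative. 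That is the criterion used inside the proof of \cref{thm:geofixed}. There it checks that a map out of $\Spec(\ul{\A}_\bfG^{\Phi H})$, which is affine over $\Spec\A$ by construction, is an equivalence onto $\Inj(\dual{H},\bfG)$. It gives no way to descend affineness of $\Inj(\dual{H},\bfG)$ itself, so your final step ``affineness descends back along the cover'' is unjustified.

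The repair is to stop trying to descend affineness and instead remove the hypothesis from the theorem itself. Run the reduction of \cref{thm:geofixed}, and on the rational pieces replace the choice of Bott classes by a direct computation. There $\bfG[\dual{K}]\subset\bfG[\dual{H}]$ is clopen, being a closed immersion of finite \'etale stacks. One then identifies $\Spec(\ul{\A}_\bfG^{\Phi H})$ with the complementary clopen piece, e.g.\ via the character isomorphism after a flat extension making $\bfG$ constant. This yields $\Phi^H\bfG\simeq\Inj(\dual{H},\bfG)$ over arbitrary $\sfM$, and affineness then follows exactly as in the paper. As written, your argument, like the paper's deduction from \cref{thm:geofixed}, covers only complex periodic $\sfM$.
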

\begin{proof}
This follows from the equivalence $\Inj(\dual{H},\bfG)\simeq\Phi^H\bfG$ as $\Phi^H\bfG \to \sfM$ is affine by construction.
\end{proof}

Given a height function $\vec{h} \in \{h(p)  \in \bfZ_{\geq 0}\cup\{\infty\}: p \in \bfP\}$, say that a $\bfP$-divisible group $\bfG$ has \emph{height $\leq \vec{h}$} if its underlying $p$-divisible group $\bfG_{(p)}$ is of height $\leq h(p)$ for all primes $p$.

\begin{cor}[Blueshift]\label{cor:blueshift}
Let $\bfG$ be an oriented $\bfP$-divisible group over a complex periodic stack of height $\leq \vec{h}$. Fix a finite abelian group $H$, and define $n(p) = \log_p|H/pH|$. Then the complex periodic stack $\Phi^H\bfG$ is of height $\leq \vec{h} - \vec{n}$. In particular, $\Phi^H\bfG = \emptyset$ if $n(p) > h(p)$ for some prime $p$.
\end{cor}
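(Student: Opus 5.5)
The plan is to use \cref{thm:geofixed}, which identifies $\Phi^H\bfG\simeq\Inj(\dual{H},\bfG)$, and then bound heights of $\bfP$-divisible groups that receive an injection from $\dual{H}$. Height is a condition checked fiberwise at points, and both the geometric fixed point stack and $\Inj$ commute with base change (\cref{thm:homologybasechange} and \cref{lm:opencomplements}(3)). We may therefore work locally on $\sfM$ and fix a prime $p$. We reduce to the case $\bfG$ oriented over an $\E_\infty$ ring $\A$ with $\pi_0\A$ local and $p$-complete, or rational. The rational case, and primes $p$ not dividing $|H|$, are trivial since then $n(p)=0$. So we may assume $\A$ is $p$-complete and $\bfG_{(p)}$ has height $\leq h=h(p)$.

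Next we decompose $\dual{H}=\dual{H}_{(p)}\times\dual{H}[1/p]$. By the example following \cref{def:injstack}, $\Inj(\dual{H},\bfG)$ maps to $\Inj(\dual{H}_{(p)},\bfG_{(p)})$, and this map factors through the structure map to $\sfM$. It therefore suffices to show that $\Inj(\dual{H}_{(p)},\bfG_{(p)})$ is of height $\leq h-n$ at $p$, since being of bounded height is inherited by stacks over it. Here $n=\log_p|H/pH|$ is the rank of the $p$-torsion of $\dual{H}_{(p)}$.

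Now pass to a point. Let $\B$ be a $\Spec$-point of $\Inj(\dual{H}_{(p)},\bfG_{(p)})$ over a $K(m)$-local (or residue field) base of height exactly $m$, so the orientation identifies $\bfG_p^\circ$ with the Quillen formal group of height $m$. Following the proof of \cref{prop:pconverse}, pass to a faithfully flat extension (\cite[Pr.2.7.15]{ec3}) to split $\bfG_{(p)}\simeq\bfG_p^\circ\oplus\ul{\Lambda}$, with $\Lambda\simeq(\bfQ_p/\bfZ_p)^{h'-m}$ and total height $h'\le h$. An injection $\dual{H}_{(p)}\to\bfG_{(p)}$ over a base where the geometric points lie in the open complement of all $\bfG[\dual{K}]$ forces the composite with the projection to $\ul{\Lambda}$ to be injective on $p$-torsion. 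The reason is that the connected part $\bfG_p^\circ[p]$ is infinitesimal, so on geometric points of a residue field of characteristic $p$ it has no nonzero sections. Hence $n\leq h'-m$, that is, $m\leq h-n$. If $n>h$, no such points exist, and $\Inj$ is empty; by \cref{thm:geofixed} this gives $\Phi^H\bfG=\emptyset$.

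The main obstacle is this last step: making precise that an injection in the sense of \cref{def:injstack}, an open complement defined via Zariski spaces, forces injectivity into the étale part at residue fields, and that "height $\le h-n$" in the paper's sense of a complex periodic stack is detected by these points. I would handle it by evaluating on residue fields $\kappa$ of $\pi_0$ of $K(m)$-localizations. There $\bfG[\dual{K}](\kappa)$ for the subgroup $K$ with $\dual{K}=$ kernel-quotient captures exactly the points whose restriction to $\dual{H}[p]$ kills something. I would also use that over such a field, $\bfG_{(p)}(\bar\kappa)[p]\cong(\bfZ/p)^{h'-m}$.
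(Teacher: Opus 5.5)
Your core argument is the paper's. Via \cref{thm:geofixed}, a point of $\Phi^H\bfG\simeq\Inj(\dual{H},\bfG)$ with residue field $\kappa$ gives an injective homomorphism $\dual{H}\to\bfG(\kappa)$. In characteristic $p$ this must inject into the \'etale part, because the connected part of $\bfG_{(p)}[p]$ is infinitesimal, and so $n(p)\le h'-m$.

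Your reduction, however, contains a false step. The rational case is not trivial ``since $n(p)=0$'': the number $n(p)=\log_p|H/pH|$ depends only on $H$, not on the base. Similarly, fixing $p$ and reducing to ``$p$-complete or rational'' silently discards bases of residue characteristic $\ell\neq p$, which are neither. At such points the $p$-height is $0$, so the bound is automatic when $n(p)\le h(p)$. But when $n(p)>h(p)$, the corollary (and in particular the claim $\Phi^H\bfG=\emptyset$) requires that $\Inj(\dual{H},\bfG)$ have no such points at all, and your argument never shows this. The repair is your own field-level argument with $m=0$. Over a field $\kappa$ of characteristic $\neq p$, $\bfG_{(p)}$ is \'etale of height $h'\le h(p)$. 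Hence $\bfG_{(p)}(\bar\kappa)[p]\cong(\bfZ/p)^{h'}$, which cannot receive an injection from $\dual{H}[p]\cong(\bfZ/p)^{n(p)}$.

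Once this is fixed, note that the paper's proof is shorter than yours and sidesteps the step you flagged as the main obstacle. After reducing to an affine base $\Spec\A$, it uses $\Spec\pi_0\A_\bfG^{\Phi H}\simeq\Inj(\dual{H},\bfG^\heartsuit)$, where $\bfG^\heartsuit$ is the classical $\bfP$-divisible group over $\pi_0\A$. It then applies the connected--\'etale observation directly at every residue field of $\pi_0\A$, of whatever characteristic. This needs neither $K(m)$-localizations nor the faithfully flat splitting $\bfG_{(p)}\simeq\bfG_p^\circ\oplus\ul{\Lambda}$. It also avoids the question of whether residue fields of the rings $\pi_0L_{K(m)}\A$ exhaust the points of $\Spec\pi_0\A$.
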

\begin{proof}
This statement is local on the base stack, so we may reduce to the case where $\bfG$ is an oriented $\bfP$-divisible group over an $\einfty$ ring $\A$. As
\[
\Spec \pi_0 \A_\bfG^{\Phi H}\simeq \Inj(H^\vee,\bfG^\heartsuit),
\]
where $\bfG^\heartsuit$ is the $\bfP$-divisible group over $\pi_0\A$ associated with $\bfG$, the corollary amounts to the observation if $\bfG^\heartsuit$ is an ordinary $\bfP$-divisible group over a field, then $\Inj(H^\vee,\bfG^\heartsuit) = \Inj(H^\vee,(\bfG^\heartsuit)^\et) = \emptyset$ whenever $\log_p |H/pH| > \height (\bfG_{(p)}^\heartsuit)^\et$ for some prime $p$.
\end{proof}

\begin{ex}
    For a fixed prime $p$ and any $0< h < \infty$, any complex-periodic $K(h)$-local $\E_\infty$-ring $\A$ comes equipped with a canonical oriented Quillen $p$-divisible group $\G^\calQ_\A$ of height $h$; see \cite[Ths.4.5.3 \& 4.6.16]{ec2}. By \Cref{cor:blueshift}, we see that $\Phi^H \G^\calQ_\A$ is empty if $H$ is a finite abelian $p$-group of $p$-rank greater than $h$. In particular, as the global sections functor
    \[\QCoh(\G^\calQ_\A(\bfB H)) \simeq \QCoh(\Spec \A^{BH}) \xrightarrow{\simeq} \Mod_{\A^{BH}}\]
    is an equivalence, we see that
    \[\Ga(\ul{\O}_{\G^\calQ_\A}^{\Phi H}) \simeq \ul{\A}_{\G^\calQ_\A}^{\Phi H} \simeq \A^{\tau H},\]
    the \emph{proper Tate construction}, is zero for such $H$. For finite abelian $p$-groups with $p$-rank at most $h$, it is classical that $\A^{\tau H}$ is nonzero. Combining these with the other vanishing results above, we recover the \emph{derived defect base} for the Borel equivariant theory defined by $\A$. This is a slight generalization of \cite[Pr.5.26]{derivedindandres} which does not require restrictions on the coefficients of $\A$; also compare with \cite[Pr.5.36]{derivedindandres}.
\end{ex}

The same line of reasoning can similarly be used to compute the derived defect base for $\gKU$ and $\gKO$. Our last example is for global topological modular forms.

\begin{ex}\label{ex:geoFP_TMF}
Let $\calE \to \sfM_\Ell^{\ori}$ be the universal oriented elliptic curve, with associated oriented $\bfP$-divisible group $\calE[\bfP^\infty]$, as in \cref{ex:ambidextroustmf}. \Cref{thm:geofixed} then gives an identification
\[
\Phi^H \calE[\bfP^\infty] \simeq \Inj(\dual{H}, \calE[\bfP^\infty])
\]
of stacks, which, upon taking global sections, yields an identification
\[
\gTMF^{\Phi H} = (\Ga(\ul{\O}_{\calE[\P^\infty]}))^{\Phi H} \simeq \Ga(\ul{\O}_{\calE[\P^\infty]}^{\Phi H}) = \Ga(\Phi^H \calE[\P^\infty]) \simeq \Ga \Inj(\dual{H},\calE)
\]
of $\bfE_\infty$ $\TMF$-algebras by \Cref{rmk:0semiaffine}, as $\M_\Ell^\ori$ is $0$-affine; see \cite{akhilandlennart} or \cite[Th.A]{reconstruction}. After inverting the order of $H$, injections correspond to \emph{level structures}, and in this way the geometric fixed points recover and refine previously considered forms of $\TMF$. For example, 
\[
\gTMF^{\Phi C_n}[1/n]\simeq\TMF_1(n),\qquad \gTMF^{\Phi C_n^2}\simeq \TMF(n),\qquad \TMF^{\Phi C_n^3}\simeq 0.
\]
Here, $n$ is already invertible in $\gTMF^{\Phi C_n^2}$ and $\gTMF^{\Phi C_n^3}\simeq 0$ by \cref{cor:blueshift}. In particular, $\gTMF^{\Phi C_n}$ can be regarded as an integral refinement of $\TMF_1(n)$, which has height $1$ at all primes dividing $n$, in contrast to $\TMF_1(n)$ which has height $0$ at all primes dividing $n$. To see this, it suffices to show that for every prime $p$, there exists a map $\gTMF^{\Phi C_n} \to \A$ to a $K(1)$-local $\bfE_\infty$ ring $\A$. Indeed, the modular interpretation
\[\Phi^{C_n} \calE[\bfP^\infty] = \Inj(\widehat{C_n}, \calE[\P^\infty])\]
implies that there is a map $\gTMF^{\Phi C_n} \to \A$ for any elliptic cohomology theory $\A$ associated with an oriented elliptic curve $E$ which admits a subgroup isomorphic to the constant group scheme $\ul{\dual{C_n}}$. Examples which are $K(1)$-local are then obtained by asking that $\A$ is a complete local ring with residue field $\kappa = \pi_0\A/\mathfrak{m}$ of characteristic $p$ and that $E$ restricts to an ordinary elliptic curve over $\kappa$; for example, one may take an separable closure of $L_{K(1)}E_2$ for $E_2$ the Morava $E$-theory associated with a supersingular elliptic curve.
\end{ex}

\addcontentsline{toc}{section}{References}
\bibliography{reference}
\bibliographystyle{alpha}
\end{document}